\def\makeautorefname#1#2{\expandafter\def\csname#1autorefname\endcsname{#2}}
\let\fullref\autoref
\newtheorem{theorem}{Theorem}[section]
\theoremstyle{plain}
\newtheorem{lemma}{Lemma}[section]
\let\c@lemma=\c@theorem 
\newtheorem{proposition}{Proposition}[section]
\let\c@proposition=\c@theorem 
\newtheorem{corollary}{Corollary}[section]
\let\c@corollary=\c@theorem 
\newtheorem{thm}{Theorem}[section]
\let\c@thm\c@theorem
\newtheorem*{thm*}{Theorem}
\newtheorem{cor}{Corollary}[section]
\let\c@cor\c@theorem
\newtheorem*{cor*}{Corollary}
\let\c@prop\c@theorem
\newtheorem*{prop*}{Proposition}
\let\c@lem\c@theorem
\newtheorem{quest}{Question}
\theoremstyle{remark}
\newtheorem{rem}{Remark}[section]
\let\c@rem\c@theorem
\newtheorem*{remark}{Remark}
\newtheorem*{observation}{Observation}
\theoremstyle{definition}
\newtheorem{definition}{Definition}[section]
\let\c@definition=\c@theorem 
\newtheorem{example}{Example}[section]
\let\c@example=\c@theorem 
\let\c@assumption=\c@theorem 
\let\c@deflem=\c@theorem 
\newtheorem{rmk}{Remark}[section]
\let\c@rmk=\c@theorem 
\let\c@notation=\c@theorem 
\let\c@defprop=\c@theorem 
\let\c@defi=\c@theorem 
\newcommand{\needsattention}[1][]{%
{\color{red} Needs Attention!!\ifthenelse{\isempty{#1}}{}{: #1}}}
\newcommand{\per}{\mathcal{P}}
\DeclareMathOperator{\rig}{Rig}
\DeclareMathOperator{\rs}{relStr}
\newcommand{\decmap}{\delta}
\newcommand{\modelmap}{\mu}
\newcommand{\modelspace}{X}
\newcommand{\fixedmodel}{Z}
\newcommand{\fixedmodelmap}{\nu}
\newcommand{\gog}{\mathbf{\Gamma}}
\newcommand{\QItypes}{\mathrm{QItypes}}
\DeclareMathOperator{\PQS}{PQS} 
\DeclareMathOperator{\bilipschitz}{BiLip} 
\DeclareMathOperator{\Isomgp}{Isom} 
\DeclareMathOperator{\Isom}{Isom} 
\DeclareMathOperator{\Igp}{\mathcal{I}} 
\DeclareMathOperator{\QIgp}{\mathcal{QI}} 
\DeclareMathOperator{\QIsom}{QIsom} 
\DeclareMathOperator{\CIsom}{CIsom} 
\DeclareMathOperator{\CIgp}{\mathcal{CI}} 
\DeclareMathOperator{\Aut}{Aut} 
\DeclareMathOperator{\Homeo}{Homeo}
\def\bdry{\partial} 
\DeclareMathOperator{\len}{\ell}
\newcommand{\clball}[3][]{\overline{\mathcal{B}}^{#1}_{#2}(#3)}
\newcommand{\quot}[1]{\bm{#1}}
\newcommand{\lift}[1]{\widetilde{#1}}
\newsavebox\myboxA
\newsavebox\myboxB
\newlength\mylenA
\newcommand*\xoverline[2][0.75]{%
    \sbox{\myboxA}{$\m@th#2$}%
    \setbox\myboxB\null
    \ht\myboxB=\ht\myboxA%
    \dp\myboxB=\dp\myboxA%
    \wd\myboxB=#1\wd\myboxA
    \sbox\myboxB{$\m@th\overline{\copy\myboxB}$}
    \setlength\mylenA{\the\wd\myboxA}
    \addtolength\mylenA{-\the\wd\myboxB}%
    \ifdim\wd\myboxB<\wd\myboxA%
       \rlap{\hskip 0.5\mylenA\usebox\myboxB}{\usebox\myboxA}%
    \else
        \hskip -0.5\mylenA\rlap{\usebox\myboxA}{\hskip 0.5\mylenA\usebox\myboxB}%
    \fi}
\newcommand{\inv}[1]{\xoverline{#1}}
\def\act{\curvearrowright} 
\def\from{\colon\thinspace}
\def\into{\hookrightarrow}
\newcommand{\extN}{\overline{\mathbb{N}}}
\DeclareMathOperator{\image}{Im}
\newcommand\tree{T}
\DeclareMathOperator{\cyl}{Cyl}
\newcommand\ornaments{\mathcal{O}}
\newcommand\ornament{o}
\newcommand\partialorientation{\zeta}
\newcommand\imbalance{\Omega}
\newcommand\struc{\mathcal{S}}
\newcommand{\geo}{\zeta}
\newcommand{\terminal}{\tau}
\newcommand{\initial}{\iota}
\newcommand{\nullvar}{\texttt{`NULL'}}
\newcommand{\cmul}{M}
\newcommand{\cadd}{A}
\newcommand{\cexp}{\alpha}
\DeclareMathOperator{\cone}{Con}
\newcommand{\qi}{\phi}
\newcommand{\homeo}{\rho}
\newcommand{\iso}{\chi}
\newcommand{\attachmap}{\alpha}
\newcommand{\edgemap}{\varepsilon}
\newcommand{\model}{\mathrm{Model}}
\DeclareMathOperator{\map}{\mathrm{Map}}
\DeclareMathOperator{\starmap}{\map}
\DeclareMathOperator{\link}{lk}
\DeclareMathOperator{\sign}{\mathrm{sign}}
\newcommand{\verts}{\mathcal{V}}
\newcommand{\edges}{\mathcal{E}}
\title{Quasi-isometries Between Groups with Two-Ended Splittings}  
\author{Christopher H. Cashen and Alexandre Martin}
\date{\today}
\begin{document}

\maketitle
\begin{abstract}
We construct `structure invariants' of a one-ended, finitely presented group  that describe
the way in which the factors of its JSJ decomposition over two-ended
subgroups fit together. 

For groups satisfying two technical conditions, these invariants
reduce the problem of quasi-isometry classification of such groups to
the problem of relative quasi-isometry classification of the factors
of their JSJ decompositions. 
The first condition is that their JSJ decompositions have two-ended cylinder stabilizers.
The second is that every factor in their JSJ decompositions is either
`relatively rigid' or `hanging'.
Hyperbolic groups always satisfy the first condition, and it is an
open question whether they always satisfy the second. 

The same methods also produce invariants that reduce the problem of
classification of one-ended hyperbolic groups up to homeomorphism of
their Gromov boundaries to the problem of classification of the
factors of their JSJ decompositions up to relative boundary
homeomorphism type.

\end{abstract}

\section{Introduction}
Gromov proposed a program of classifying finitely generated groups up
to the geometric equivalence relation of quasi-isometry \cite{Gro84}. A natural approach to this problem is to first try to decompose the group into ``smaller'' pieces by means of a graph of groups decomposition, and then reduce the quasi-isometry classification problem to the problem of understanding the quasi-isometry types of the various vertex groups and the way these subgroups fit together.

The simplest such decomposition is the decomposition of a group as
a graph of groups over finite subgroups. 
Stallings's Theorem \cite{Sta68,Sta71} asserts that  a finitely generated group splits as an amalgamated product or an HNN extension over a finite group if and only if it has more than one end. In particular, the existence of a
splitting as a graph of groups
with finite edge groups is a quasi-isometry invariant. Finitely presented groups admit a maximal splitting over finite subgroups \cite{Dun85}, and a theorem of Papasoglu and Whyte \cite{PapWhy02} says that the
collection of quasi-isometry types of one-ended vertex groups of a
maximal decomposition of an infinite-ended, finitely presented group
is a complete quasi-isometry invariant.
This reduces the quasi-isometry classification problem to one-ended groups. 

We push this program to its next logical step, which is to decompose
one-ended groups over two-ended subgroups.
Papasoglu \cite{Pap05} shows that the existence of a splitting of a
finitely presented one-ended group over two-ended subgroups is
quasi-isometry invariant, provided that the group is not commensurable to a
surface group.
Moreover, such a group admits a maximal decomposition as a graph of
groups over two-ended subgroups, known as a JSJ decomposition, and
Papasoglu's results imply that quasi-isometries respect (in a
certain sense that will be made precise in \fullref{sec:qiinvariance}) the JSJ
decomposition.
In particular, the quasi-isometry types of the non-elementary vertex groups
of the JSJ decomposition are invariant under quasi-isometries.
Even more is true: In a non-elementary vertex group of the JSJ we see
the collection of conjugates of the two-ended subgroups corresponding
to incident edges of the JSJ decomposition. 
Quasi-isometries of the group must preserve the vertex groups together
with their patterns of
conjugates of incident edge subgroups.
We say a quasi-isometry must preserve the \emph{relative quasi-isometry
  type} of the vertex.
Such patterns have been exploited before
\cite{MosSagWhy11,Cas10,BisMj12, Mj12} to produce quasi-isometry
invariants from various `pattern rigidity' phenomena.
However, the quasi-isometry types, or relative quasi-isometry types,
of the vertex groups alone do not
give complete quasi-isometry invariants, because not all vertex
groups have the same coarse intersections:
Vertices that are adjacent in the Bass-Serre tree of the decomposition
have vertex groups that intersect in two-ended subgroups, while the
coarse intersection of non-adjacent vertices may or may not be
bounded.

In this paper,  we produce further quasi-isometry invariants of a finitely presented one-ended group from an appropriate JSJ decomposition over two-ended subgroups.  
Under a mild technical restriction, which is known to hold for large classes of groups (see \fullref{sec:rigidexamples} for a discussion),
we show that our invariants give complete quasi-isometry invariants, and thus reduce the quasi-isometry  classification problem for such groups to
relative versions of these problems in the vertex groups of their JSJ decomposition.\\

In the case of one-ended hyperbolic groups, a weaker classification is
possible, namely the classification up to homeomorphisms of Gromov
boundaries. 
Indeed, recall that quasi-isometric hyperbolic groups have homeomorphic Gromov
boundaries at infinity, but there are examples of hyperbolic groups
with homeomorphic boundary that are not quasi-isometric.
For a hyperbolic group, the existence of a splitting over a finite
subgroup amounts to having a disconnected Gromov boundary,
and is thus detected by the homeomorphism type of the boundary. 
Paralleling the results of Papasoglu and Whyte for quasi-isometries, Martin and \'Swi\k{a}tkowski \cite{MarSwi15}
show that hyperbolic groups with infinitely many ends have
homeomorphic boundaries if and only if they have the same sets of
homeomorphism types of boundaries of one-ended factors, reducing the classification problem to the case of one-ended hyperbolic groups.

For a one-ended hyperbolic group $G$ whose boundary is not a circle, Bowditch shows that the there exists a splitting over a two-ended subgroup if and only if there exists a cut pair in the boundary \cite{BowditchJSJ}.
From the structure of cut pairs in the boundary, he deduces the
existence of a simplicial tree on which $\Homeo(\bdry G)$ acts by isomorphisms.
Paralleling the quasi-isometry case, the action of $\Homeo(\bdry G)$
on this tree preserves the relative boundary homeomorphism types of
vertex stabilizers in $G$.

Using the same approach as for our quasi-isometry classification, we construct a complete system of  invariants of the homeomorphism type of the Gromov boundary of a hyperbolic group, which completely reduces the classification problem for hyperbolic groups to relative versions of this problem in the vertex groups of their JSJ decomposition. \\

Our results rely heavily on the existence of a canonical choice
of tree that is preserved by quasi-isometries, respectively,
homeomorphisms of the boundary, with the additional property  that any
such map induces maps of the same nature at the level of the vertex
groups.

In the case of hyperbolic groups, the tree  is Bowditch's canonical JSJ
tree constructed from cut pairs in the boundary.
For a more general finitely presented one-ended group $G$, let $\gog$ be a JSJ decomposition over two-ended subgroups.
Let $\tree:=\tree(\gog)$ be the Bass-Serre tree of $\gog$.
Commensurability of  stabilizers defines an equivalence relation
on edges of $\tree$ whose equivalence classes are called \emph{cylinders}.
Guirardel and Levitt \cite{GuiLev11} show that the dual tree to the
covering of $\tree$ by cylinders defines a new tree $\cyl(\tree)$, the \emph{tree of cylinders} of
$\tree$, with a cocompact $G$--action, and, in fact, this tree is
independent of $\gog$, so it makes sense to call it the \emph{tree of
  cylinders of $G$}, and denote it $\cyl(G)$.
It follows from Papasoglu's results, see \fullref{qiinvariance}, that a
quasi-isometry between finitely presented one-ended groups induces an
isomorphism between their trees of cylinders, and restricts to give a
quasi-isometry of each vertex group $G_v$ of the tree of cylinders that coarsely preserves the
pattern $\per_v$ of edge groups $G_e$, for edges $e$ incident to $v$.
When the edge stabilizer of $\cyl(G)$ in $G$ are two-ended then the
quotient graph of groups gives a canonical decomposition of $G$
over two-ended subgroups. 
Our results are strongest when additionally the cylinder stabilizers
are two-ended.
This is the case, in particular, when $G$ is hyperbolic, in which case
$\cyl(G)$ is equivariantly isomorphic to Bowditch's tree.
\\

Before giving more details about the ideas behind our classifications, 
we restrict to the case that $G$ is a one-ended hyperbolic group, and we
 adopt some notation that will allow us to to discuss simultaneously
 the quasi-isometry and boundary homeomorphism cases. 
Let $\starmap((X,\per^X),(Y,\per^Y))$ denote
alternately:
\begin{itemize}
\item The set of quasi-isometries from $X$ to $Y$ taking the 
collection of coarse equivalence classes of subsets $\per^X$ to the
collection of coarse equivalence classes of subsets $\per^Y$.
\item The set of boundary homeomorphisms from $\bdry X$ to $\bdry Y$
  taking the collection of subsets $\bdry \per^X$ to the collection of
  subsets $\bdry\per^Y$.
\end{itemize}
Similarly, $\starmap(G)$ is either $\QIgp(G)$ or $\Homeo(\bdry G)$. An element of $ \starmap(\cdots)$ will be referred to as a $\starmap$--equivalence.

The idea behind our classification result is the following. 
First remark that a finitely presented one-ended group is
quasi-isometric to a complex of spaces over the JSJ tree of cylinders
(this will be recalled in \fullref{sec:treesofspaces}). 
Moreover, such a decomposition as a complex of spaces is compatible
with Map-equivalences, in that a Map-equivalence between two finitely
presented one-ended groups coarsely preserves the structure of
complex of spaces, as follows from the aforementioned results of
Papasoglu and Bowditch. 
To determine whether two groups are Map-equivalent, we thus want to
decide whether their JSJ trees of cylinders are isomorphic and then try
to promote such an isomorphism to a Map-equivalence between the
complexes of spaces, hence, between the groups. 
In full generality, a $\starmap$-equivalence not only induces a
simplicial isomorphism between the trees of cylinders, but also
preserves extra information about the vertex groups that we want to
take into account: $\mbox{Map}$--class of vertex groups,  relative
$\mbox{Map}$--class with respect to incident edge groups, etc.  
In other words, we decorate the vertices of the trees of cylinders
with these additional pieces of information.
What we then want is to understand when there exists a decoration-preserving isomorphism between the tree of cylinders of two groups.
We want to add enough additional information so that a
decoration-preserving isomorphism between the trees of cylinders can 
be promoted to a $\mbox{Map}$--equivalence between the associated
groups. 
Moreover, we would like to have an algorithm telling us when two such decorated trees of cylinders are isomorphic. 

This latter point will be dealt with by generalizing to cocompact
decorated trees a theorem from graph theory giving a necessary and
sufficient condition for the universal covers of two graphs to be
isomorphic \cite{Lei82}, see \fullref{sec:structure}. 
To such a decorated tree we will associate a \textit{structure
  invariant} that completely determines the tree up to
decoration-preserving isomorphism.

To now get an intuition of the decorations we consider in this
article, let us start from the decoration of the tree of cylinders
that associates to each vertex $v$ the relative Map-equivalence
$\starmap(G_v, \cP_v)$ mentioned earlier, where $\cP_v$ is the
peripheral structure coming from the incident edge groups, and let us
try to promote a decoration-preserving isomorphism $\chi$ between trees of cylinders to a $\mbox{Map}$-equivalence between groups. 
The goal is to choose, for each vertex $v$, a $\starmap$--equivalence between $v$ and
$\chi(v)$, and piece them together to get $\mbox{Map}$-equivalence between groups.

The first potential problem is a realization problem. 
We know that  $v$ and $\chi(v)$ are $\starmap$--equivalent, but we
also need to know that there is such a $\starmap$--equivalence that
matches up peripheral subsets in the same way that $\chi$ matches up
edges incident to $v$ and $\chi(v)$.

The second potential problem is that the vertex
$\starmap$--equivalences must agree when their domains overlap. 
For boundary homeomorphisms the overlap is just the boundary of an
edge space, which is a pair of points, so this a matter of choosing
consistent \emph{orientations} on the edge spaces. 

Roughly, this is the phenomenon underlying the fact that two HNN extensions of the form

$$ G_+:= \langle G, t | tu\bar{t}=v \rangle ~ \mbox{ and } ~ G_-:= \langle G, t | tu\bar{t}= \bar{v} \rangle$$
might not be $\mbox{Map}$-equivalent in general, for a group $G$ and elements $u, v$ of $G$. 

It is worth noting that the orientation obstruction automatically 
disappears if all the edge groups contain an infinite dihedral group. 

For the classification up to boundary homeomorphisms, these three
obstructions, relative type, realizability, and orientation, are
essentially the only obstructions to constructing a
$\starmap$--equivalence between the groups.
We use these considerations to produce a finer decoration that yields
a structure invariant that is a complete invariant for boundary
homeomorphism type, see
\fullref{corollary:boundaryhomeomorphism}. 

In the case of quasi-isometries, orientation of the edge spaces is not
enough; we must choose $\starmap$--equivalence of the vertex spaces
that agree all
along the length of shared edge spaces.
There are two cases in which we can decide if this is possible. 
The first is that vertex spaces are extremely flexible so that we have
a lot of freedom to choose $\starmap$--equivalences and make them
agree on edge spaces.
This is the case for the so-called `hanging' vertices. 
The other case is the opposite one, in which the vertex space
is extremely `rigid' and we have very little choice about how to
choose the maps. 
In this case we define an invariant called a \emph{stretch factor}
that we then incorporate into the decoration.
If all vertices are either rigid or hanging then the decoration that
takes into account vertex relative quasi-isometry type, realizability,
orientation, and stretch factors yields a complete quasi-isometry
invariant, see \fullref{thm:qi}.
This notion of \emph{relative quasi-isometric rigidity} is known to hold for many classes of groups, see \fullref{sec:rigidexamples}.\\

The classifications we provide are inherently more technical than the
classifications for splittings over finite subgroups by Papasoglu and
Whyte and Martin and \'Swi\k{a}tkowski. 
Underlying those theorems is the fact that in splittings over finite
groups the vertex groups of the canonical decomposition are
essentially independent of one another, so only the pieces of the
decomposition matter. 
Our classification must handle not just the pieces, but also their
complex interactions.

\subsection{Applications}
Our main results give invariants that reduce classification problems
for finitely presented, one-ended groups to a relative version of the
classification problem on the vertex groups of a JSJ decomposition. 

Note that vertex groups of a two-ended JSJ decomposition of a
one-ended group are not necessarily one-ended, but we really have 
achieved a significant reduction of the classification problem because
the problem of classification of relative quasi-isometry types is far
more constrained than the general problem.
The most striking example of this phenomenon is a theorem of Cashen
and Macura \cite{CasMac11} that says that if a free group appears as a
vertex group of a JSJ decomposition then it is either hanging or it is
quasi-isometrically rigid relative to the peripheral structure induced
by incident edge groups. 
Moreover, in the rigid case the relative quasi-isometry question
reduces to a relative \emph{isometry} question in certain associated cube
complexes, which seems far more tractable. 

A sample application of our main results is a complete description, in
terms of \cite{CasMac11}, of the quasi-isometry and boundary
homeomorphism types of one-ended hyperbolic groups that split as
graphs of groups\footnote{ Such a decomposition can be algorithmically
  improved to a JSJ
decomposition of the same type \cite{Cas10splitting}.} with free vertex groups and cyclic edge groups. 

Consider the following example:
\begin{example}\label{ex:keyexample}
Let $G_i=\left<a, b, t\mid \inv{t}u_it=v_i\right>$, where $u_i$ and 
$v_i$ are words in $\left<a,b\right>$ given below. 
In each case $G_i$ should be thought of as an HNN extension of 
$\left<a, b\right>$ over $\mathbb{Z}$ with stable letter $t$. 

Let $u_0:=a$, $v_0:=ab\inv{a}\inv{b}^2$, $u_1:=ab$, $v_1:=a^2\inv{b}^2$, 
$u_2:=ab^2$, and $v_2:=a^2\inv{b}$.   

Then $G_0$, $G_1$, and $G_2$ are pairwise
non-quasi-isometric, but have homeomorphic Gromov boundaries.

The quasi-isometry claim  follows because the vertex groups in each case are
quasi-isometrically rigid relative to the incident edge groups, and
the stretch factor for $G_i$ is the ratio of the word lengths of
$u_i$ and $v_i$, which are different for each $i$; see
see \fullref{sec:obstructions} and
\fullref{sec:examplestretch}.
The groups have homeomorphic boundaries because the stretch factors
are the only difference, and boundary homeomorphism is not sensitive
to stretch factors; see \fullref{sec:obstructions} and \fullref{ex:exampleboundary}.
\end{example}

Our methods produce interesting invariants even in cases that the
groups are not hyperbolic or  that the
relative problems for the vertex groups is not be completely
understood:
\begin{example}\label{ex:mcg}
  Let $M$ be the  mapping class group of
  a non-sporadic hyperbolic surface, with a fixed finite generating set. 
Let $g_0,\,g_1,\,g_0',\,g_1'\in M$ be pseudo-Anosov elements that are
  not proper powers. 
Let $G:=M*_{\mathbb{Z}}M$
  and $G':=M*_{\mathbb{Z}}M$ be the amalgamated product groups
  obtained by identifying $g_0$ with $g_1$ and $g_0'$ with $g_1'$,
  respectively.
Then $G$ and $G'$ are not quasi-isometric if the ratio of
translation lengths of $g_0$ and $g_1$ is different, up to inversion,
from the ratio of translation lengths of $g_0'$ and $g_1'$.

This follows because mapping class groups are quasi-isometrically
rigid and the ratios of the translation lengths in this example are
the stretch factors; see \fullref{sec:rigidexamples} and \fullref{sec:stretchfactors}.
\end{example}

\subsection*{Acknowledgements}
The first author thanks Mladen Bestvina, Jason Behrstock, and Gilbert Levitt for interesting
conversations related to this work.
In particular, the idea that a version of Leighton's Theorem would
provide a concise description of our structure invariants was
suggested by Bestvina, and \fullref{ex:mcg} was suggested by Behrstock.

Both authors were supported by the European Research Council (ERC) grant of Goulnara
Arzhantseva, grant agreement \#259527 and the Erwin Schr\"odinger
Institute workshop ``Geometry of Computation in Groups''.
The first author is partially supported by the Austrian Science Fund
(FWF):M1717-N25.
The second author is partially supported by the Austrian Science Fund
(FWF):M1810-N25.

\section{Preliminaries}
We assume familiarity with standard concepts such as Cayley graphs,
ends of spaces, and (Gromov) hyperbolic geometry. See \cite{BriHae99} for
background.

A group is \emph{virtually cyclic} if it has an infinite cyclic
subgroup of finite index.
A group is \emph{non-elementary} is it is neither finite nor virtually
cyclic. 
A standard exercise is to show that a finitely generated, two-ended
group is virtually cyclic.

\subsection{Coarse geometry}
Throughout the paper the qualifier `coarse' is used to indicate `up to additive error'.

Let $(X,d_X)$ and $(Y,d_Y)$ be metric spaces.
Subsets of $X$ are \emph{coarsely equivalent} if they are bounded
Hausdorff distance from one another.
A subset $A$ is \emph{coarsely contained} in $B$ if $A$ is coarsely
equivalent to a subset of $B$.
Two maps $\phi$ and $\phi'$ from $X$ to $Y$ are said to be
\emph{coarsely equivalent} or \emph{bounded distance} from
each other if $\sup_{x\in X}d_Y(\phi(x),\phi'(x))<\infty$.
A map $\phi\from X\to Y$ is \emph{coarsely surjective} if its image is
coarsely equivalent to $Y$.

A map $\phi\from X \to Y$ is a \emph{controlled
  embedding}\footnote{This is more commonly called a `coarse embedding'.} if there exist
unbounded, non-decreasing real functions $\rho_0$ and $\rho_1$ such that for
all $x$ and $x'$ in $X$ we have:
\[\rho_0(d_X(x,x'))\leq
d_Y(\phi(x),\phi(x'))\leq \rho_1(d_X(x,x'))\]

There are several classes of controlled embeddings that have
special names.
If $\rho_i(r):=\cmul_ir$ for $i\in\{0,1\}$ then $\phi$ is
\begin{itemize}
\item an \emph{isometric embedding} if $\cmul_0=1=\cmul_1$,
\item an $\cmul$--\emph{similitude} if $\cmul_0=\cmul=\cmul_1$,
\item an $\cmul$--\emph{biLipschitz embedding} if 
  $\cmul_0^{-1}=\cmul=\cmul_1$.
\end{itemize}

If such a map is surjective then it is called, respectively, an \emph{isometry},
\emph{similarity}, or \emph{biLipschitz equivalence}.

For each of these, we can add the qualifier `coarse' and allow an
additive error, so that $\rho_0(r):=\cmul_0r-\cadd$ and
$\rho_1(r):=\cmul_1r+\cadd$.
For instance, $\phi$ is an $(\cmul,\cadd)$--coarse biLipschitz embedding if
$\rho_i$ is as above with  $\cmul_0=\cmul^{-1}$ and $\cmul_1=\cmul$.
In the `coarse' cases we drop the term `embedding' if the map is coarsely
surjective.

An $(\cmul,\cadd)$--coarse biLipschitz embedding is more commonly called an 
$(\cmul,\cadd)$--\emph{quasi-isometric embedding}.

Let $\QIsom(X,Y)$ denote the set of quasi-isometries from $X$ to $Y$,
and let $\CIsom(X,Y)$ denote the set of coarse isometries from $X$
to $Y$.
For a coarsely surjective map $\phi\from X\to Y$, a \emph{coarse inverse} is a map
$\inv{\phi}\from Y\to X$ such that $\inv{\phi}\circ\phi$ is coarsely
equivalent to $\mathrm{Id}_X$ and $\phi\circ\inv{\phi}$ is coarsely
equivalent to $\mathrm{Id}_Y$. 
If $\phi\in\QIsom(X,Y)$ then all coarse inverses of $\phi$ are
coarsely equivalent and belong to $\QIsom(Y,X)$.
If $\phi\in\CIsom(X,Y)$ then every coarse inverse of $\phi$ belongs to $\CIsom(Y,X)$.

Let $\Igp(X,Y)$, $\CIgp(X,Y)$, and $\QIgp(X,Y)$ denote, respectively,
the sets
 $\Isom(X,Y)$, $\CIsom(X,Y)$, and $\QIsom(X,Y)$, modulo
coarse equivalence.
When $Y=X$ we shorten the notation to 
 $\Igp(X)$, $\CIgp(X)$, $\QIgp(X)$, and each of these form a group
 under composition.

A subset of $\CIgp(X,Y)$ or $\QIgp(X,Y)$ is said to be \emph{uniform}
if there exists a $C$ such that every element of the subset is an
equivalence class of maps containing a $C$--coarse isometry or a
$(C,C)$--quasi-isometry, respectively. 

Quasi-isometries respect coarse equivalence of subsets.
If $\per$ is a set of coarse equivalence classes of subsets of $X$,
and $\per'$ is a set of coarse equivalence classes of subsets of $Y$,
let $\QIgp((X,\per),(Y,\per'))$ be the subset of $\QIgp(X,Y)$
consisting of quasi-isometries that
induce bijections between $\per$ and $\per'$.
Similarly, $\QIgp((X,\per)):=\QIgp((X,\per),(X,\per))$ is a subgroup of $\QIgp(X)$.

If $\qi\from X\to Y$ is a quasi-isometry, define $\qi_*\from
\QIgp(X)\to\QIgp(Y)$ by $\qi_*(\psi):=\qi\circ\psi\circ\inv{\qi}$.

A subset $Y\subset X$ is \emph{coarsely connected} if there exists
some $\cadd$ such that for every $y$
and $y'\in Y$ there is, for some $k$, a chain $y_0:=y,y_1,\dots,y_k:=y'$
of $(k+1)$ points
$y_i\in Y$ such that $d(y_i,y_{i+1})\leq \cadd$.

A \emph{geodesic} is an isometric embedding of a connected subset of $\mathbb{R}$.
A \emph{coarse geodesic} is a coarse geodesic embedding of a
coarsely connected subset of $\mathbb{R}$.
A \emph{quasi-geodesic} is a quasi-isometric embedding of a coarsely
connected subset of $\mathbb{R}$.

The space $X$ is said to be \emph{geodesic}, \emph{$\cadd$--coarse geodesic},
or \emph{$(\cmul,\cadd)$--quasi-geodesic} if for every pair of points in $X$
there exists, respectively, a geodesic, $\cadd$--coarse geodesic,
or $(\cmul,\cadd)$--quasi-geodesic connecting them.

Let $\llbracket X\rrbracket$ denote the set of proper geodesic metric spaces
quasi-isometric to $X$.
If $\per$ is a set of coarse equivalence classes of subsets of $X$, let
$\llbracket (X,\per)\rrbracket$ denote the set of pairs $(Y,\per')$
where $Y$ is a geodesic metric space and $\per'$ is a collection of
coarse equivalence classes of subsets of $Y$ such that there exists a
quasi-isometry from $X$ to $Y$ that induces a bijection from $\per$ to $\per'$.
We call $\llbracket X\rrbracket$ the \emph{quasi-isometry type} of $X$
and $\llbracket (X,\per)\rrbracket$ the \emph{relative quasi-isometry
  type} of $(X,\per)$.

If $L$ is a path connected subset of a geodesic metric space $(X,d_X)$, let $d_L$ denote the induced
length metric on $L$.
A \emph{quasi-line} in $X$ is a path connected subset $L$ such that
$(L,d_L)$ is quasi-isometric to $\mathbb{R}$ and the inclusion map of
$L$ into $X$ is a controlled embedding.

We define a \emph{peripheral structure $\per$}  on geodesic metric space $X$ to be a collection of coarse
equivalence classes of quasi-lines.
In particular, 
if $G$ is a finitely generated group and $\mathcal{H}$ is a finite collection
of two-ended subgroups of $G$, then $\mathcal{H}$ induces a peripheral
structure consisting of distinct coarse equivalence classes of
 conjugates of elements of $\mathcal{H}$.

\subsection{Graphs of groups}
Let $\Gamma$ be a finite oriented graph. 
Let $\verts\Gamma$ be the set of vertices of $\Gamma$, and let
$\edges^+\Gamma$ be the set of oriented edges.
For $e\in\edges^+\Gamma$, let $\initial(e)$ be its initial vertex, and let
$\terminal(e)$ be its terminal vertex.
For each $e\in\edges^+\Gamma$ formally define $\inv{e}$ to be an
inverse edge to $e$ with   $\initial(\inv{v}):=\terminal(e)$,
$\terminal(\inv{e}):=\initial(e)$, and $\bar{\bar{e}}:=e$.
The inverse edge $\inv{e}$  should be thought of as $e$ traversed against its given
orientation.
Let $\edges^-\Gamma$ denote the set of inverse edges, and $\edges\Gamma:=\edges^+\Gamma\cup\edges^-\Gamma$.

A graph of groups $\gog:=(\Gamma,\{G_{\gamma}\}_{\gamma\in\verts\Gamma\cup\edges^+\Gamma},\{\edgemap_e\}_{e\in\edges^+\Gamma})$ consists of a
finite directed graph $\Gamma$, groups $G_\gamma$ for
$\gamma\in\verts\Gamma\cup\edges^+\Gamma$ such that
$G_e<G_{\initial(e)}$ for $e\in\edges^+\Gamma$, and injections $\edgemap_e\from G_e\into
G_{\tau(e)}$ for $e\in\edges^+\Gamma$.

For symmetry in the notation it is convenient to define $G_{\bar{e}}:=G_e$ for
each $\inv{e}\in\edges^-\Gamma$ and let
$\edgemap_{\bar{e}}$ denote the inclusion of $G_{\bar{e}}:=G_e$ into
$G_{\terminal(\bar{e})}:=G_{\initial(e)}$.

A graph of groups $\gog$ has an associated fundamental group $G=G(\gog)$ obtained by
amalgamating the vertex groups  over the edge groups \cite{Ser03}.
We say that $\gog$ is \emph{a graph of groups decomposition} of $G$.

The Bass-Serre tree $\tree:=\tree(\gog)$ of $\gog$ is the tree on which $G$ acts
without edge inversions, such that $\lquotient{G}{\tree}=\Gamma$ and such
that the
stabilizer $G_{t}$ of  ${t}\in\verts\tree\cup\edges\tree$ is a conjugate in $G$ of
the group $G_{\quot{t}}$, where $\quot{t}$ is the image of $t$ under
the quotient map $\tree\to\Gamma$.

Throughout we use the notation $\quot{t}$ to denote the image of $t$
in $\Gamma$.
Conversely, for each $\gamma\in\verts\Gamma\cup\edges\Gamma$ we choose
some lift $\lift{\gamma}$ of $\gamma$ to $\tree$.
Given a maximal subtree in $\Gamma$ we can, and do, choose lifts of
vertices and edge in the subtree to get a subtree in $\tree$.

\begin{definition}
Given a vertex group $G_v$ of $\gog$, the \emph{peripheral structure 
  coming from incident edge groups}, $\per_v$, is the set of distinct
coarse equivalence classes in $G_v$ of $G_v$--conjugates of the images of the edge
injections $\edgemap_e\from G_e\into G_v$ for edges $e\in\edges\Gamma$ with $\tau(e)=v$.  
\end{definition}
\begin{definition}
Given a  vertex $v$ of $T(\gog)$, the \emph{peripheral structure 
  coming from incident edge groups}, $\per_{{v}}$, is the set of distinct
coarse equivalence classes in the stabilizer subgroup $G_{{v}}$
of ${v}$
of stabilizers of incident edge groups.
\end{definition}
It is immediate from the definitions that the quotient by the
$G$--action identifies 
$(G_v,\per_v)$ and $(G_{\quot{v}},\per_{\quot{v}})$.

We are interested in graphs of groups in which the edge groups are
two-ended, hence virtually cyclic.
Commensurability of edge stabilizers defines an equivalence relation
on the edges of the Bass-Serre tree $\tree$ of such a splitting.
The equivalence classes of edges are called \emph{cylinders}.
Every cylinder is a subtree of $\tree$ \cite[Lemma~4.2]{GuiLev11}.
It follows that we get another tree $\cyl(\tree)$, called the \emph{tree of
  cylinders} of $\tree$, by taking the dual tree to the covering of
$\tree$ by cylinders.

\begin{definition}\label{def:acylindrical}
  A graph of groups with two-ended edge groups is $k$--\emph{acylindrical}
  if the cylinders of its Bass-Serre tree have  diameter at most $k$.
It is \emph{acylindrical} if there exists $k$ such that it is $k$--acylindrical. 
\end{definition}

Let $C$ be a cylinder in $\tree$ and let $\mathrm{Stab}(C)$ be the
stabilizer of $C$ in $G$. Choose an infinite order element $z$ in
$G_e$ for some edge $e\in C$.
For any element $g\in\mathrm{Stab}(C)$, $G_e$ and $G_{ge}$ are
commensurable, virtually cyclic groups. 
Since $\langle z\rangle$ is a finite index subgroup of $G_e$, there
exist non-zero $a$ and $b$ such that $gz^ag^{-1}=z^b$.
Define $\Delta(g)=\frac{a}{b}$.
This defines a homomorphism $\Delta\from
\mathrm{Stab}(C)\to \mathbb{Q}^*$, called the
\emph{modular homomorphism} of $C$, that is independent of the choice
of $z$.

\begin{definition}\label{def:unimodular}
A cylinder is called \emph{unimodular} if the image of its modular
homomorphism is in $\{-1,1\}$.

  A graph of groups with two-ended edge groups is \emph{unimodular} if
  all of its cylinders are unimodular.
\end{definition}

We also define a \emph{modulus} $\hat{\Delta}$ on pairs of
edges of a cylinder $C$:
\begin{definition}\label{def:extendedmodulus}
Let $e_0$ and $e_1$ be edges in $C$. 
Let $\langle z_0\rangle<G_{e_0}$ and $\langle z_1\rangle<G_{e_1}$ be
infinite cyclic subgroups of minimal index.
Define $\hat{\Delta}(e_0,e_1)=\frac{[\langle z_1\rangle:\langle
  z_0\rangle\cap\langle z_1\rangle]}{[\langle z_0\rangle:\langle
  z_0\rangle\cap\langle z_1\rangle]}$.
\end{definition}

It is easy to check that $\hat{\Delta}$ does not depend on the choice
of minimal index infinite cyclic subgroups, and that $\hat{\Delta}(e,ge)=|\Delta(g)|$.

\subsection{The JSJ tree of cylinders}\label{sec:jsj}
\subsubsection{Definitions}
A JSJ decomposition of a finitely presented, one-ended group $G$ over
two-ended subgroups  is a
graph of groups with two-ended edge groups that encodes all splittings
of $G$ over two-ended subgroups.
Equivalent descriptions of such decompositions appear in Dunwoody and
Sageev \cite{DunSag99}, Fujiwara and Papasoglu \cite{FujPap06}, and
Guirardel and Levitt \cite{GuiLev09}.
See also Rips and Sela \cite{RipSel97}.

Following Papasoglu \cite{Pap05}, we will give a geometric
description of JSJ decompositions.
First, we need some terminology.

A quasi-line $L$ is \emph{separating} if its complement has at least
two \emph{essential}
components, that is, components that are not contained in any finite neighborhood of $L$.
In particular, if $G$ splits over a two-ended subgroup then that
two-ended subgroup is bounded distance from a separating quasi-line.
Separating quasi-lines \emph{cross} if each travels arbitrarily deeply
into two different essential complementary components of the other.

Let $G_v$ be a vertex group in a graph of groups decomposition.
Let $\per_v$ be the peripheral structure on $G_v$ coming from incident
edge groups.
Let $\Sigma$ be a hyperbolic pair of pants.
Let $\per_{\bdry\Sigma}$ be the peripheral structure on the universal
cover $\tilde{\Sigma}$ of $\Sigma$
consisting of the coarse equivalence classes of the components of the
preimages of the boundary curves.
\begin{definition}\label{def:hanging}
We say $v$ is \emph{hanging}\footnote{After the 
  `quadratically hanging' vertex groups of Rips and Sela \cite{RipSel94}.} if $(G_v,\per_v)$ is quasi-isometric to 
$(\tilde{\Sigma},\per_{\bdry\Sigma})$. 
We say $v$ is \emph{rigid} if it is not two-ended, not hanging, and 
does not split over a two-ended subgroup relative to its incident edge 
groups.  
\end{definition}

The rigid and hanging terminology extends to vertices in $\tree(\gog)$
in the obvious way.

\begin{definition}\label{def:JSJ}
  Let $G$ be a finitely presented one-ended group that is not
  commensurable to a surface group.
A \emph{JSJ decomposition} of $G$ is a (possibly
  trivial) graph of groups
  decomposition $\gog$ with two-ended edge groups satisfying the
  following conditions:
  \begin{enumerate}[(a)]
  \item Every vertex group is either two-ended, hanging, or
    rigid.\label{item:vertextypes}
\item If $v$ is a valence one vertex with two-ended vertex group then
  the incident edge group does not surject onto $G_v$.
\item Every cylinder in the Bass-Serre tree of $\gog$ that
  contains exactly two hanging vertices also contains a rigid vertex.\label{item:maxhanging}
  \end{enumerate}
\end{definition}

This definition is equivalent to those cited above. 
The essential facts are that: 
\begin{enumerate}
\item Hanging vertices contain crossing pairs
of separating quasi-lines.
\item Every pair of crossing separating 
quasi-lines is coarsely contained in a conjugate of a hanging vertex
group.
\item A separating quasi-line that is not crossed by any other
  separating quasi-line is coarsely equivalent to a conjugate of an edge group.
\item Every edge group is coarsely equivalent to a separating
  quasi-line that is not crossed by any other separating quasi-line.\label{item:edgenocross}
\end{enumerate}
\begin{remark}
Condition (\ref{item:maxhanging}) implies that the hanging vertex 
groups are maximal hanging, which is necessary for item \ref{item:edgenocross}.
\end{remark}
\begin{remark}
 The case that a vertex group is the fundamental group of a pair of
pants and the incident edge groups glue on to the boundary curves is
called `rigid' in the usual JSJ terminology because there are no
splittings of the pair of pants group relative to the boundary
subgroups. 
Algebraically, such a vertex behaves like our rigid vertices, but
geometrically this is a hanging vertex.
\end{remark}

In general a group does not have a
unique JSJ decomposition, but rather a deformation space of JSJ decompositions
\cite{For03,GuiLev09}.
Furthermore, all JSJ decompositions are in the same deformation space,
which means any one can be transformed into any other by meas of a
finite sequence of moves of a prescribed type.
The tree of cylinders of a decomposition depends only on the
deformation space \cite[Theorem~1]{GuiLev11}, up to $G$--equivariant
isomorphism, so there is a unique JSJ tree of cylinders. 

\begin{definition}
  Let $G$ be a finitely presented one-ended group. 
The \emph{JSJ tree of cylinders} $\cyl(G)$ of $G$ is the tree of cylinders of
the Bass-Serre tree of any JSJ decomposition of $G$ over two-ended subgroups.
\end{definition}

The quotient graph of groups $\lquotient{G}{\cyl(G)}$ gives a canonical 
decomposition of $G$.
It is canonical in the sense that its Bass-Serre tree is
$G$--equivariantly isomorphic to the tree of cylinders of any JSJ
decomposition of $G$ over two-ended subgroups.
However, such a graph of cylinders is \emph{not necessarily} a JSJ
decomposition, and it does not even have two-ended edge groups, in
general. 
We return to this issue in \fullref{sec:restrictedtoc}.

\subsubsection{Quasi-isometry invariance of the JSJ tree of cylinders}\label{sec:qiinvariance}
Since quasi-isometries coarsely preserve quasi-lines, and preserve the
crossing and separating properties of quasi-lines,
the following version of quasi-isometry invariance of JSJ
decompositions follows from Papasoglu's work:
\begin{theorem}[{cf \cite[Theorem~7.1]{Pap05}}]\label{qiinvariance}
  Let $G$ and $G'$ be finitely presented one-ended groups. 
Suppose $\qi\from G\to G'$ is a quasi-isometry.
Then there is a constant $C$ such that $\qi$ induces an isomorphism
$\qi_*\from\cyl(G)\to\cyl(G')$ that preserves vertex type ---
cylinder, hanging, or rigid --- and for $v\in\verts\cyl(G)$ takes $G_v$
to within distance $C$ of $G'_{\qi_*(v)}$.
\end{theorem}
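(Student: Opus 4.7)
The plan is to reduce the theorem to Papasoglu's Theorem~7.1 together with the canonicity of the tree of cylinders within a JSJ deformation space.

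First, I would fix JSJ decompositions $\gog$ of $G$ and $\gog'$ of $G'$, with Bass-Serre trees $\tree$ and $\tree'$. The quasi-isometry $\qi$ coarsely preserves the collection of separating quasi-lines and their crossing and non-crossing relations, since these notions are manifestly quasi-isometry invariant. Invoking Papasoglu's result \cite[Theorem~7.1]{Pap05}, this implies that $\qi$ induces a correspondence at the level of JSJ data. More concretely, using the four essential facts listed after \fullref{def:JSJ}: the uncrossed separating quasi-lines (up to coarse equivalence) correspond exactly to conjugates of edge groups, and maximal families containing crossing pairs identify conjugates of hanging vertex groups. Rigid vertex groups are then recovered as the coarse intersections of the complementary regions. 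All of these characterizations are quasi-isometry invariant, so $\qi$ sends the $G$--orbit of each vertex of $\tree$ to within bounded Hausdorff distance of the $G'$--orbit of a vertex of the Bass-Serre tree $\tree''$ of \emph{some} JSJ decomposition of $G'$, preserving the trichotomy of vertex types.

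Second, I would pass to trees of cylinders. Commensurability of edge stabilizers is preserved under $\qi$ (since commensurable two-ended subgroups give coarsely equivalent quasi-lines, and conversely), so the induced bijection sends cylinders to cylinders and descends to a simplicial isomorphism $\cyl(\tree)\to\cyl(\tree'')$. Since $\tree''$ and $\tree'$ both lie in the JSJ deformation space of $G'$, the Guirardel--Levitt theorem \cite[Theorem~1]{GuiLev11} provides a canonical $G'$--equivariant isomorphism $\cyl(\tree'')\cong\cyl(\tree')=\cyl(G')$. Composing yields the desired isomorphism $\qi_*\from\cyl(G)\to\cyl(G')$, which preserves vertex type by construction: cylinder vertices come from commensurability classes, and hanging/rigid distinctions are read off from crossing data that $\qi$ already preserved.

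Finally, the uniform bound $C$ is obtained by tracking additive errors. Each $G_v$ is coarsely equivalent to a union of $G_v$--orbits of separating quasi-lines and bounded complementary pieces; $\qi$ maps each such quasi-line to within distance depending only on the quasi-isometry constants of $\qi$ of a separating quasi-line of $G'$ in $G'_{\qi_*(v)}$. Since the quotient graphs $\Gamma$ and $\Gamma'$ are finite, there are only finitely many $G$--orbits of vertices and hence only finitely many constants to combine, giving a uniform $C$. The main obstacle, and the only genuinely nontrivial point, is the first step: one must be careful that Papasoglu's correspondence, which is stated at the level of the JSJ, really does respect the specific stratification into cylinder/hanging/rigid types used to build $\cyl(G)$; this is handled by combining his theorem with facts (1)--(4) that characterize each stratum purely in terms of separating quasi-lines and their crossing pattern.
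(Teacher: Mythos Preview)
Your approach is essentially the same as the paper's: both reduce to Papasoglu's characterization of rigid and hanging vertex groups via separating quasi-lines and their crossing pattern, together with the observation that cylinders are precisely the coarse equivalence classes of uncrossed separating quasi-lines and are therefore quasi-isometry invariant. Your detour through an auxiliary JSJ tree $\tree''$ followed by Guirardel--Levitt canonicity is unnecessary (though not wrong): since $\cyl(G)$ depends only on the deformation space, one may work directly at the level of trees of cylinders from the start, as the paper does, and the bijection on cylinders comes for free from the description of cylinders as coarse equivalence classes.

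There is one genuine gap. For rigid and hanging vertices your sketch is fine, but for a cylinder vertex $c$ the group $G_c$ is the \emph{commensurator} of any edge group in that cylinder, and in general this need not be two-ended and is not coarsely equivalent to any single separating quasi-line or finite union thereof. Your last paragraph asserts that ``each $G_v$ is coarsely equivalent to a union of $G_v$--orbits of separating quasi-lines and bounded complementary pieces,'' but for cylinder stabilizers this is not obvious and is exactly the point that requires a separate argument. The paper handles this by citing Vavrichek \cite{Vav13}, whose result shows that commensurators of two-ended subgroups are coarsely preserved by quasi-isometries; you should either invoke that result or supply an independent argument for this step.
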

\begin{proof}
  Two-ended subgroups of $G$ are bounded distance from each other if
  and only if they are commensurable, so cylinders are coarse
  equivalence classes of edge and two-ended vertex stabilizers. 
These are exactly the coarse equivalence classes of separating
quasi-lines that are not crossed by other separating quasi-lines, so
quasi-isometries induce a bijection between cylinders. 
The remaining statements are from Papasoglu \cite[Theorem~7.1]{Pap05}
for the rigid and hanging vertex stabilizers and Vavrichek
\cite{Vav13} for the cylinder stabilizers.
\end{proof}
\begin{corollary}\label{qirestricted}
  If $v$ is a rigid or hanging vertex in $\cyl(G)$ then
\[\qi_v:=\pi_{\qi_*(v)}\circ\qi|_{G_v}\in\QIgp((G_v,\per_v),(G'_{\qi_*(v)},\per_{\qi_*(v)}))\]
where $\pi_{\qi_*(v)}$ takes the image of $\qi|_{G_v}$ to
$G'_{\qi_*(v)}$ by closest point projection.
\end{corollary}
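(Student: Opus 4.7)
The plan is to read this off directly from \fullref{qiinvariance} together with the standard fact that for non-elementary vertex groups of a JSJ decomposition over two-ended subgroups, the vertex group is quasi-isometrically embedded in the ambient group (so that the induced metric on $G_v\subset G$ is quasi-isometric to any word metric on $G_v$). Concretely, I would first argue that $\qi_v$ is a quasi-isometry $G_v\to G'_{\qi_*(v)}$, and then that it matches the peripheral structures; the peripheral content is where all the work lies.

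For the quasi-isometry statement, \fullref{qiinvariance} gives a constant $C$ so that $\qi(G_v)$ is contained in the $C$--neighborhood of $G'_{\qi_*(v)}$. Hence the closest-point projection $\pi_{\qi_*(v)}$ displaces each point of $\qi(G_v)$ by at most $C$, so $\qi_v$ is coarsely equal, as a map into $G'$, to the restriction $\qi|_{G_v}$. A coarse inverse is produced symmetrically: apply a coarse inverse $\inv{\qi}$ of $\qi$ and then project to $G_v$. Combined with the undistortedness of rigid and hanging vertex groups in $G$ and $G'$, these are quasi-isometries between $G_v$ and $G'_{\qi_*(v)}$ with respect to their intrinsic word metrics.

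For the peripheral structures, recall that a class in $\per_v$ is the coarse equivalence class (inside $G_v$) of the stabilizer of an edge $e$ of $\cyl(G)$ incident to $v$. Since $v$ is rigid or hanging, the other endpoint of $e$ is a cylinder vertex $c$, and $G_e=G_v\cap G_c$ is coarsely equivalent, inside $G$, to $G_c$ (which is a cylinder stabilizer). Under $\qi$, by \fullref{qiinvariance}, the vertex $c$ goes to a cylinder vertex $\qi_*(c)$ adjacent to $\qi_*(v)$ in $\cyl(G')$, and the cylinder stabilizer $G_c$ is sent within bounded distance of $G'_{\qi_*(c)}$. Projecting to $G'_{\qi_*(v)}$ and intersecting, the image of $G_e$ is coarsely equivalent inside $G'_{\qi_*(v)}$ to $G'_{\qi_*(e)}$, which is a representative of the class in $\per_{\qi_*(v)}$ corresponding to the incident edge $\qi_*(e)$. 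Applying the same argument to a coarse inverse shows the induced map on peripheral classes is a bijection.

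The main obstacle is really just bookkeeping: one must keep straight that edges of $\cyl(G)$ correspond to cylinders in the underlying JSJ tree, so that the statement about $\qi_*$ preserving edge stabilizers of $\cyl(G)$ is precisely the statement (provided by Vavrichek's theorem, as used in \fullref{qiinvariance}) that $\qi$ coarsely preserves cylinder stabilizers of the JSJ tree. Once that identification is in place, closest-point projection from a bounded neighborhood of $G'_{\qi_*(v)}$ onto $G'_{\qi_*(v)}$ supplies the rest, and no new coarse-geometric input is required beyond \fullref{qiinvariance}.
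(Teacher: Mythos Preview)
Your proposal is correct and takes the same approach as the paper---deriving the corollary directly from \fullref{qiinvariance}. The paper itself gives essentially no proof beyond the one-line Remark that closest-point projection is coarsely well-defined, so your argument is considerably more detailed than the paper's, making explicit both the undistortedness of non-elementary vertex groups and the matching of peripheral structures via cylinder stabilizers that the paper leaves entirely implicit.
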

\begin{remark}
  $\pi_{\qi_*(v)}$ is coarsely well defined since $\qi(G_v)$ is
  within distance $C$ of $G'_{\qi_*(v)}$.
\end{remark}

\subsubsection{Boundary homeomorphism invariance of the JSJ tree of cylinders}
In the case of a one-ended hyperbolic group that is not cocompact Fuchsian, Bowditch constructed a canonical JSJ splitting of the group directly from the combinatorics of the local cut points of the Gromov boundary of the group \cite{BowditchJSJ}. In this case, he proves that the JSJ tree is unique, and such a tree is thus equivariantly isomorphic to the JSJ tree of cylinders of the group. As a homeomorphism between the Gromov boundaries of two hyperbolic groups preserves the topology, we get the following: 

\begin{theorem}[{cf \cite{BowditchJSJ}}]\label{boundaryinvariance}
  Let $G$ and $G'$ be one-ended hyperbolic groups that are not cocompact Fuchsian. 
Suppose $\homeo\from \bd G\to \bd G'$ is a homeomorphism between their Gromov boundaries.
Then  $\homeo$ induces an isomorphism
$\homeo_*\from\cyl(G)\to\cyl(G')$ that preserves vertex type ---
cylinder, hanging, or rigid --- and for $v\in\verts\cyl(G)$ the
homeomorphism $\homeo$ restricts to a homeomorphism $\homeo|_{\bd
  G_v}\from \bd G_v\to \bd G'_{\homeo_*(v)}$.
\end{theorem}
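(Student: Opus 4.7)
The plan is to invoke Bowditch's construction directly, since the statement is essentially a repackaging of his main theorem in the language of the tree of cylinders. Bowditch shows that for a one-ended hyperbolic group $G$ that is not cocompact Fuchsian, there is a canonical simplicial tree $T_B(G)$ on which $G$ acts, constructed purely from the combinatorics of local cut points and cut pairs in $\bdry G$. Because the construction uses only the topology of $\bdry G$ (not any metric or geometric data), it is manifestly functorial: any homeomorphism $\homeo\from \bdry G \to \bdry G'$ induces a simplicial isomorphism $\homeo_*\from T_B(G)\to T_B(G')$. The first step is to spell this out and note that $T_B(G)$ is $G$--equivariantly isomorphic to the Bass-Serre tree of a JSJ decomposition over two-ended subgroups, and therefore, by the uniqueness of the deformation space and \cite[Theorem 1]{GuiLev11}, equivariantly isomorphic to $\cyl(G)$.

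Next, I would show that vertex type is preserved. In Bowditch's setup, the three kinds of vertices of $T_B(G)$ correspond to distinct topological features of $\bdry G$: cylinder vertices correspond to cut pairs (which become pairs of points in $\bdry G$ under the identification with edge stabilizers), hanging vertices correspond to maximal ``necklaces'' of cut pairs in $\bdry G$, and rigid vertices correspond to the remaining ``indecomposable'' pieces. Each of these characterizations is topological, so a homeomorphism of boundaries must send each type to the same type under $\homeo_*$.

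For the last clause, I would identify $\bdry G_v$ with a specific topologically defined subset of $\bdry G$ attached to the vertex $v$: the limit set of the vertex stabilizer. For hanging vertices this is a necklace; for rigid vertices it is the closure of a component of $\bdry G$ minus a suitable set of cut pairs; for two-ended vertices it is a pair of points. In each case the subset is determined by purely topological data from the vertex $v\in T_B(G)$, and the embedding $\bdry G_v\hookrightarrow\bdry G$ is the inclusion of this subset. Hence $\homeo$ restricts to a homeomorphism $\bdry G_v\to\bdry G'_{\homeo_*(v)}$.

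The main obstacle is really bookkeeping rather than any new mathematical content: one must carefully match Bowditch's ``rigid/hanging/cylinder'' terminology with our \fullref{def:hanging} and check that Bowditch's tree coincides equivariantly with $\cyl(G)$ under our conventions. Once that identification is made, the functoriality statement for $\homeo$ is immediate from the topological nature of the construction, in perfect parallel with the proof of \fullref{qiinvariance} where the corresponding role is played by the Papasoglu--Vavrichek quasi-isometry invariance results.
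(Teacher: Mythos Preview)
Your proposal is correct and matches the paper's approach exactly: the paper does not give a proof environment for this theorem but simply cites Bowditch's construction of the canonical JSJ tree from the combinatorics of local cut points in $\bdry G$, observes that this tree is equivariantly isomorphic to $\cyl(G)$, and notes that since the construction is purely topological a boundary homeomorphism induces the desired isomorphism. Your write-up is a more detailed version of that same argument, including the identification of vertex types with topological features of the boundary, which the paper leaves implicit.
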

\begin{corollary}\label{corollary:boundryhomeorestriction}
For every vertex $v\in\cyl(G)$:
  \[\homeo_v:=\homeo|_{\bdry G_v}\in\Homeo((\bdry G_v,\bdry\per_v),(\bdry G'_{\homeo_*(v)},\bdry\per'_{\homeo_*(v)}))\]
\end{corollary}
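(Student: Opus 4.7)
The proof will be an essentially immediate unpacking of \fullref{boundaryinvariance}, with the key observation that the theorem's restriction statement, applied to cylinder vertices, supplies the peripheral matching data.

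Recall that $\bdry\per_v$ is the collection of subsets $\bdry G_e \subset \bdry G_v$ for edges $e\in\edges\cyl(G)$ incident to $v$. \fullref{boundaryinvariance} directly provides that $\homeo|_{\bdry G_v}$ is a homeomorphism onto $\bdry G'_{\homeo_*(v)}$, so the only content left to verify is that this restriction bijects $\bdry\per_v$ with $\bdry\per'_{\homeo_*(v)}$.

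To prove this, I would exploit the fact that $\homeo_*$ is a simplicial isomorphism, hence induces a bijection between edges of $\cyl(G)$ incident to $v$ and edges of $\cyl(G')$ incident to $\homeo_*(v)$. Given an edge $e$ incident to $v$, let $u$ be its other endpoint, which by construction of the tree of cylinders is a cylinder vertex. Since $G$ is hyperbolic, cylinder stabilizers are two-ended; hence $G_u$ is two-ended and $G_e$ has finite index in $G_u$, so $\bdry G_e = \bdry G_u$ as a two-point subset of $\bdry G_v$. Applying \fullref{boundaryinvariance} this time to the vertex $u$ yields $\homeo(\bdry G_u) = \bdry G'_{\homeo_*(u)}$. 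Because $\homeo_*$ is simplicial, $\homeo_*(u)$ is the cylinder endpoint of the edge $\homeo_*(e)$, and the same finite-index argument in $G'$ gives $\bdry G'_{\homeo_*(u)} = \bdry G'_{\homeo_*(e)}$, the corresponding element of $\bdry\per'_{\homeo_*(v)}$.

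The bijection between edges then promotes to a bijection between $\bdry\per_v$ and $\bdry\per'_{\homeo_*(v)}$ intertwining $\homeo|_{\bdry G_v}$, which is exactly the required statement. There is no real obstacle: the argument only uses that the tree isomorphism is simplicial and that the theorem's vertex-restriction statement includes cylinder vertices, whose boundaries are precisely the peripheral two-point sets in $\bdry G_v$.
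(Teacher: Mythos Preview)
Your argument is correct and is exactly the natural unpacking of the corollary; the paper itself gives no proof, treating the statement as immediate from \fullref{boundaryinvariance}. One small remark: as written your argument tacitly assumes $v$ is non-cylindrical (so that the other endpoint $u$ is the cylinder vertex), but the cylindrical case is trivial since then $\bdry G_v$ is a pair of points and every element of $\bdry\per_v$ equals $\bdry G_v$ itself.
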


\subsubsection{Improved invariants from restrictions on the JSJ tree of cylinders}\label{sec:restrictedtoc}
The JSJ tree of cylinders $\cyl(G)$ of a finitely presented one-ended
group $G$ suffices for the definition of the basic quasi-isometry invariants of \fullref{sec:structureinvariants}.
These are far from complete invariants, however.

We can refine the invariants in restricted classes of groups. 
For instance, if the edges of $\cyl(G)$ have two-ended stabilizers in
$G$ then we can define stretch factors as in
\fullref{sec:stretchfactors}.

When the cylinder stabilizers are two-ended then the full power of
\fullref{sec:vertexconstraints} can be brought to bear. 
In this case the graph of cylinders is a canonical JSJ decomposition
of $G$ over two-ended subgroups.  
This is the case of chief interest for this paper, and this is always the case if $G$ is hyperbolic.

If the cylindrical vertices of $\cyl(G)$ have two-ended stabilizers 
then they are all finite valence in $\cyl(G)$.
Furthermore, if $\gog$ is a JSJ decomposition of $G$ over two-ended subgroups
and $v\in\tree(\gog)$ is a vertex whose stabilizer is rigid or hanging
then $v$ belongs to more than one cylinder, so $\cyl(G)$ has a vertex
corresponding to $v$ with the same stabilizer subgroup in $G$.
Thus,  $\cyl(G)$ is bipartite, with one part, $V_C$, consisting of
finite valence cylindrical vertices, one for each cylinder of
$\tree(\gog)$, and  the other part consisting of the sets of vertices
$V_H$ and $V_R$, which are all of infinite valence, corresponding to hanging and rigid vertices of $\tree(\gog)$,
respectively.

See \cite[Proposition~5.2]{GuiLev11} for a general result about when
the tree of cylinders gives a JSJ decomposition.

\subsection{Trees of spaces}\label{sec:treesofspaces}
Let $\tree$ be an oriented simplicial tree.
For each vertex $v\in\tree$ let $X_v$ be a metric space.
For each edge $e\in\edges^+\tree$ let $X_e$ be a subspace of $X_{\initial(e)}$, and let $\attachmap_e\from X_e\to X_{\terminal(e)}$ be a  map such that for $X_{\bar{e}}:=\attachmap_e(X_e)$ there exists a map $\attachmap_{\bar{e}}\from X_{\bar{e}}\to X_{\initial(e)}$ such that $\attachmap_{\bar{e}}\circ\attachmap_e$ is bounded distance from $\mathrm{Id}_{X_e}$ and $\attachmap_e\circ\attachmap_{\bar{e}}$ is bounded distance from $\mathrm{Id}_{X_{\bar{e}}}$.

Let $X$ be the quotient of the set
\[\underset{v\in\verts\tree}{\coprod}X_v\sqcup\underset{e\in\edges^+\tree}{\coprod}\;\underset{x\in X_e}{\coprod}\{x\}\times e\]
by the identifications $x\sim (x,\initial(e))$ and $\attachmap_e(x)\sim (x,\terminal(e))$.
We call 
\[X:=X(\tree,\{X_t\}_{t\in\verts\tree\cup\edges\tree},\{\attachmap_e\}_{e\in\edges\tree})\] a \emph{tree of spaces over $\tree$}.
The $X_v$ are called \emph{vertex spaces} and the $X_e$ are called \emph{edge spaces}.
The sets $\{x\}\times e$ we call \emph{rungs}, and metrize them as unit intervals.
The maps $\attachmap_e$ are called \emph{attaching maps}.

We say $X$ has \emph{locally finite edge patterns} if for every vertex $v$, every $x\in X_v$, and every $R\geq 0$, there are finitely many  $e\in\edges\tree$ such that $X_e$ intersects $\clball[v]{R}{x}:=\{y\in X_v\mid d_{X_v}(x,y)\leq R\}$.

A \emph{$k$--chain} is a sequence of points $x_0,x'_0,x_1,\dots,x'_k$ such that $x_i$ and $x'_i$ are contained in a common vertex space or rung for all $0\leq i\leq k$ and $x'_i$ and $x_{i+1}$ are contained in a common vertex space or rung for $0\leq i<k$.
The \emph{length} of a $k$--chain is $d(x_0,x'_0)+\sum_{0< i\leq k}\left(d(x'_{i-1},x_i)+d(x_i,x'_i)\right)$, where the distance terms are interpreted in the vertex space or rung containing the pair.
The quotient pseudo-metric on $X$ defines the distance between two points to be the infinum of lengths of chains joining them.
When $x$ and $x'$ are points in vertex spaces then for a $k$--chain joining them we have that $x'_i$ and $x_{i+1}$ are the endpoints of a rung for each $0\leq i<k$, so the length of the chain is at least $k$.
The following properties follow easily from this observation.
\begin{lemma}
For a tree of spaces $X$:
  \begin{itemize}
  \item The quotient pseudo-metric is a metric.
\item A ball of radius at most 1 in a vertex space is isometrically embedded in $X$.
\item The rungs are isometrically embedded.
\end{itemize}
\end{lemma}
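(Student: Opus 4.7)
The plan rests on the observation highlighted just before the statement: for a chain joining two points that lie in vertex spaces, any pair $(x_i', x_{i+1})$ in a common rung whose endpoints lie in distinct vertex spaces contributes the full unit length of that rung. Consequently, a $k$-chain with $k \ge 1$ between vertex-space points has length at least $k$, and any chain that leaves a vertex space $X_v$ and later returns to $X_v$ must include two such full rung traversals, contributing at least $2$.

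For the first bullet I verify positivity of the quotient pseudo-distance, the only non-trivial metric axiom. For distinct $x \ne x'$, either they share a common vertex space or rung --- in which case every $0$-chain has length $d_{X_v}(x,x') > 0$ (resp.\ positive rung distance), while every $k \ge 1$ chain has length at least $1$, yielding an infimum at least $\min(d_{X_v}(x,x'),1) > 0$ --- or no single vertex space or rung contains both, in which case every chain necessarily includes at least one bridging rung pair and therefore has length at least $1$.

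For the second bullet, points $y, y'$ in a ball of radius at most $1$ in $X_v$ satisfy $d_{X_v}(y,y') \le 2$, and the $0$-chain already gives $d_X(y,y') \le d_{X_v}(y,y')$. For the reverse inequality, any candidate chain either remains inside $X_v$, in which case the triangle inequality in $X_v$ yields length at least $d_{X_v}(y,y')$, or it leaves $X_v$ and later returns, in which case the preliminary observation forces its length to be at least $2 \ge d_{X_v}(y,y')$. Taking the infimum gives $d_X(y,y') = d_{X_v}(y,y')$. The third bullet is handled by an analogous dichotomy for points $p, q$ in a rung: $0$-chains inside the rung realize $d_{\mathrm{rung}}(p,q) \le 1$, while any chain leaving the rung must fully traverse at least one rung to reach a vertex space and eventually return, contributing at least $1 \ge d_{\mathrm{rung}}(p,q)$.

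The mildly fiddly part of the argument is ruling out chains that enter the interior of a rung attached to $X_v$ without crossing to the other endpoint: such an excursion contributes at least $2\delta$ for a round-trip depth $\delta$ but makes no progress between $y$ and $y'$, so it can be excised from any candidate chain without increasing its length, reducing the analysis to the two clean cases above. I expect this bookkeeping, rather than any deep issue, to be the main obstacle, and it is entirely routine given the key length-at-least-$k$ observation for $k$-chains between vertex-space points.
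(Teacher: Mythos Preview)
Your approach matches the paper's --- both hinge on the observation that chains between vertex-space points pay at least one unit of length per full rung crossed --- and your treatment of the second bullet, including the excision of rung-stub excursions, is correct.

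Two intermediate bounds are overstated, however. For the first bullet, when no single vertex space or rung contains both $x$ and $x'$, you claim every chain has length at least $1$; this fails when $x\in X_v$ and $x'$ lies in the interior of a rung $R$ attached to $X_v$ at endpoint $a\neq x$, since the chain $x\to a\to x'$ has length $d_{X_v}(x,a)+d_R(a,x')$, which can be less than $1$. Positivity still holds --- the terminal segment of any chain to $x'$ must lie in $R$ and contribute at least $\min(d_R(a,x'),d_R(b,x'))>0$ --- but the bound you assert is too strong. For the third bullet, the phrase ``must fully traverse at least one rung to reach a vertex space'' is inaccurate: reaching a vertex space from inside a rung only costs the distance to an endpoint. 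The correct dichotomy is that a chain from $p$ to $q$ in $R$ either stays in $R$, or exits and re-enters at the same endpoint (excise the excursion), or exits at one endpoint and re-enters at the other --- in which case the excursion runs between the two adjacent vertex spaces and therefore crosses some full rung over the same tree edge, contributing at least $1\geq d_R(p,q)$. With these corrections your argument goes through.
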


\begin{lemma}\label{lemma:propergeodesictreeofspaces}
If $X$ is a tree of spaces over $\tree$ such that each vertex space is proper and geodesic, each edge space is closed and discrete, and edge patterns are locally finite in each vertex space, then $X$ is a proper geodesic space.
\end{lemma}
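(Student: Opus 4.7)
\emph{Proof plan.} The plan is to establish properness and verify that $X$ is a length space; a proper length space is geodesic by the metric Hopf--Rinow theorem. The length-space property follows immediately from the definition of the quotient metric: any $k$--chain of length $L$ can be upgraded, without increasing length, to a continuous rectifiable path by inserting a geodesic (provided by the vertex spaces) for each consecutive pair in a common vertex space and using that rungs are isometric images of unit intervals.

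The main work is properness. Fix $x \in X_v$ and $R \geq 0$; I aim to show $\clball{R}{x}$ is compact. Because each rung contributes length exactly $1$ to any chain crossing it, only vertex spaces $X_w$ with $d_\tree(v,w) \leq R$ can meet $\clball{R}{x}$. I would argue by induction on tree distance that the set of such relevant vertex spaces is finite. At the base vertex $v$, local finiteness of edge patterns produces finitely many edges $e$ with $X_e \cap \clball[v]{R}{x} \neq \emptyset$. Each such edge contributes candidate ``next-step'' vertex spaces at effective radius at most $R-1$, and iterating at most $\lfloor R \rfloor$ times produces a finite set of relevant vertices. Within each such $X_w$, the set $\clball{R}{x} \cap X_w$ is closed in $X_w$ and contained in a metric ball, hence in a compact ball of the proper space $X_w$. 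Only finitely many rungs contribute to $\clball{R}{x}$, each compact: edge spaces are closed and discrete in proper spaces, so their intersection with any bounded set is finite, and by local finiteness only finitely many edge spaces meet any given ball in a vertex space. Thus $\clball{R}{x}$ is a finite union of compact sets, hence compact.

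The main obstacle is managing the inductive step for properness, since the tree $\tree$ itself need not be locally finite. Finiteness at each radius depends on simultaneously invoking local finiteness of edge patterns in vertex spaces and the closed--discrete hypothesis on edge spaces, the latter being exactly what prevents a single ball in a vertex space from meeting a given edge space in infinitely many rung-endpoints.
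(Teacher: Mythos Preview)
Your proposal is correct. The properness argument is essentially identical to the paper's: both induct on tree distance from the base vertex, using local finiteness of edge patterns together with the closed--discrete hypothesis on edge spaces to show that only finitely many vertex spaces and rungs are reached within radius $R$, and that the intersection with each is contained in a compact set.

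For the geodesic property you take a genuinely different route. The paper argues directly: given two points it takes a sequence of approximating $k$--chains (with $k$ bounded by the distance), and uses the closed--discrete and local-finiteness hypotheses to pass to subsequences until the chain stabilizes coordinate by coordinate, yielding a chain of minimal length which is then realized as a geodesic path. Your approach instead observes that every chain can be promoted to a rectifiable path of no greater length (since vertex spaces are geodesic and rungs are intervals), so $X$ is a length space, and then invokes the metric Hopf--Rinow theorem for proper length spaces. Your route is shorter and more conceptual; the paper's route is self-contained and makes the limiting chain explicit, which is in the spirit of the discrete combinatorics used elsewhere in the section. Both are valid.
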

\begin{proof}
For properness it suffices to show a closed ball centered at a point in a vertex space is compact, since a ball centered at an interior point in a rung is contained in the union of balls of radius 1 larger centered at the endpoints of the rung. 

Take $R>0$ and a point $x\in X_v$ for some vertex $v$.
Let $N_0:=\clball[v]{R}{x}$.
Since edge patterns are locally finite, there are finitely many edges $e$ such that $X_e$ intersects $\clball[v]{R}{x}$.
For $X_e\cap\clball[v]{R}{x}\neq \emptyset$, the set $X_e\cap\clball[v]{R}{x}$ is finite, since $X_e$ is closed and discrete and $\clball[v]{R}{x}$ is compact, so $\clball[\terminal(e)]{R-1}{\attachmap_e(X_e\cap\clball[v]{R}{x})}$ is compact.

Let $N_{1,1},\dots,N_{1,k_1}$ be these finitely many compact sets.
For each $N_{1,i}$ there are finitely many $e$ such that $X_e$ intersects $N_{1,i}$. Let $N_{1,i,1},\dots,N_{1,i,k_{1,i}}$ be the finitely many closed $R-2$ balls about the sets $\attachmap_e(X_e\cap N_{1,i})$ in their respective vertex spaces. 
Continue in this way for $R$ steps. Let $N$ be the union of the finitely many $N_*$, along with the finitely many rungs connecting them. 
This is a finite union of compact sets, so it is compact, and it contains $\clball{R}{x}$, since it contains every chain of length less than or equal to $R$ starting from $x$. 
Thus, $X$ is proper. 

To see that $X$ is geodesic, first consider two points $x$ and $x'$ contained in vertex spaces. 
By definition of the metric, there exists a sequence of chains joining $x$ and $x'$ with length decreasing to $d(x,x')$. 
Since $x$ and $x'$ are in vertex spaces, the length of a $k$--chain joining them is at least $k$, so there is a subsequence of approximating chains that are all $k$--chains for some fixed $k\leq d(x,x')$.
Let $x_{0,i}:=x,x_{0,i}',\dots,x'_{k,i}:=x'$ be the $i$--th $k$--chain in the sequence. 
Set $x_0:=x$.
Now, $(x'_{0,i})$ is a bounded sequence in $X_{v_0}$. Since edge spaces are closed and discrete and edge patterns are locally finite, $(x'_{0,1})$ contains a constant subsequence.
Pass to a subsequence of chains such that $(x'_{0,i})$ is constant and define $x'_0$ to be its value. 
Since edge patterns are locally finite, $x'_0$ belongs to finitely many $X_e$.
Since attaching maps are coarsely invertible and edge spaces are closed and discrete, there are only finitely many rungs with $x'_0$ as an endpoint. 
Therefore, the sequence $(x_{1,i})$ takes only finitely many values. 
Choose one, define it to be $x_1$, and pass to the subsequence with $x_{1,i}=x_1$.
Repeat this process $k-1$ times, passing each time to a subsequence of chains with one additional constant coordinate. 
We are left with a constant sequence of chains with length decreasing to $d(x,x')$, so the length is equal to $d(x,x')$.
Each vertex space and rung are geodesic spaces, so we connect successive points in the chain by geodesics in the vertex space or rung containing them to get a path with length equal to length of the chain, which is therefore a geodesic from $x$ to $x'$. 

If $x'$ is an interior point of a rung and $x$ is in a vertex space then find geodesics from $x$ to the two endpoints of the rung and concatenate with the subsegment of the rung leading to $x'$. The shorter of these two paths is a geodesic. A similar argument works if both $x$ and $x'$ are interior points of rungs. 
\end{proof}

\subsection{Algebraic trees of spaces}\label{sec:models}
Let $\gog$ be a graph of finitely generated groups.
In this section we construct a tree of spaces over $\tree:=\tree(\gog)$ that is quasi-isometric to $G:=G(\gog)$. 
The idea is to take the vertex spaces to be Cayley graphs of the vertex stabilizers and use the edge injections of $\gog$ to define the attaching maps.
The construction is standard, but there is some bookkeeping involved that will be useful in \fullref{sec:geometricmodel}.

For each $v\in\verts\Gamma$, choose a finite generating set for $G_v$
and coset representatives $h_{(v,i)}$ for $G/G_v$.
For each  $e\in\edges\Gamma$ choose coset representatives $g_{(e,i)}$ of $G_{\initial(e)}/G_e$.

For each  $v\in\verts\Gamma$, choose a
lift $\lift{v}\in\verts\tree$.
For each edge $e\in\edges\Gamma$, choose a lift
$\lift{e}\in\edges\tree$ with
$\initial(\lift{e})=\lift{\initial(e)}$.
Define $f_e:=h_{(\initial(e),j)}g_{(e,i)}$ for $i$ and $j$
such that $f_e\bar{\lift{e}}=\lift{\bar{e}}$.
Given a maximal subtree of $\Gamma$ it is possible to choose lifts so that 
$f_e=1$ for all edges $e$ such that $e$ or $\inv{e}$ belongs to the maximal subtree.

For $t\in\verts\tree\cup\edges\tree$, let $\quot{t}\in\Gamma$ denote the image
of $t$ under the quotient
by the $G$--action.

Let $v$ be a vertex of $\tree$. 
There is a representative $h_{(\quot{v},i)}$ such that
$v=h_{(\quot{v},i)}\lift{\quot{v}}\in\tree$.
 Define $Y_v$ to be a copy
of the Cayley graph of $G_{\quot{v}}$ with respect to the chosen generating
set, which we identify with the coset $h_{(\quot{v},i)}G_{\quot{v}}$ via left
multiplication by $h_{(\quot{v},i)}$. 

Take the edge spaces to be cosets of the edge stabilizers of
$\gog$, and define attaching maps via containment.
Specifically, for an edge $e=h_{(\quot{v},i)}g_{(\quot{e},j)}\lift{\quot{e}}\in\tree$ with $\initial(e)=v$ we define
$Y_{e}:=h_{(\quot{v},i)}g_{(\quot{e},j)}G_{\quot{e}}\subset
h_{(\quot{v},i)}G_{\quot{v}}=Y_v$
with attaching map: 
\[\attachmap_e(x):=h_{(\quot{v},i)}g_{(\quot{e},j)}f_{\quot{e}}\edgemap_{\quot{e}}(g_{(\quot{e},j)}^{-1}h_{(\quot{v},i)}^{-1}x)\subset Y_{\terminal(e)}\]

Let $Y:=Y(\tree,\{Y_t\},\{\attachmap_e\})$ be the resulting tree of
spaces over $\tree$, which we call an \emph{algebraic tree of spaces}
for $\gog$.

\begin{lemma}\label{lemma:algebraictreeofspaces}
  An algebraic tree of spaces $Y$ for $\gog$ is quasi-isometric to $G(\gog)$.
\end{lemma}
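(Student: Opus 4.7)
The approach is the Milnor--Schwarz lemma. We show that $Y$ is proper and geodesic and that $G := G(\gog)$ admits a properly discontinuous, cocompact action on $Y$ by isometries; then any orbit map from $G$ to $Y$ is a quasi-isometry.

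Properness and geodesicity of $Y$ follow from \fullref{lemma:propergeodesictreeofspaces}. Each vertex space is a Cayley graph of a finitely generated group, hence proper and geodesic. Each edge space $Y_e = h_{(\quot v, i)} g_{(\quot e, j)} G_{\quot e}$ is a coset inside the Cayley graph $Y_{\initial(e)}$, hence closed and uniformly discrete. For local finiteness of edge patterns at $v$, each element of $G_{\quot v}$ lies in exactly one coset of $G_{\quot e}$ for each of the finitely many edges $\quot e$ of $\Gamma$ incident to $\quot v$, so a ball in $Y_v$, containing finitely many elements, meets only finitely many edge spaces.

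The $G$--action on $Y$ is defined by left multiplication on coset labels: for $g \in G$ and $x \in Y_v = h_{(\quot v, i)} G_{\quot v}$, set $g \cdot x := gx$. Writing $g h_{(\quot v, i)} = h_{(\quot v, i')} s$ with $s \in G_{\quot v}$ determined by $gv = h_{(\quot v, i')} \lift{\quot v}$, the map $x \mapsto gx$ sends $Y_v$ onto $Y_{gv} = h_{(\quot v, i')} G_{\quot v}$ and, under the Cayley graph identifications, corresponds to left multiplication by $s$ in $G_{\quot v}$, hence is an isometry. A parallel coset rewrite for $s g_{(\quot e, j)}$ using representatives of $G_{\quot v}/G_{\quot e}$ shows that edge spaces are permuted among themselves. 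The remaining point is equivariance of the attaching maps, $\attachmap_{ge}(gx) = g\,\attachmap_e(x)$, which is precisely what the correction factors $f_{\quot e}$ in the definition of $\attachmap_e$ were designed to guarantee. I expect this bookkeeping, unwinding the various coset representatives and the definition of $f_{\quot e}$, to be the main obstacle; after the cancellations it reduces to a tautology on $G_{\quot e}$.

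Given the isometric action, properness is automatic: any compact set meets finitely many vertex spaces and rungs, and the stabilizer of $Y_v$ is the conjugate $h_{(\quot v, i)} G_{\quot v} h_{(\quot v, i)}^{-1}$, acting freely on $Y_v$ by left multiplication on the underlying Cayley graph. Cocompactness holds because $Y/G$ has one piece for each vertex or edge of the finite graph $\Gamma$, each vertex piece being the finite quotient of $\mathrm{Cay}(G_{\quot v})$ by its own left regular action, glued by finitely many rungs. The Milnor--Schwarz lemma then gives the claimed quasi-isometry.
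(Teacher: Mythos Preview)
Your proof is correct and takes essentially the same approach as the paper: verify the hypotheses of \fullref{lemma:propergeodesictreeofspaces} to get a proper geodesic space, observe that $G(\gog)$ acts properly discontinuously and cocompactly by isometries via left multiplication, and invoke the Milnor--\v{S}varc lemma. The paper's own proof is a three-sentence sketch of exactly this; you have simply filled in more of the bookkeeping.
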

\begin{proof}
By \fullref{lemma:propergeodesictreeofspaces},  $Y$ is a proper
  geodesic metric space.
The group $G(\gog)$ acts by left multiplication, properly
  discontinuously and cocompactly by isometries on $Y$, so they are
  quasi-isometric, by the Milnor-\v{S}varc Lemma.
\end{proof}

 \subsection{Trees of maps}

\subsubsection{Trees of quasi-isometries}
\begin{proposition}\label{prop:treeofqis}
  Suppose $X$ and $X'$ are trees of spaces over $\tree$ and $\tree'$,
  respectively.
Suppose that $\iso\from\tree\to\tree'$ is an isomorphism.
Suppose there exists $\cmul\geq 1$ and $\cadd\geq 0$ 
 such that:
  \begin{itemize}
  \item  For each  $v\in\verts\tree$ there is an $(\cmul,\cadd)$--quasi-isometry $\qi_v\from X_v\to X'_{\iso(v)}$ and a quasi-isometry inverse $\bar{\qi}_v\from X'_{\chi(v)}\to X_v$.
\item For every $e\in\edges\tree$, the space $\qi_{\initial(e)}(X_e)$ is $\cadd$--coarsely equivalent to $X'_{\iso(e)}$ in $X'_{\iso(\initial(e))}$, and the space $\bar{\qi}_{\initial(e)}(X'_{\chi(e)})$ is $\cadd$--coarsely equivalent to $X_e$ in $X_{\initial(e)}$.
\item For every  $e\in\edges\tree$ and every $x\in X_e$ there exists a point $x'\in X'_{\iso(e)}$ such that $d(\qi_{\initial(e)}(x),x')\leq \cadd$ and $d(\qi_{\terminal(e)}(\attachmap_e(x)),\attachmap'_e(x'))\leq\cadd$.
\item For every  $e\in\edges\tree$ and every $x'\in X'_{\iso(e)}$ there exists a point $x\in X_e$ such that $d(\bar{\qi}_{\initial(e)}(x'),x)\leq \cadd$ and $d(\bar{\qi}_{\terminal(e)}(\attachmap'_{\iso(e)}(x')),\attachmap_e(x))\leq\cadd$.
  \end{itemize}
Then there is a quasi-isometry $\qi\from X\to X'$ with $\qi|_{X_v}=\qi_v$ for each vertex $v\in\tree$.
\end{proposition}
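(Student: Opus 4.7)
The plan is to extend the given vertex-space maps to a (possibly discontinuous) map $\qi\from X\to X'$ and verify the quasi-isometry property using the chain-metric description of distances in a tree of spaces. Set $\qi|_{X_v}:=\qi_v$ for every $v\in\verts\tree$. For each edge $e\in\edges\tree$ with $v:=\initial(e)$ and each $x\in X_e$, the third hypothesis provides a point $x'=x'(e,x)\in X'_{\iso(e)}$ with $d(\qi_v(x),x')\leq\cadd$ and $d(\qi_{\terminal(e)}(\attachmap_e(x)),\attachmap'_{\iso(e)}(x'))\leq\cadd$. Fix one such choice for each pair $(e,x)$, and extend $\qi$ to each rung by sending every point in the interior of $\{x\}\times e$ to $x'(e,x)$. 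This defines a map $\qi\from X\to X'$ with $\qi|_{X_v}=\qi_v$.

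To check that $\qi$ is coarse Lipschitz, I would argue using the chain metric. Given $y,z\in X$, take a chain from $y$ to $z$ whose length is within $\epsilon$ of $d_X(y,z)$ and that traverses the minimal tree-path between the vertex spaces of $y$ and $z$, so it crosses $n\leq d_X(y,z)+1$ rungs and involves at most $n+1$ vertex-space segments. For each consecutive pair $(p,q)$ in a vertex space $X_v$, the bound $d_{X'}(\qi(p),\qi(q))\leq\cmul\,d_{X_v}(p,q)+\cadd$ holds by the $(\cmul,\cadd)$-quasi-isometry property of $\qi_v$. For each consecutive pair $(p,q)$ in a common rung $\{x\}\times e$, the images $\qi(p)$ and $\qi(q)$ belong to $\{\qi_v(x),\,x'(e,x),\,\qi_{\terminal(e)}(\attachmap_e(x))\}$, and any two elements of this set are at distance at most $2\cadd+1$ in $X'$ by a direct computation using the third hypothesis and the rung $\{x'(e,x)\}\times\iso(e)$ of length $1$. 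Summing the two kinds of contributions and using $n\leq d_X(y,z)+1$ yields $d_{X'}(\qi(y),\qi(z))\leq\cmul'\,d_X(y,z)+\cadd'$ for constants $\cmul',\cadd'$ depending only on $\cmul$ and $\cadd$.

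Applying the same construction to $\{\bar\qi_v\}$ and the fourth hypothesis produces a coarse-Lipschitz map $\bar\qi\from X'\to X$ with $\bar\qi|_{X'_{\iso(v)}}=\bar\qi_v$. On each vertex space the composition $\bar\qi\circ\qi$ restricts to $\bar\qi_v\circ\qi_v$, which is uniformly bounded distance from $\mathrm{Id}_{X_v}$ by the first hypothesis; any point in the interior of a rung is within distance $1$ of a vertex-space endpoint, so the coarse-Lipschitz estimate just proved shows that $\bar\qi\circ\qi$ moves it by a uniformly bounded amount. Hence $\bar\qi\circ\qi$ is uniformly bounded distance from $\mathrm{Id}_X$, and symmetrically $\qi\circ\bar\qi$ is uniformly bounded distance from $\mathrm{Id}_{X'}$. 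Applying the coarse-Lipschitz bound for $\bar\qi$ to the pair $(\qi(y),\qi(z))$ then gives the reverse inequality $d_X(y,z)\leq\cmul'\,d_{X'}(\qi(y),\qi(z))+\cadd''$, and coarse surjectivity of $\qi$ follows from $\qi\circ\bar\qi$ being close to $\mathrm{Id}_{X'}$.

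The main point requiring care throughout is the uniformity of the constants: the bounds produced by the third and fourth hypotheses, the $(\cmul,\cadd)$-quasi-isometry estimates in each vertex space, and the closeness of $\bar\qi_v\circ\qi_v$ to $\mathrm{Id}_{X_v}$ must all hold with constants independent of the vertex or edge involved, which is precisely what is encoded by the single pair $(\cmul,\cadd)$ featured in the statement. This uniformity is exactly what allows the chain-metric bound to sum linearly across an arbitrarily long chain while retaining global control in terms of $d_X$.
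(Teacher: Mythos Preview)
Your argument is correct and follows essentially the same approach as the paper: both transport an almost-optimal chain in $X$ to a chain in $X'$ using the third hypothesis, bound the vertex-space contributions by the uniform $(\cmul,\cadd)$--quasi-isometry estimates and the rung contributions by a constant depending only on $\cadd$, and then run the symmetric argument with $\bar\qi$ to obtain the quasi-isometry. The only cosmetic difference is that the paper avoids extending $\qi$ to rung interiors altogether---it simply works on the union of vertex spaces, which is $1$--coarsely dense in $X$, and defines $\qi$ only there---whereas you extend $\qi$ to all of $X$ by sending rung interiors to the chosen $x'(e,x)$; either choice works and the estimates are the same.
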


\begin{definition}
  A collection of quasi-isometries $(\qi_v)$ satisfying the conditions
  given in \fullref{prop:treeofqis} is called \emph{a tree of
    quasi-isometries over $\iso$ compatible with $X$ and $X'$}. 
\end{definition}

\begin{proof}[Proof of \fullref{prop:treeofqis}]
It suffices to consider the unions of the vertex spaces, which form
coarsely dense subsets of $X$ and $X'$, and define $\qi$ by
$\qi|_{X_v}:=\qi_v$. 
Define a coarse inverse $\bar{\qi}$ to $\qi$ by $\bar{\qi}|_{X'_{\iso(v)}}:=\bar{\qi}_v$.

Suppose $x$ and $x'$ are points in vertex spaces of $X$, and let $x_0,x_0',x_1,\dots,x'_k$ be a $k$--chain joining them with length at most $d(x,x')+1$.
Let $X_{v_i}$ be the vertex space containing $x_i$ and $x_i'$, and let $e_i$ be the edge such that $x'_{i-1}$ and $x_i$ are endpoints of a rung over $e_i$.

Let $y_0:=\qi_{v_0}(x_0)$ and $y'_k:=\qi_{v_k}(x'_k)$.
By hypothesis, for each $x'_i$ there exists a $y'_i\in X'_{\iso(e_i)}$ such that $d(\qi_{\initial(e)}(x'_i),y'_i)\leq \cadd$ and $d(\qi_{\terminal(e)}(x_{i+1}),y_{i+1})\leq\cadd$, where $y_{i+1}:=\attachmap'_{\iso(e)}(y'_i)$.
This gives a $k$--chain $y_0,y'_0,\dots,y'_k$ joining $\qi(x)$ and  $\qi(x')$ whose length is:
{\allowdisplaybreaks
\begin{align*}
d_{X'_{\iso(v_0)}}(y_0,y'_0)&+\sum_{0< i\leq k}\big(d(y'_{i-1},y_i)+d_{X'_{\iso(v_i)}}(y_i,y'_i)\big)\\
&\leq d_{X'_{\iso(v_0)}}(\qi_{v_0}(x_0),\qi_{v_0}(x'_0))+\cadd +\\*
&\qquad\sum_{0< i\leq k}\big(1+2\cadd+d_{X'_{\iso(v_i)}}(\qi_{v_i}(x_i),\qi_{v_i}(x'_i))\big)\\
&\leq \cmul d_{X_{v_0}}(x_0,x'_0)+2\cadd+\sum_{0< i\leq k}\big(1+3\cadd+\cmul d_{X_{v_i}}(x_i,x'_i)\big)\\
&=2\cadd+k(1+3\cadd)+\cmul\sum_{0\leq i\leq k} d_{X_{v_i}}(x_i,x'_i)\\
&\leq 2\cadd +\max\{1+3\cadd,\cmul\}\big(k+\sum_{0\leq i\leq k} d_{X_{v_i}}(x_i,x'_i)\big)\\
&\leq 2\cadd +\\*
&\qquad\max\{1+3\cadd,\cmul\}\left(d_{X_{v_0}}(x_0,x'_0)+\sum_{0< i\leq k}\left(d(x'_{i-1},x_i)+ d_{X_{v_i}}(x_i,x'_i)\right)\right)\\
&\leq 2\cadd+\max\{1+3\cadd,\cmul\}(d(x,x')+1)
\end{align*}}
Therefore, $d(\qi(x),\qi(x'))\leq \max\{1+3\cadd,\cmul\}d(x,x') + (2\cadd+\max\{1+3\cadd,\cmul\})$, so $\qi$ is coarsely Lipschitz. 
A similar computation shows $\bar{\qi}$ is coarsely Lipschitz, so $\qi$ is a quasi-isometry.
\end{proof}

\begin{figure}[h]
  \centering
  \[ \xymatrix{
  X_{\initial(e)}  \ar[rrr]^{\qi_{\initial(e)}} & & & X'_{\initial(\iso(e))}  \\
 X_e \ar[d]_{\attachmap_{e}} \ar@{^{(}->}[u]  \ar[rrr]^{\qi_e} & & & X'_{\iso(e)}  \ar[d]^{\attachmap'_{\iso(e)}} \ar@{^{(}->}[u]  \\
  X_{\terminal(e)}  \ar[rrr]_{ \qi_{\terminal(e)}} & & & X'_{\terminal(\iso(e))}
  } \]
  \caption{Commuting diagram for \fullref{corollary:treeofqis}.}\label{fig:com}
\end{figure}

\begin{corollary}\label{corollary:treeofqis}
Suppose $\iso\from\tree\to\tree'$ is an isomorphism of trees and $X$
and $X'$ are trees of spaces over $\tree$ and $\tree'$, respectively.
Suppose there are $\cmul\geq 1 $ and $\cadd\geq 0$ and
$(\cmul,\cadd)$--quasi-isometries $\qi_v\from X_v\to X'_{\iso(v)}$ for
each vertex and $\qi_e\from X_e\to X'_{\iso(e)}$ for each edge such
that the diagram in \fullref{fig:com} commutes up to uniformly bounded error. 
Then $(\qi_v)$ is a tree of quasi-isometries over $\chi$ compatible
with $X$ and $X'$, so $X$ is quasi-isometric to $X'$.
\end{corollary}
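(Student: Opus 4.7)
The plan is to verify directly the four hypotheses of \fullref{prop:treeofqis} using the hypotheses of the corollary. Since each $\qi_v$ is already given to be an $(\cmul,\cadd)$--quasi-isometry, the first bullet of \fullref{prop:treeofqis} holds once we fix, for each vertex $v$, a quasi-inverse $\bar{\qi}_v\from X'_{\iso(v)}\to X_v$; all such inverses can be chosen to be $(\cmul',\cadd')$--quasi-isometries for some uniform $(\cmul',\cadd')$ depending only on $(\cmul,\cadd)$. Similarly choose quasi-inverses $\bar{\qi}_e$ for each edge quasi-isometry.

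Next I would deal with the coarse equivalence of edge spaces. Let $D$ be a uniform bound on the non-commutativity of the top and bottom squares in \fullref{fig:com}. The top square tells us that for each $x\in X_e$,
\[
d_{X'_{\iso(\initial(e))}}\bigl(\qi_{\initial(e)}(x),\qi_e(x)\bigr)\leq D,
\]
so $\qi_{\initial(e)}(X_e)$ lies in the $D$--neighborhood of $\qi_e(X_e)$. Since $\qi_e$ is coarsely surjective onto $X'_{\iso(e)}$, we conclude that $\qi_{\initial(e)}(X_e)$ is coarsely equivalent to $X'_{\iso(e)}$; an analogous argument with $\bar{\qi}_e$ gives coarse equivalence of $\bar{\qi}_{\initial(e)}(X'_{\iso(e)})$ with $X_e$. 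Thus the second bullet holds for a uniform constant.

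For the third bullet, given $x\in X_e$, take $x':=\qi_e(x)\in X'_{\iso(e)}$. The top square of \fullref{fig:com} gives
\[
d\bigl(\qi_{\initial(e)}(x),x'\bigr)\leq D,
\]
and the bottom square gives
\[
d\bigl(\qi_{\terminal(e)}(\attachmap_e(x)),\attachmap'_{\iso(e)}(x')\bigr)=d\bigl(\qi_{\terminal(e)}(\attachmap_e(x)),\attachmap'_{\iso(e)}(\qi_e(x))\bigr)\leq D.
\]
So the third bullet holds with constant $D$. The fourth bullet is the symmetric statement for the quasi-inverses; here I would verify it by first observing that the commuting diagram in \fullref{fig:com}, together with the coarse-inverse property, implies that the analogous diagram with $(\bar{\qi}_v,\bar{\qi}_e,\attachmap',\attachmap)$ in place of $(\qi_v,\qi_e,\attachmap,\attachmap')$ commutes up to a uniformly bounded error depending only on $(\cmul,\cadd,D)$. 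Then the same argument as for the third bullet applies.

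The only mildly nontrivial step is the fourth bullet: one must be careful that composing quasi-inverses of the various pieces really yields a uniformly bounded failure of commutativity, since the constants from each quasi-inverse pick up factors of $\cmul$. However, because the diagram in \fullref{fig:com} involves only finitely many maps with uniform constants, the resulting bound on the quasi-inverse diagram is also uniform. With all four bullets verified, $(\qi_v)$ is a tree of quasi-isometries over $\iso$ compatible with $X$ and $X'$, and \fullref{prop:treeofqis} then produces the desired quasi-isometry $X\to X'$.
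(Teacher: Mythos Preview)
Your proposal is correct and is precisely the argument the paper intends: the corollary is stated without proof, as an immediate consequence of \fullref{prop:treeofqis}, and verifying its four bullets from the commuting diagram is exactly what you do. Your handling of the quasi-inverse diagram for the fourth bullet is the only point requiring any care, and your remark that the uniform bounds propagate through finitely many compositions with uniform constants is enough.
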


\subsubsection{Gromov boundaries and trees of homeomorphisms}\label{sec:boundaryoftreeofspaces}
In this section, let $X$ be a proper geodesic hyperbolic tree of spaces over a tree $\tree$, such that the vertex spaces are proper geodesic hyperbolic spaces, the edge spaces are uniformly quasi-convex in $X$, and the attaching maps are uniform quasi-isometries.

For example, if $\gog$ is a finite acylindrical graph of hyperbolic groups such that the edge injections are quasi-isometric embeddings then the algebraic tree of spaces has this structure \cite{BestvinaFeighnCombinationHyperbolic,Kap01}.
 If, moreover, the edge groups are two-ended then the edge injections are automatically quasi-isometric embeddings.

Quasi-convexity of edge spaces implies quasi-convexity of vertex spaces, so the Gromov boundary of each vertex space and edge space embeds into the boundary of $X$.

Consider the space $\bdry X:=\bdry\tree\sqcup \coprod_{v\in\verts\tree}\bdry X_v$ modulo identifying $\bdry X_e\subset\bdry X_{\initial(e)}$ with $\bdry X_{\bar{e}}\subset \bdry X_{\terminal(e)}$ via  $\bdry\psi_e$ for each edge $e$. 
Let $\bdry_{Stab}$ denote the image of $\coprod_{v\in\verts\tree}\bdry X_v$ in $\bdry X$.

\begin{lemma}
  The inclusion of $\bdry X$ into the Gromov boundary of $X$ is a surjection.
\end{lemma}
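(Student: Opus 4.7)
The plan is to take any point $\xi$ in the Gromov boundary of $X$, represented by a geodesic ray $\gamma \colon [0, \infty) \to X$ with $\gamma(0) = x_0 \in X_{v_0}$, and show that $\xi$ lies in the image of the natural map $\bdry X \to \partial_\infty X$. I split cases according to whether some vertex space is visited by $\gamma$ infinitely often.

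First, suppose $\gamma^{-1}(X_v)$ is unbounded for some vertex $v$, so there are times $t_n \to \infty$ with $\gamma(t_n) \in X_v$. Uniform quasi-convexity of the edge spaces implies quasi-convexity of every vertex space in $X$, as noted in the setup, so the inclusion $X_v \hookrightarrow X$ is a quasi-isometric embedding of proper hyperbolic spaces. Because $\gamma(t_n) \to \xi$ in $\partial_\infty X$, the Gromov products $(\gamma(t_n) \mid \gamma(t_m))_{x_0}$ tend to infinity in $X$; since quasi-isometric embeddings of hyperbolic spaces coarsely preserve Gromov products, the same holds inside $X_v$. Therefore $(\gamma(t_n))$ defines a point $\eta \in \bdry X_v$ whose image in $\partial_\infty X$ is $\xi$, as required.

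In the remaining case, every preimage $\gamma^{-1}(X_v)$ is bounded, so $\gamma$ traverses infinitely many rungs and visits an infinite sequence of vertex spaces $X_{v_0}, X_{v_1}, X_{v_2}, \dots$ with consecutive pairs adjacent in $\tree$, each vertex of $\tree$ appearing only finitely many times. I claim this walk converges to a unique end $\omega \in \bdry \tree$. Since $v_0$ appears finitely often and any return to $v_0$ from a subtree beyond an adjacent edge must cross that edge again, only finitely many excursions occur at $v_0$; eventually the walk stays inside a single subtree beyond $v_0$, entered through some neighbor $w_1$. Iterating inside each nested subtree yields a geodesic ray $\rho = (v_0, w_1, w_2, \dots)$ in $\tree$ whose endpoint is $\omega$. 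The vertex containing $\gamma(t)$ is eventually arbitrarily far along $\rho$, and the natural map $\bdry \tree \to \partial_\infty X$ identifies $\omega$ with $\xi$.

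The main obstacle is the commitment argument in the second case: making precise that a walk in a tree in which every vertex is visited only finitely often must converge to a unique end of the tree. This is a purely combinatorial fact about trees, established by a clean induction using the uniqueness of paths. A secondary but necessary point is that the induced maps $\bdry X_v \to \partial_\infty X$ and $\bdry \tree \to \partial_\infty X$ commute with limits of the relevant sequences, which follows respectively from quasi-convexity of vertex spaces and from the tree-of-spaces geometry; both are routine to verify but needed to conclude that the constructed combinatorial limits $\eta$ and $\omega$ genuinely agree with $\xi$.
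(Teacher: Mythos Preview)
Your proposal is correct and follows essentially the same approach as the paper: project the ray to $\tree$ and split according to whether it lingers near some vertex space or escapes to an end of $\tree$. Your two-case split simply merges the paper's first two cases (an edge crossed infinitely often; a vertex visited infinitely often) into one, and your Gromov-product argument in case~1 is equivalent to the paper's direct use of quasi-convexity.

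The only real difference is emphasis. You work out carefully the combinatorial fact that a walk in a tree visiting each vertex finitely often must converge to a unique end; the paper leaves this implicit as ``the remaining possibility.'' Conversely, the paper gives the geometric argument---using quasi-convexity of edge spaces and a thin-triangle comparison---showing that any two rays whose projections converge to the same end $\eta\in\bdry\tree$ are asymptotic in $X$, which is exactly what identifies $\omega$ with $\xi$ in your case~2. You defer this as ``routine,'' but it is the only place in the tree-end case where the hyperbolicity of $X$ is actually used, so it carries the geometric content of that case. It is standard, but worth writing out rather than labeling routine.
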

\begin{proof}
  Pick a basepoint $p\in X$ and let $\geo\from [0,\infty]\to \bar{X}$ be a geodesic ray based at $p$. 
Consider the projection $\pi(p)$ of $p$ to $\tree$.

If $\pi\circ\geo$ crosses some edge $e$ infinitely many times then there is an unbounded sequence of times $t_1,t_2,\dots$ such that $\geo(t_i)\in X_e$. Since the edge space is quasi-convex, $\geo$ stays bounded distance from $X_e$, so converges to a point in $\bdry X_e$.

A similar argument shows that if $\pi\circ \geo$ visits a vertex $v$ infinitely many times or is eventually constant at $v$ then $\geo$ converges to a point in $\bdry X_v$.

The remaining possibility is that $\pi\circ\geo$ limits to a point of $\bdry\tree$.
We claim there is a unique asymptotic class of geodesic rays in $X$ with $\pi\circ\geo\to\eta\in\bdry\tree$.
Suppose $\geo'$ is another such ray.
Let $v_0$ be the vertex such that $p\in X_{v_0}$. 
Let $e$ be an edge in $\tree$ on the geodesic from $v_0$ to $\eta$. 
Let $e'$ be another edge on the geodesic from $v_0$ to $\eta$ that is distance at least $\delta+Q$ from $e$, where $\delta$ is the hyperbolicity constant for $X$ and $Q$ is the quasi-convexity constant for edge spaces.
Let $x$ be a point of $\geo\cap X_e$ and $y$ a point of $\geo\cap X_{e'}$.
Let $x'$ be a point of $\geo'\cap X_e$ and $y'$ a point of $\geo'\cap X_{e'}$.
Consider a geodesic triangle whose sides are $\geo|_{[p,y]}$, $\geo'|_{[p,y']}$, and a geodesic $\geo''$ from $y$ to $y'$. 
By quasi-convexity, $d(x,\geo'')>\delta$ and $d(x',\geo'')>\delta$.
Therefore, hyperbolicity implies $d(x,\geo')\leq\delta$ and $d(x',\geo)\leq\delta$.
This implies $\geo$ and $\geo'$ are asymptotic.
\end{proof}

Now we define a topology on $\bdry X$ and show it is equivalent to the Gromov topology.

\begin{definition}[domains]\label{def:domain}
 The \textit{domain} $D(\eta)$ of a point $\eta \in \bd T$ is the singleton $\{\eta\}$. The \textit{domain} $D(\xi)$ of a point $\xi$ of $\bds X$ is the subtree of $T$ spanned by those vertices $v$ of $T$ such that $\bd X_v$ contains a point in the equivalence class $\xi$. 
\end{definition}

\begin{rmk}
 It can be proved that domains of points of $\bds X$ have a uniformly bounded number of edges, see \cite{MartinBoundaries}. 
\end{rmk}

We can now define neighborhoods of points of $\partial X$, starting with points of $\bd T$.

\begin{definition}[neighborhoods of points of $\bd T$]\label{def:nbhdtreepoint}
 Let $\eta$ be a point of $\bd T$, and $U$ be a neighborhood of $\eta$ in $T \cup \bd T$. We define the neighborhood $V_U(\eta)$ as the set of points of $\bd X$ whose domain is contained in $U$.  
\end{definition}

Before moving to neighborhoods of points of $\bds X$, we need a definition. 

\begin{definition}[$\xi$-family, cone]\label{def:cone}
 Let $\xi$ be a point of $\bds X$. 
For every vertex $v$ of $D(\xi)$, choose a neighborhood $U_v$ of $\xi$ in $X_v \cup \bd X_v$. Let $\cU$ be the collection of sets $U_v, v \in D(\xi)$, which we call a $\xi$-\textit{family}. We define the set $\mbox{Cone}_\cU(\xi)$, called a \textit{cone}, as the set of points $w \in (T \cup \bd T) \setminus D(\xi)$ such that if $e$ is the last edge of the geodesic from $w$ to $D(\xi)$ in $T$, we have $\bdry\attachmap_{e}(\bd X_e) \subset U_{\terminal(e)}$.
\end{definition}

\begin{definition}[neighborhoods of points of $\bds X$]\label{def:nbhdstabpoint}
 Let $\xi$ be a point of $\bds X$, and $\cU$ be a $\xi$-family. We define the neighborhood $V_{\cU}(\xi)$ as the set of points $\eta$ of $\bd X$ such that the following holds:
 \begin{itemize}
  \item $D(\eta)\setminus D(\xi)$ is contained in $\mbox{Cone}_\cU(\xi)$,
  \item for every vertex $v$ of $D(\xi) \cap D(\eta)$, we have $\eta \in U_v$.
 \end{itemize}
\end{definition}

\begin{thm}[{\cite[Corollary~9.19]{MartinBoundaries}}]\label{thm:boundaryhomeoGromovboundary}
 With the topology described above, the inclusion of $\partial X$ into the Gromov boundary of $X$ is a homeomorphism. \qed
\end{thm}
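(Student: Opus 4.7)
The surjectivity lemma earlier in this subsection shows that the natural map $\iota\colon \bdry X \to \bdry_\infty X$ into the Gromov boundary is a surjection, so two tasks remain: establishing injectivity of $\iota$, and identifying the topology generated by the neighborhoods of \fullref{def:nbhdtreepoint} and \fullref{def:nbhdstabpoint} with the subspace topology pulled back from the Gromov boundary.

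For injectivity, take distinct $\xi, \xi' \in \bdry X$ and argue by cases on whether they lie in $\bdry\tree$ or $\bds X$. If both lie in $\bdry\tree$, representative rays project under the coarse retraction $\pi\colon X \to \tree$ to diverging rays in $\tree$, so they pass through eventually disjoint edge spaces and cannot be asymptotic in $X$ by quasi-convexity of those edge spaces. If $\xi \in \bdry\tree$ and $\xi' \in \bds X$, any ray converging to $\xi'$ remains in a uniform neighborhood of $\bigcup_{v \in D(\xi')} X_v$ (using quasi-convexity and the uniformly bounded size of $D(\xi')$), while any ray to $\xi$ exits every such neighborhood. If $\xi, \xi' \in \bds X$, after using the identifications $\bdry\psi_e$ to move representatives into common vertex spaces when possible, distinctness in $\bdry X$ yields some vertex $v$ with $\xi \neq \xi'$ in $\bdry X_v$, and the quasi-convex embedding $X_v \hookrightarrow X$ promotes this to distinctness in $\bdry_\infty X$.

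For the topology, I would verify that at each point the two systems of neighborhoods are mutually cofinal. For $\eta \in \bdry\tree$, shrinking the tree-neighborhood $U$ amounts to requiring geodesic rays from a fixed basepoint $p$ to cross a prescribed edge of $\tree$ on the way out, which by $\delta$-thinness in $X$ and quasi-convexity of the corresponding edge space matches a Gromov shadow. For $\xi \in \bds X$, the family $\cU = \{U_v\}_{v \in D(\xi)}$ prescribes the direction of approach to $\xi$ inside each $X_v$, and the cone condition $\bdry\attachmap_e(\bdry X_e) \subset U_{\terminal(e)}$ propagates this control to rays whose domains extend beyond $D(\xi)$. Hyperbolicity of $X$ combined with quasi-convexity of the vertex and edge spaces translates these conditions into largeness of the Gromov product $(\xi \mid \cdot)_p$.

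The main obstacle is compatibility at points $\xi \in \bds X$ whose domain $D(\xi)$ is a nontrivial subtree of $\tree$, since such $\xi$ is simultaneously a boundary point of many vertex spaces and a sequence may accumulate on $\xi$ by passing through several of them. Showing that the $V_\cU(\xi)$ form a genuine neighborhood basis equivalent to the Gromov one rests on the uniform bound on the number of edges of $D(\xi)$ noted after \fullref{def:domain}, together with a careful check that local neighborhood data on adjacent vertex spaces transports consistently through the quasi-isometric attaching maps $\attachmap_e$; this last verification is what the cited \cite[Corollary~9.19]{MartinBoundaries} carries out in detail.
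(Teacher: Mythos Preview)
The paper does not prove this theorem: it is stated with a bare \qed{} and attributed to \cite[Corollary~9.19]{MartinBoundaries}, so there is no argument in the paper to compare against. Your proposal is a reasonable outline of how such a result is established, and you yourself acknowledge in the final paragraph that the substantive verification is deferred to the cited reference.

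That said, as a self-contained proof your proposal has genuine gaps beyond the one you flag. In the injectivity case $\xi,\xi'\in\bds X$ you assert that distinctness in $\bdry X$ yields a vertex $v$ with $\xi\neq\xi'$ in $\bdry X_v$, but this is not automatic: if $D(\xi)\cap D(\xi')=\emptyset$ there is no such common vertex, and you need instead to separate the representatives by an edge space lying on the tree-geodesic between the two domains. In the topology discussion for $\xi\in\bds X$, the cone condition controls rays that \emph{leave} $D(\xi)$, but you have not explained why a sequence in $\bdry\tree$ converging to $\xi$ in the Gromov sense must eventually satisfy the cone condition; this requires showing that the edge through which such a ray last exits $D(\xi)$ has boundary pair Gromov-close to $\xi$, which in turn needs the uniform quasi-convexity constants and the bounded-domain remark in an essential way. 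Your sketch gestures at the right ingredients but does not assemble them into an argument, so what you have written is a proof plan rather than a proof---which is consistent with the paper's own decision to cite rather than prove.
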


\begin{definition}\label{def:treehomeo}
  Let $X$ and $X'$ be proper geodesic hyperbolic trees of quasi-convex spaces over a trees $\tree$ and $\tree'$, respectively.
A \emph{tree of boundary homeomorphisms compatible with $X$ and $X'$ over an isomorphism $\chi\from\tree\to\tree'$} consists of homeomorphisms $\rho_v\from \bdry X_v\to\bdry X'_{\chi(v)}$ for every vertex $v\in\tree$ such that for $\xi\in \bdry X_v\cap \bdry X_w$ we have $\rho_v(\xi)=\rho_w(\xi)\in \bdry X'_{\chi(v)}\cap \bdry X'_{\chi(w)}$, and for $\xi\in\bdry X'_v\cap \bdry X'_w$ we have $\rho_v^{-1}(\xi)=\rho_w^{-1}(\xi)\in\bdry X_{\chi^{-1}(v)}\cap\bdry X_{\chi^{-1}(w)}$.
\end{definition}

\begin{proposition}\label{prop:treeofhomeosimplieshomeo}
   Let $X$ and $X'$ be proper geodesic hyperbolic trees of quasi-convex spaces over trees $\tree$ and $\tree'$, respectively, with a compatible tree of boundary homeomorphisms $(\rho_v)$ over $\chi\in\Isom(\tree,\tree')$.
Then there is a homeomorphism $\rho\from\bdry X\to\bdry Y$ defined by $\rho|_{\bdry X_v}:=\rho_v$ and $\rho|_{\bdry\tree}:=\bdry\chi$.
\end{proposition}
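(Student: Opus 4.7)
The plan is to first verify that $\rho$ is a well-defined bijection, then establish continuity using the explicit basis of neighborhoods from Definitions \ref{def:nbhdtreepoint} and \ref{def:nbhdstabpoint}; continuity of $\rho^{-1}$ will then follow by the symmetric argument, since $(\rho_v^{-1})$ itself forms a tree of boundary homeomorphisms compatible with $X'$ and $X$ over $\chi^{-1}$.

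Well-definedness is essentially built into the hypotheses: the overlaps in the decomposition $\bdry X=\bdry\tree\sqcup\coprod_v\bdry X_v/{\sim}$ occur only within the $\bdry X_v$ pieces, and the compatibility clause in Definition \ref{def:treehomeo} guarantees $\rho_v=\rho_w$ on every intersection $\bdry X_v\cap\bdry X_w$, so the piecewise formula yields a well-defined function. Applying the analogous argument to $(\rho_v^{-1})$ produces a two-sided inverse, giving bijectivity.

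For continuity, I would first record the structural fact that $D(\rho(\xi))=\chi(D(\xi))$ for every $\xi\in\bdry X$: for $\eta\in\bdry\tree$ this is tautological, while for $\xi\in\bds X$ the compatibility of $(\rho_v)$ gives $\xi\in\bdry X_v\Leftrightarrow\rho(\xi)\in\bdry X'_{\chi(v)}$, so the vertices spanning $D(\xi)$ are sent by $\chi$ exactly to those spanning $D(\rho(\xi))$. Given domain equivariance, a basic neighborhood $V_{U'}(\bdry\chi(\eta))$ at a point of $\bdry\tree'$ equals $\rho(V_{\chi^{-1}(U')}(\eta))$. For a basic neighborhood $V_{\cU'}(\rho(\xi))$ at a point of $\bds X'$ with family $\cU'=\{U'_{\chi(v)}\}_{v\in D(\xi)}$, I would manufacture a $\xi$-family $\cU=\{U_v\}$ by choosing each $U_v$ to be an open neighborhood of $\xi$ in $X_v\cup\bdry X_v$ satisfying $U_v\cap\bdry X_v=\rho_v^{-1}(U'_{\chi(v)}\cap\bdry X'_{\chi(v)})$, which is possible since $\rho_v$ restricts to a homeomorphism on the boundary.

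The main obstacle is transporting the cone condition across $\chi$. The compatibility clause forces $\rho_{\terminal(e)}(\bdry\attachmap_e(\bdry X_e))=\bdry\attachmap'_{\chi(e)}(\bdry X'_{\chi(e)})$ for every edge $e$, because $\bdry X_e$ lies in $\bdry X_{\initial(e)}\cap\bdry X_{\terminal(e)}$ after the identifications and the $\rho_v$'s agree on this intersection. Combined with the choice of the $U_v$'s and the fact that $\chi$ commutes with the last-edge projection to a subtree, this gives $w\in\mbox{Cone}_{\cU}(\xi)\Leftrightarrow\chi(w)\in\mbox{Cone}_{\cU'}(\rho(\xi))$, so $\rho(V_\cU(\xi))=V_{\cU'}(\rho(\xi))$. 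The same reasoning applied to $(\rho_v^{-1})$ finishes the proof.
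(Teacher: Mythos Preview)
Your proposal is correct and follows the same approach as the paper: verify well-definedness from the compatibility hypothesis, then check continuity of $\rho$ and $\rho^{-1}$ directly against the explicit neighborhood basis of Definitions~\ref{def:nbhdtreepoint} and~\ref{def:nbhdstabpoint}, invoking Theorem~\ref{thm:boundaryhomeoGromovboundary} to identify this topology with the Gromov topology. The paper's own proof is a two-sentence sketch that asserts continuity is ``clear'' from those definitions; you have simply supplied the verification, including the domain equivariance $D(\rho(\xi))=\chi(D(\xi))$ and the transport of the cone condition, that makes it clear.
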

\begin{proof}
  We have a well defined map $\rho$ and its inverse $\rho^{-1}$ is defined by $\rho^{-1}|_{\bdry X'_v}:=\rho^{-1}_{\chi^{-1}(v)}$ and $\rho^{-1}|_{\bdry\tree}:=\bdry\chi^{-1}$.
It clear that $\rho$ and $\rho^{-1}$ are continuous with respect to
the topology given in \fullref{def:nbhdtreepoint} and
\fullref{def:nbhdstabpoint}, which is equivalent to the
standard topology by \fullref{thm:boundaryhomeoGromovboundary}.
\end{proof}

\begin{theorem}\label{thm:boundaryhomeoifftreeofboundaryhomeos}
  Let $G$ and $G'$ be one-ended hyperbolic groups with non-trivial two-ended JSJ decompositions.
Let $\modelspace$ and $\modelspace'$ be algebraic trees of spaces over the respective JSJ trees of cylinders $\tree:=\cyl(G)$ and $\tree':=\cyl(G')$.
Every homeomorphism $\rho\from\bdry \modelspace\to\bdry \modelspace'$ splits as a tree of compatible boundary homeomorphisms over the isomorphism $\rho_*\from\tree\to\tree'$.
Every tree of boundary homeomorphisms $(\rho_v)$ compatible with $\modelspace$ and $\modelspace'$ over an isomorphism $\chi\from\tree\to\tree'$ patches together to give a homeomorphism $\rho\from\bdry \modelspace\to\bdry \modelspace'$ with $\rho_*=\chi$ and $\rho|_{\bdry\modelspace_v}=\rho_v$.
\end{theorem}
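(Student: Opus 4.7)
The plan is to handle the two directions of the statement separately, with the second following directly from \fullref{prop:treeofhomeosimplieshomeo} and the first relying on \fullref{boundaryinvariance} together with \fullref{corollary:boundryhomeorestriction}.

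For the direction from a compatible tree of boundary homeomorphisms to a global boundary homeomorphism, I would first verify that the algebraic trees of spaces $\modelspace$ and $\modelspace'$ satisfy the hypotheses of \fullref{prop:treeofhomeosimplieshomeo}. Because $G$ is one-ended hyperbolic, \fullref{sec:restrictedtoc} tells us that the cylinder stabilizers of $\cyl(G)$ are two-ended, so the quotient graph of groups $\lquotient{G}{\cyl(G)}$ is a finite acylindrical graph of hyperbolic groups with two-ended edge groups. The edge injections, being inclusions of two-ended subgroups into hyperbolic groups, are quasi-isometric embeddings, and the Bestvina--Feighn / Kapovich combination theorem cited in \fullref{sec:boundaryoftreeofspaces} then ensures that $\modelspace$ is a proper geodesic hyperbolic tree of quasi-convex spaces, and likewise for $\modelspace'$. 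Invoking \fullref{prop:treeofhomeosimplieshomeo} directly produces the desired homeomorphism $\rho$ with $\rho_{*}=\chi$ and $\rho|_{\bdry\modelspace_v}=\rho_v$.

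For the converse direction, I would begin by identifying each algebraic vertex space $\modelspace_v$ with a Cayley graph of its $G$-stabilizer, as per the construction in \fullref{sec:models}, and use \fullref{lemma:algebraictreeofspaces} to identify $\bdry\modelspace$ canonically with $\bdry G$ in a way that sends each $\bdry\modelspace_v$ to $\bdry G_v$. A homeomorphism $\rho\from\bdry\modelspace\to\bdry\modelspace'$ can therefore be regarded as a homeomorphism $\bdry G\to\bdry G'$, to which \fullref{boundaryinvariance} applies to produce the tree isomorphism $\rho_*\from\cyl(G)\to\cyl(G')$, while \fullref{corollary:boundryhomeorestriction} shows that $\rho$ restricts on each vertex boundary to a homeomorphism $\rho_v:=\rho|_{\bdry\modelspace_v}\from\bdry\modelspace_v\to\bdry\modelspace'_{\rho_*(v)}$.

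The compatibility conditions of \fullref{def:treehomeo} are then essentially tautological. For any $\xi\in\bdry\modelspace_v\cap\bdry\modelspace_w$, the values $\rho_v(\xi)$ and $\rho_w(\xi)$ both coincide with the single element $\rho(\xi)\in\bdry\modelspace'$, and that element lies in $\bdry\modelspace'_{\rho_*(v)}\cap\bdry\modelspace'_{\rho_*(w)}$ because each $\rho_u$ takes values in $\bdry\modelspace'_{\rho_*(u)}$. The symmetric condition for $\rho^{-1}$ is obtained by applying the same reasoning to the homeomorphism $\rho^{-1}\from\bdry\modelspace'\to\bdry\modelspace$. The only thing that requires genuine care --- more a matter of bookkeeping than a real obstacle --- is keeping the identifications between the algebraic trees of spaces and the groups $G$ and $G'$ consistent; once this is fixed by the $G$-equivariant quasi-isometries of \fullref{lemma:algebraictreeofspaces}, the rest of the verification is immediate.
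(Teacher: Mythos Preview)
Your proposal is correct and follows essentially the same approach as the paper: invoke \fullref{prop:treeofhomeosimplieshomeo} for one direction and \fullref{boundaryinvariance} (with its corollary) for the other, after noting that two-ended edge groups are quasi-convex so that vertex boundaries embed in $\bdry\modelspace$. The paper's version is terser---it does not spell out the verification of the hypotheses of \fullref{prop:treeofhomeosimplieshomeo} or the compatibility conditions of \fullref{def:treehomeo}---but your added detail is correct and does not diverge in substance.
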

\begin{proof}
Since the edge groups are two-ended they are virtually cyclic, hence quasi-convex.
Therefore, the boundary of each vertex space embeds into the boundary of its tree of spaces. 
 By \fullref{boundaryinvariance}, $\rho$ induces an isomorphism $\rho_*\from\tree\to\tree'$ and $\rho(\bdry\modelspace_v)=\bdry\modelspace'_{\rho_*(v)}$. 
Since these spaces are embedded, $\rho|_{\bdry\modelspace_v}\in\Homeo((\bdry\modelspace_v,\bdry\per_v),(\bdry\modelspace'_{\rho_*(v)},\bdry\per'_{\rho_*(v)}))$, so $\rho$ splits as a tree of boundary homeomorphisms over $\rho_*$.

The converse is \fullref{prop:treeofhomeosimplieshomeo}.
\end{proof}

\section{Distinguishing orbits: Basic examples}\label{sec:obstructions}
Given a group $G$, a common strategy in understanding groups
quasi-isometric to $G$ is to first understand self-quasi-isometries of
$G$. 
We explained in the previous section that $\QIgp(G)$ acts on
$\cyl(G)$, preserving the relative quasi-isometry types of vertices. 
A first step towards understanding $\QIgp(G)$ is to understand its
action on $\cyl(G)$.
In particular, we would like to determine the $\QIgp(G)$--orbits of vertices in
$\cyl(G)$. 
We know that there are finitely many $G$--orbits of vertices in $\cyl(G)$,
and that $\QIgp(G)$--orbits are unions of $G$--orbits, so the problem
reduces to distinguishing $G$--orbits that are not contained in a
common $\QIgp(G)$--orbit. 

In this section we show on some particular examples how one may distinguish $G$--orbits.
These examples motivate the technical machinery of the following
sections, in which we combine and iterate the considerations presented
here. 

Our examples are hyperbolic, so, as usual, we consider the cases of
quasi-isometry and boundary homeomorphism  simultaneously by letting $\starmap(G)$ mean either
$\QIgp(G)$ or $\Homeo(\bdry G)$, as appropriate.

In general our techniques reduce questions about graphs of groups to
questions about the vertex groups, so our results are strongest, and
the examples are most illuminating, when
the vertex groups are well understood. 
First we establish such a well understood vertex group, and then we
use it to construct graph of group examples.
\medskip

Consider the free group $F_2=\langle a,b\rangle$.
Let $X$ be the Cayley graph of $F_2$ with respect to the generating
set $\{a,b\}$.
Recall that if $\mathcal{H}$ is a finite collection of two-ended
subgroups of $F_2$ then there is a corresponding peripheral structure
$\per$ on $X$ consisting of distinct coarse equivalence classes of
conjugates of elements of $\mathcal{H}$. 
Since $X$ is a tree, the coarse equivalence class of a two-ended
subgroup contains a unique bi-infinite geodesic, so we can think of
$\per$ as an $F_2$--invariant collection of bi-infinite geodesics.

We claim that $\llbracket(X,\per_i)\rrbracket = \llbracket(X,\per_j)\rrbracket$ for all $i$ and $j$ where $\per_i$
is the peripheral structure induced by $\mathcal{H}_i$ defined as
follows:
\begin{multicols}{2}
\begin{itemize}
\item $\mathcal{H}_0=\{\langle a^2b^2\inv{a}\inv{b}\rangle\}$
\item $\mathcal{H}_1=\{\langle ab^2\rangle, \langle a^2\inv{b}\rangle\}$
\item $\mathcal{H}_2=\{\langle ab\rangle, \langle a^2\inv{b}^2\rangle\}$
\item $\mathcal{H}_3= \{\langle a\rangle, \langle ab\inv{a}\inv{b}^2\rangle\}$
\item $\mathcal{H}_4=\{\langle a\rangle, \langle b\rangle, \langle ab\inv{a}\inv{b}\rangle\}$
\item $\mathcal{H}_5=\{\langle a \rangle, \langle b \rangle, \langle ab
  \rangle, \langle a\inv{b} \rangle\}$
\end{itemize}
\end{multicols}

This follows from \cite[Theorem~6.3]{CasMac11}. 
The reason is that the Whitehead graph for each of these examples is the complete graph on four
vertices.
For each $i$, there are exactly 6 bi-infinite geodesics
representing elements of $\per_i$ passing through the vertex of $X$
corresponding to the identity element of $F_2$.
Given $i$ and $j$, any choice of bijection between these six elements
of $\per_i$ and the six corresponding elements of $\per_j$ extends uniquely to an
element of $\QIgp((X,\per_i),(X,\per_j))$.
Conversely, every element of $\QIgp((X,\per_i),(X,\per_j))$ is of this
form, up to pre- and post-composition with the $F_2$--action.

When $F_2$ is a vertex group of a graph of groups and the peripheral
structure coming from incident edge groups is one of the $\per_i$
above then the vertex is rigid.

For variety, we give two more rigid examples. 
\begin{itemize}
\item $F_3=\langle a,b,c\rangle$ relative to the peripheral structure
  induced by $\langle a^2b^2c^2acb\rangle$.
The Whitehead graph for this example is  the complete
bipartite graph with parts of three vertices each.
This peripheral structure is not quasi-isometrically equivalent to the
$\per_i$ given above.
\item $\pi_1(M)$, for $M$ a closed hyperbolic 3--manifold, relative to
  the peripheral structure induced by $\langle g\rangle$, where $g$
  is a non-trivial, indivisible element of $\pi_1(M)$.
\end{itemize}

\subsection{First example: wrong type, bad neighbors, stretch factors}\label{sec:firstbasicexample}
\medskip
\begin{figure}[h]
  \centering
\labellist
\small
\pinlabel $\mathbb{Z}$ [br] at 3 101
\pinlabel $F_2$ [bl] at 79 197
\pinlabel $\mathbb{Z}$  [b] at 157 197
\pinlabel $\pi_1(M)$ [bl] at 233 197
\pinlabel $F_2$ [bl] at 79 149
\pinlabel $\mathbb{Z}$  [b] at 157 149
\pinlabel $F_2$ [bl] at 233 149
\pinlabel $F_2$ [bl] at 79 101
\pinlabel $\mathbb{Z}$  [b] at 157 101
\pinlabel $F_2$ [bl] at 233 101
\pinlabel $F_2$ [bl] at 79 52
\pinlabel $\mathbb{Z}$  [b] at 157 52
\pinlabel $F_2$ [bl] at 233 52
\pinlabel $F_2$ [bl] at 79 3
\pinlabel $\mathbb{Z}$  [b] at 157 3
\pinlabel $F_3$ [bl] at 233 3
\tiny
\pinlabel $c_1$ [tr] at 3 96
\pinlabel $r_1$ [tl] at 79 193
\pinlabel $c_2$ [t] at 157 193
\pinlabel $r_6$ [t] at 233 193
\pinlabel $r_2$ [tl] at 79 145
\pinlabel $c_3$  [t] at 157 145
\pinlabel $r_7$ [t] at 233 145
\pinlabel $r_3$ [t] at 79 97
\pinlabel $c_4$  [t] at 157 97
\pinlabel $r_8$ [t] at 233 97
\pinlabel $r_4$ [t] at 79 49
\pinlabel $c_5$  [t] at 157 49
\pinlabel $h$ [t] at 233 49
\pinlabel $r_5$ [t] at 79 1
\pinlabel $c_6$  [t] at 157 1
\pinlabel $r_9$ [t] at 233 1
\pinlabel $ab^2$ [br] at 70 182
\pinlabel $ab^2$ [br] at 70 141
\pinlabel $a^2\bar b^2$ [br] at 70 99
\pinlabel $ab^2$ [tr] at 70 56
\pinlabel $ab^2$ [tr] at 70 15
\pinlabel $a\bar{b}^2$ [bl] at 92 198
\pinlabel $a\bar{b}^2$ [bl] at 92 148
\pinlabel $ab$ [bl] at 92 100
\pinlabel $a\bar{b}^2$ [bl] at 92 51
\pinlabel $a\bar{b}^2$ [bl] at 92 3
\pinlabel $g$ [br] at 222 198
\pinlabel $a^2b^2\bar{a}\bar{b}$ [br] at 222 148
\pinlabel $a^2b^2\bar{a}\bar{b}$ [br] at 222 100
\pinlabel $ab\bar{a}\bar{b}$ [br] at 222 51
\pinlabel $a^2b^2c^2acb$ [br] at 222 3
\endlabellist
  \includegraphics{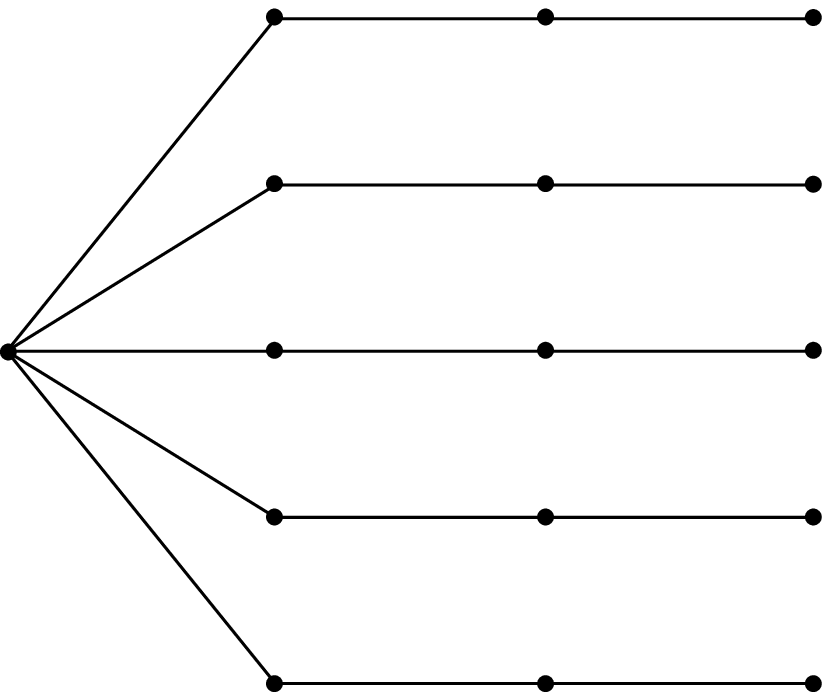}
  \caption{First example $\gog$}
  \label{fig:biggog}
\end{figure}

Consider the graph of groups $\gog$ shown in \fullref{fig:biggog}.
It is the graph of cylinders for the one-ended hyperbolic group
$G:=G(\gog)$, so $\tree:=\tree(\gog)=\cyl(G)$.
The label below a vertex is its name, and the label above a vertex is
the associated vertex group.
All of the edge groups are infinite cyclic.
Suppose that we have chosen a generator for each infinite cyclic
vertex and edge group. 
Each end of an edge is labelled to indicate the image of this
generator in the incident vertex group. 
For cyclic vertices we give a non-zero integer indicating that the
image of the edge generator is that power of the vertex generator. 
This label is omitted when it is equal to 1, which is the case for all
edges in this particular graph of groups.

For non-cyclic vertices we
indicate the element that is the image of the chosen generator of the edge group.
For economy of notation we reuse $a$ and $b$ as generators of all the
$F_2$ vertex groups.
These are distinct subgroups of $G$, so more properly we would say,
for instance, that $G_{r_1}=\langle a_1,b_1\rangle$ and the two
incident edge injections send the generators of their respective edge
groups to $a_1b_1^2$ and $a_1\inv{b}_1^2$.

In this section we describe the $\starmap(G)$--orbits of vertices in $\tree$.

\subsubsection{Wrong type}
The vertex $h$ is the only hanging vertex, so $G\lift{h}$ is
a $\starmap(G)$--orbit.

The vertices labelled $c_i$ are cylindrical, while the vertices
labelled $r_i$ are rigid, so there are no $i$ and $j$ such that
$\lift{r_i}$ and $\lift{c_j}$ are in the same $\starmap(G)$--orbit.

The vertex group of $r_6$ is not quasi-isometric or boundary
homeomorphic to any of the other vertex groups, so $G\lift{r_6}$ is
a $\starmap(G)$--orbit.

The vertex group of $r_9$ relative to the peripheral structure
induced by the incident edge is not the same relative quasi-isometry
or boundary homeomorphism type of any other vertex, so
$G\lift{r_9}$ is a $\starmap(G)$--orbit.

\subsubsection{Bad neighbors}
The cylindrical vertices have finite valence in $\tree$.
The valence of vertices in $G\lift{c_1}$ is 5, while for all other
cylindrical vertices it is 2.
Thus, $G\lift{c_1}$ is a $\starmap(G)$--orbit.

Vertices in $G\lift{c_2}$ are the only ones adjacent to vertices in
the $\starmap(G)$--orbit $G\lift{r_6}$, so $G\lift{c_2}$ is a
$\starmap(G)$--orbit.

Vertices in $G\lift{c_2}$ are adjacent only to vertices in
$G\lift{r_1}$ and $G\lift{r_6}$, but we already know that
$G\lift{r_6}$ is a $\starmap(G)$--orbit, so $G\lift{r_1}$ is a
$\starmap(G)$--orbit as well. 

By the same sort of argument, we quickly conclude that
$G\lift{c_5}$, $G\lift{r_4}$, $G\lift{c_6}$, and $G\lift{r_5}$ are distinct
$\starmap(G)$--orbits.

Such arguments based on distinguishing vertices by their neighbors are
the contents of \fullref{sec:structureinvariants}.

\subsubsection{Stretch factors}
It remains to differentiate the second and third branches of $\gog$.
This is more subtle, as the only difference is at vertices  $r_2$ and
$r_3$, which do have the same relative
quasi-isometry type.

Notices that the generator of $G_{c_3}$ is identified with an element
of word length 6 in $G_{r_7}$, and length 3 in $G_{r_2}$,
whereas the generator of $G_{c_4}$ is identified with elements of word
lengths 6 and 2. 
It is not at all obvious that these lengths are preserved by
quasi-isometries. 
Nevertheless, we will show that in this particular example $\lift{c_3}$ and
$\lift{c_4}$ can be distinguished up to quasi-isometry by the fact that the
ratios $\frac{6}{3}$ and $\frac{6}{2}$ are not equal. 
These ratios correspond to an invariant called the `stretch factor',
which is developed in \fullref{sec:stretchfactors}.

By neighbor arguments as above we can then conclude that
$\QIgp(G)$--orbits of $\tree$ are the same as $G$--orbits.

The stretch factor is only an invariant for quasi-isometries,
not boundary homeomorphism. 
In this example we cannot, in fact,
distinguish the second and third branch of $\gog$ up to boundary
homeomorphism, which is to say, $G\lift{r_2}\cup G\lift{r_3}$ is a
single $\Homeo(\bdry G)$--orbit, as are $G\lift{c_3}\cup
G\lift{c_4}$ and $G\lift{r_7}\cup G\lift{r_8}$.

\subsection{Second example: unbalanced cylinders}
In general, if $\cL$ is an element of a peripheral structure $\per$
on $\modelspace$, there is no reason to believe that there is an
element of $\starmap((\modelspace,\per))$ that preserves $\cL$ and reverses
its orientation. 

We briefly give an idea how one may construct such an example. 
Consider the genus 7 surface $\Sigma$ in \fullref{fig:orientation},
with a hyperbolic metric.

\begin{figure}[h]
  \centering
  \includegraphics[scale=.5]{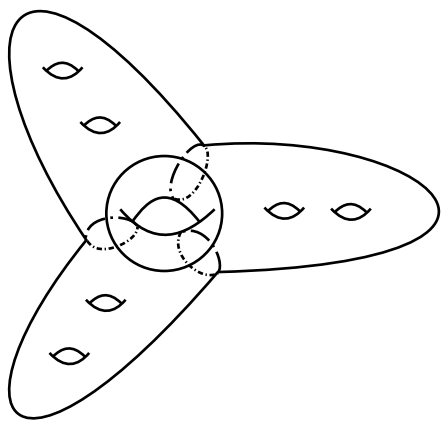}
  \caption{The surface $\Sigma$}
  \label{fig:orientation}
\end{figure}

Let $f\in\pi_1(\Sigma)$ be the element represented by the curve
running around the central hole. 
Let $\cL$ be a component of the preimage of this curve in $\widetilde{\Sigma}=\mathbb{H}^2$.
The figure shows three curves the separate $\Sigma$ into subsurfaces
$\Sigma_1$, $\Sigma_2$, and $\Sigma_3$, each of which is a genus two
surface with two boundary components. 
Suppose each $\Sigma_i$ contains a collection of curves that is
complicated enough so that all the complementary components are discs,
and different enough so that there does not exist a  crossing-preserving bijection between the components
of the preimages of the curves of $\Sigma_i$ contained in one
component of the preimage of $\Sigma_i$ in $\widetilde{\Sigma}$
 and the components
of the preimages of the curves of $\Sigma_j$ contained in one
component of the preimage of $\Sigma_j$.
Let $\per$ be the peripheral structure on $\widetilde{\Sigma}$ induced
by all of the above curves in $\Sigma$, which is to say, each element of
$\per$ is the coarse equivalence class of a component of the preimage
in $\widetilde{\Sigma}$ of one of the curves in $\Sigma$.

Then there is no element of $\map(\pi_1(\Sigma),\per)$ that reverses
$\cL$.
The reason is that $\cL$ passes through components of the preimages of
$\Sigma_1$, $\Sigma_2$, and $\Sigma_3$ in order. 
If this order were reversible by an element of
$\starmap((\pi_1(\Sigma),\per))$ we would contradict the restriction that
the pattern of curves in $\Sigma_1$, $\Sigma_2$, and $\Sigma_3$ were
chosen to be `different'. 

Now consider the graph of groups $\gog$ of
\fullref{fig:disorientation}, where $\Sigma$ and $f$ are as above, and
$g$ is a non-trivial, indecomposable element of a closed hyperbolic
3--manifold $M$.
Then $G:=G(\gog)$ is a one-ended hyperbolic group with  $\tree:=\tree(\gog)=\cyl(G)$.

\begin{figure}[h]
  \centering
\labellist
\small
\pinlabel $\pi_1(\Sigma)$ [l] at 52 107
\pinlabel $\pi_1(\Sigma)$ [br] at 122 80
\pinlabel $\pi_1(\Sigma)$ [l] at 58 43
\pinlabel $\mathbb{Z}$ [bl] at 76 77
\pinlabel $\mathbb{Z}$ [l] at 54 130
\pinlabel $\pi_1(M)$ [l] at 55 154
\tiny
\pinlabel $r_1$ [tr] at 50 105
\pinlabel $r_2$ [t] at 122 76
\pinlabel $r_3$ [br] at 55 43
\pinlabel $c_1$ [tl] at 76 73
\pinlabel $-1$ [br] at 70 65
\pinlabel $f$ [tl] at 63 55
\pinlabel $f$ [bl] at 58 94
\pinlabel $f$ [t] at 108 77
\pinlabel $g$ [l] at 55 145
\pinlabel $r_4$ [r] at 52 154
\pinlabel $c_2$ [rt] at 52 129
\endlabellist
  \includegraphics{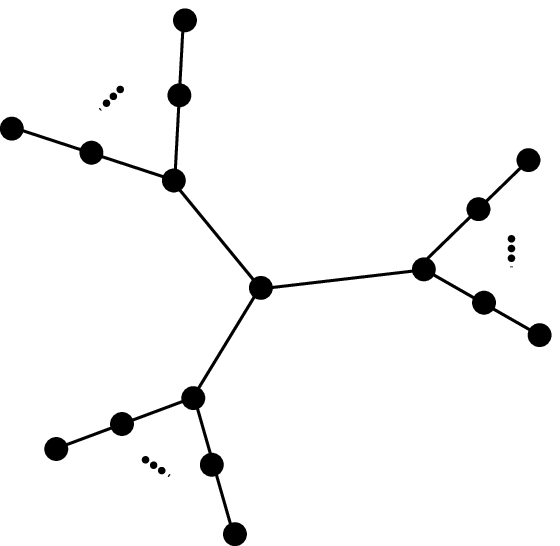}
  \caption{Second example $\gog$}
  \label{fig:disorientation}
\end{figure}

The graph of groups $\gog$ has a central cylindrical vertex $c_1$ that
attaches to three rigid vertices $r_1$, $r_2$, and $r_3$, each of
which has local group $\pi_1(\Sigma)$, with each attachment along a
copy of $f$.
For each $r_i$ there are some number of other incident edges, each
corresponding to one of the curves in $\Sigma$ described above, so
that the peripheral structure on $G_{r_i}$ induced by edge inclusions
is the peripheral structure $\per$ described above for
$\widetilde{\Sigma}$.
Each of these edges we attach to another cylindrical vertex, and then
to a rigid vertex carrying a copy of $\pi_1(M)$.

We consider a kind of `$f$--parity' by counting the number of vertices
adjacent to $\lift{c_1}$ for which the generator of $G_{\lift{c_1}}$ is identified
with $f$ minus the number of vertices adjacent to $\lift{c_1}$ for which the generator of $G_{\lift{c_1}}$ is identified
with $\inv{f}$.
Since $\lift{c_1}$ has non-zero $f$--parity, we call it an \emph{unbalanced
  cylinder}.
We claim that there is not an element of $\starmap(G)$ that reverses
the orientation of an unbalanced cylinder.
In this example, we then conclude that $G\lift{r_3}$ is a
$\starmap(G)$--orbit and $G\lift{r_1}\cup G\lift{r_2}$ is a different $\starmap(G)$--orbit.

The proof is as follows.
Since $G\lift{c_1}$ is the only $G$--orbit of cylindrical vertices in
$\tree$ of
valence 3, if there were an element of $\starmap(G)$ taking
$\lift{r_3}$ to, say, $\lift{r_1}$, then it would fix $\lift{c_1}$.
There is clearly an element of $\starmap((G_{r_3},\per_{r_3}),
(G_{r_1},\per_{r_1}))$ taking $f$ to $f$, but, since $f$ is not
reversible, if such a map extended to an element of $\starmap(G)$
it would necessarily reverse the
orientation of $G_{\lift{c_1}}$.
Since $f$ is not reversible, this would mean that every vertex in
which $f$ is identified with the generator of $G_{\lift{c_1}}$ must be sent
to a vertex in which $\inv{f}$ is identified with the generator of
$G_{\lift{c_1}}$.
This means that both $\lift{r_1}$ and $\lift{r_2}$ are sent to
$\lift{r_3}$, contradicting the fact that $\starmap(G)$ acts by
isomorphisms on $\tree$.

Dealing with issues of orientation and unbalanced cylinders is the
focus of \fullref{sec:cylinderrefinement}.

\section{Decorated trees and structure invariants}\label{sec:structureinvariants}
Let $G$ be a group and let $\tree$ be a simplicial tree of countable valence upon which $G$ acts cocompactly and
without inverting an edge. 
In this section, we explain how to associate to $\tree$ an invariant, the \emph{structure invariant}, that completely characterizes it up to decoration-preserving isomorphism.
The construction of this invariant generalizes the `degree refinement'
invariant of graph theory, which is an invariant that determines when
two finite graphs have isomorphic universal covers.

\begin{definition}\label{def:decoration}
A \emph{decoration} is a   $G$--invariant map $\decmap\from \tree \to
\ornaments$ that assigns to each vertex of $\tree$ an
\emph{ornament} $\ornament\in\ornaments$.
\end{definition}

For simplicity, in this section we decorate only vertices. In the next
section we will also decorate edges, with the condition that
$\decmap(e)=\decmap(\bar{e})$ for every edge.
Formally, this can be accomplished by subdividing each edge of $\tree$
and decorating the new vertices.

Corresponding to a decoration there is a partition of $\tree$ as
$\coprod_{\ornament\in\ornaments}\decmap^{-1}(\ornament)$.
We say that a decoration $\decmap'\from\tree\to\ornaments'$
\emph{is a refinement of} $\decmap$ if the $\decmap'$--partition is finer than
the $\decmap$--partition. 
Equivalently, a decoration $\decmap'\from\tree\to\ornaments'$ is a refinement of  $\decmap\from\tree\to\ornaments$ if there exists a surjective map $\pi\from \mbox{Im } \decmap' \to \mbox{Im }\decmap$ such that $\pi \circ \decmap' = \decmap$.
We say $\decmap'$ is a \emph{strict refinement} if the
$\decmap'$--partition is strictly finer than
the $\decmap$--partition.
A refinement that is not strict is a \emph{trivial refinement}.

In this section, we will define a \emph{structure invariant} for a decorated tree, such that we can  reconstruct the tree, up to
decoration-preserving tree automorphism,
from the data of the structure invariant. In order to do so, we want to identify orbits in $\tree$ under the action of the group 
$\Aut(\tree,\decmap):=\{\iso\in\Aut(\tree)\mid
\decmap\circ\iso=\decmap\}$, and then to say how they
fit together. In a nutshell, the idea is as follows.

Suppose $v$ and $w$ are two vertices.
If there is a $\iso\in\Aut(\tree,\decmap)$ with $\iso(v)=w$, then, of course, $\decmap(v)=\decmap(w)$. 
Additionally, $\iso$ gives a decoration-preserving bijection from the
neighbors of $v$ to the neighbors of $w$.
Thus, for each ornament $\ornament$, the number of neighbors of $v$ bearing $\ornament$ must be equal to the number of neighbors of $w$ bearing $\ornament$. 

Conversely, if $\decmap(v)=\decmap(w)$, but for some ornament $\ornament$
there are differing numbers of neighbors of $v$ bearing $\ornament$ and
neighbors of $w$ bearing $\ornament$, then there is no decoration-preserving
automorphism taking $v$ to $w$, so we ought refine the decoration to distinguish $v$ from
$w$.
We then repeat this refinement process until vertices with the same
ornament can no longer be distinguished by the ornaments of their neighbors.
This happens after finitely many steps because $\lquotient{G}{\tree}$ is
compact, see \fullref{prop:stabilization}.
\fullref{sec:refinement} formalizes this process, which we call
\emph{neighbor refinement}.

\subsection{Neighbor refinement}\label{sec:refinement}
Let $\extN:=\mathbb{N}\cup\{0,\infty\}$.
Call $\ornaments_0:=\ornaments$ and $\decmap_0:=\decmap$ the `initial set of
ornaments' and the `initial decoration', respectively.
Beginning with $i=0$,
for each $v\in\verts\tree$ define: 
\[f_{v,i}\from \ornaments_{i}\to \extN: \ornament\mapsto
\#\{w\in\decmap_{i}^{-1}(\ornament)\mid w \text{ is adjacent to }v\}\]

Define
$\ornaments_{i+1}:=\ornaments_0\times \extN^{\ornaments_i}$, and
define $\decmap_{i+1}(v):=(\decmap_0(v), f_{v,i})$.

\begin{lemma}\label{lemma:refinement}
For all $i$, the map
  $\decmap_{i+1}\from\tree\to\ornaments_{i+1}$ is a decoration that refines $\decmap_i\from\tree\to\ornaments_i$.
\end{lemma}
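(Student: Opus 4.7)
The plan is to argue by induction on $i$, simultaneously establishing both assertions: that each $\decmap_i$ is $G$-invariant (hence a decoration) and that $\decmap_{i+1}$ refines $\decmap_i$.

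For the base case $i=0$, $G$-invariance of $\decmap_0$ is given. To see that $\decmap_1$ is $G$-invariant, I would use that $G$ acts by tree automorphisms, so for any $g\in G$ left-multiplication by $g$ is a bijection between the neighbors of $v$ and the neighbors of $gv$; combined with $G$-invariance of $\decmap_0$, this bijection preserves fibers of $\decmap_0$, giving $f_{gv,0}=f_{v,0}$ and hence $\decmap_1(gv)=\decmap_1(v)$. Refinement of $\decmap_0$ by $\decmap_1$ is immediate from the definition: the projection $\ornaments_1=\ornaments_0\times\extN^{\ornaments_0}\to\ornaments_0$ onto the first factor is a surjection $\pi$ with $\pi\circ\decmap_1=\decmap_0$.

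For the inductive step, assume $\decmap_i$ is $G$-invariant and refines $\decmap_{i-1}$ via some surjection $\pi_{i-1}\from\mathrm{Im}\,\decmap_i\to\mathrm{Im}\,\decmap_{i-1}$. The $G$-invariance of $\decmap_{i+1}$ is then verbatim the argument from the base case, replacing $\decmap_0$ by $\decmap_i$ in the computation showing $f_{gv,i}=f_{v,i}$. For the refinement statement, the first coordinate of $\decmap_{i+1}$ is $\decmap_0$, so it suffices to show that $f_{v,i}$ determines $f_{v,i-1}$ via a formula not depending on $v$. The key identity, for $\ornament\in\ornaments_{i-1}$, is
\[
f_{v,i-1}(\ornament)
=\sum_{\ornament'\in\pi_{i-1}^{-1}(\ornament)} f_{v,i}(\ornament'),
\]
obtained by partitioning the set of neighbors $w$ with $\decmap_{i-1}(w)=\ornament$ according to the value of $\decmap_i(w)$, using that $\decmap_{i-1}=\pi_{i-1}\circ\decmap_i$. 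From this formula I can define the desired surjection $\mathrm{Im}\,\decmap_{i+1}\to\mathrm{Im}\,\decmap_i$ sending $(\decmap_0(v),f_{v,i})$ to $(\decmap_0(v),f_{v,i-1})$, verifying refinement.

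The only mild technical point is that sums in $\extN$ with possibly infinitely many nonzero summands (since $\tree$ has countable, hence possibly infinite, valence) must be interpreted in $\extN$, but the formula above still makes unambiguous sense there because $f_{v,i}(\ornament')$ and $f_{v,i-1}(\ornament)$ are both valid elements of $\extN$ and the identity is simply counting. I do not foresee a substantial obstacle; the main care needed is to set up the induction so that the refinement map $\pi_{i-1}$ from the previous step is available when computing the refinement at step $i$.
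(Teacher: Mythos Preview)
Your proposal is correct and follows essentially the same approach as the paper: both argue by induction on $i$, establish $G$-invariance of $\decmap_{i+1}$ from $G$-invariance of $\decmap_i$ via the action by tree automorphisms, obtain the base refinement from projection to the first coordinate, and prove the inductive refinement step via the identity $f_{v,i-1}(\ornament)=\sum_{\ornament'\in\pi_{i-1}^{-1}(\ornament)} f_{v,i}(\ornament')$. The paper phrases this last identity as $N(v,i-1,\ornament')=\sum_{\ornament\in\decmap_i\circ\decmap_{i-1}^{-1}(\ornament')}N(v,i,\ornament)$, which is the same formula in different notation.
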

\begin{proof}
  Let $v$ be a vertex, and let $g\in G$. 
Suppose $\decmap_i$ is
  $G$--invariant.
Then,
$\decmap_{i+1}(gv)=(\decmap_i(gv),f_{gv,i})=(\decmap_i(v),f_{v,i})=\decmap_{i+1}(v)$.
Since $\decmap_0$ is $G$--invariant, all the $\decmap_i$ are
decorations by induction.

For each $i$, let $N(v,i,\ornament)$ denote the number of neighbors of $v$ in
$\decmap_i^{-1}(\ornament)$.

Clearly, $\decmap_1$ refines $\decmap_0$, since $\decmap_0$ is the
composition of $\decmap_1$ with projection to the first coordinate of
the image. 
Suppose that $\decmap_i$ refines $\decmap_{i-1}$.
Then for every $\ornament'\in\ornaments_{i-1}$, we have $N(v,i-1,\ornament')=\sum_{\ornament\in\decmap_i\circ\decmap_{i-1}^{-1}(\ornament')}N(v,i,\ornament)$.

If $\decmap_{i+1}(v)=\decmap_{i+1}(w)$ then
$\decmap_0(v)=\decmap_0(w)$ and $N(v,i,\ornament)=N(w,i,\ornament)$ for each $\ornament\in\ornaments_i$.
Thus, for all $\ornament'\in\ornaments_{i-1}$, \[N(v,i-1,\ornament')=\sum_{\ornament\in\decmap_i\circ\decmap_{i-1}^{-1}(\ornament')}N(v,i,\ornament)=\sum_{\ornament\in\decmap_i\circ\decmap_{i-1}^{-1}(\ornament')}N(w,i,\ornament)=N(w,i-1,\ornament'),\]
so $\decmap_i(v)=\decmap_i(w)$.
Hence $\decmap_{i+1}$ refines $\decmap_i$. 
The lemma follows by induction.
\end{proof}

\begin{proposition}\label{prop:stabilization}
  There exists an $s\geq 0$ such that $\decmap_{i+1}$ is a strict
  refinement of $\decmap_i$ for all $i+1\leq s$ and $\decmap_{i+1}$ is a trivial
  refinement of $\decmap_i$ for all $i\geq s$.
\end{proposition}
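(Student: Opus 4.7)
The plan is to exploit cocompactness of the $G$--action together with the monotone nature of the refinements. Let $P_i$ denote the partition of $\verts\tree$ whose blocks are the fibres of $\decmap_i$; by \fullref{lemma:refinement}, $P_{i+1}$ refines $P_i$. Since $\decmap_i$ is $G$--invariant, each block of $P_i$ is a union of $G$--orbits of vertices, and cocompactness of the action supplies a uniform bound $|P_i|\leq |\lquotient{G}{\verts\tree}|<\infty$.

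Observe that $|P_{i+1}|>|P_i|$ precisely when $\decmap_{i+1}$ strictly refines $\decmap_i$, while $|P_{i+1}|=|P_i|$ (combined with $P_{i+1}\preceq P_i$) forces $P_{i+1}=P_i$, i.e.\ the refinement is trivial. The sequence $(|P_i|)$ is non-decreasing and bounded, so there is a smallest index $s\geq 0$ with $|P_{s+1}|=|P_s|$. By minimality of $s$, $\decmap_{i+1}$ is a strict refinement of $\decmap_i$ for all $i+1\leq s$.

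The main content of the proof is then the propagation of triviality: if $P_{i+1}=P_i$, then $P_{i+2}=P_{i+1}$. To see this, suppose $\decmap_{i+1}(v)=\decmap_{i+1}(w)$; I must show $f_{v,i+1}=f_{w,i+1}$. Because $P_{i+1}=P_i$, for every $\ornament\in\image(\decmap_{i+1})$ there is a unique $\ornament'\in\image(\decmap_i)$ with $\decmap_{i+1}^{-1}(\ornament)=\decmap_i^{-1}(\ornament')$; concretely $\ornament'=\decmap_i(z)$ for any $z\in\decmap_{i+1}^{-1}(\ornament)$, and this is well defined precisely because the two partitions coincide. Consequently
\[f_{v,i+1}(\ornament)=f_{v,i}(\ornament')\qquad\text{and}\qquad f_{w,i+1}(\ornament)=f_{w,i}(\ornament').\]
The hypothesis $\decmap_{i+1}(v)=\decmap_{i+1}(w)$ includes the equality $f_{v,i}=f_{w,i}$, so $f_{v,i+1}(\ornament)=f_{w,i+1}(\ornament)$ for every $\ornament\in\image(\decmap_{i+1})$, and both sides are zero otherwise. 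Therefore $\decmap_{i+2}(v)=\decmap_{i+2}(w)$, and $P_{i+2}=P_{i+1}$.

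Iterating this propagation starting at $i=s$ yields that $\decmap_{i+1}$ is a trivial refinement of $\decmap_i$ for every $i\geq s$, which together with the strict-refinement range $i+1\leq s$ gives the claim. The boundedness/monotonicity half is essentially bookkeeping; the only genuine obstacle is the propagation step, and that reduces to the observation that when the partitions induced by $\decmap_i$ and $\decmap_{i+1}$ coincide, counting neighbours by $\ornaments_{i+1}$--ornaments carries the same information as counting them by $\ornaments_i$--ornaments.
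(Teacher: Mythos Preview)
Your proof is correct and follows essentially the same approach as the paper's: both use cocompactness to bound the number of parts by the number of $G$--orbits, and both prove the propagation step by observing that when $P_{i+1}=P_i$ the neighbour counts $f_{v,i+1}$ are determined by $f_{v,i}$ via the bijection between $\image\decmap_{i+1}$ and $\image\decmap_i$. The only difference is expository order (you establish boundedness first, the paper does propagation first), not mathematical content.
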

\begin{proof}
Since $\decmap_{i+1}$ refines $\decmap_i$ there is a function
$\decmap_i\circ
\decmap_{i+1}^{-1}\from\image\decmap_{i+1}\subset\ornaments_{i+1}\to\ornaments_i$.
The refinement is trivial precisely when this function is injective. 

Suppose there exists an $s$ such that $\decmap_{s+1}$ is a trivial
refinement of $\decmap_s$. 
For every
$\ornament\in\image\decmap_{s+1}\subset\ornaments_{s+1}$ we have $f_{v,s+1}(\ornament)=f_{v,s}\circ \decmap_s\circ
\decmap_{s+1}^{-1}(\ornament)$, and $f_{v,s+1}(\ornament)=0$
otherwise.
Therefore, $\decmap_{s+1}(v)=\decmap_{s+1}(w)$ implies
$f_{v,s}=f_{w,s}$, which implies $f_{v,s+1}=f_{w,s+1}$, which implies $\decmap_{s+2}(v)=\decmap_{s+2}(w)$.
Thus, once one refinement in the sequence is trivial so are all further refinements. 

To see that eventually some refinement is trivial, note that
$G$--invariance implies that for all $i$ we have a partition of
$\tree$ by
$\tree=\coprod_{\ornament\in\ornaments_i}\decmap_i^{-1}(\ornament)$ in
which each part is a union of $G$--orbits. 
A refinement is strict if and only if the new partition has strictly
more parts than the previous one. However, the number of parts is
bounded above by the number of $G$--orbits, which is finite.
\end{proof}

\begin{definition}
  The \emph{neighbor refinement} of $\decmap$ is the decoration
  $\decmap_s\from\tree\to\ornaments_s$ at which the neighbor refinement process stabilizes.
\end{definition}

\begin{proposition}\label{prop:neighborrefinementpartitionisautdecpartition}
Let $\decmap'\from\tree\to\ornaments'$ be the neighbor refinement of $\decmap\from\tree\to\ornaments$.
  The $\decmap'$--partition of $\tree$ is equal to the partition
  into $\Aut(\tree,\decmap)$--orbits and to the partition into $\Aut(\tree,\decmap')$--orbits.
\end{proposition}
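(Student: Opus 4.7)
The approach is to establish two equalities: first, $\Aut(\tree,\decmap) = \Aut(\tree,\decmap')$ as subgroups of $\Aut(\tree)$, which identifies their orbit partitions; and second, that the $\Aut(\tree,\decmap')$-orbit partition coincides with the $\decmap'$-partition of $\tree$.

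For the group equality, one inclusion is immediate: since $\decmap'$ refines $\decmap$ by \fullref{lemma:refinement}, every $\iso \in \Aut(\tree,\decmap')$ preserves the coarser $\decmap$. For the converse, I would argue by induction on $i$ that any $\iso \in \Aut(\tree,\decmap)$ preserves $\decmap_i$: given the inductive hypothesis, $\iso$ takes neighbors of $v$ bijectively to neighbors of $\iso(v)$ while preserving $\decmap_i$, hence $f_{v,i} = f_{\iso(v),i}$, and so $\decmap_{i+1}(v) = \decmap_{i+1}(\iso(v))$. Taking $i = s$ gives $\Aut(\tree,\decmap) \subseteq \Aut(\tree,\decmap')$.

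For the partition equality, $\Aut(\tree,\decmap')$-orbits are contained in $\decmap'$-classes by the very definition of $\Aut(\tree,\decmap')$. The main content is the reverse inclusion: given $v, w \in \verts\tree$ with $\decmap'(v) = \decmap'(w)$, I would construct $\iso \in \Aut(\tree,\decmap')$ with $\iso(v) = w$ by induction on the radius $k$, extending $\iso$ at each stage to a decoration-preserving simplicial isomorphism from the ball of radius $k$ about $v$ onto the ball of radius $k$ about $w$. The base case is $\iso(v) := w$. The crucial input is stabilization (\fullref{prop:stabilization}): since $\decmap_{s+1}$ is a trivial refinement of $\decmap_s = \decmap'$, whenever $\decmap'(u) = \decmap'(u')$ one has $f_{u,s} = f_{u',s}$, so $u$ and $u'$ have equally many neighbors carrying each ornament of $\ornaments_s$. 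At an inductive pair $(u, \iso(u))$ with $u$ at distance $k \geq 1$ from $v$, the unique parents of $u$ and $\iso(u)$ are already matched and share an ornament; subtracting them from the neighbor counts on each side leaves equal $\extN$-valued counts of the remaining neighbors of each ornament, and I pick any ornament-preserving bijection between these to extend $\iso$.

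The main obstacle is possible infinite valence, but this is handled by the countable-valence hypothesis: any two countable sets with equal $\extN$-valued ornament counts admit an ornament-preserving bijection, and subtracting one matched element from each side preserves equality in $\extN$ (including when the count is $\infty$). Taking the union of the partial maps over all $k$ yields the desired global automorphism in $\Aut(\tree,\decmap')$, completing the proof.
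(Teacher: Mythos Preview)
Your proof is correct and follows essentially the same approach as the paper. The only structural difference is that you first establish the group equality $\Aut(\tree,\decmap)=\Aut(\tree,\decmap')$ and then show the $\decmap'$--partition equals the $\Aut(\tree,\decmap')$--orbit partition, whereas the paper arranges the same ingredients as a cycle of three partition refinements ($\Aut(\tree,\decmap)$--orbits refine $\decmap'$--classes, $\Aut(\tree,\decmap')$--orbits refine $\Aut(\tree,\decmap)$--orbits, and $\decmap'$--classes refine $\Aut(\tree,\decmap')$--orbits); the inductive construction of $\iso$ is identical in both.
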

\begin{proof}
  The partition into $\Aut(\tree,\decmap)$--orbits is finer than the
  $\decmap'$--partition because each refinement step is
  $\Aut(\tree,\decmap)$--equivariant. 
The $\Aut(\tree,\decmap')$--orbit partition is finer than the
$\Aut(\tree,\decmap)$--orbit partition since $\decmap'$ is a
refinement of $\decmap$.
We show the $\decmap'$--partition is finer than the $\Aut(\tree,\decmap')$--orbit partition
by supposing $\decmap'(v)=\decmap'(w)$ and
producing $\iso\in\Aut(\tree,\decmap')$ with $\iso(v)=w$.

The automorphism $\iso$ is constructed inductively. Start by defining $\iso(v):=w$.
Since $\decmap'$ is stable under neighbor refinement, for every
$\ornament\in\ornaments'$ we have
$\#\link(v)\cap(\decmap')^{-1}(\ornament)=\#\link(w)\cap(\decmap')^{-1}(\ornament)$.
Extend $\iso$ by choosing any bijection between these sets. 
This extends $\iso$ to the 1--neighborhood of $v$.

Now suppose $\iso$ is defined on a subtree $\tree'$ of $\tree$ such
that for every $v'\in\verts\tree'$, either $v'$ is a leaf or $\tree'$ contains every edge
incident to $v'$. 
Let $v'$ be a leaf, and let $u$ be the vertex of $\tree'$ adjacent to $v'$.
For $\decmap'(u)$ extend $\iso$ via a bijection between
$\big(\link(v)\setminus\{u\}\big)\cap(\decmap')^{-1}(\decmap'(u))$ and
$\big(\link(\iso(v'))\setminus\{\iso(u)\}\big)\cap(\decmap')^{-1}(\decmap'(u))$.
For $\ornament\neq \decmap'(u)$ extend $\iso$ just as in the base case.
\end{proof}

\subsection{Structure invariants}\label{sec:structure}
Let $\decmap_s\from\tree\to\ornaments_s$ be a neighbor refinement of $\decmap$ as in \fullref{prop:stabilization}.
As defined, $\decmap_s$ actually encodes the history of the refinement
process, not just the structure of $\tree$. 
We define the structure invariant by forgetting this extraneous
history. 
Let $\pi_0\from\ornaments_s\to\ornaments$ be projection to the first coordinate.
Choose an ordering of $\image\decmap$ and let $\ornaments[j]$ denote the $j$--th
ornament in the image.
Similarly, for each $1\leq j\leq \#\image\decmap$, choose an ordering of
$\pi_0^{-1}(\ornaments[j])\cap\image\decmap_s$.
Order $\image\decmap_s$ lexicographically, and let $\ornaments_s[i]$ denote the
$i$--th ornament.

\begin{definition}
The \emph{structure invariant} $\struc(T,\decmap,\ornaments)$ is the
$\#\image\decmap_s\times\#\image\decmap_s$ matrix whose $j,k$--entry is the tuple consisting of the
number of vertices in $\decmap_s^{-1}(\ornaments_s[j])$ adjacent to each vertex of
$\decmap_s^{-1}(\ornaments_s[k])$, the $\ornaments$--ornament
$\pi_0(\ornaments_s[j])$ and the $\ornaments$--ornament
$\pi_0(\ornaments_s[k])$. 
The last two coordinates are called respectively the \emph{row} and \emph{column ornament} of the entry.
\end{definition}
$\struc(T,\decmap,\ornaments)$ can be seen as a block matrix, with  blocks consisting of
entries with the same row ornaments and the same column ornaments.
The structure invariant is well defined up to permuting the $\ornaments$--blocks and permuting
rows and columns within $\ornaments$--blocks, ie, up to the choice of
orderings of $\ornaments$ and the $\pi_0^{-1}(\ornaments[j])$.

\begin{proposition}\label{structureinvariant}
Let $\decmap\from\tree\to\ornaments$ be a $G$--invariant decoration of a
cocompact $G$--tree.
Let $\decmap'\from\tree'\to\ornaments$ be a $G'$--invariant decoration of a
cocompact $G'$--tree.
There exists  a decoration-preserving isomorphism $\phi\from T\to T'$ if and only if
$\struc(T,\decmap,\ornaments)=\struc(\tree'\!,\decmap'\!,\ornaments)$, up to permuting
rows and columns within $\ornaments$--blocks.
\end{proposition}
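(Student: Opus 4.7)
The plan is to treat the two directions of the biconditional separately, with the backward direction being the substantial one. For the forward direction, suppose $\phi\from\tree\to\tree'$ is a decoration-preserving isomorphism. By induction on $i$, one checks that $\decmap'_i\circ\phi=\decmap_i$ for every $i$: the refinement step $\decmap_{i+1}(v)=(\decmap_0(v),f_{v,i})$ records only adjacency counts relative to $\decmap_i$, and $\phi$ preserves both the previous decoration and adjacency, so $f_{v,i}=f_{\phi(v),i}$ and the refinement at $v$ agrees with the refinement at $\phi(v)$. In particular $\image\decmap_s=\image\decmap'_s$ inside the common ambient set $\ornaments_s$, and the adjacency counts at vertices of each refined ornament agree, so with compatible orderings one obtains identical structure invariant matrices; without compatible orderings, the matrices differ only by permutations within $\ornaments$-blocks.

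For the converse, equality of structure invariants (after a block-respecting permutation) provides a bijection $\sigma\from\image\decmap_s\to\image\decmap'_s$ with $\pi_0\circ\sigma=\pi_0$, such that for every pair $(j,k)$ the number of $\ornaments_s[k]$-decorated neighbors of any vertex in $\decmap_s^{-1}(\ornaments_s[j])$ equals the number of $\sigma(\ornaments_s[k])$-decorated neighbors of any vertex in $(\decmap'_s)^{-1}(\sigma(\ornaments_s[j]))$. Stability of the neighbor refinement (\fullref{prop:stabilization}) is what guarantees that these counts depend only on the refined ornament and not on the individual vertex; this is the crucial input. I would then construct a decoration-preserving isomorphism $\phi\from\tree\to\tree'$ by breadth-first induction, following the template of the proof of \fullref{prop:neighborrefinementpartitionisautdecpartition}. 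Pick any $v_0\in\tree$ and any $v'_0\in(\decmap'_s)^{-1}(\sigma(\decmap_s(v_0)))$, set $\phi(v_0):=v'_0$, and extend outward, maintaining the invariant that $\phi$ is defined on a subtree $\tree_0\subset\tree$ with image $\tree'_0\subset\tree'$ and $\decmap'_s\circ\phi=\sigma\circ\decmap_s$ on $\tree_0$. At a leaf $v$ of $\tree_0$ with parent $u\in\tree_0$ (or when $v=v_0$), the refined-ornament counts among all neighbors of $v$ match, via $\sigma$, those among neighbors of $\phi(v)$ in $\tree'$; after removing the already-matched pair $(u,\phi(u))$, the remaining unvisited neighbors on the two sides still match in refined-ornament count, and I pick any such bijection.

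The map $\phi$ is then defined on all of $\tree$ because $\tree$ is connected and BFS from $v_0$ exhausts it; its image is a non-empty subtree of $\tree'$ that is closed under passage to all neighbors (since each extension step adjoins every neighbor of the processed vertex), hence equals $\tree'$ by connectedness. Injectivity is automatic because at each step the new image vertices are chosen among neighbors of $\phi(v)$ lying outside $\tree'_0$, so no collisions occur. Decoration-preservation follows from $\decmap'\circ\phi=\pi_0\circ\decmap'_s\circ\phi=\pi_0\circ\sigma\circ\decmap_s=\decmap$. The main technical point, and where the hypotheses are actually used, is the count-matching at each inductive step: this is precisely the conjunction of the matrix equality with the vertex-independence of counts provided by stability of the refinement.
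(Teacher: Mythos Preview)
Your proposal is correct and follows essentially the same approach as the paper: the forward direction is dismissed as clear (you spell out the induction on refinement stages, which is fine), and the converse is handled by the same inductive construction of $\phi$ as in the proof of \fullref{prop:neighborrefinementpartitionisautdecpartition}, matching neighbors according to their stable refined ornaments. The paper's own proof is a two-sentence reference back to that construction, so your write-up simply unpacks what the paper leaves implicit.
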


In particular, with the above notations, $T$ and $T'$ must have the
same sets of ornaments for a decoration-preserving isomorphism $\phi$
to exist.

\begin{proof}
  It is clear that isomorphic  decorated trees have the
  same structure invariants, up to choosing the orderings of the ornaments.
For the converse, assume that we have reordered within $\ornaments$--blocks
so that
$\struc(T,\decmap,\ornaments)=\struc(\tree'\!,\decmap'\!,\ornaments)=\struc$.
Construct a decoration-preserving tree isomorphism exactly as in the
proof of \fullref{prop:neighborrefinementpartitionisautdecpartition}.
\end{proof}

\begin{remark}
  When $T$ is the universal cover of a finite graph $\Gamma$ and the initial
  set of ornaments is trivial then the structure invariant we have
  defined is just the well known \emph{degree refinement} of
  $\Gamma$. The lemma says that two graphs have the same degree
  refinement if and only if they have isomorphic universal covers.
A theorem of Leighton \cite{Lei82} says that such graphs
in fact have a common finite cover.
There are also decorated versions of Leighton's Theorem, eg
\cite{Neu10}. 
\end{remark}

\begin{observation}
We get a quasi-isometry invariant of a group $G$ by taking the structure 
invariant of a cocompact $\QIgp(G)$-tree with a $\QIgp(G)$--invariant decoration.  
\end{observation}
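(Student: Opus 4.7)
The plan is to verify that the construction respects the quasi-isometry equivalence class of $G$. The observation is essentially a direct combination of \fullref{qiinvariance} with \fullref{structureinvariant}, so the work consists of packaging these correctly and checking that the tree together with its decoration transports across a quasi-isometry in a way that is compatible with the two group actions. The canonical model to keep in mind is $\tree = \cyl(G)$ with $\decmap$ recording, for instance, the vertex type (cylindrical, hanging, or rigid) or the relative quasi-isometry type of the associated vertex group.

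First I would make precise the setup: a \emph{cocompact $\QIgp(G)$--tree} is a simplicial tree $\tree$ on which $G$ acts cocompactly, equipped with a homomorphism $\QIgp(G) \to \Aut(\tree)$ extending the $G$--action (via the identification of $G$ with its image in $\QIgp(G)$), and such that every $\qi \in \QIgp(G)$ acts by an automorphism. A \emph{$\QIgp(G)$--invariant decoration} is then a decoration $\decmap \from \tree \to \ornaments$ with $\decmap \circ \qi = \decmap$ for every $\qi \in \QIgp(G)$. By \fullref{qiinvariance}, when $G$ is a finitely presented one-ended group, its JSJ tree of cylinders $\cyl(G)$ is such a tree.

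Now suppose $G$ and $G'$ are quasi-isometric, with $\qi \from G \to G'$ a quasi-isometry. Conjugation by $\qi$ gives an isomorphism $\qi_{*} \from \QIgp(G) \to \QIgp(G')$. Moreover, \fullref{qiinvariance} produces an isomorphism $\qi_{*} \from \tree \to \tree'$ of the canonically associated trees that intertwines the actions: for every $\psi \in \QIgp(G)$ and every $v \in \verts\tree$, $\qi_{*}(\psi \cdot v) = \qi_{*}(\psi) \cdot \qi_{*}(v)$. Define the transported decoration $\decmap'$ on $\tree'$ by $\decmap'(v) := \decmap(\qi_{*}^{-1}(v))$. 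This is well-defined and independent of the choice of $\qi$: if $\qi''$ is a second quasi-isometry from $G$ to $G'$, then $\qi''_{*} \circ \qi_{*}^{-1}$ is induced by an element of $\QIgp(G')$, hence by the transported action by an element that leaves $\decmap'$ invariant; equivalently, $\qi_{*}^{-1} \circ \qi''_{*}$ lies in the image of $\QIgp(G)$ in $\Aut(\tree)$, so preserves $\decmap$.

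Finally, by construction $\qi_{*} \from \tree \to \tree'$ is a decoration-preserving isomorphism taking $(\tree,\decmap)$ to $(\tree',\decmap')$, both of which have image in the same ornament set $\ornaments$. Applying \fullref{structureinvariant} yields $\struc(\tree, \decmap, \ornaments) = \struc(\tree', \decmap', \ornaments)$, up to the usual permutation of rows and columns within $\ornaments$--blocks. The only real point to verify carefully is the naturality claim in the second paragraph, namely that the association $\qi \mapsto \qi_{*}$ is compatible with composition and inversion; this is immediate from the uniqueness part of \fullref{qiinvariance}, since any two candidates for $\qi_{*}$ must agree on the coarse equivalence classes of separating quasi-lines, which are in bijection with the vertices and edges of $\cyl(G)$.
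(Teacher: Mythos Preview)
Your proposal is correct and matches the paper's intent. The paper does not actually supply a proof of this observation---it merely calls it ``a simple observation'' and immediately moves on to the Behrstock--Neumann discussion and then to \fullref{main}, where it says that combining \fullref{structureinvariant} with \fullref{qiinvariance} and \fullref{qirestricted} proves the specific case of the JSJ tree of cylinders. Your write-up is a faithful unpacking of exactly that combination.

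One small point worth tightening: you define the decoration $\decmap'$ on $\tree'$ by transport along $\qi_*$, but for the statement to have content the decoration on $\tree'$ should be the one produced by the \emph{same canonical recipe} applied to $G'$, not merely the pushed-forward one. What you actually need (and what \fullref{qiinvariance} and \fullref{qirestricted} supply in the case $\tree=\cyl(G)$ with the decorations of \fullref{main}) is that $\qi_*$ carries the canonical $\decmap$ on $\tree$ to the canonical $\decmap'$ on $\tree'$; your well-definedness argument then shows this identification is independent of the chosen $\qi$. With that adjustment the argument is complete and in line with the paper's (implicit) reasoning.
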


This is a simple observation, but it does not seem to have
appeared in the literature in this generality.

Behrstock and Neumann \cite{BehNeu08, BehNeu12} have used special
cases of this type of invariant, in a different guise, to classify
fundamental groups of some families of compact irreducible
3--manifolds of zero Euler characteristic.
In both papers the tree is the Bass-Serre tree for the geometric
decomposition of such a 3--manifold along tori and Klein bottles,
which is the
higher dimensional
antecedent of the JSJ decompositions considered in this paper.

When the geometric decomposition has only Seifert fibered pieces the vertices
are decorated by the quasi-isometry type of the universal cover of the
corresponding Seifert fibered manifold. 
There are only two possible quasi-isometry types, according to whether
or not the Seifert fibered piece has boundary.
Every vertex in the Bass-Serre tree has infinite valence, so
each entry of the structure invariant is either $0$ or $\infty$. 

Behrstock and Neumann \cite{BehNeu08} state their result in terms of
`bi-similarity'\footnote{Their meaning of `similarity' is different
  than in
  this paper.}classes of bi-colored graphs.
They show that each bi-similarity class is represented by a unique
minimal graph, and that two such 3--manifolds are quasi-isometric if
and only if the bi-colored Bass-Serre tree of their geometric
decompositions have the same representative minimal graph.
Their minimal bi-colored graphs carry exactly the same
information as the structure invariant of the decorated Bass-Serre
tree.
One can construct their graph by taking the vertex set to be the stable
decoration set $\ornaments_s$ and connecting vertex $\ornaments_s[j]$ to vertex
$\ornaments_s[k]$ by an edge if and only if the $j,k$--entry of $\struc$ is
$\infty$.
The vertices of the graph are `bi-colored' by the projection
$\pi_0\from\ornaments_s\to\ornaments$.
Conversely, $\struc$ can be recovered by replacing each edge in the
graph by infinitely many edges, lifting the bi-coloring to the
universal covering tree, and calculating the structure invariant.

The second paper \cite{BehNeu12} extends their results to cases where the
decomposition involves some hyperbolic pieces.
The decorations there are more complex.

\subsection{Structure invariants for the JSJ tree of cylinders}
Combining \fullref{structureinvariant} with \fullref{qiinvariance} and
\fullref{qirestricted} proves:

\begin{theorem}\label{main}
If $G$ is a finitely presented one-ended group not commensurable to a
surface group, then the structure invariant for the JSJ tree of
cylinders is a quasi-isometry invariant of $G$, with respect to any of
the following initial decorations:
\begin{enumerate}
\item Vertex type: rigid, hanging, or cylinder.\label{initialdecorationbytype}
\item Vertex type and, if $v$ is rigid, $\llbracket G_v\rrbracket$.\label{initialdecorationbyqitype}
\item Vertex type and, if $v$ is rigid, $\llbracket (G_v,\per_v)\rrbracket$.\label{initialdecorationbyrelativeqitype}
\end{enumerate}
\end{theorem}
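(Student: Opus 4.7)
The plan is to assemble the three inputs already in hand: Papasoglu's quasi-isometry invariance of the JSJ tree of cylinders (\fullref{qiinvariance}), the restricted statement for rigid/hanging vertex groups (\fullref{qirestricted}), and the combinatorial fact that the structure invariant is a complete invariant of a decorated $G$--tree up to decoration-preserving isomorphism (\fullref{structureinvariant}). The whole argument is essentially a bookkeeping exercise once one checks that each of the three listed initial decorations is preserved by the map $\qi_*$ coming from a quasi-isometry.

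Concretely, suppose $G$ and $G'$ are quasi-isometric finitely presented one-ended groups, not commensurable to surface groups, and let $\qi\from G\to G'$ be a quasi-isometry. By \fullref{qiinvariance}, $\qi$ induces a simplicial isomorphism $\qi_*\from\cyl(G)\to\cyl(G')$ that preserves the partition of vertices into cylinder, hanging, and rigid types, and that sends the vertex group $G_v$ to within bounded Hausdorff distance of $G'_{\qi_*(v)}$ for every vertex $v$. This immediately says that $\qi_*$ preserves decoration (\ref{initialdecorationbytype}). For decoration (\ref{initialdecorationbyqitype}), the bounded Hausdorff distance condition gives, at each rigid vertex $v$, a quasi-isometry between $G_v$ and $G'_{\qi_*(v)}$ by composing $\qi|_{G_v}$ with closest point projection, so $\llbracket G_v\rrbracket=\llbracket G'_{\qi_*(v)}\rrbracket$. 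For decoration (\ref{initialdecorationbyrelativeqitype}), \fullref{qirestricted} promotes this to a quasi-isometry of pairs $(G_v,\per_v)\to(G'_{\qi_*(v)},\per_{\qi_*(v)})$, so $\llbracket(G_v,\per_v)\rrbracket=\llbracket(G'_{\qi_*(v)},\per_{\qi_*(v)})\rrbracket$.

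In each of the three cases, $\qi_*$ is therefore a decoration-preserving isomorphism of cocompact decorated trees, so by \fullref{structureinvariant} we conclude that the structure invariants of $\cyl(G)$ and $\cyl(G')$ (with respect to the chosen initial decoration) agree up to permutation of rows and columns within ornament blocks, which is exactly the equivalence relation under which the structure invariant is well defined. Since this holds for every quasi-isometry between $G$ and $G'$, the structure invariant is a quasi-isometry invariant.

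There is no real obstacle here: all of the geometric content has been absorbed into \fullref{qiinvariance} and \fullref{qirestricted}, and all of the combinatorial content into \fullref{structureinvariant}. The only point to watch is that decoration (\ref{initialdecorationbyrelativeqitype}) genuinely requires the peripheral structure to be transported correctly, which is what \fullref{qirestricted} provides via the closest point projection $\pi_{\qi_*(v)}$; and that each initial decoration is manifestly $G$-- or $G'$--invariant on the respective tree, so that the hypotheses of \fullref{structureinvariant} apply.
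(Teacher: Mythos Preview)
Your argument is correct and matches the paper's own proof, which is simply the one-line observation that \fullref{structureinvariant}, \fullref{qiinvariance}, and \fullref{qirestricted} together yield the result. You have spelled out exactly the bookkeeping the paper leaves implicit.
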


\begin{theorem}\label{thm:allhanging}
  If $G$ is hyperbolic and the JSJ decomposition of $G$ has no rigid vertices then the
  invariant of \fullref{main} is a complete quasi-isometry invariant.
\end{theorem}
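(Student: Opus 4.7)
Suppose $G$ and $G'$ are hyperbolic one-ended groups (not commensurable to surface groups) whose JSJ decompositions over two-ended subgroups have no rigid vertices, and whose JSJ trees of cylinders carry equal structure invariants relative to the vertex-type decoration of \fullref{main}. By \fullref{structureinvariant} there is a decoration-preserving isomorphism $\iso\from\cyl(G)\to\cyl(G')$; in particular $\iso$ sends cylindrical vertices to cylindrical vertices and hanging vertices to hanging vertices. The plan is to promote $\iso$ to a quasi-isometry using the tree-of-spaces machinery of \fullref{sec:treesofspaces}. Build algebraic trees of spaces $Y$ over $\cyl(G)$ and $Y'$ over $\cyl(G')$ as in \fullref{sec:models}; by \fullref{lemma:algebraictreeofspaces} it suffices to construct a quasi-isometry $Y\to Y'$, which in turn can be produced from a compatible tree of vertex quasi-isometries via \fullref{prop:treeofqis}.

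Since $G$ is hyperbolic, the discussion in \fullref{sec:restrictedtoc} tells us that cylinder stabilizers in $\cyl(G)$ are two-ended, so the graph of cylinders is a bipartite graph of groups with two-ended edge groups and only two species of vertex space: cylindrical vertex spaces are quasi-lines (two-ended groups), and hanging vertex spaces are, relative to their peripheral structures, uniformly quasi-isometric to the pair-of-pants universal cover $(\widetilde{\Sigma},\per_{\bdry\Sigma})$. The construction proceeds in two steps. First, for each cylindrical vertex $c$ choose an orientation-preserving similarity $\qi_c\from Y_c\to Y'_{\iso(c)}$ between the two quasi-lines; this choice coarsely prescribes, for every incident edge $e$, what the quasi-isometry at the hanging endpoint of $e$ must do on the corresponding boundary quasi-line. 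Second, for each hanging vertex $h$ choose a quasi-isometry $\qi_h\in\starmap((Y_h,\per_h),(Y'_{\iso(h)},\per'_{\iso(h)}))$ whose restriction to each incident edge space agrees, up to uniformly bounded error, with the map dictated by the adjacent cylindrical similarity. Once this is done, the commuting diagram of \fullref{fig:com} commutes up to bounded error, so \fullref{corollary:treeofqis} delivers the desired quasi-isometry $Y\to Y'$.

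The crux, and the only substantive obstacle, is the realizability step in the hanging case: given any bijection between the boundary quasi-lines of $\widetilde{\Sigma}$ and those of a second copy $\widetilde{\Sigma}'$, and given any prescription of coarse affine maps between matched pairs, produce a single element of $\starmap((\widetilde{\Sigma},\per_{\bdry\Sigma}),(\widetilde{\Sigma}',\per_{\bdry\Sigma'}))$ realizing all of them simultaneously. I would establish this by cutting $\widetilde{\Sigma}$ along its boundary quasi-lines into hyperbolic pieces (each the universal cover of a thrice-punctured disk, a tree of ideal triangles), doing the analogous cut for $\widetilde{\Sigma}'$, and matching pieces using the bijection coming from $\iso$; because distinct boundary quasi-lines lie in the boundaries of disjoint complementary pieces, the prescribed affine behavior on each line is a constraint on a single piece and can be extended to a quasi-isometry of that piece by a standard hyperbolic flexibility argument. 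The resulting piecewise definition glues to a quasi-isometry with uniform constants, since the cocompact $G$-action on $\cyl(G)$ produces only finitely many orbits of edges at each vertex orbit and the prescribed constants from the $\qi_c$ are therefore uniform. Combining with the fact that the edge patterns in $Y$ and $Y'$ are uniformly locally finite, the hypotheses of \fullref{prop:treeofqis} are satisfied, completing the construction.
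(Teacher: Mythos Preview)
Your overall plan---build a tree of compatible vertex quasi-isometries and invoke \fullref{prop:treeofqis}---is the same as the paper's, which sketches the argument and defers the details to \fullref{thm:qi} and to the Behrstock--Neumann technique encapsulated in \fullref{lemma:BN}. The gap is in your realization step for hanging vertices, and it stems from the order in which you make choices.

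You first fix, for every cylindrical vertex $c$, a similarity $\qi_c$, and only afterwards ask for a single $\qi_h$ on each hanging vertex $h$ that agrees with \emph{all} the already-chosen $\qi_c$ along the infinitely many incident edges. Your justification is that ``distinct boundary quasi-lines lie in the boundaries of disjoint complementary pieces,'' so the constraints decouple. This is false: cutting $\widetilde{\Sigma}$ along its boundary quasi-lines does not separate it---those lines \emph{are} the boundary of a connected convex region in $\mathbb{H}^2$. Any two boundary components of $\widetilde{\Sigma}$ have a unique common perpendicular of bounded length, and a quasi-isometry with controlled constants must send the feet of that perpendicular to points that are boundedly close in the target. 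Independently chosen similarities $\qi_{c_1}$ and $\qi_{c_2}$ on neighbouring cylinders will, generically, violate this constraint, so no compatible $\qi_h$ with uniform constants exists.

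The fix, and what the paper does, is to build $\iso$ and the $\qi_v$ together, inductively from a basepoint outward. When you arrive at a hanging vertex $h$ along a single edge $e_0$, only the map on $\modelspace_{\bar e_0}$ is prescribed; \fullref{lemma:BN} (the Behrstock--Neumann flexibility statement) then produces a quasi-isometry $\qi_h$ extending that one boundary map, and the restrictions of $\qi_h$ to the \emph{other} peripheral sets \emph{define} the maps $\qi_c$ on the neighbouring cylinders. Propagating in this order keeps the constraints consistent and the constants uniform (finitely many $G$--orbits of edges). Your two-step scheme reverses this dependency and over-determines the problem.
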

Later we will prove a more general result, \fullref{thm:qi}, that 
includes \fullref{thm:allhanging} as a special case.
A brief sketch of a direct proof of
\fullref{thm:allhanging} goes like this:
Given two hyperbolic groups as in \fullref{thm:allhanging}, the
structure invariants of \fullref{main} are equivalent if and only if the groups
have isomorphic decorated JSJ trees of cylinders. 
Since all non-cylindrical vertices are hanging, it
follows, using techniques of of Behrstock and Neumann
   \cite{BehNeu08}, that the groups are quasi-isometric if and only
  if they have isomorphic decorated JSJ trees of cylinders.
Details of the last claim can be found in Dani and Thomas
\cite[Section~4]{DanTho14}.
 The torsion-free case was previously written up in the thesis of Malone
 \cite{Mal10}.

\subsection{Examples}
\subsubsection{An example with symmetry}
Consider the graph of groups $\gog$ in \fullref{fig:twofold}, for
which $\tree(\gog)=\cyl(G(\gog))$.
Let $G:=G(\gog)$ and $\tree:=\tree(\gog)$.
\medskip
\begin{figure}[h]
  \centering
  \labellist
\tiny
\pinlabel $r$ [t] at 312 0
\pinlabel $c$ [t] at 235 0
\pinlabel $h$ [t] at 157 0
\pinlabel $c'$ [t] at 80 0
\pinlabel $r'$ [t] at 3 0
\pinlabel $a^2b^2\bar{a}\bar{b}$ [br] at 296 3
\pinlabel $zy$ [bl] at 174 3
\pinlabel $xy\bar{x}\bar{z}$ [br] at 141 3
\pinlabel $a^2b^2\bar{a}\bar{b}$ [bl] at 15 3
\small
\pinlabel $\langle a,b\rangle$ [b] at 312 4
\pinlabel $\mathbb{Z}$ [b] at 235 4
\pinlabel $\langle x,y,z\rangle$ [b] at 157 4
\pinlabel $\mathbb{Z}$ [b] at 80 4
\pinlabel $\langle a,b\rangle$ [b] at 3 4 
\endlabellist
\includegraphics{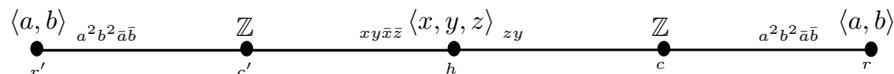}
  \caption{Symmetric example}
  \label{fig:twofold}
\end{figure}

Take the initial decoration be vertex type, so
$\decmap_0(r),\,\decmap_0(r'):=\text{`rigid'}$;
$\decmap_0(c),\,\decmap_0(c'):=\text{`cylindrical'}$; and $\decmap_0(h):=\text{`hanging'}$.

Let us compute the first neighbor refinement:
The image of an edge injection into a rigid vertex group is an
infinite index subgroup, so rigid vertices of $\tree$ are infinite
valence, and adjacent only to cylindrical vertices. Since $\decmap_0$
does not distinguish between cylindrical vertices, we have:
\[ f_{\lift{r},0},\,f_{\lift{r'},0}:=
\begin{cases}
\text{`cylindrical'}&\mapsto \infty\\
\text{`rigid'}&\mapsto 0\\
\text{`hanging'}&\mapsto 0
\end{cases}\]

In this example each edge group surjects onto its adjacent cylindrical
vertex group, so cylindrical vertices of $\tree$ have valence 2. 
One neighbor is rigid and one is hanging, and $\decmap_0$ does not
distinguish between the rigid vertices, so we have:
\[ f_{\lift{c},0},\,f_{\lift{c'},0}:=
\begin{cases}
\text{`cylindrical'}&\mapsto 0\\
\text{`rigid'}&\mapsto 1\\
\text{`hanging'}&\mapsto 1
\end{cases}\]

Finally, the image of an edge injection into a hanging vertex group is an
infinite index subgroup, so hanging vertices of $\tree$ are infinite
valence, and adjacent only to cylindrical vertices. Since $\decmap_0$
does not distinguish between cylindrical vertices, we have:
 \[f_{\lift{h},0}:=
\begin{cases}
\text{`cylindrical'}&\mapsto \infty\\
\text{`rigid'}&\mapsto 0\\
\text{`hanging'}&\mapsto 0
\end{cases}\]

The first refinement is therefore:
\begin{align*}
  \decmap_1(\lift{r}),\,\decmap_1(\lift{r'})&:=(\text{`rigid'},f_{\lift{r},0})\\ 
\decmap_1(\lift{c}),\,\decmap_1(\lift{c'})&:=(\text{`cylindrical'},f_{\lift{c},0})\\
\decmap_1(\lift{h})&:=(\text{`hanging'},f_{\lift{h},0})
\end{align*}
This is a trivial refinement, so $\decmap_0$ is stable, and the
structure invariant is given in \fullref{tab:sym}.
\begin{table}[h]
  \centering
  \[
  \begin{array}{l|c|c|c|}
    &\text{`cylindrical'}&\text{`rigid'}&\text{`hanging'}\\
\text{`cylindrical'}&0&1&1\\
\cline{1-4}
\text{`rigid'}&\infty&0&0\\
\cline{1-4}
\text{`hanging'}&\infty&0&0
  \end{array}
\]
  \caption{Structure invariant for symmetric example}
  \label{tab:sym}
\end{table}

Notice that we get the same structure invariant for the group
$G':=G(\gog')$ defined by the graph of groups in
\fullref{fig:twofoldcovered}.
This is expected, as the groups are quasi-isometric. In fact, $G$ is isomorphic to an index 2 subgroup of $G'$.
\medskip
\begin{figure}[h]
  \centering
\labellist 
\tiny
\pinlabel $r$ [t] at 157 0
\pinlabel $c$ [t] at 80 0
\pinlabel $h$ [t] at 3 0
\pinlabel $a^2b^2\bar{a}\bar{b}$ [br] at 141 3
\pinlabel $pq\bar{p}\bar{q}$ [bl] at 15 3
\small
\pinlabel $\langle a,b\rangle$ [b] at 157 4
\pinlabel $\mathbb{Z}$ [b] at 80 4
\pinlabel $\langle p,q\rangle$ [b] at 3 4
\endlabellist
  \includegraphics{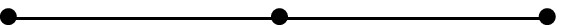}
  \caption{$\gog'$}
  \label{fig:twofoldcovered}
\end{figure}

\subsubsection{The example of \fullref{sec:firstbasicexample}}
Let $\gog$ be the graph of groups of \fullref{fig:biggog}, for which
$\tree(\gog)=\cyl(G(\gog))$.
Let $G:=G(\gog)$ and $\tree:=\tree(\gog)$.
In this example we take the initial decoration to be by vertex type
and quasi-isometry type of vertex stabilizer.
The vertices of $\tree$ carry four possible ornaments, $(r,\llbracket\pi_1(M)\rrbracket)$,
$(r,\llbracket F_2\rrbracket)$, $(h,\llbracket F_2\rrbracket)$, and $(c,\llbracket \mathbb{Z}\rrbracket)$, where in the
first coordinate $r$, $h$, and $c$ stand for `rigid', hanging', and
`cylindrical', respectively.

In the first refinement step, observe that each rigid and hanging
vertex of $\tree$ is adjacent to infinitely many cylindrical
vertices. 
The initial decoration does not distinguish any of the cylindrical
vertices, so $\decmap_1$ does not give any new information about rigid
and hanging vertices. 

On the other hand, we do distinguish some of the cylindrical
vertices. 
This is because $\decmap_0$ does distinguish between three
different kinds of rigid and hanging vertex, so we can distinguish
cylindrical vertices according to the number and kind of their neighbors.

  \begin{table}[h]
    \centering
    \[
  \begin{array}{l l|c|c|c|c|}
&&  (r,\llbracket\pi_1(M)\rrbracket) & (r,\llbracket F_2\rrbracket) &
(h,\llbracket F_2\rrbracket) &(c,\llbracket \mathbb{Z}\rrbracket)\\
\lift{r_6}&(r,\llbracket\pi_1(M)\rrbracket) &0&0&0&\infty\\
\cline{2-6}
\lift{r_1}, \lift{r_2}, \lift{r_3}, \lift{r_4}, \lift{r_5},
    \lift{r_7}, \lift{r_8}, \lift{r_9}&(r,\llbracket F_2\rrbracket)&0&0&0&\infty\\
\cline{2-6}
\lift{h}&(h,\llbracket F_2\rrbracket)&0&0&0&\infty\\
\cline{2-6}
\lift{c_1}&\multirow{4}{*}{$(c,\llbracket \mathbb{Z}\rrbracket)$}&0&5&0&0\\
\lift{c_2}&&1&1&0&0\\
\lift{c_3}, \lift{c_4}, \lift{c_6}&&0&2&0&0\\
\lift{c_5}&&0&1&1&0
  \end{array}\]
    \caption{First refinement}
    \label{tab:firstrefinement}
  \end{table}

\fullref{tab:firstrefinement} shows $\decmap_1$.
In the first column, for convenience, we give representatives of
$G$--orbits of vertex $v$ in $\tree$ with a given ornament in $\ornaments_1$.
The second column contains $\decmap_0(v)$.
The remaining columns in each row give a row vector encoding
$f_{v,0}$.
For instance, the last row says that a vertex  $v\in G\lift{c_5}$ has
$\decmap_0(v)=(c,\llbracket\mathbb{Z}\rrbracket)$ and is adjacent to
one vertex in $\decmap_0^{-1}((r,\llbracket F_2\rrbracket))$ and one
vertex in $\decmap_0^{-1}((h,\llbracket F_2\rrbracket))$.

 \begin{table}[h]
    \centering
    \[
  \begin{array}{l l|c|c|c|cccc|}
&&  (r,\llbracket\pi_1(M)\rrbracket)& (r,\llbracket F_2\rrbracket) &
(h,\llbracket F_2\rrbracket) &\multicolumn{4}{c|}{(c,\llbracket \mathbb{Z}\rrbracket)}\\
\lift{r_6}&(r,\llbracket\pi_1(M)\rrbracket) &0&0&0&0&\infty&0&0\\
\cline{2-9}
\lift{r_1}&\multirow{4}{*}{$(r,\llbracket F_2\rrbracket)$}&0&0&0&\infty&\infty&0&0\\
\lift{r_4}&&0&0&0&\infty&0&0&\infty\\
\lift{r_2}, \lift{r_3}, \lift{r_5}&&0&0&0&\infty&0&\infty&0\\ 
\lift{r_7}, \lift{r_8}, \lift{r_9}&&0&0&0&0&0&\infty&0\\
\cline{2-9}
\lift{h}&(h,\llbracket F_2\rrbracket)&0&0&0&0&0&0&\infty\\
\cline{2-9}
\lift{c_1}&\multirow{4}{*}{$(c,\llbracket \mathbb{Z}\rrbracket)$}&0&5&0&0&0&0&0\\
\lift{c_2}&&1&1&0&0&0&0&0\\
\lift{c_3}, \lift{c_4}, \lift{c_6}&&0&2&0&0&0&0&0\\
\lift{c_5}&&0&1&1&0&0&0&0
  \end{array}\]
    \caption{Second refinement}
    \label{tab:secondrefinement}
  \end{table}

\fullref{tab:secondrefinement} shows $\decmap_2$. 
The $(c,\llbracket\mathbb{Z}\rrbracket)$ columns are listed in the
same order as the $(c,\llbracket\mathbb{Z}\rrbracket)$ rows of
\fullref{tab:firstrefinement}.
For instance, the fourth row says that for any given vertex in
$G\lift{r_2}$, $G\lift{r_3}$, or $G\lift{r_5}$ in $\tree$, it has
infinitely many neighbors in $G\lift{c_1}$ and infinitely many
neighbors in $G\lift{c_3}\cup G\lift{c_4}\cup G\lift{c_6}$ and no
other neighbors.

\begin{table}[h!]
    \centering
\[  \begin{array}{l l|c|cccc|c|cccc|}
&& (r,\llbracket\pi_1(M)\rrbracket)& \multicolumn{4}{c|}{(r,\llbracket F_2\rrbracket)} &
(h,\llbracket F_2\rrbracket) &\multicolumn{4}{c|}{(c,\llbracket \mathbb{Z}\rrbracket)}\\
\lift{r_6}&(r,\llbracket\pi_1(M)\rrbracket) &0&0&0&0&0&0&0&\infty&0&0\\
\cline{2-12}
\lift{r_1}&\multirow{4}{*}{$(r,\llbracket F_2\rrbracket)$}&0&0&0&0&0&0&\infty&\infty&0&0\\
\lift{r_4}&&0&0&0&0&0&0&\infty&0&0&\infty\\
\lift{r_2}, \lift{r_3}, \lift{r_5}&&0&0&0&0&0&0&\infty&0&\infty&0\\ 
    \lift{r_7}, \lift{r_8}, \lift{r_9}&&0&0&0&0&0&0&0&0&\infty&0\\
\cline{2-12}
\lift{h}&(h,\llbracket F_2\rrbracket)&0&0&0&0&0&0&0&0&0&\infty\\
\cline{2-12}
\lift{c_1}&\multirow{4}{*}{$(c,\llbracket \mathbb{Z}\rrbracket)$}&0&1&1&3&0&0&0&0&0&0\\
\lift{c_2}&&1&1&0&0&0&0&0&0&0&0\\
\lift{c_3}, \lift{c_4}, \lift{c_6}&&0&0&0&1&1&0&0&0&0&0\\
\lift{c_5}&&0&0&1&0&0&1&0&0&0&0
  \end{array}\]
    \caption{Structure invariant}
    \label{tab:thirdrefinement}
  \end{table}
We can again count neighbors according to the new decoration, but we
see in \fullref{tab:thirdrefinement} that this does not distinguish
any additional vertices.
Thus, the refinement process has stabilized, and
\fullref{tab:thirdrefinement} is the structure invariant.
Up to permuting the ordering on $\ornaments_0$ and the ordering of
rows and columns within each $\ornaments_0$--block, this structure
invariant completely determines $\tree$ up to $\decmap_0$--preserving
tree isomorphism.

\section{A new decoration: Stretch factors}\label{sec:stretchfactors}
\subsection{Relative quasi-isometric rigidity}\label{sec:relativerigidity}

In this section, we associate to the JSJ tree of a cylinders of a group new quasi-isometry invariants that take into account the metric information carried by the various two-ended edge groups.
Indeed, consider an infinite order element of an edge stabilizer.
Its image in each of the adjacent vertex groups has some translation
length, and the ratio of these translation lengths gives a stretch
factor that describes how the amalgamation distorts distance as
measured in the vertex groups. 
This stretch factor clearly depends on the choice of metrics of the
vertex groups, so it is not an intrinsic invariant of the group, and,
in general, it is not preserved by quasi-isometries. However, we show
that when the vertex groups satisfy an appropriate notion of
quasi-isometric rigidity---a notion that is satisfied by many interesting classes of groups---then such stretch factors are
indeed quasi-isometry invariants.

\begin{definition}\label{def:weakrigidity}
A finitely generated group  $G$ is \emph{quasi-isometrically rigid
  relative to the peripheral structure}
$\per$, or $(G,\per)$ is
quasi-isometrically rigid, if there exists a proper geodesic metric space
$\modelspace$ with peripheral structure $\per'$ and a quasi-isometry
$\modelmap\from (G,\per) \to (\modelspace,\per')$
such that
\begin{enumerate}
\item $\modelmap_*(
\QIgp(G,\per))$ is a uniform subgroup of 
$\CIgp((\modelspace,\modelmap(\per)))$.\label{item:UCI}
\item If $g\in G$ is an infinite order element fixing an element of
  $\per$ then $i\mapsto \modelmap(g^i)$ is a coarse similitude.\label{item:CS}
\end{enumerate}
 
The pair $(\modelspace,\per')$ is called a \emph{rigid model} for
$(G,\per)$.
\end{definition}
\begin{remark}
  In \fullref{prop:almostisometryhomomorphism} we prove that \ref{item:UCI} implies \ref{item:CS} if $G$
  is hyperbolic.
\end{remark}
\begin{lemma}\label{lemma:ci}
  If $(X,\per')$ is a rigid model for $(G,\per)$ then
  $\modelmap'\circ\inv{\modelmap}\in\CIsom((X,\per'))$ for any $\modelmap,\modelmap'\in \QIsom((G,\per),(X,\per'))$.
\end{lemma}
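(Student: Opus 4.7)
The plan is to use the specific quasi-isometry guaranteed by the definition of rigid model as a common reference point. Let $\modelmap_0\from(G,\per)\to(X,\per')$ be a quasi-isometry as in \fullref{def:weakrigidity}, so that $(\modelmap_0)_*(\QIgp(G,\per))$ is a uniform subgroup of $\CIgp((X,\per'))$. The strategy is to show that any $\modelmap\in\QIsom((G,\per),(X,\per'))$ satisfies $\modelmap\circ\bar{\modelmap_0}\in\CIsom((X,\per'))$, and then combine two such statements to obtain the conclusion.

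First I would verify that $\bar{\modelmap_0}\circ\modelmap$ represents an element of $\QIgp((G,\per))$: since $\modelmap$ sends $\per$ to $\per'$ and $\bar{\modelmap_0}$ sends $\per'$ back to $\per$, the composition preserves $\per$. Applying property (1) of \fullref{def:weakrigidity}, the image
\[(\modelmap_0)_*([\bar{\modelmap_0}\circ\modelmap])=[\modelmap_0\circ\bar{\modelmap_0}\circ\modelmap\circ\bar{\modelmap_0}]=[\modelmap\circ\bar{\modelmap_0}]\]
lies in $\CIgp((X,\per'))$ (using $\modelmap_0\circ\bar{\modelmap_0}$ is coarsely equivalent to the identity). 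A map coarsely equivalent to a coarse isometry is itself a coarse isometry (with constants inflated by twice the coarse-equivalence bound), so $\modelmap\circ\bar{\modelmap_0}$ is a coarse isometry of $(X,\per')$. The same argument applied to $\modelmap'$ shows $\modelmap'\circ\bar{\modelmap_0}\in\CIsom((X,\per'))$.

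To finish, I would write
\[\modelmap'\circ\bar{\modelmap}\ \text{coarsely equivalent to}\ (\modelmap'\circ\bar{\modelmap_0})\circ(\modelmap_0\circ\bar{\modelmap}).\]
The first factor is a coarse isometry by the argument above. The second factor $\modelmap_0\circ\bar{\modelmap}$ is a coarse inverse of $\modelmap\circ\bar{\modelmap_0}$, hence also a coarse isometry by the standard fact from the preliminaries that coarse inverses of coarse isometries are coarse isometries. Compositions of coarse isometries are coarse isometries, so $\modelmap'\circ\bar{\modelmap}$ is a coarse isometry of $X$, and since it sends $\per'$ to $\per'$ it belongs to $\CIsom((X,\per'))$.

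I do not anticipate a genuine obstacle: once $\modelmap_0$ is used as a reference, the claim reduces to formal manipulations with coarse equivalence classes. The only bookkeeping item worth stating carefully is that ``coarsely equivalent to a coarse isometry'' implies ``coarse isometry'', which allows us to pass between the coarse-equivalence-class statement from property (1) and the pointwise statement required by the lemma.
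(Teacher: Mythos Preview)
Your proof is correct and uses essentially the same mechanism as the paper: both exploit that $(\modelmap_0)_*$ carries $\QIgp((G,\per))$ into $\CIgp((X,\per'))$, and both pass through $\bar{\modelmap_0}\circ\modelmap\in\QIgp((G,\per))$. The paper packages this as the single observation that $\QIgp((X,\per'))=\modelmap_{0*}(\QIgp((G,\per)))\leq\CIgp((X,\per'))$, hence every self-quasi-isometry of $(X,\per')$ is already a coarse isometry, whereas you unwind this element by element; the content is the same.
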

This fact motivates the terminology `rigid'.
\begin{proof}
For $\modelmap,\modelmap'\in \QIsom((G,\per),(X,\per'))$ we have $\modelmap'\circ\inv{\modelmap}\in\QIsom((X,\per'),(X,\per'))$.
For any $\modelmap\in\QIsom((G,\per),(X,\per'))$ we have
$\CIgp((\modelspace,\per'))<\QIgp((\modelspace,\per'))=\modelmap_*(\QIgp((G,\per)))$,
so if $\modelmap_*(\QIgp((G,\per)))<\CIgp((\modelspace,\per'))$ then $\QIgp((X,\per'))=\CIgp((X,\per'))$.
\end{proof}

\begin{definition}\label{def:length}
  Let $(\modelspace,\per')$ be a rigid model for $(G,\per)$, and let
  $g$ be an infinite order element of $G$ that fixes an element of $\per$.
Define the \emph{$\modelspace$--length of $g$}, $\len_\modelspace(g)$, to be the multiplicative constant of
the coarse similitude from $\mathbb{Z}$ to $\modelspace$ defined by $i\mapsto
\modelmap(g^i)$, where
$\modelmap\in\QIsom((G,\per),(\modelspace,\per'))$.

If a positive power $g^k$ of an infinite order element $g$ fixes an
element of $\per$ define $\len_\modelspace(g):=\frac{1}{k}\len_\modelspace(g^k)$.
\end{definition}

\fullref{lemma:ci} implies that $\len_\modelspace(g)$ is independent
of the choice of quasi-isometry $\modelmap\in\QIsom((G,\per),(\modelspace,\per'))$.

We remark in the second case that if $\pi\from \mathbb{Z}\to
k\mathbb{Z}$ is a closest point projection then $i\mapsto
\modelmap(g^{\pi(i)})$ is a coarse similitude from $\mathbb{Z}$ to
$\modelspace$ with multiplicative constant
$\frac{1}{k}\len_\modelspace(g^k)$, so this is a
sensible definition for $\len_\modelspace(g)$.

\subsection{Examples of relative quasi-isometric rigidity}\label{sec:rigidexamples}
Let $G$ be a finitely presented group.
Let $\mathcal{H}$ be a finite collection of two-ended subgroups of
$G$.
Let $\per$ be the peripheral structure consisting of distinct coarse equivalence classes
of conjugates of elements of $\mathcal{H}$.
\begin{enumerate}
\item If $G$ is quasi-isometric to a space $\modelspace$ such that
  $\Igp(X)=\QIgp(X)$
then $(G,\per)$ is rigid.
The peripheral structure plays no role in this case.
Examples include:
\begin{enumerate}
\item Irreducible symmetric spaces other
than real or complex hyperbolic space; thick Euclidean buildings;
and products of such
\cite{Pan89a, KleLee97}.
\item The `topologically rigid' hyperbolic groups of Kapovich and
  Kleiner \cite{KapKle00}.
\item Certain Fuchsian buildings \cite{BouPaj00,Xie06}.
\item Mapping class groups of non-sporadic hyperbolic surfaces \cite{BehKleMin12}.
\end{enumerate}

\item If $G$ is quasi-isometric to a space $\modelspace$ such that
  $\CIgp(X)=\QIgp(X)$
then $(G,\per)$ is rigid.
Again, the peripheral structure plays no role in this case.
Xie gives an example of a certain solvable Lie group with this property
\cite{Xie12}.

\item If $\modelspace$ is a real or complex hyperbolic space of dimension at least 3
and $G$ is quasi-isometric to $\modelspace$ then $(G,\per)$ is quasi-isometrically
rigid whenever $\mathcal{H}$ is non-empty, by a theorem of
Schwartz \cite{Sch97}.
\item If $\modelspace'$ is the 3-valent tree, $G$ is quasi-isometric to
$\modelspace'$ (so $G$ is virtually free), and $G$ does not virtually split over 0 or 2--ended subgroups
relative to $\mathcal{H}$, then $(G,\per)$ is quasi-isometrically rigid \cite{CasMac11,Cas10splitting}.
In this case the model space $\modelspace$ depends on $\per$, and is
not necessarily isometric to $\modelspace'$.
\item If $\modelspace=\mathbb{H}^2$, $\phi\from G\to \modelspace$ is a quasi-isometry, and $G$ does not
virtually split over 2--ended subgroups relative to $\mathcal{H}$,
then $(G,\per)$ is quasi-isometrically rigid, as follows.
A result of Kapovich and Kleiner \cite{KapKle00} shows that $G$ has finite index in $\QIgp((G,\per))$.
Therefore, $\QIgp((G,\per))$ is a finitely generated group
quasi-isometric to $\modelspace$.
This quasi-isometry induces a
cobounded quasi-action of $\QIgp((G,\per))$ on $\modelspace$.
Such a quasi-action is quasi-isometrically conjugate to an isometric
action on $\modelspace$, by a theorem of Markovic \cite{Mar06}.
\end{enumerate}

The first four cases actually satisfy the following stronger version of quasi-isometric rigidity: 

\begin{definition}\label{def:rigid}
We say  $G$ is \emph{strongly quasi-isometrically rigid relative to} $\per$, or $(G,\per)$ is
 strongly quasi-isometrically rigid, if there is a proper geodesic space
 $\modelspace$ such that if $(X,\per')$ and $(X,\per'')$ are rigid
 models for $(G,\per)$, then there is a coarse isometry $\phi$ of $X$
 such that $\phi(\per')=\per''$.
\end{definition}

By contrast, the last case only satisfies the weaker version of rigidity.

For a non-example, consider $G:=\mathbb{Z}^n$ and
$X:=\mathbb{R}^n$. 
For any $\mathcal{H}$ the group $\QIgp((G,\per))$ contains maps
conjugate to homotheties of $\mathbb{R}^n$.
This implies that the multiplicative constants in $\QIgp((G,\per))$ are
unbounded, so $\QIgp((G,\per))$ cannot be conjugate into some coarse
isometry group.

It is also easy to find non-examples of relative rigidity via splittings:
  If $G$ virtually splits over a zero or two-ended group relative to
  $\mathcal{H}$ then $(G,\per)$ is not quasi-isometrically rigid. 
In such an example there are generalized Dehn twist quasi-isometries
preserving $\per$,
powers of which again produce unbounded multiplicative constants in $\QIgp((G,\per))$.

The previous examples and non-examples naturally lead to the following question:

\begin{quest}\label{question:arehyperbolicgroupsrigid}
  If $G$ is a hyperbolic group that is not quasi-isometric
  to $\mathbb{H}^2$ and does not virtually split over a zero or two-ended
  subgroup relative to $\mathcal{H}$, is $(G,\per)$
  quasi-isometrically rigid? Strongly quasi-isometrically rigid?
\end{quest}

\subsection{Relative quasi-isometric rigidity for hyperbolic groups}
In \fullref{theorem:boundaryrigidity} we give a characterization of relative quasi-isometric rigidity for
hyperbolic groups.
 This combines with \fullref{prop:almostisometryhomomorphism} to show
 that the first condition of \fullref{def:weakrigidity} implies the
 second for hyperbolic groups.
\fullref{theorem:boundaryrigidity} also provides an alternative
viewpoint that may be useful for resolving \fullref{question:arehyperbolicgroupsrigid}.

A space $X$ is called \emph{visual} if
there exists an $\cadd\geq 0$ and $x\in X$ such that
for every $y\in X$ there exists an $\cadd$--coarse-geodesic ray
starting at $x$ and passing within distance $\cadd$ of $y$.
It follows from \cite[Proposition~5.2 and Proposition~5.6]{BonSch00}
that if $X$ is quasi-isometric to a visual hyperbolic space then $X$
is a visual hyperbolic space.

\begin{definition}
  A map $\phi\from X\to Y$ is an
  $(\cexp,\cmul)$--\emph{power-quasi-symmetric embedding} if
  for all distinct $x,y,z\in X$
\[\frac{d_Y(\phi(x),\phi(z))}{d_Y(\phi(x),\phi(y))}\leq
\eta\left(\frac{d_X(x,z)}{d_X(x,y)}\right),\]
where
\[\eta(r)=
\begin{cases}
  \cmul r^{\sfrac{1}{\cexp}} &\text{ for } 0<r<1,\\
\cmul r^\cexp &\text{ for } 1\leq r.
\end{cases}\]
\end{definition}

Let $\PQS((X,d_X),(Y,d_Y))$ denote the set of power-quasi-symmetric
homeomorphisms, and abbreviate $\PQS(X,d_X):=\PQS((X,d_X),(X,d_X))$,
which is a group.
If $\mathcal{Z}$ is a collection of subsets of $X$, define:
\[\PQS(X,d,\mathcal{Z}):=\{\phi\in\PQS((X,d))\mid \forall
Z\in\mathcal{Z},\, \phi(Z)\in\mathcal{Z}\text{ and }\exists !Z'\in\mathcal{Z},\,
\phi(Z')=Z\}\]

\begin{theorem}\label{theorem:boundaryrigidity}
  Let $G$ be a non-elementary hyperbolic group with a peripheral structure $\per$
  consisting of coarse equivalence classes of conjugates of finitely
  many two-ended subgroups. 
Fix, arbitrarily, a word metric $d$ on $G$, a basepoint $p\in G$, and
a visual metric $d_\infty$ on $\bdry G$. The following are
equivalent:
\begin{enumerate}
\item $(G,\per)$ is quasi-isometrically rigid.\label{item:wrigid}
\item There exists a proper, geodesic, visual hyperbolic space $\modelspace$ and a
  quasi-isometry $\modelmap\from G\to \modelspace$ such that $\modelmap_*(
\QIgp((G,\per)))$ is a uniform subgroup of $\CIgp((\modelspace,\modelmap(\per)))$.\label{item:modelspace}
\item There exists a visual metric $d_\infty'$ on $\bdry G$ such that
  $\PQS(\bdry G,d_\infty,\bdry\per)$ is power-quasi-symmetrically conjugate to
  a uniform subgroup of $\bilipschitz(\bdry G,d_\infty')$.\label{item:bilipschitz}
\end{enumerate}
\end{theorem}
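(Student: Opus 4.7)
My plan is to establish the three implications $(2)\Rightarrow(1)\Rightarrow(2)$ (which together give $(1)\Leftrightarrow(2)$) and then $(2)\Leftrightarrow(3)$, using the dictionary between quasi-isometries of visual hyperbolic spaces and power-quasi-symmetric boundary maps.

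First, $(2)\Rightarrow(1)$ is almost immediate: a visual hyperbolic model satisfies condition \ref{item:UCI} of \fullref{def:weakrigidity} by hypothesis, and condition \ref{item:CS}, the coarse-similitude condition on cyclic subgroups fixing an element of $\per$, follows from \fullref{prop:almostisometryhomomorphism}, which shows that uniform coarse isometry groups act on orbits of hyperbolic elements by coarse similitudes. For $(1)\Rightarrow(2)$, start from a rigid model $(\modelspace,\per')$. Since $G$ is non-elementary hyperbolic, its Cayley graph is a proper geodesic visual hyperbolic space, and $\modelspace$ is quasi-isometric to it. By \cite[Propositions~5.2 and 5.6]{BonSch00}, the property of being visual hyperbolic is quasi-isometry invariant among proper geodesic spaces, so $\modelspace$ is itself visual hyperbolic. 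No replacement of the model is needed.

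For $(2)\Rightarrow(3)$, choose a visual metric $d_\modelspace$ on $\bdry\modelspace$. Since $\modelmap\from G\to \modelspace$ is a quasi-isometry of visual hyperbolic spaces, it induces a power-quasi-symmetric homeomorphism $\bdry\modelmap\from (\bdry G,d_\infty)\to(\bdry\modelspace,d_\modelspace)$ (Paulin, Bonk--Schramm). Transport $d_\modelspace$ back via $\bdry\modelmap$ to obtain a visual metric $d_\infty'$ on $\bdry G$ (visual metrics on $\bdry G$ are unique up to power-quasi-symmetry), so that $\bdry\modelmap\from(\bdry G,d_\infty')\to(\bdry\modelspace,d_\modelspace)$ is bi-Lipschitz. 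Under the identification $\bdry\modelmap_*$, each element of $\modelmap_*(\QIgp((G,\per)))$ is a uniform coarse isometry of $\modelspace$ preserving $\modelmap(\per)$. Coarse isometries of a visual hyperbolic space extend to bi-Lipschitz maps on the boundary with respect to a fixed visual metric, with uniform bi-Lipschitz constants depending only on the coarse-isometry constants. Hence conjugation by $\bdry\modelmap$ sends $\PQS(\bdry G,d_\infty,\bdry\per)$ into a uniform subgroup of $\bilipschitz(\bdry G,d_\infty')$.

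For $(3)\Rightarrow(2)$, the idea is to invert the above construction using a hyperbolic cone. Apply the Bonk--Schramm hyperbolic cone construction to $(\bdry G,d_\infty')$ to obtain a proper geodesic visual hyperbolic space $\modelspace$ whose boundary, with visual metric, is canonically bi-Lipschitz to $(\bdry G,d_\infty')$. The identity map $(\bdry G,d_\infty)\to(\bdry G,d_\infty')$ is power-quasi-symmetric (two visual metrics on the boundary of a hyperbolic group are power-quasi-symmetrically equivalent), so it extends to a quasi-isometry $\modelmap\from G\to\modelspace$ with $\bdry\modelmap$ bi-Lipschitz. Uniform bi-Lipschitz subgroups of the boundary extend to uniform coarse isometries of the cone, so $\modelmap_*$ carries the hypothesized uniform bi-Lipschitz subgroup back to a uniform coarse isometry subgroup of $\modelspace$ preserving the image of $\per$, verifying condition \ref{item:UCI} of \fullref{def:weakrigidity} in the visual hyperbolic setting.

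The main obstacle is carefully quoting and applying the boundary-to-interior dictionary: we need both directions (quasi-isometry $\leftrightarrow$ power-quasi-symmetric, and coarse isometry $\leftrightarrow$ bi-Lipschitz), and the uniformity statements must be tracked through the hyperbolic cone construction. The compatibility of these correspondences with the peripheral structure $\per$ is automatic, since both the boundary-extension and cone functors are natural and both $\QIgp$ and the boundary pattern $\bdry\per$ are defined by the coarse equivalence classes of the distinguished subgroups, whose boundaries are precisely the elements of $\bdry\per$.
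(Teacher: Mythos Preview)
Your proof is correct and follows essentially the same approach as the paper: the equivalence of (1) and (2) comes from the quasi-isometry invariance of the visual property for hyperbolic spaces (together with \fullref{prop:almostisometryhomomorphism} for condition~\ref{item:CS}), while (2)$\Leftrightarrow$(3) is the standard dictionary between coarse isometries of visual hyperbolic spaces and bi-Lipschitz boundary maps, realized in one direction by taking a visual metric on $\bdry\modelspace$ and in the other by a hyperbolic cone construction. The only cosmetic difference is that the paper uses the Buyalo--Schroeder truncated hyperbolic approximation rather than the Bonk--Schramm cone, and the paper obtains $d_\infty'$ directly as a visual metric on $\bdry\modelspace=\bdry G$ rather than by explicitly pulling back along $\bdry\modelmap$; these are equivalent formulations.
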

\begin{proof}
  Since $G$ is a visual hyperbolic space, so is $\modelspace$.
Thus, equivalence of \ref{item:wrigid} and \ref{item:modelspace} 
  is immediate from the definition of relative rigidity.

Item \ref{item:modelspace} implies item \ref{item:bilipschitz} by taking
$d_\infty'$ to be a visual metric on $\bdry X=\bdry G$ and applying
\cite[Theorem~6.5]{BonSch00}, which shows that $\modelmap$ extends to
a power-quasi-symmetry of $\bdry G$, and a uniform subgroup of
$\CIgp(\modelspace)$ extends to a uniform subgroup of $\bilipschitz(\bdry G,d_\infty')$.

Conversely, there are several `hyperbolic cone' constructions in the
literature \cite{Gro87,BouPaj03,BonSch00,BuySch07} that
take a metric space $Z$ and produce a hyperbolic metric space
$\cone(Z)$ such that a visual metric on $\bdry\cone(Z)$ recovers $Z$
with the given metric.
We take $\cone_r(Z)$ be the `truncated hyperbolic approximation with
parameter $r$' of
Buyalo and Schroeder \cite{BuySch07}.

Item \ref{item:bilipschitz}  implies item \ref{item:modelspace} 
by taking $\modelspace$ to be the hyperbolic cone $\cone_r(\bdry
G,d_\infty')$ for $r$ sufficiently small.
\end{proof}

Let $g$ be an infinite order element of a hyperbolic group $G$ with a
fixed word metric.
The isometry $L_g$ defined by left multiplication by $g$ has a well
defined translation length, which is positive.
If $\modelmap\from G\to\modelspace$ is a quasi-isometry such that
$\modelmap_*(L_g)$ is a coarse isometry, it is not true in general
that $\modelmap_*(L_g)$ has a well defined translation length. 
The next proposition shows that we do still get a positive translation
length for $\modelmap_*(L_g)$ in the special case of relative rigidity.

\begin{proposition}\label{prop:almostisometryhomomorphism}
  Let $G$ be a hyperbolic group and $\per$ a peripheral structure such
that there exists a proper geodesic space $\modelspace$ and a
  quasi-isometry $\modelmap\from G\to \modelspace$ such that $\modelmap_*(
\QIgp((G,\per)))$ is a uniform subgroup of $\CIgp((\modelspace,\modelmap(\per)))$.
For every infinite order element $g\in G$ the map $i\mapsto
\modelmap(g^i)$ is a coarse similitude.
\end{proposition}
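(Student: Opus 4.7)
The plan is to exploit the loxodromic dynamics of $L_g$ on the Gromov boundary. Since $G$ is a non-elementary hyperbolic group and $\modelmap$ is a quasi-isometry, $\modelspace$ is a proper geodesic visual hyperbolic space, and $\modelmap$ extends to a power-quasi-symmetric homeomorphism $\bdry\modelmap\colon\bdry G\to\bdry\modelspace$. First I would verify that $L_{g^n}\in\QIgp((G,\per))$ for every $n\in\mathbb Z$: left multiplication is an isometry of $G$, and it permutes the coarse equivalence classes of conjugates of the finitely many subgroups underlying $\per$. Consequently $\Phi_n:=\modelmap_*(L_{g^n})$ lies in the uniform subgroup $\modelmap_*(\QIgp((G,\per)))\subseteq \CIgp((\modelspace,\modelmap(\per)))$, so each $\Phi_n$ admits a $C$-coarse isometry representative $\phi_n$ with $C$ uniform in $n$. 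The boundary extension $\bdry\phi_n$ is conjugate via $\bdry\modelmap$ to $\bdry L_{g^n}$, so fixes the two distinct points $\xi^\pm:=\bdry\modelmap(g^{\pm\infty})$, with $\xi^+$ attracting.

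The main step is to extract a common positive translation length from the uniform family $\{\phi_n\}$. Since a $C$-coarse isometry preserves Gromov products up to an additive constant depending only on $C$, the family $\{\bdry\phi_n\}$ is uniformly bilipschitz on $(\bdry\modelspace,d_\infty)$ for any visual metric, with bilipschitz constant depending only on $C$ and the visual parameter $\epsilon$, cf.\ \cite{BonSch00}. A quasi-axis $A$ for $\phi_1$ connecting $\xi^-$ to $\xi^+$ is preserved by every $\phi_n$ up to bounded Hausdorff distance; along $A$, each $\phi_n$ acts as a coarse isometry fixing the two endpoints at infinity, and hence as a coarse translation by a well-defined amount $\tau_n$. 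The group law together with the multiplicativity of the local bilipschitz stretch factor at $\xi^+$ --- which holds precisely because of the uniform bilipschitz bound --- forces $\tau_n=n\tau$ for a single $\tau>0$. Therefore $d_\modelspace(p,\phi_n(p))=\tau|n|+O(1)$ uniformly in $n$, where $p:=\modelmap(1)$.

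Finally I would compare $\phi_n(p)$ with $\modelmap(g^n)$ up to uniform bounded error. Both sequences are quasi-geodesics in $\modelspace$ joining $\xi^-$ to $\xi^+$: the former by the translation-length analysis, the latter because $n\mapsto g^n$ is a coarse similitude in $G$ (by quasi-convexity of cyclic subgroups in hyperbolic groups) and $\modelmap$ is a quasi-isometry. The Morse lemma in $\modelspace$ gives Hausdorff-closeness of the two orbits. To upgrade this to a pointwise uniform comparison --- the key technical obstacle, since different $C$-coarse isometry representatives of the same class $\Phi_n$ are bounded distance apart but not a priori uniformly in $n$ --- I would use the one-step shift compatibility $\phi_1(\phi_n(p))\sim\phi_{n+1}(p)$ and $\phi_1(\modelmap(g^n))\sim\modelmap(g^{n+1})$, where the second equivalence has constants depending only on the fixed representative $\phi_1$ and the quasi-isometry constants of $\modelmap$, not on $n$. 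Iterating these one-step shifts along the common quasi-axis, the two quasi-geodesic parametrizations can differ by at most a bounded affine shift, so $d_\modelspace(\modelmap(1),\modelmap(g^n))=\tau|n|+O(1)$, yielding the desired coarse similitude $i\mapsto\modelmap(g^i)$ with multiplicative constant $\tau$.
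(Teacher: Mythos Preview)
Your boundary approach has a genuine error at the key step. The claim that a uniform family of $C$--coarse isometries extends to a \emph{uniformly} bilipschitz family on $(\bdry\modelspace,d_\infty)$ is false. A $C$--coarse isometry $\phi$ preserves Gromov products only in the sense that $(\phi(x)\cdot\phi(y))_{\phi(p)}=(x\cdot y)_p+O(C)$, with the basepoint moved to $\phi(p)$; the visual metric $d_\infty$, however, is based at a \emph{fixed} point $p$, and changing basepoint by $d(p,\phi(p))$ introduces a multiplicative distortion of order $e^{\epsilon\, d(p,\phi(p))}$. Concretely, a unit hyperbolic translation of $\mathbb{H}^2$ is an isometry, but its $n$--th iterate acts on the boundary (in the Euclidean visual metric) by $x\mapsto e^n x$, with bilipschitz constant $e^n$. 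So your iterates $\bdry\phi_n=(\bdry\phi_1)^n$ cannot be uniformly bilipschitz once $\phi_1$ actually translates, and the deduction ``multiplicativity of the local stretch forces $\tau_n=n\tau$'' collapses. The final paragraph's iteration of one-step shift compatibilities also does not close the gap: the estimate $d(\phi_1(a_n),a_{n+1})\leq D$ only gives $d(a_n,b_n)\leq d(a_0,b_0)+n(2D+C)$ after $n$ steps, which is linear rather than bounded.

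The paper's route avoids the boundary entirely. The first point is \fullref{cor:coarseAIisAI}: since each $L_g^n$ is an isometry, $\modelmap_*(L_g^n)$ has quasi-isometry constants depending only on $\modelmap$, and since it is coarsely equivalent to a $C$--coarse isometry, \fullref{lemma:distboundbyqiconst} forces $\modelmap_*(L_g^n)$ itself to be an $\cadd'$--coarse isometry with $\cadd'$ \emph{independent of $n$}. This removes any need to compare with separately chosen representatives $\phi_n$. The second point is \fullref{lemma:almostsymmetricembedding}, which is a purely metric subadditivity argument: setting $x_i:=\modelmap(g^i)$, the uniform coarse isometry property gives $|d(x_i,x_j)-d(x_0,x_{j-i})|$ bounded, and quasi-geodesic stability gives $|d(x_0,x_{i+j})-d(x_0,x_i)-d(x_0,x_j)|$ bounded; a $\limsup/\liminf$ comparison then shows $L:=\lim d(x_0,x_i)/i$ exists and $|d(x_0,x_i)-Li|$ is bounded. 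Your shift-compatibility intuition is pointing at exactly this subadditivity, but the argument needs to be run on the distances $d(x_0,x_i)$ rather than via boundary stretch factors.
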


Before proving the proposition we need a few lemmas.

\begin{lemma}\label{lemma:distboundbyqiconst}
Let $X$ be proper geodesic space quasi-isometric to a non-elementary hyperbolic group.
  For
  $i\in\{0,1\}$, let $\phi_i$ be a quasi-isometry
  of $X$, with $[\phi_0]=[\phi_1]\in\QIgp(X)$.
The distance between $\phi_0$ and $\phi_1$ is bounded in terms of the
quasi-isometry constants of $\phi_0$ and $\phi_1$ and the constants of $X$.
\end{lemma}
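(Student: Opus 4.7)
The plan is to reduce the statement to a uniform bound on how far a quasi-isometry coarsely equivalent to the identity can move a point. First I would set $\psi:=\bar\phi_0\circ\phi_1$, where $\bar\phi_0$ is any chosen quasi-inverse of $\phi_0$; then $\psi$ is a $(K,C)$-quasi-isometry of $X$ with $(K,C)$ depending only on the quasi-isometry constants of $\phi_0$ and $\phi_1$, and the hypothesis $[\phi_0]=[\phi_1]$ forces $[\psi]=[\mathrm{Id}_X]$. Since $\phi_0$ is coarsely Lipschitz and $\phi_0\circ\bar\phi_0$ is bounded distance from $\mathrm{Id}_X$ with controlled constants, one gets $d(\phi_0(x),\phi_1(x))\leq K\,d(\psi(x),x)+C'$ for a $C'$ depending only on the QI data, so it will suffice to bound $\sup_{x\in X}d(\psi(x),x)$ uniformly in $K,C$ and in intrinsic constants of $X$.

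Next I would exploit that $X$ is a proper, visual, $\delta$-hyperbolic space (as it is quasi-isometric to a non-elementary hyperbolic group), so $\bdry X$ has at least three points and every $(K,C)$-quasi-geodesic fellow-travels a geodesic with the same endpoints within a Morse constant $M(K,C,\delta)$. Because $\psi$ is a finite bounded distance from $\mathrm{Id}_X$, the induced boundary map $\bdry\psi$ is the identity on $\bdry X$. The strategy is then: for each $x\in X$ produce an ideal triangle whose coarse center lies within a uniform distance of $x$, and use Morse stability applied to the $\psi$-images of its three sides to pin $\psi(x)$ near $x$.

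To produce such triangles uniformly, I would fix once and for all a non-elementary hyperbolic group $G$, a quasi-isometry $q\from G\to X$ with quasi-inverse $\bar q$, three distinct points $\xi_1,\xi_2,\xi_3\in\bdry X$, and a coarse center $c_0\in X$ of the ideal triangle they span. Transferring the cocompact left-multiplication action of $G$ on itself through $q$ yields a cobounded quasi-action $g\mapsto\alpha_g:=q\circ L_g\circ\bar q$ of $G$ on $X$ by uniformly controlled quasi-isometries. For a given $x$ I would pick $g$ with $d(\alpha_g(c_0),x)\leq D$ for a constant $D$ depending only on $q$ and the cocompactness of the $G$-action, and set $\eta_i:=\bdry\alpha_g(\xi_i)$. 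Then $\alpha_g(c_0)$ is a coarse center of the ideal triangle on $\{\eta_1,\eta_2,\eta_3\}$, with error controlled by $\delta$ and the quasi-action constants. Since $\psi$ fixes each $\eta_i$, the $\psi$-image of each geodesic side is a $(K,C)$-quasi-geodesic with the same endpoints, hence lies within $M(K,C,\delta)$ of the original side. The point $\psi(\alpha_g(c_0))$ is therefore uniformly close to each of the three sides, hence uniformly close to any coarse center of the triangle, hence uniformly close to $\alpha_g(c_0)$ itself; combined with $d(\alpha_g(c_0),x)\leq D$ and the coarse Lipschitzness of $\psi$, this yields a uniform bound on $d(\psi(x),x)$.

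The main obstacle will be verifying uniformity of all the intermediate constants --- the Morse constant, the discrepancy between two coarse centers of a single ideal triangle, and the quasi-isometry constants of the translates $\alpha_g$ --- so that the final bound depends only on $(K,C)$ and on intrinsic data of $X$ (the hyperbolicity constant $\delta$, the quasi-isometry constants of $q$, and the cocompactness constant of $G\act G$). Fortunately, these uniformities are exactly what the Morse lemma and the definition of a quasi-action by uniform quasi-isometries supply, so the bookkeeping should go through without essential difficulty.
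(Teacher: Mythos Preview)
Your proposal is correct and follows essentially the same approach as the paper: both arguments use that the two maps agree on $\bdry X$, pick an ideal geodesic triangle with quasi-center near an arbitrary point $x$, and then use the Morse lemma to conclude that the images of $x$ are uniformly close. The paper is slightly more direct in that it compares $\phi_0$ and $\phi_1$ head-on rather than reducing to $\psi=\bar\phi_0\circ\phi_1$, and it invokes visuality of $X$ to produce the ideal triangle at each point rather than transporting a fixed triangle via a cobounded quasi-action, but these are cosmetic differences.
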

\begin{proof}
The following argument is standard, see for instance \cite{Tuk85}. 
We briefly outline the proof. 

Since $X$ is quasi-isometric to a visual hyperbolic space, it is a
visual hyperbolic space.
Furthermore, every point $x\in X$ can be realized as a quasi-center of
an ideal geodesic triangle $\Delta$. 
Since $\phi_0$ and $\phi_1$ are coarsely equivalent,
$\bdry\phi_0=\bdry\phi_1$, so $\phi_0(\Delta)$ and $\phi_1(\Delta)$
are ideal quasi-geodesic triangles with the same ideal vertices, and
quasi-geodesic constants depending on those of $\phi_0$ and $\phi_1$, respectively. 
The set of quasi-centers of uniformly quasi-geodesic triangles with
the same vertices is bounded in terms of the quasi-geodesic constants
and the hyperbolicity constant of $X$, and $\phi_0(x)$ and $\phi_1(x)$ both lie
in this set.
\end{proof}

\begin{cor}\label{cor:coarseAIisAI}
  If $\phi$ is an $(\cmul,\cadd)$--quasi-isometry and $\psi$ is an
  $\cadd'$--coarse isometry with $[\phi]=[\psi]\in\QIgp(X)$ then there
  is an $\cadd''$ depending only on $\cmul$, $\cadd$, $\cadd'$ and $X$ such that $\phi$ is an $\cadd''$--coarse isometry.
\end{cor}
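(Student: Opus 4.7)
The plan is to deduce this directly from \fullref{lemma:distboundbyqiconst}. Since $[\phi]=[\psi]\in\QIgp(X)$ and both maps have controlled quasi-isometry constants (with $\psi$ being in particular a $(1,\cadd')$--quasi-isometry), the lemma produces a constant $D$, depending only on $\cmul$, $\cadd$, $\cadd'$, and the constants of $X$, such that $\sup_{x\in X}d(\phi(x),\psi(x))\leq D$.

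Next, I would combine this uniform bound with the coarse isometry property of $\psi$ using the triangle inequality. For any $x,y\in X$,
\[
|d(\phi(x),\phi(y))-d(x,y)|\leq d(\phi(x),\psi(x))+|d(\psi(x),\psi(y))-d(x,y)|+d(\psi(y),\phi(y))\leq 2D+\cadd'.
\]
Setting $\cadd'':=2D+\cadd'$, we conclude that $\phi$ is an $\cadd''$--coarse isometry, with $\cadd''$ depending only on $\cmul$, $\cadd$, $\cadd'$, and $X$, as required.

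There is no real obstacle here: the content of the corollary is entirely packaged in \fullref{lemma:distboundbyqiconst}, and the remainder is a one-line triangle inequality argument. The only thing to note is that the hypothesis that $X$ is quasi-isometric to a non-elementary hyperbolic group (needed to invoke the lemma) is implicit from context, since throughout this section $X$ is taken to be a rigid model for a hyperbolic group $(G,\per)$.
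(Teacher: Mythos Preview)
Your proof is correct and is exactly the intended derivation: the paper states this result as an immediate corollary of \fullref{lemma:distboundbyqiconst} without giving a separate proof, and your argument (bound the sup-distance via the lemma, then apply the triangle inequality) is the natural one-line justification.
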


\begin{lemma}\label{lemma:almostsymmetricembedding}
Let $\modelmap\from Y\to\modelspace$ be a quasi-isometry between visual hyperbolic
spaces. 
Suppose that $\phi$ is a loxodromic isometry of $Y$ with translation length
$\tau$.
Let $y_0$ be a point on an axis of $\phi$, and set $y_i=\phi^i(y_0)$ and
$x_i=\modelmap(y_i)$.
Suppose that $\{\modelmap_*(\phi^i)\}_{i\in\mathbb{Z}}$ are uniform coarse isometries.
Then
\[L:=\lim_{i\to\infty}\frac{d(x_0,x_i)}{i}\]
exists, and there exists an $\cadd$ such that $i\mapsto \modelmap(\phi^i(y_0))$
is an $(L,2\cadd)$--coarse similitude.
\end{lemma}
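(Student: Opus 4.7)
The plan is to exploit the fact that the uniformity of $\{\modelmap_*(\phi^i)\}$ as coarse isometries gives a \emph{translation invariance} for distances along the sequence $(x_i)$, and then to combine this with the Morse-type behaviour of quasi-geodesics in the hyperbolic space $\modelspace$ to upgrade this to an \emph{approximate additivity}, from which the existence of $L$ and the coarse similitude conclusion follow by a standard argument for approximately additive sequences on $\mathbb{Z}$.

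First I would set $\alpha_n := \modelmap_*(\phi^n) = \modelmap \circ \phi^n \circ \inv{\modelmap}$ and observe that by \fullref{cor:coarseAIisAI} we may take the $\alpha_n$ to be $\cadd_0$--coarse isometries with a single $\cadd_0$ independent of $n$. Since $\inv{\modelmap}\circ\modelmap$ is coarsely equivalent to $\id_Y$ with bounded error $\cadd_1$, we have
\[
d_\modelspace(\alpha_n(x_i),x_{i+n}) \;\leq\; \cadd_1' \quad\text{for all }i,n\in\mathbb{Z},
\]
where $\cadd_1'$ depends only on $\cadd_1$ and the quasi-isometry constants of $\modelmap$. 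Feeding this into the coarse isometry property of $\alpha_n$ applied to the pair $(x_i,x_j)$, I obtain a uniform constant $\cadd$ with
\[
\bigl|\,d(x_{i+n},x_{j+n}) - d(x_i,x_j)\,\bigr| \;\leq\; \cadd \qquad (i,j,n\in\mathbb{Z}).
\]
Setting $n=-i$ and using symmetry of $d$, this reduces the two-variable function $d(x_i,x_j)$ to a one-variable function $g(k) := d(x_0,x_k)$ for $k\geq 0$, with $|d(x_i,x_j)-g(|i-j|)|\leq \cadd$.

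Next I would establish approximate additivity $|g(m+n)-g(m)-g(n)| \leq \cadd'$. Sub-additivity $g(m+n)\leq g(m)+g(n)+\cadd$ is immediate from the triangle inequality combined with the translation invariance displayed above. The reverse inequality is the main obstacle: I need that the broken path $x_0,x_m,x_{m+n}$ is not much shorter than $d(x_0,x_{m+n})$. For this I use that $\phi$ is loxodromic, so the orbit $(y_i)$ is a quasi-geodesic with $d(y_i,y_j)$ coarsely $\tau|i-j|$; hence its $\modelmap$-image $(x_i)$ is a uniform quasi-geodesic sequence in the hyperbolic space $\modelspace$. By the Morse property for quasi-geodesics in visual hyperbolic spaces, $(x_i)$ stays uniformly close to a coarse geodesic $\gamma$, and on $\gamma$ distances are additive; a straightforward comparison then yields $g(m)+g(n) \leq g(m+n)+\cadd''$. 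Combining both directions gives the required approximate additivity.

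Finally, with $|g(m+n)-g(m)-g(n)|\leq \cadd'$ in hand, the existence of $L$ and the bound $|g(n)-Ln|\leq\cadd'$ follow from the classical doubling trick: iterating gives $|g(2^k n)-2^k g(n)|\leq (2^k-1)\cadd'$, so $|g(2^k n)/(2^k n) - g(n)/n|\leq \cadd'/n$, proving that $g(n)/n$ is Cauchy (and hence $L:=\lim g(n)/n$ exists), and passing to the limit yields $|g(n)/n - L|\leq \cadd'/n$, i.e.\ $|g(n)-Ln|\leq\cadd'$. Putting this together with the earlier reduction $|d(x_i,x_j)-g(|i-j|)|\leq \cadd$ gives $|d(x_i,x_j)-L|i-j||\leq \cadd+\cadd'$; after absorbing constants this is the desired $(L,2\cadd)$--coarse similitude bound. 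The main technical point to take care of is the super-additivity step, which is where the hyperbolicity of $\modelspace$ (through the Morse lemma) is essential; everything else is formal manipulation of coarse isometries and approximately additive sequences.
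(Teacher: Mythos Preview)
Your proposal is correct and follows essentially the same line as the paper's proof: both first extract the translation invariance $|d(x_{i+n},x_{j+n})-d(x_i,x_j)|\leq\cadd$ from the uniform coarse isometry hypothesis, then upgrade this to approximate additivity of $n\mapsto d(x_0,x_n)$ via quasi-geodesic stability in the hyperbolic target, and finally deduce the existence of $L$ and the coarse similitude bound. The only cosmetic difference is in the last step, where the paper shows $\limsup=\liminf$ by a direct contradiction while you invoke the standard Fekete-type argument for approximately additive sequences (for which, strictly, one should compare $g(m)/m$ and $g(n)/n$ via $g(mn)$ rather than only along dyadic multiples).
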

\begin{proof}
The fact that $\{\modelmap_*(\phi^i)\}_{i\in\mathbb{Z}}$ are uniform coarse
isometries implies that the difference $|d(x_i,x_j)-d(x_0,x_{j-i})|$ is uniformly bounded.
Quasi-geodesic stability further implies that
$|d(x_0,x_{i+j})-d(x_0,x_i)-d(x_0,x_j)|$ is uniformly bounded. 
Let $\cadd$ be the greater of these two bounds.

Suppose $L^+:=\limsup\sfrac{d(x_0,x_i)}{i}$ and
$L^-:=\liminf\sfrac{d(x_0,x_i)}{i}$ are different.
Note $L^->0$ since $i\mapsto x_i$ is a quasi-geodesic. 
Take $\epsilon:=\sfrac{(L^+-L^-)}{3}$.
Choose some $i$ such that $\sfrac{d(x_0,x_i)}{i}<L^-+\epsilon$ and
such that $\alpha:=\frac{d(x_0,x_i)+2\cadd}{d(x_0,x_i)}<\sqrt{\frac{2L^++L^-}{L^++2L^-}}$.
Choose some $j$ such that
$\sfrac{d(x_0,x_i)}{i}>L^+-\epsilon$ and such that
$\frac{q+1}{q}<\sqrt{\frac{2L^++L^-}{L^++2L^-}}$, where $q$ is the
integer such that $qi\leq j<(q+1)i$. 

The previous inequalities, together with the triangle inequality to decompose $d(x_0, x_j)$ along  $x_0, x_i, \ldots, x_{qi}, x_j$, yields:

\begin{align*} 
  L^+-\epsilon<&\frac{d(x_0,x_j)}{j}\leq\frac{d(x_0,x_j)}{qi}\leq\frac{(q+1)(d(x_0,x_i)+2\cadd)}{qi}\\
&=\frac{\alpha(q+1)
  d(x_0,x_i)}{qi}\leq \frac{\alpha(q+1)}{q}\cdot
  (L^-+\epsilon)\\
&< \frac{2L^++L^-}{L^++2L^-}\cdot \frac{L^++2L^-}{3}=L^+-\epsilon
\end{align*}
This is a contradiction, so $L^+=L^-=L$. 

Suppose there exists an $i$ such that $d(x_0,x_i)<Li-2\cadd$. 
Then
$L=\lim_{j\to\infty}\frac{d(x_0,x_{ij})}{ij}<\lim_{j\to\infty}\frac{Lij}{ij}=L$,
which is a contradiction. 

If there exists an $i$ such that $d(x_0,x_i)>Li+2\cadd$, then a
similar computation leads to a contradiction.
Therefore, $|d(x_0,x_i)-Li|\leq 2\cadd$, which means $i\mapsto
x_i=\modelmap(\phi^i(y_0))$ is an $(L,2\cadd)$--coarse similitude.
\end{proof}

\begin{proof}[{Proof of \fullref{prop:almostisometryhomomorphism}}]
Let $L_g$ be left multiplication on $G$ by an infinite order element.
For all $n\in\mathbb{Z}$ the map $L_g^n$ is an isometry, so
$\modelmap_*(L_g^n)$ is a quasi-isometry whose constants depend only
on those of $\modelmap$.
By hypothesis, there exists an $\cadd$ such that for each $n$ there
exists an $\cadd$--coarse isometry $c_n\in \CIgp(X)$ with $[c_n]=[\modelmap_*(L_g^n)]$.
By \fullref{cor:coarseAIisAI}, there exists an $\cadd'$ independent of
$n$ such that $\modelmap_*(L_g^n)$ is an $\cadd'$--coarse isometry.
Now apply \fullref{lemma:almostsymmetricembedding}.
\end{proof}

\subsection{Stretch factors}
Let $G$ be a finitely presented, one-ended group such that $\tree:=\cyl(G)$
has two-ended edge stabilizers. 
Let $\gog:=\lquotient{G}{\cyl(G)}$. 
Let $Y$ be an algebraic tree of spaces for $G$ over $\tree$.

Vertices $v_0$ and $v_1$ of $\tree$ that belong to a common cylinder
have stabilizer groups that intersect in a virtually cyclic subgroup.

Recall that $\hat{\Delta}$ denotes the modulus of \fullref{def:extendedmodulus}.

\begin{definition}
  Let $v_0$ and $v_1$ be distinct quasi-isometrically rigid vertices of
  $\tree$ contained in a common cylinder $c$.
For $i\in\{0,1\}$, choose a rigid model $(\modelspace_{v_i},\per_{v_i})$ for $(G_{v_i},\per^\gog_{v_i})$.
Let $\langle z\rangle$ be an infinite cyclic subgroup of $G_{v_0}\cap
G_{v_1}$, and define the \emph{relative stretch} from $v_0$ to $v_1$ to be:
\[\rs(v_0,v_1,(\modelspace_{v_0},\per_{v_0}),(\modelspace_{v_1},\per_{v_1})):=\frac{\len_{\modelspace_{v_1}}(z)}{\len_{\modelspace_{v_0}}(z)}\]
\end{definition}

Clearly,
$\rs(v_0,v_1,(\modelspace_{v_0},\per_{v_0}),(\modelspace_{v_1},\per_{v_1}))$
depends on the choices of $(\modelspace_{v_i},\per_{v_i})$.
Recall, by \fullref{lemma:ci}, it does not depend on the choice of
quasi-isometries $(G_{v_0},\per^\gog_{v_0})\to (\modelspace_{v_0},\per_{v_0})$ and $(G_{v_1},\per^\gog_{v_1})\to (\modelspace_{v_1},\per_{v_1})$.
\begin{lemma}
$\rs(v_0,v_1,(\modelspace_{v_0},\per_{v_0}),(\modelspace_{v_1},\per_{v_1}))$ does not
depend on the choice of $\langle
z\rangle<G_{v_0}\cap G_{v_1}$.
\end{lemma}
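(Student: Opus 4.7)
The plan is to reduce the statement to the basic observation that the length function $\len_X$ scales linearly on powers of a fixed element, combined with the fact that $G_{v_0}\cap G_{v_1}$ is virtually cyclic, so any two infinite cyclic subgroups are commensurable.

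First I would check that $\len_{X_{v_i}}(z)$ is even defined. Since $v_0$ and $v_1$ lie in a common cylinder $c$, all edge stabilizers of edges along the path from $v_0$ to $v_1$ in $c$ are pairwise commensurable, and each is contained in $G_{v_0}\cap G_{v_1}$ up to passage to a finite index subgroup. Hence $G_{v_0}\cap G_{v_1}$ is itself virtually cyclic, and any infinite order element $z\in G_{v_0}\cap G_{v_1}$ has a positive power lying in an edge stabilizer incident to $v_i$. That edge stabilizer belongs to $\per^{\gog}_{v_i}$, so the power of $z$ fixes an element of the peripheral structure; the second clause of \fullref{def:length} then makes $\len_{X_{v_i}}(z)$ well defined.

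Second, I would exploit commensurability: given two infinite cyclic subgroups $\langle z\rangle$ and $\langle z'\rangle$ of the virtually cyclic group $G_{v_0}\cap G_{v_1}$, there exist nonzero integers $p,q$ with $z^p=z'^{\,q}$. The key input from \fullref{def:length} is the homogeneity relation $\len_{X}(g^k)=|k|\,\len_{X}(g)$ for every nonzero integer $k$ (immediate from the fact that $i\mapsto\modelmap(g^{ki})$ is obtained from $i\mapsto \modelmap(g^i)$ by precomposition with $i\mapsto ki$, multiplying the coarse similitude constant by $|k|$). Applying this identity for both $i=0$ and $i=1$ yields
\[
|p|\,\len_{X_{v_i}}(z)=\len_{X_{v_i}}(z^p)=\len_{X_{v_i}}(z'^{\,q})=|q|\,\len_{X_{v_i}}(z'),\qquad i\in\{0,1\}.
\]

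Finally, I would take the ratio in $i$: both versions of the relative stretch equal
\[
\frac{\len_{X_{v_1}}(z)}{\len_{X_{v_0}}(z)}=\frac{(|q|/|p|)\,\len_{X_{v_1}}(z')}{(|q|/|p|)\,\len_{X_{v_0}}(z')}=\frac{\len_{X_{v_1}}(z')}{\len_{X_{v_0}}(z')},
\]
so the combinatorial factor $|q|/|p|$ cancels and the stretch is independent of the choice of $\langle z\rangle$. The only mildly subtle point is the first step — confirming that an infinite order element of $G_{v_0}\cap G_{v_1}$ fixes an element of the vertex peripheral structure used to define $\len_{X_{v_i}}$; once that is in place, the rest is a one-line linearity argument.
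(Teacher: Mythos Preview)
Your argument is correct and is actually the cleaner route to this particular lemma. One small simplification: since an element fixing both $v_0$ and $v_1$ must fix the entire geodesic between them in $\tree$, you have $G_{v_0}\cap G_{v_1}\subset G_e$ for every edge $e$ on that geodesic; in particular $z$ itself (not just a power) lies in the edge stabilizer incident to $v_i$, so the first clause of \fullref{def:length} already applies and your preliminary discussion of powers is unnecessary.

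The paper proceeds differently: rather than comparing two arbitrary choices $\langle z\rangle,\langle z'\rangle$ directly via commensurability, it fixes minimal-index cyclic subgroups $\langle z_i\rangle<G_{e_i}$ in the edge groups at either end of the geodesic and derives the explicit formula
\[
\frac{\len_{\modelspace_{v_1}}(z)}{\len_{\modelspace_{v_0}}(z)}=\frac{\len_{\modelspace_{v_1}}(z_1)}{\len_{\modelspace_{v_0}}(z_0)}\cdot\hat{\Delta}(e_0,e_1),
\]
then observes that the right-hand side is visibly independent of $z$. Your approach is more elementary and suffices for the lemma as stated; the paper's approach buys the displayed formula, which is reused in the proof of \fullref{prop:stretchpreserved} and makes \fullref{corollary:stretchesmultiply} immediate without needing to locate a common $z$ in a triple intersection.
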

\begin{proof}
For $i\in\{0,1\}$, let $e_i$ be an edge on the geodesic in $\tree$ between $v_0$ and
$v_1$, with $\initial(e_i)=v_i$.
  Let $\langle z_{0}\rangle<G_{e_0}$ and $\langle z_{1}\rangle<G_{e_1}$ be infinite cyclic subgroups of
  minimal index.
Since $G_{e_0}$ and $G_{e_1}$ are virtually cyclic, $\langle z\rangle$
has finite index in each of them.
\[\frac{\len_{\modelspace_{v_1}}(z)}{\len_{\modelspace_{v_0}}(z)}%
=\frac{\len_{\modelspace_{v_1}}(z_{1})\cdot
\frac{[\langle z_{1}\rangle : \langle z_{1}\rangle\cap \langle z\rangle]}{[\langle z\rangle : \langle z_{1}\rangle\cap \langle z\rangle]}}{\len_{\modelspace_{v_0}}(z_{0})\cdot
\frac{[\langle z_{0}\rangle : \langle z_{0}\rangle\cap \langle
  z\rangle]}{[\langle z\rangle : \langle z_{0}\rangle\cap \langle
  z\rangle]}}%
=\frac{\len_{\modelspace_{v_1}}(z_{1})}{\len_{\modelspace_{v_0}}(z_{0})}\cdot\frac{[\langle
  z_1\rangle:\langle z_0\rangle\cap\langle z_1\rangle]}{[\langle
  z_0\rangle:\langle z_0\rangle\cap\langle z_1\rangle]}%
=\frac{\len_{\modelspace_{v_1}}(z_{1})}{\len_{\modelspace_{v_0}}(z_{0})}\cdot\hat{\Delta}(e_0,e_1)\]
The right-hand side is independent of the choice of $z_0$ and $z_1$,
since if, say, $\langle z_0'\rangle$ is another infinite cyclic subgroup of minimal
index in $G_{e_0}$ then: \[\len_{\modelspace_{v_0}}(z_0')=\len_{\modelspace_{v_0}}(z_0)\cdot\frac{[\langle
  z_0\rangle : \langle z_0\rangle\cap\langle z_0'\rangle]}{[\langle
  z_0'\rangle : \langle z_0\rangle\cap\langle z_0'\rangle]}=\len_{\modelspace_{v_0}}(z_0)\qedhere\]
\end{proof}

\begin{corollary}\label{corollary:stretchesmultiply}
  If $v_0$, $v_1$, and $v_2$ are quasi-isometrically rigid
  vertices in a common cylinder then:
\begin{align*}
  \rs(v_0,v_1,(\modelspace_{v_0},\per_{v_0}),(\modelspace_{v_1},\per_{v_1}))&\cdot
\rs(v_1,v_2,(\modelspace_{v_1},\per_{v_1}),(\modelspace_{v_2},\per_{v_2}))\\
&=\rs(v_0,v_2,(\modelspace_{v_0},\per_{v_0}),(\modelspace_{v_2},\per_{v_2}))
\end{align*}
\end{corollary}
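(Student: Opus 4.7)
The plan is to exploit the previous lemma, which shows that $\rs(v_i,v_j,\ldots)$ does not depend on the choice of infinite cyclic subgroup $\langle z\rangle<G_{v_i}\cap G_{v_j}$, by computing all three relative stretches using a single common infinite cyclic subgroup.

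First I would produce an infinite cyclic subgroup $\langle z\rangle$ contained in $G_{v_0}\cap G_{v_1}\cap G_{v_2}$. Since $v_0,v_1,v_2$ lie in a common cylinder $c$, any two of their stabilizers share a virtually cyclic intersection, and all such intersections are commensurable to one another (as edges in a cylinder have pairwise commensurable stabilizers by definition of a cylinder). Concretely, pick an infinite cyclic subgroup $\langle z_0\rangle < G_{v_0}\cap G_{v_1}$ and an infinite cyclic subgroup $\langle z_1\rangle<G_{v_1}\cap G_{v_2}$. Their intersection $\langle z_0\rangle\cap\langle z_1\rangle$ is of finite index in each of $\langle z_0\rangle$ and $\langle z_1\rangle$, hence infinite cyclic. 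Let $z$ be a generator; then $\langle z\rangle$ sits in $G_{v_0}\cap G_{v_1}\cap G_{v_2}$.

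Next I would invoke the preceding lemma to rewrite each of the three relative stretches using this common $\langle z\rangle$:
\[
\rs(v_0,v_1,\ldots)=\frac{\len_{\modelspace_{v_1}}(z)}{\len_{\modelspace_{v_0}}(z)},\quad
\rs(v_1,v_2,\ldots)=\frac{\len_{\modelspace_{v_2}}(z)}{\len_{\modelspace_{v_1}}(z)},\quad
\rs(v_0,v_2,\ldots)=\frac{\len_{\modelspace_{v_2}}(z)}{\len_{\modelspace_{v_0}}(z)}.
\]
The corollary then follows immediately from multiplying the first two expressions and cancelling $\len_{\modelspace_{v_1}}(z)$.

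The only point that requires any care is the first step, finding the common $\langle z\rangle$; but as sketched above, this is a direct consequence of the definition of a cylinder as a commensurability class of edge stabilizers together with the observation that the intersection of two finite-index subgroups of a virtually cyclic group is again infinite cyclic (after passing to a finite-index subgroup). Everything else is formal manipulation, so I do not anticipate any genuine obstacle.
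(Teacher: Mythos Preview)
Your proposal is correct and is exactly the argument the paper has in mind: the corollary is stated immediately after the lemma with no explicit proof, because once the lemma guarantees independence of the choice of $\langle z\rangle$, one simply picks a common infinite cyclic subgroup of $G_{v_0}\cap G_{v_1}\cap G_{v_2}$ (which exists since all edge stabilizers in a cylinder are commensurable) and the identity becomes a telescoping product.
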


\begin{proposition}\label{prop:stretchpreserved}
  Let $\qi$ be a quasi-isometry between finitely presented, one-ended
  groups $G$ and $G'$ whose JSJ trees of cylinders have two-ended edge
  stabilizers.
Let $\gog:=\lquotient{G}{\cyl(G)}$ and $\gog':=\lquotient{G'}{\cyl(G')}$.
Suppose that $v_0$ and $v_1$ are distinct quasi-isometrically rigid vertices
of $\tree(\gog)$ contained in a cylinder $c$.
Choose rigid models $(\modelspace_{v_0},\per_{v_0})$ and
$(\modelspace_{v_1},\per_{v_1})$ for $(G_{v_0},\per^\gog_{v_0})$ and
$(G_{v_1},\per^\gog_{v_1})$, respectively. 
Then: \[\rs(v_0,v_1,(\modelspace_{v_0},\per_{v_0}),(\modelspace_{v_1},\per_{v_1}))=\rs(\qi_*(v_0),\qi_*(v_1),(\modelspace_{v_0},\per_{v_0}),(\modelspace_{v_1},\per_{v_1}))\]
\end{proposition}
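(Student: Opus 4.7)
The plan is to pin down a single scaling factor $\lambda$ that captures how $\qi$ distorts the cyclic subgroup $\langle z\rangle$, and then show that $\lambda$ cancels out of the ratio defining the stretch. First I would use \fullref{qiinvariance} to get the induced tree isomorphism $\qi_*\from\cyl(G)\to\cyl(G')$, which sends the cylinder $c$ containing $v_0,v_1$ to a cylinder $c'$ of $\tree(\gog')$ containing $\qi_*(v_0)$ and $\qi_*(v_1)$. Since $\cyl(G')$ has two-ended cylinder stabilizers by hypothesis, the intersection $G'_{\qi_*(v_0)}\cap G'_{\qi_*(v_1)}$ is infinite, hence contains an infinite order element, which I call $z'$. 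By \fullref{qirestricted} each restriction $\qi_{v_i}$ is a relative quasi-isometry, so $(\modelspace_{v_i},\per_{v_i})$ is also a rigid model for $(G'_{\qi_*(v_i)},\per^{\gog'}_{\qi_*(v_i)})$ with model map $\modelmap'_i:=\modelmap_{v_i}\circ\inv{\qi_{v_i}}$; this ensures that the right-hand side of the asserted equality is well defined.

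Next I would compare the $\langle z\rangle$--orbit in $G$ with the $\langle z'\rangle$--orbit in $G'$. Because $\qi$ is a quasi-isometry and $z$ has infinite order, the sequence $n\mapsto\qi(z^n)$ is a bi-infinite quasi-geodesic in $G'$ lying uniformly close to both $G'_{\qi_*(v_0)}$ and $G'_{\qi_*(v_1)}$, hence uniformly close to $\langle z'\rangle$. Combining this with the fact that $\langle z'\rangle$ itself is a quasi-line produces a positive real $\lambda$ and a constant $D$ (after possibly replacing $z'$ by $\inv{z'}$) such that
\[d_{G'}\big(\qi(z^n),(z')^{\lfloor\lambda n\rfloor}\big)\leq D\quad\text{for every }n\in\mathbb{Z}.\]
The essential observation is that $\lambda$ depends only on $\qi$, $z$ and $z'$, not on the choice of $i$, because $\qi$ is a single ambient map.

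Then I would transfer this relation into the model spaces. For each $i\in\{0,1\}$, the closest point projection $\pi_{\qi_*(v_i)}$ moves points in $\qi(G_{v_i})$ a uniformly bounded distance, so $\qi_{v_i}(z^n)$ is bounded distance from $(z')^{\lfloor\lambda n\rfloor}$ in $G'_{\qi_*(v_i)}$. Applying $\modelmap'_i$, and using the fact that $\modelmap'_i\circ\qi_{v_i}\eaddmul\modelmap_{v_i}$ by construction, gives
\[\modelmap_{v_i}(z^n)\eaddmul\modelmap'_i\big((z')^{\lfloor\lambda n\rfloor}\big).\]
Reading off multiplicative constants of the two coarse similitudes in $n$ yields $\len_{\modelspace_{v_i}}(z)=\lambda\cdot\len_{\modelspace_{v_i}}(z')$ for both $i=0$ and $i=1$. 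Taking the ratio over $i$ cancels $\lambda$ and produces the required equality of stretches.

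The main obstacle I anticipate is the careful promotion of $(\modelspace_{v_i},\per_{v_i})$ to a rigid model of the image vertex group with its uniform subgroup of $\CIgp$ intact; this should follow because $\qi_{v_i}$ conjugates $\QIgp((G_{v_i},\per^\gog_{v_i}))$ onto $\QIgp((G'_{\qi_*(v_i)},\per^{\gog'}_{\qi_*(v_i)}))$, so the pushforwards via $\modelmap_{v_i}$ and $\modelmap'_i$ land in the same uniform subgroup of $\CIgp((\modelspace_{v_i},\per_{v_i}))$. Everything else is a straightforward asymptotic rate computation together with the key observation that $\lambda$ is $i$--independent.
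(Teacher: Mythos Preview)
Your overall strategy is sound, but there is a genuine gap where you extract $\lambda$. Knowing that $n\mapsto\qi(z^n)$ tracks the quasi-line $\langle z'\rangle$ only produces a function $f\from\mathbb{Z}\to\mathbb{Z}$ with $d_{G'}\big(\qi(z^n),(z')^{f(n)}\big)$ bounded; nothing so far forces $f$ to be coarsely linear. A quasi-isometry of $\mathbb{Z}$ is not in general a coarse similitude: take the piecewise-linear bijection of $[0,\infty)$ whose slope alternates between $1$ and $2$ on the intervals $[2^k,2^{k+1}]$ --- it is $(2,0)$--biLipschitz, yet $f(n)/n$ oscillates between values near $4/3$ and $5/3$ and has no limit. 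The ambient quasi-isometry alone cannot supply a single scaling factor.

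The repair is to reverse the order of operations. Carry the a priori nonlinear tracking function $f$ into the model space $\modelspace_{v_i}$ first. Condition~(\ref{item:CS}) of \fullref{def:weakrigidity} says both $n\mapsto\modelmap_{v_i}(z^n)$ and $m\mapsto\modelmap'_i((z')^m)$ are coarse similitudes onto the same peripheral quasi-line, with multiplicative constants $\len_{\modelspace_{v_i}}(z)$ and $\len_{\modelspace_{v_i}}(z')$. Since $\modelmap_{v_i}(z^n)$ is bounded distance from $\modelmap'_i((z')^{f(n)})$ there, comparing arc-length positions forces $|f(n)-\lambda_i n|$ to be bounded with $\lambda_i=\len_{\modelspace_{v_i}}(z)/\len_{\modelspace_{v_i}}(z')$. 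Thus linearity of $f$ is a \emph{consequence} of rigidity, not an input. Your key observation --- that $f$ is $i$--independent up to bounded error because both $\qi_{v_0}(z^n)$ and $\qi_{v_1}(z^n)$ lie within bounded $G'$--distance of the single point $\qi(z^n)$, and $n\mapsto(z')^n$ is proper into $G'$ --- then gives $\lambda_0=\lambda_1$, which is the assertion. The paper reaches the same endpoint without naming $\lambda$: it writes the stretch as a limit of model-space distance ratios $Q(x_0,x_1)$ and shows directly that conjugating the closest-point projection $Y_{e_0}\to Y_{e_1}$ by $\qi$ perturbs $Q$ only by bounded additive error in the numerator.
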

\begin{proof}
Let $Y$ be an algebraic tree of spaces for $\gog$, and let
$Y'$ be an algebraic tree of spaces for $\gog'$.
For $v\in\{v_0,v_1\}$ choose
$\modelmap_{v}\in\QIsom((Y_{v},\per^\gog_{v}),(\modelspace_{v},\per_{v}))$.
Note that $\modelmap_v\circ\overline{\qi}_v\in\QIsom((Y'_{\qi_*(v)},\per_{\qi_*(v)}^{\gog'}) ,(\modelspace_{v},\per_{v}))$.
Define: 
\[R:=\rs(v_0,v_1,(\modelspace_{v_0},\per_{v_0}),(\modelspace_{v_1},\per_{v_1}))\]
\[R':=\rs(\qi_*(v_0),\qi_*(v_1),(\modelspace_{v_0},\per_{v_0}),(\modelspace_{v_1},\per_{v_1}))\]

Choose two points $x_0$ and $x_1$ in $\modelmap_{v_0}(Y_{e_0})$ such
that $d_{\modelspace_{v_0}}(x_0,x_1)\gg 0$.
The idea of the proof is to approximate $R$ by a quantity $Q(x_0,x_1)$
depending on $x_0$ and $x_1$, and similarly approximate $R'$ by
$Q'(x_0,x_1)$, and then show:
\[R=\lim_{d(x_0,x_1)\to\infty} Q(x_0,x_1)=\lim_{d(x_0,x_1)\to\infty} Q'(x_0,x_1)=R'\]
In the following, quantities are `coarsely well defined' if they are
well defined up to additive error independent of the choice of $x_0$
and $x_1$.

By construction, $Y_{e_0}$ is a coset of $G_{\quot{e_0}}$, and $\langle
z_{\quot{e_0}}\rangle$ is an infinite cyclic subgroup of minimal index in
$G_{\quot{e_0}}$.
Let $g_0\in G$ such that $Y_{e_0}=g_0G_{\quot{e_0}}$.
Since $g_0\langle z_{\quot{e_0}}\rangle$ is coarsely dense in $Y_{e_0}$, there
exist integers $k_i$ such that
$d_{Y_{v_0}}(\modelmap^{-1}(x_i),g_0z_{\quot{e_0}}^{k_i})$ is small. 

$Y_{e_0}$ and $Y_{e_1}$ are coarsely equivalent, so closest point projection
$\pi\from Y_{e_0}\to Y_{e_1}$ is coarsely well defined.
Moreover, since $G_{e_0}$ and $G_{e_1}$ are commensurable, there exist
$\epsilon\in\{\pm 1\}$ and $l\in\mathbb{Z}$ such that
$\pi(g_0z^j_{\quot{e_0}})$ is bounded distance from $g_1z^{\epsilon j\hat{\Delta}(e_0,e_1)
  +l}_{\quot{e_1}}$, where $z^{\epsilon j\hat{\Delta}(e_0,e_1)
  +l}_{\quot{e_1}}$ is to be interpreted as $z_{\quot{e_1}}$ raised to the
greatest integer less than or equal to $\epsilon j\hat{\Delta}(e_0,e_1)
  +l$.
Now we have the following string of relations, where $\sim$ indicates
equality up to additive error in the numerator and denominator,
independent of $x_0$ and $x_1$.

{\allowdisplaybreaks\begin{align*}
R&=\frac{\len_{\modelspace_{v_1}}(g_1z_{\quot{e_1}}g^{-1}_1)}{\len_{\modelspace_{v_0}}(g_0z_{\quot{e_0}}g^{-1}_0)}\cdot\hat{\Delta}(e_0,e_1)\\
&\sim\frac{\left(\frac{d_{\modelspace_{v_1}}(\modelmap_{v_1}(g_1z_{\quot{e_1}}^{\epsilon
  k_0\hat{\Delta}(e_0,e_1)+l}),\modelmap_{v_1}(g_1z_{\quot{e_1}}^{\epsilon
  k_1\hat{\Delta}(e_0,e_1)+l}))}{|(\epsilon
  k_1\hat{\Delta}(e_0,e_1)+l)-(\epsilon
  k_0\hat{\Delta}(e_0,e_1)+l)|}\right)}{\left(\frac{d_{\modelspace_{v_0}}(\modelmap_{v_0}(g_0z_{\quot{e_0}}^{k_0}),\modelmap_{v_0}(g_0z_{\quot{e_0}}^{k_1}))}{|k_1-k_0|}\right)}\cdot\hat{\Delta}(e_0,e_1)\\
&=\frac{d_{\modelspace_{v_1}}(\modelmap_{v_1}(g_1z_{\quot{e_1}}^{\epsilon
  k_0\hat{\Delta}(e_0,e_1)+l}),\modelmap_{v_1}(g_1z_{\quot{e_1}}^{\epsilon
  k_1\hat{\Delta}(e_0,e_1)+l}))}{d_{\modelspace_{v_0}}(\modelmap_{v_0}(g_0z_{\quot{e_0}}^{k_0}),\modelmap_{v_0}(g_0z_{\quot{e_0}}^{k_1}))}\\
&\sim
  \frac{d_{\modelspace_{v_1}}(\modelmap_{v_1}(\pi(g_0z_{\quot{e_0}}^{k_0})),\modelmap_{v_1}(\pi(g_0z_{\quot{e_0}}^{k_1})))}{d_{\modelspace_{v_0}}(\modelmap_{v_0}(g_0z_{\quot{e_0}}^{k_0}),\modelmap_{v_0}(g_0z_{\quot{e_0}}^{k_1}))}\\
&\sim\frac{d_{\modelspace_{v_1}}(\modelmap_{v_1}\circ\pi\circ\modelmap_{v_0}^{-1}(x_0),\modelmap_{v_1}\circ\pi\circ\modelmap_{v_0}^{-1}(x_1))}{d_{\modelspace_{v_0}}(x_0,x_1)}=:Q(x_0,x_1)
\end{align*}}
We conclude $R=\lim_{d(x_0,x_1)\to\infty} Q(x_0,x_1)$.

Similarly, if $\pi'$ is closest point projection from $\qi(Y_{e_0})$
to $\qi(Y_{e_1})$ define:
\[Q'(x_0,x_1):=\frac{d_{\modelspace_{v_1}}(\modelmap_{v_1}\circ\inv{\qi}\circ\pi'\circ\qi\circ\modelmap_{v_0}^{-1}(x_0),\modelmap_{v_1}\circ\inv{\qi}\circ\pi'\circ\qi\circ\modelmap_{v_0}^{-1}(x_1))}{d_{\modelspace_{v_0}}(x_0,x_1)}\]
We have $R'=\lim_{d(x_0,x_1)\to\infty}Q'(x_0,x_1)$.  
However, since $Y_{e_0}$ and $Y_{e_1}$ are coarsely equivalent, $\inv{\qi}\circ\pi'\circ\qi$ is coarsely equivalent to
$\pi$, so $Q(x_0,x_1)\sim Q'(x_0,x_1)$.
We conclude:
\[R=\lim_{d(x_0,x_1)\to\infty} Q(x_0,x_1)=\lim_{d(x_0,x_1)\to\infty}
Q'(x_0,x_1)=R'\qedhere\]
\end{proof}

\subsection{Uniformization}\label{sec:uniformization}
The stretch factors defined in the previous section depend on the
choice of rigid model for the vertex groups.
We suppress this dependence by choosing models uniformly:
\begin{definition}\label{def:qitypes}
Let $\QItypes:=\{ \llbracket (G,\per)\rrbracket\}$ be the set of
quasi-isometry classes of finitely presented groups relative to
peripheral structures.
For each $Q\in\QItypes$ choose a proper, geodesic space $Z_Q$ with
peripheral structure $\per_Q$ such that $Q=\llbracket
(Z_Q,\per_Q)\rrbracket$.
Define $\model(Q):=(Z_Q,\per_Q)$.
If $(Z_Q,\per_Q)$ is 
quasi-isometrically rigid then we choose $(Z_Q,\per_Q)$ to be a
rigid model as in \fullref{sec:relativerigidity}.
We choose $Z_{\llbracket \mathbb{R},\mathbb{R}\rrbracket}=\mathbb{R}$.
\end{definition}

\begin{definition}
If $v_0$ and $v_1$ are quasi-isometrically rigid vertices in a
cylinder, define:
  \[\rs(v_0,v_1)=\rs(v_0,v_1,\model(G_{v_0},\per_{v_0}^\gog),\model(G_{v_1},\per_{v_1}^\gog))\]
\end{definition}

\subsection{Normalization for unimodular graphs of groups}\label{sec:normalization}
Suppose that $\tree:=\cyl(G)$ has two-ended edge stabilizers and $c$ is a
unimodular cylinder in $\tree$.
Suppose that $c$ contains some quasi-isometrically rigid vertices.
Unimodularity implies $\{\rs(v_0,v_1)\mid
v_0,v_1\in c\text{ are qi rigid}\}$ is bounded.
Since stretch factors multiply by
\fullref{corollary:stretchesmultiply}, there exists a
quasi-isometrically rigid $v_0$ such that for every other 
quasi-isometrically rigid vertex $v_1$ in $c$ we have
$\rs(v_0,v_1)\geq 1$.
Define $\rs(c,v_1):=\rs(v_0,v_1)$.
\begin{definition}\label{def:relstretch}
Suppose that $\tree:=\cyl(G)$ has two-ended edge stabilizers.
Let $e$ be an edge of $\tree$ connecting a cylindrical vertex $c$ to a quasi-isometrically
rigid vertex $v$.
Define $\rs(e):=\rs(c,v)$.
\end{definition}

\subsection{An example}\label{sec:examplestretch}
Recall \fullref{ex:keyexample}:
Let $G_i:=\left<a, b, t\mid \inv{t}u_it=v_i\right>$, where $u_i$ and
$v_i$ are words in $\left<a,b\right>$ given below. 
In each case $G_i$ should be thought of as an HNN extension of
$\left<a, b\right>$ over $\mathbb{Z}$ with stable letter $t$.
Subdividing the edge gives a unimodular JSJ decomposition $\gog_i$ of $G_i$ whose Bass-Serre $\tree_i$ is equal to its tree of cylinders.

Let $u_0:=a$, $v_0:=ab\inv{a}\inv{b}^2$, $u_1:=ab$, $v_1:=a^2\inv{b}^2$,
$u_2:=ab^2$, and $v_2:=a^2\inv{b}$.

The methods of \cite{CasMac11}
show that for each $i$, the pair $\{u_i,v_i\}$ is Whitehead
minimal, with Whitehead graph equal to the complete graph on 4
vertices, so $\left<a,b\right>$ is quasi-isometrically rigid relative
to the peripheral structure $\per_i$ coming from incident edge groups, and
the rigid model space is just the Cayley graph for $\left<a,b\right>$
with respect to $\{a,b\}$, ie, the 4--valent tree.
Furthermore, $\QIgp((\left<a,b\right>,\per_i))$ is transitive on
$\per_i$, and $\llbracket(\left<a,b\right>,
\per_1)\rrbracket=\llbracket(\left<a,b\right>,\per_2)\rrbracket=\llbracket(\left<a,b\right>,\per_3)\rrbracket$.
Therefore, \fullref{main} cannot distinguish $G_1$, $G_2$, and $G_3$.

In $\tree_i$, $V_C$ is a single orbit, and each vertex $c\in V_C$ has
valence two. Fix some $c\in V_C$, and suppose $\rig(c)=\{e,e'\}$.
Let us assume that $e$ is an edge that attaches to
$\modelspace_{\terminal(e)}$ along the image of a conjugate of $\left<v_i\right>$ and $e'$ is an edge that attaches to
$\modelspace_{\terminal(e')}$ along the image of a conjugate of $\left<u_i\right>$.
The stabilizers of $e$ and $e'$ are equal to the stabilizer
of $c$, which is infinite cyclic.
This, together with the fact that the rigid model vertex space is the Cayley
graph (tree) for $\left<a,b\right>$ with respect to $\{a,b\}$, and the
fact that each $u_i$ and $v_i$ is cyclically reduced with respect to $\{a,b\}$, means that $\mathrm{Str}(e)$ is the word
length of $v_i$ in $\left<a,b\right>$, and $\mathrm{Str}(e')$ is the
word length of $u_i$ in $\left<a,b\right>$.
Thus, $\rs(e')=1$ and $\rs(e)=\sfrac{|v_i|}{|u_i|}=5,\,2,\,\text{ or
}1$, as $i=0,\,1\,\text{ or }2$, respectively.
By \fullref{prop:stretchpreserved}, no one of these groups is
quasi-isometric to the other.


\section{Vertex constraints}\label{sec:vertexconstraints}
In this section we assume that $G$ is a one-ended, finitely presented
group such that $\tree:=\cyl(G)$ has two-ended cylinder stabilizers. 
Let $\gog$ be the quotient graph of cylinders, which is therefore a
canonical JSJ decomposition of $G$ over two-ended subgroups; recall \fullref{sec:restrictedtoc}.

We suppose that $\modelspace$ is a tree of spaces over $\tree$,
quasi-isometric to $G$.

In \fullref{sec:structureinvariants} we saw how to decide if two vertices of
$\tree$ are in the same $\Aut(\tree,\decmap)$--orbit.
In this section we would like to restrict further to subgroups of
$\Aut(\tree,\decmap)$ induced by $\QIgp(\modelspace)$, or, in the case
that $G$ is hyperbolic, by $\Homeo(\bdry\modelspace)$.
We will actually do something that is weaker in the quasi-isometry
case, but has the advantage that the same approach works for both
quasi-isometries and boundary homeomorphisms. 
What we do is restrict to elements of $\Aut(\tree,\decmap)$ that
at each vertex look like they are induced by a quasi-isometry or boundary
homeomorphism of the appropriate vertex space.
We also add a compatibility condition below.
First we explain the notation.

For $[\qi]\in\QIgp(\modelspace)$ we can choose a representative
$\qi$ that induces an automorphism $\qi_*$ of $\tree$ and splits as a tree of quasi-isometries  $\qi_v:=\qi|_{\modelspace_v}\in\QIgp((\modelspace_v,\per_v),(\modelspace_{\qi_*(v)},\per_{\qi_*(v)}))$ over $\tree$.

Similarly, if $G$ is hyperbolic, then $\modelspace$ is hyperbolic and
$\qi\in\Homeo(\bdry\modelspace)$ induces an automorphism
$\qi_*$ of $\tree$ and splits as a tree of boundary homeomorphisms
$\qi_v:=\qi|_{\bdry\modelspace_v}\in\Homeo((\bdry\modelspace_v,\bdry\per_v),(\bdry\modelspace_{\qi_*(v)},\bdry\per_{\qi_*(v)}))$
over $\tree$.

Since cylinders are two-ended, each edge space is a quasi-line $\cL$ in its
respective vertex space.
Recall this means that there is a controlled embedding $\Xi$ of
$\mathbb{R}$ with image $\cL$.
In the hyperbolic case $\Xi$ is actually a quasi-isometric embedding, and $\cL$
has distinct endpoints at infinity in the boundary of the vertex space
containing it.
In this case we define an orientation of $\cL$ to be a choice of one
of these boundary points, and a boundary homeomorphism of the vertex
space that preserves $\bdry\cL$ is said to be orientation preserving
if it fixes $\bdry\cL$ and orientation reversing if it exchanges the
two points of $\bdry\cL$.

In the quasi-isometry case we know that $\Xi([0,\infty))$ and
$\Xi([0,-\infty))$ are not coarsely equivalent.
We define an orientation of $\cL$ to be a choice of coarse equivalence class
of either  $\Xi([0,\infty))$ or $\Xi([0,-\infty))$. 
A quasi-isometry that coarsely preserves $\cL$ is said to be
orientation preserving on $\cL$ if it fixes the coarse equivalence classes of
$\Xi([0,\infty))$ and $\Xi([0,-\infty))$, and orientation reversing if
it exchanges them.

We seek $\iso\in\Aut(\tree,\decmap)$ such that for
every  $v\in\verts\tree$ there exists
an element $\qi_v\in\starmap((\modelspace_v,\per_v),(\modelspace_{\iso(v)},\per_{\iso(v)}))$
such that $(\qi_v)_*=\iso|_{\link(v)}$, subject to the following
compatibility condition.
In the quasi-isometry case we require that
$(\attachmap_{\iso(e)}\circ\qi_{\initial(e)})\circ(\qi_{\terminal(e)}\circ\attachmap_e)^{-1}$
is orientation preserving on $\modelspace_{\overline{\iso(e)}}$.
In the boundary homeomorphism case we require that
$(\bdry\attachmap_{\iso(e)}\circ\qi_{\initial(e)})\circ(\qi_{\terminal(e)}\circ\bdry\attachmap_e)^{-1}$
is the identity on $\bdry\modelspace_{\overline{\iso(e)}}$ for every edge $e$.
For brevity, we say ``$(\attachmap_{\iso(e)}\circ\qi_{\initial(e)})\circ(\qi_{\terminal(e)}\circ\attachmap_e)^{-1}$
is orientation preserving on $\modelspace_{\overline{\iso(e)}}$'' in both cases.

In the boundary homeomorphism case we conclude, in
\fullref{corollary:boundaryhomeomorphism}, that such a collection of
$\qi_v$ patch together to give $\qi\in\Homeo(\bdry\modelspace)$ with $\qi_*=\iso$.

The analogous statement is not true for quasi-isometries.
To patch together quasi-isometries we need
$\attachmap_{\iso(e)}\circ\qi_{\initial(e)}$ and
$\qi_{\terminal(e)}\circ\attachmap_e$ to be coarsely equivalent \emph{as
maps}, but we have only assumed that they have coarsely equivalent
image sets with the same orientations.
We also need to know that the $\qi_v$ have uniform quasi-isometry constants.
These points will be addressed in subsequent sections.

\subsection{Partial Orientations}
  A \emph{partial orientation} $\partialorientation$ of $\modelspace$
  assigns to each cylindrical vertex space and to each peripheral set
  in each non-elementary vertex space
either an orientation of that space or the value
  $\nullvar$.

A cylindrical vertex space or peripheral set is said to be $\partialorientation$--\emph{oriented}
if
its $\partialorientation$ value is not $\nullvar$, and
$\partialorientation$--\emph{unoriented} otherwise.

A cylindrical vertex is said to be $\partialorientation$--oriented or
$\partialorientation$--unoriented if its vertex space is.

An edge $e\in\tree$ is said to be $\partialorientation$--oriented or
$\partialorientation$--unoriented if the corresponding edge space in
its incident non-elementary vertex is.

The \emph{sign} of a map $\qi$ that takes an oriented space $A$ to
an oriented space $B$ is 1 if the map is orientation preserving and $-1$ if it is
orientation reversing.
For a partial orientation $\partialorientation$ we define
$\sign_\partialorientation\qi$ as usual when $A$ and $B$ are both $\partialorientation$--oriented, and we define
$\sign_\partialorientation\qi:=0$ if either of them is $\partialorientation$--unoriented.

One partial orientation, $\partialorientation'$, \emph{extends}
another, $\partialorientation$, if they agree on all
$\partialorientation$--oriented sets.

\subsection{Cylindrical vertices}\label{sec:cylinderrefinement}
\begin{definition}\label{def:imbalance}
Let $\partialorientation$ be a partial orientation.
Let $c$ be a cylindrical vertex.
The \emph{orientation imbalance} at $c$ with
respect to a decoration $\decmap\from\tree\to\ornaments$ and a partial orientation
$\partialorientation$ is the function
$\imbalance_c^{\decmap,\partialorientation}\from\ornaments \to
\mathbb{Z}^\ornaments/\{-1,1\}$, with the action by
coordinate-wise multiplication,
defined as follows.
Choose an orientation of $\modelspace_c$ and for each $e\in\link(c)$
let $\sign\attachmap_e$ denote the sign of
$\attachmap_e$ with respect to the chosen orientation of
$\modelspace_c$ and the $\partialorientation$--orientation of
$\modelspace_{\bar{e}}$, which we take to be 0 if $e$ is
$\partialorientation$--unoriented. Define:

\[\imbalance_c^{\decmap,\partialorientation}(\ornament):=[\sum_{e\in\link(c)\cap\decmap^{-1}(\ornament)}\sign
\attachmap_e]\]

If $\imbalance_c^{\decmap,\partialorientation}$ is non-zero we call
$c$ an \emph{unbalanced cylinder}.
\end{definition}

Taking an equivalence class of function in the definition eliminates
the dependence on the arbitrary choice of orientation of $\modelspace_c$.

\begin{proposition}
Suppose $\decmap$ and $\partialorientation$ are
$\starmap(\modelspace)$--invariant.
  If there exists $\qi\in\starmap(\modelspace)$ such that $\qi_*$
  fixes a cylindrical vertex $c$ and reverses the  orientation of
  $\modelspace_c$ then
  $\imbalance_c^{\decmap,\partialorientation}$ is identically zero.
\end{proposition}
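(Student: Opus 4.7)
The plan is to exploit the pairing induced by $\qi_*$ on the edges at $c$ and show that contributions to $\imbalance_c^{\decmap,\partialorientation}(\ornament)$ cancel in pairs for every ornament $\ornament$. Since $\qi_*$ is a tree automorphism fixing $c$, it restricts to a permutation of $\link(c)$; and since $\decmap$ is $\starmap(\modelspace)$--invariant, this permutation preserves every ornament class $\link(c)\cap\decmap^{-1}(\ornament)$. Each such class is finite, because the standing assumption that cylinder stabilizers are two-ended forces cylindrical vertices to have finite valence in $\tree$, so the sums defining $\imbalance_c$ are honest finite sums.

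The crux is the sign-reversal identity
\[ \sign\attachmap_{\qi_*(e)} = -\sign\attachmap_e \]
for every $\partialorientation$--oriented $e\in\link(c)$, both signs computed against a single fixed orientation on $\modelspace_c$. To establish this I would use that $\qi$ splits as a tree of vertex maps $\qi_v$ compatible with the attaching maps, giving a near-commuting square
\[ \attachmap_{\qi_*(e)} \circ \qi_c|_{\modelspace_e} \ \approx\ \qi_{\terminal(e)}|_{\modelspace_{\bar e}} \circ \attachmap_e. \]
The hypothesis that $\qi_c$ reverses the chosen orientation on $\modelspace_c$ gives $\sign \qi_c|_{\modelspace_e} = -1$, while $\starmap(\modelspace)$--invariance of $\partialorientation$ gives $\sign \qi_{\terminal(e)}|_{\modelspace_{\bar e}} = +1$. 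Multiplying signs around the square yields the identity. The $\partialorientation$--unoriented edges contribute sign $0$ by definition and so drop out of every sum.

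The conclusion is then a one-line re-indexing: for every $\ornament\in\ornaments$,
\[ \sum_{e\in\link(c)\cap\decmap^{-1}(\ornament)} \sign\attachmap_e \ =\ \sum_{e} \sign\attachmap_{\qi_*(e)} \ =\ -\sum_{e} \sign\attachmap_e, \]
using first that $\qi_*$ is a bijection of the indexing set and then the sign-reversal identity. Hence each coordinate vanishes, and the class $\imbalance_c^{\decmap,\partialorientation}\in\mathbb{Z}^\ornaments/\{-1,1\}$ is zero.

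The only delicate point I anticipate is justifying the sign-reversal identity in the quasi-isometry case, where the compatibility square only commutes up to bounded error rather than exactly. However, the sign of a map between oriented quasi-lines is defined in terms of coarse equivalence classes of half-lines, so bounded perturbations cannot flip it; this is a routine unpacking of the tree-of-quasi-isometries compatibility rather than a genuine obstacle.
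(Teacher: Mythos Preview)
Your proposal is correct and follows essentially the same approach as the paper: both arguments use $\starmap(\modelspace)$--invariance of $\decmap$ to see that $\qi_*$ permutes each ornament class in $\link(c)$, then combine the compatibility square with invariance of $\partialorientation$ and the orientation-reversing property of $\qi_c$ to deduce $\sign\attachmap_{\qi_*(e)}=-\sign\attachmap_e$. The paper packages the conclusion as a bijection between positive-sign and negative-sign edges in each class, which is equivalent to your re-indexing argument $\sum=-\sum$.
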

\begin{proof}
  Suppose $\ornament\in\ornaments$ is such that there exists a
  $\partialorientation$--oriented edge
  $e\in\link(c)\cap\decmap^{-1}(\ornament)$.
Let $v:=\terminal(e)$.
By $\starmap(\modelspace)$--invariance,
$\partialorientation(\modelspace_{\overline{\qi_*(e)}})=\qi_v(\partialorientation(\modelspace_{\bar{e}}))=\attachmap_{\qi_*(e)}\circ\qi_c\circ\attachmap_e^{-1}(\partialorientation(\modelspace_{\bar{e}}))$.
Since $\qi_c$ is orientation reversing, $\attachmap_e$
and $\attachmap_{\qi_*(e)}$ have opposite signs.
Therefore, $(\qi)_*|_{\link(c)}$ gives a bijection between edges in
  $\link(c)\cap\decmap^{-1}(\ornament)$ whose attaching map have positive sign and edges
  in $\link(c)\cap\decmap^{-1}(\ornament)$ whose attaching map have negative sign. 
Since this is true for every $\ornament\in\ornaments$ such that
$\link(c)\cap\decmap^{-1}(\ornament)$ is non-empty, $\imbalance_c^{\decmap,\partialorientation}$ is identically zero.
\end{proof}
\begin{corollary}
Suppose $\decmap$ and $\partialorientation$ are
$\starmap(\modelspace)$--invariant.
  If $G_c$ contains an infinite dihedral group then
  $\imbalance_c^{\decmap,\partialorientation}$ is identically zero.
\end{corollary}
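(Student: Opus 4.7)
The plan is to produce an element $\qi\in\starmap(\modelspace)$ satisfying the hypothesis of the previous proposition, namely one whose induced tree automorphism fixes $c$ and that reverses the orientation of $\modelspace_c$; the conclusion then follows immediately.

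First I would locate a candidate element inside $G$. Since $G_c$ contains an infinite dihedral subgroup $D_\infty=\langle a,b\mid a^2=b^2=1\rangle$, we may take $r:=a$, an order-two element of $G_c$. The group $G$ acts on $\modelspace$ by isometries (via the algebraic tree of spaces structure), so left multiplication by $r$ defines an isometry of $\modelspace$, which in particular belongs to $\QIgp(\modelspace)$ and, in the hyperbolic case, induces an element of $\Homeo(\bdry\modelspace)$; call this element $\qi$. Because $r\in G_c$, we have $\qi_*(c)=c$.

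Next I would verify that $\qi|_{\modelspace_c}$ reverses the orientation of $\modelspace_c$. The space $\modelspace_c$ is a quasi-line, so it has exactly two ends (or, in the hyperbolic case, two boundary points at infinity), and an orientation is precisely a choice of one of these two ends. The $G_c$-action on $\modelspace_c$ factors through the action on this pair of ends, giving a homomorphism $G_c\to\mathbb{Z}/2$ whose kernel $G_c^+$ is the finite-index subgroup preserving orientation; every infinite-order element of $G_c$ lies in $G_c^+$. Inside the dihedral subgroup, the element $r=a$ has order two and its image in the abstract $D_\infty$ is a reflection, whose composition $ab$ with $b$ is the translation generator (of infinite order). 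Hence $r\notin G_c^+$, so $r$ swaps the two ends of $\modelspace_c$ and thus reverses its orientation.

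With these two properties established, I would apply the previous proposition directly: since $\decmap$ and $\partialorientation$ are $\starmap(\modelspace)$-invariant and $\qi\in\starmap(\modelspace)$ fixes $c$ while reversing the orientation of $\modelspace_c$, the imbalance $\imbalance_c^{\decmap,\partialorientation}$ is identically zero. The only delicate point is step two, checking that an order-two element of $G_c$ coming from a dihedral subgroup is forced to swap the ends; but this is a structural fact about virtually cyclic groups, since the index-two orientation-preserving subgroup contains all torsion-free elements and the translation element $ab\in D_\infty$ witnesses that $r$ is not orientation-preserving.
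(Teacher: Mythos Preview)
Your approach is exactly the one the paper has in mind: the corollary is stated without proof immediately after the proposition, and the intended argument is to exhibit a reflection in $D_\infty \leq G_c$ as the required element of $\starmap(\modelspace)$.

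The one place where your sketch is genuinely incomplete is the claim that $r=a$ reverses the orientation of $\modelspace_c$. Saying that $G_c^+$ ``contains all torsion-free elements'' and that $ab$ ``witnesses that $r$ is not orientation-preserving'' does not by itself force $a\notin G_c^+$: a priori nothing you have said rules out $D_\infty\subset G_c^+$ (equivalently, $G_c^+=G_c$). The missing ingredient is the conjugation relation in $D_\infty$: since $a(ab)a^{-1}=(ab)^{-1}$ and the infinite-order element $ab$ translates points of the quasi-line $\modelspace_c$ toward one fixed end, conjugation by $a$ converts this into the translation toward the opposite end, which forces $a$ to swap the two ends. Once you make that explicit the argument is complete.

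One small correction: in this section $\modelspace$ is only assumed to be a tree of spaces over $\tree$ quasi-isometric to $G$, not necessarily the algebraic tree of spaces, so $G$ need not act on $\modelspace$ by isometries. This does not matter for your argument, since left multiplication by $r$ still yields an element of $\QIgp(\modelspace)$ (and of $\Homeo(\bdry\modelspace)$ in the hyperbolic case) via the quasi-isometry $G\to\modelspace$, which is all the proposition requires.
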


\begin{proposition}\label{prop:imbalancewrtqi}
Suppose $\decmap$ and $\partialorientation$ are
$\starmap(\modelspace)$--invariant.
  For every $\qi\in\starmap(\modelspace)$ we have
  $\imbalance_c^{\decmap,\partialorientation}=\imbalance_{\qi_*(c)}^{\decmap,\partialorientation}$.
\end{proposition}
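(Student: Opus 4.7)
The plan is to exploit the fact that any $\qi \in \starmap(\modelspace)$ splits as a tree of maps $(\qi_v)$ over $\qi_* \in \Aut(\tree)$ satisfying the compatibility condition that $(\attachmap_{\qi_*(e)} \circ \qi_{\initial(e)}) \circ (\qi_{\terminal(e)} \circ \attachmap_e)^{-1}$ is orientation preserving on $\modelspace_{\overline{\qi_*(e)}}$. Fix a cylindrical vertex $c$, pick an auxiliary orientation $\omega$ on $\modelspace_c$ to compute $\imbalance_c^{\decmap,\partialorientation}$, and use the pushed-forward orientation $\qi_c(\omega)$ on $\modelspace_{\qi_*(c)}$ to compute $\imbalance_{\qi_*(c)}^{\decmap,\partialorientation}$; the resulting equivalence class in $\mathbb{Z}^\ornaments/\{\pm 1\}$ does not depend on these choices.

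The first step is to observe that $\qi_*$ restricts to a bijection $\link(c) \to \link(\qi_*(c))$ which, by $\starmap$--invariance of $\decmap$, preserves the decoration, and, by $\starmap$--invariance of $\partialorientation$, preserves the property of being $\partialorientation$--oriented (since $\qi_{\terminal(e)}$ sends the $\partialorientation$--orientation of $\modelspace_{\bar e}$ to the $\partialorientation$--orientation of $\modelspace_{\overline{\qi_*(e)}}$). Thus it suffices to check, for each $\partialorientation$--oriented $e \in \link(c)$, the sign identity
\[
\sign_\partialorientation \attachmap_{\qi_*(e)} \;=\; \sign_\partialorientation \attachmap_e,
\]
where the left-hand sign is measured using $\qi_c(\omega)$ on $\modelspace_{\qi_*(c)}$ and $\partialorientation(\modelspace_{\overline{\qi_*(e)}})$, and the right-hand sign uses $\omega$ and $\partialorientation(\modelspace_{\bar e})$.

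The core of the argument is then a diagram chase. Starting from the $\omega$--orientation on $\modelspace_e$, pushing it forward by $\attachmap_{\qi_*(e)} \circ \qi_c$ produces, by definition, $\sign_\partialorientation \attachmap_{\qi_*(e)}$ times $\partialorientation(\modelspace_{\overline{\qi_*(e)}})$. By the compatibility condition, this is the same as pushing forward by $\qi_{\terminal(e)} \circ \attachmap_e$, which gives $\sign_\partialorientation \attachmap_e$ times $\qi_{\terminal(e)}(\partialorientation(\modelspace_{\bar e}))$. By $\starmap$--invariance of $\partialorientation$, the latter equals $\sign_\partialorientation \attachmap_e$ times $\partialorientation(\modelspace_{\overline{\qi_*(e)}})$, giving the desired equality of signs. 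Summing over $e \in \link(c) \cap \decmap^{-1}(\ornament)$ for each $\ornament \in \ornaments$ yields $\imbalance_c^{\decmap,\partialorientation} = \imbalance_{\qi_*(c)}^{\decmap,\partialorientation}$ as functions, hence certainly as classes modulo $\{\pm 1\}$.

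The main technical point to be careful about is the dependence on auxiliary orientation choices: the definition of $\imbalance$ quotients out by the global $\pm 1$ coming from the choice of orientation on the cylindrical vertex space, and this quotient is exactly what absorbs the possibility that $\qi_c$ need not preserve any canonical orientation. The only real input beyond bookkeeping is the compatibility equation, which converts an assumption about $\qi$ acting on $\modelspace$ into a local statement relating $\attachmap_e$ and $\attachmap_{\qi_*(e)}$ at each edge.
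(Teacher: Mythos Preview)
Your proof is correct and follows essentially the same argument as the paper's. The only cosmetic difference is that the paper picks arbitrary auxiliary orientations on both $\modelspace_c$ and $\modelspace_{\qi_*(c)}$ and then splits into the two cases of $\qi_c$ being orientation preserving or reversing (yielding sums that agree or differ by a global sign, respectively), whereas you push forward the orientation via $\qi_c$ and thereby land directly in the orientation-preserving case, with the $\{\pm 1\}$--quotient absorbing the choice.
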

\begin{proof}
Choose some orientation on $\modelspace_c$ and
$\modelspace_{\qi_*(c)}$.

If no edge in $\link(c)\cap\decmap^{-1}(\ornament)$ is $\partialorientation$--oriented
then
$\imbalance_c^{\decmap,\partialorientation}(\ornament)=0$, and, by
$\starmap(\modelspace)$--invariance, the same are true for
$\qi_*(c)$.

Now consider $\ornament\in\ornaments$ such that there exists an edge $e\in\link(c)\cap\decmap^{-1}(\ornament)$ such that $e$
is $\partialorientation$--oriented.
 Let $v:=\terminal(e)$.
By $\starmap(\modelspace)$--invariance,
$\qi_*(e)\in\link(\qi_*(c))\cap\decmap^{-1}(\ornament)$ with $\partialorientation(\modelspace_{\overline{\qi_*(e)}})=\qi_v(\partialorientation(\modelspace_{\bar{e}}))=\attachmap_{\qi_*(e)}\circ\qi_c\circ\attachmap_e^{-1}(\partialorientation(\modelspace_{\bar{e}}))$.
If $\qi_c$ is orientation reversing then $\attachmap_e$
and $\attachmap_{\qi_*(e)}$ have opposite signs, so that: 
\[\sum_{\qi_*(e)\in\link(\qi_*(c))\cap\decmap^{-1}(\ornament)}\sign\attachmap_{\qi_*(e)}=-\left(\sum_{e\in\link(c)\cap\decmap^{-1}(\ornament)}\sign\attachmap_e\right)\]

If $\qi_c$ is orientation preserving then $\attachmap_e$
and $\attachmap_{\qi_*(e)}$ have the same signs, so that:
\[\sum_{\qi_*(e)\in\link(\qi_*(c))\cap\decmap^{-1}(\ornament)}\sign\attachmap_{\qi_*(e)}=\sum_{e\in\link(c)\cap\decmap^{-1}(\ornament)}\sign\attachmap_e\qedhere\]
\end{proof}

The previous proposition shows we can use cylinder imbalances to
distinguish different cylinders.
The following lemma shows this holds up under refinement of the decoration.
\begin{lemma}
Suppose $\decmap'\from\tree\to\ornaments'$ is a refinement of
$\decmap$ and $\partialorientation'$ is an extension of
$\partialorientation$.
Suppose that the $\decmap$--partition of edges of $\tree$ is finer than
the partition into $\partialorientation$--oriented edges and
$\partialorientation$--unoriented edges.
Let $c$ be a cylindrical vertex.
 If $\imbalance_c^{\decmap,\partialorientation}$ is non-zero then $\imbalance_c^{\decmap',\partialorientation'}$ is
  non-zero.

Let $c'$ be a cylindrical vertex distinct from $c$.
If for every $\ornament\in\ornaments$ there exist
$\partialorientation$--oriented edges in
$\link(c)\cap\decmap^{-1}(\ornament)$ if and only if there exist $\partialorientation$--oriented edges in
$\link(c')\cap\decmap^{-1}(\ornament)$
then
$\imbalance_c^{\decmap,\partialorientation}\neq\imbalance_{c'}^{\decmap,\partialorientation}$
implies $\imbalance_c^{\decmap',\partialorientation'}\neq\imbalance_{c'}^{\decmap',\partialorientation'}$.
\end{lemma}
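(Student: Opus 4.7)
The plan is to derive both parts from a single push-forward identity for the imbalance. Since $\decmap'$ refines $\decmap$, there is a surjection $\pi\from\image\decmap'\to\image\decmap$ with $\pi\circ\decmap'=\decmap$, so for each $\ornament\in\ornaments$ the edges in $\link(c)\cap\decmap^{-1}(\ornament)$ partition into the sets $\link(c)\cap(\decmap')^{-1}(\ornament')$ for $\ornament'\in\pi^{-1}(\ornament)$. Because $\partialorientation'$ extends $\partialorientation$, for every $\partialorientation$--oriented edge $e$ the two orientations on $\modelspace_{\bar e}$ agree, so $\sign_\partialorientation\attachmap_e=\sign_{\partialorientation'}\attachmap_e$. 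Fixing an orientation of $\modelspace_c$ to pick representatives, the above observations yield
\[\imbalance_c^{\decmap,\partialorientation}(\ornament)=\sum_{\ornament'\in\pi^{-1}(\ornament)}\imbalance_c^{\decmap',\partialorientation'}(\ornament')\]
whenever $\ornament$'s edges at $c$ are $\partialorientation$--oriented. If instead those edges are $\partialorientation$--unoriented, then $\imbalance_c^{\decmap,\partialorientation}(\ornament)=0$ by definition, even though $\partialorientation'$ may orient some of them and the corresponding $\imbalance_c^{\decmap',\partialorientation'}(\ornament')$ may be nonzero.

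For the first claim, suppose $\imbalance_c^{\decmap,\partialorientation}$ is nonzero as an element of $\mathbb{Z}^\ornaments/\{\pm 1\}$. Pick a representative and an ornament $\ornament$ where it is nonzero. By the global hypothesis that the $\decmap$--partition of edges refines the oriented/unoriented partition, the edges in $\link(c)\cap\decmap^{-1}(\ornament)$ must all be $\partialorientation$--oriented, so the displayed identity applies at $\ornament$; some summand on the right is therefore nonzero, giving $\imbalance_c^{\decmap',\partialorientation'}\neq 0$.

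For the second claim I will prove the contrapositive. Assume $\imbalance_c^{\decmap',\partialorientation'}=\imbalance_{c'}^{\decmap',\partialorientation'}$ in $\mathbb{Z}^{\ornaments'}/\{\pm 1\}$, and choose orientations of $\modelspace_c$ and $\modelspace_{c'}$ so that the two representatives agree coordinatewise. The extra hypothesis guarantees that for each $\ornament\in\ornaments$, either both $c$ and $c'$ have all their $\ornament$--edges $\partialorientation$--oriented, or neither does. In the first case the displayed identity and the chosen equality on $\pi^{-1}(\ornament)$ yield $\imbalance_c^{\decmap,\partialorientation}(\ornament)=\imbalance_{c'}^{\decmap,\partialorientation}(\ornament)$; in the second case both sides vanish. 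Hence the chosen representatives agree on all of $\ornaments$, giving $\imbalance_c^{\decmap,\partialorientation}=\imbalance_{c'}^{\decmap,\partialorientation}$.

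The only real point to watch is the $\{\pm 1\}$ ambiguity coming from the arbitrary orientation of cylindrical vertex spaces; but since this ambiguity is inherited uniformly from one chosen orientation of $\modelspace_c$ (and, for Part 2, of $\modelspace_{c'}$), picking compatible representatives once serves at both the $\decmap$ and $\decmap'$ levels. No other obstacle arises.
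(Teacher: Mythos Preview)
Your proof is correct and follows essentially the same approach as the paper's: both arguments rest on the partition identity $\sum_{e\in\link(c)\cap\decmap^{-1}(\ornament)}\sign\attachmap_e=\sum_{\ornament'\in\pi^{-1}(\ornament)}\sum_{e\in\link(c)\cap(\decmap')^{-1}(\ornament')}\sign\attachmap_e$ on $\partialorientation$--oriented ornaments, use it directly for the first claim, and prove the second by contraposition. The only cosmetic difference is that the paper carries an explicit $\epsilon\in\{\pm 1\}$ relating the two chosen representatives for $c$ and $c'$, whereas you absorb this sign into the choice of orientations on $\modelspace_c$ and $\modelspace_{c'}$; these are equivalent bookkeeping devices.
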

\begin{proof}
If $c$ is unbalanced then there exists an $\ornament\in\ornaments$ such that
$\imbalance_c^{\decmap,\partialorientation}(\ornament)\neq 0$, which
implies that there are $\partialorientation$--oriented edges in $\link(c)\cap\decmap^{-1}(\ornament)$.
Since the $\decmap$--partition of edges of $\tree$ is finer than
the partition into $\partialorientation$--oriented edges and
$\partialorientation$--unoriented edges, all edges in $\link(c)\cap\decmap^{-1}(\ornament)$ are
$\partialorientation$--oriented.
Since $\partialorientation'$ extends $\partialorientation$, all
edges in $\link(c)\cap\decmap^{-1}(\ornament)$ are $\partialorientation'$--oriented, and since
$\decmap'$ refines $\decmap$, we have, with respect to
$\partialorientation'$ and  some fixed
orientation of $\modelspace_c$, that:
\[\sum_{e\in\link(c)\cap\decmap^{-1}(\ornament)}\sign\attachmap_e=\sum_{\ornament'\in\decmap'\circ\decmap^{-1}(\ornament)}\sum_{e\in\link(c)\cap\decmap^{-1}(\ornament')}\sign\attachmap_e\]

The left hand side is non-zero, so one of the terms of the outer sum
on the right hand side must be non-zero. Thus,  $\imbalance_c^{\decmap',\partialorientation'}$ is
  not identically zero.

For the second statement, suppose, for contraposition, that
$\imbalance_c^{\decmap',\partialorientation'}=\imbalance_{c'}^{\decmap',\partialorientation'}$.
Having chosen orientations on $\modelspace_c$ and
$\modelspace_{c'}$, there is an $\epsilon\in\pm 1$ such that for all
$\ornament'\in\ornaments'$:
\[\sum_{e\in\link(c)\cap(\decmap')^{-1}(\ornament')}\sign\attachmap_e=\epsilon \left(\sum_{e\in\link(c')\cap(\decmap')^{-1}(\ornament')}\sign\attachmap_e\right)\]

If $\ornament\in\ornaments$ is such that there are no
$\partialorientation$--oriented edges in either
$\link(c)\cap\decmap^{-1}(\ornament)$ or
$\link(c')\cap\decmap^{-1}(\ornament)$ then:
\[\sum_{e\in\link(c)\cap\decmap^{-1}(\ornament)}\sign\attachmap_e=
0=\epsilon\left(\sum_{e\in\link(c')\cap\decmap^{-1}(\ornament)}\sign\attachmap_e\right)\]

Otherwise, by hypothesis, there are $\partialorientation$--oriented edges in both
$\link(c)\cap\decmap^{-1}(\ornament)$ and
$\link(c')\cap\decmap^{-1}(\ornament)$.
We conclude that
$\imbalance_c^{\decmap,\partialorientation}=\imbalance_{c'}^{\decmap,\partialorientation}$
from the following computation, in which the first and third
equalities are from the facts that the $\decmap$--partition of edges of $\tree$ is finer than
the partition into $\partialorientation$--oriented edges and
$\partialorientation$--unoriented edges and  that $\decmap'$ refines
$\decmap$, and the second equality is from the hypothesis that $\imbalance_c^{\decmap',\partialorientation'}=\imbalance_{c'}^{\decmap',\partialorientation'}$.
\begin{multline*}
\sum_{e\in\link(c)\cap\decmap^{-1}(\ornament)}\sign\attachmap_e=\sum_{\ornament'\in\decmap'\circ\decmap^{-1}(\ornament)}\sum_{e\in\link(c)\cap(\decmap')^{-1}(\ornament')}\sign\attachmap_e\\
=\sum_{\ornament'\in\decmap'\circ\decmap^{-1}(\ornament)}\epsilon\left(\sum_{e\in\link(c')\cap(\decmap')^{-1}(\ornament')}\sign\attachmap_e\right)=\epsilon\left(\sum_{e\in\link(c')\cap\decmap^{-1}(\ornament)}\sign\attachmap_e\right)\qedhere
\end{multline*}
\end{proof}

Given $\decmap$ and $\partialorientation$ that are both
$\starmap(\modelspace)$--invariant we 
define the process of \emph{cylinder refinement}
as follows.
\begin{enumerate}
\item By passing to the coarsest common refinement, we may assume
that the $\decmap$--partition of edges of $\tree$ is finer than
the partition into $\partialorientation$--oriented edges and
$\partialorientation$--unoriented edges.
The refined $\decmap$ is still $\starmap(\modelspace)$--invariant.
\item Consider the $\partialorientation$--unoriented, unbalanced
  cylinders. It is possible to choose orientations of their vertex
  spaces so that if $c$ and $c'$ are two such cylinders with
  $\imbalance_c^{\decmap,\partialorientation}=\imbalance_{c'}^{\decmap,\partialorientation}$
  then for all $\ornament\in\ornaments$ we have
  $\sum_{e\in\link(c)\cap\decmap^{-1}(\ornament)}\sign\attachmap_e=\sum_{e\in\link(c')\cap\decmap^{-1}(\ornament)}\sign\attachmap_e$. 
Extend $\partialorientation$ to $\partialorientation'$ by taking these
orientations of the unbalanced cylindrical vertex spaces.
\item If $e$ is a $\partialorientation$--unoriented edge such that $c:=\initial(e)$ is cylindrical and $\partialorientation'$--oriented,
define
$\partialorientation'(\modelspace_{\bar{e}})=\attachmap_e(\partialorientation'(\modelspace_c))$.
\item Define $\ornaments':=\ornaments\times\{-1,0,1\}$.
Define $\decmap'(e):=(\decmap(e),\sign_{\partialorientation'}(e))$ for
each edge and $\decmap'(v):=(\decmap(v),0)$ for each vertex.
\end{enumerate}

\begin{lemma}\label{lemma:cylinderrefinement}
Suppose that $\decmap$ and $\partialorientation$ are both
$\starmap(\modelspace)$--invariant and 
 that the $\decmap$--partition of edges of $\tree$ is finer than
the partition into $\partialorientation$--oriented edges and
$\partialorientation$--unoriented edges.
Let $\decmap'$ and
$\partialorientation'$ be constructed via cylinder refinement, as
above. Then
  $\decmap'$ is a $\starmap(\modelspace)$--invariant refinement of $\decmap$ and 
  $\partialorientation'$ is a $\starmap(\modelspace)$--invariant
  extension of $\partialorientation$.
Moreover, the $\decmap'$--partition on edges is finer than the partition into $\partialorientation'$--oriented and
$\partialorientation'$--unoriented edges.
\end{lemma}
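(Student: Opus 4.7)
The plan is to verify the three conclusions in order, with the bulk of the effort going into the $\starmap(\modelspace)$-equivariance of the orientation choices made in step 2 of the construction.

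First I would dispose of the structural claims. That $\decmap'$ refines $\decmap$ is immediate from $\ornaments' = \ornaments \times \{-1,0,1\}$ via projection to the first coordinate, and that $\partialorientation'$ extends $\partialorientation$ is built into steps 2 and 3 of the construction, which only assign orientations to previously $\partialorientation$-unoriented sets. For the final edge-partition claim, I would use the standing hypothesis that the $\decmap$-partition already separates $\partialorientation$-oriented edges from $\partialorientation$-unoriented ones: step 3 propagates orientations from a cylinder simultaneously to every $\partialorientation$-unoriented edge incident to it, so the same $\decmap$-partition still separates $\partialorientation'$-oriented from $\partialorientation'$-unoriented edges. The extra sign coordinate of $\decmap'$ then explicitly records the dichotomy, since $\sign_{\partialorientation'}(e) = 0$ precisely when $e$ is $\partialorientation'$-unoriented.

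The substance of the proof is the $\starmap(\modelspace)$-invariance of $\decmap'$ and $\partialorientation'$. The hard part will be making the orientation choices in step 2 coherent with the $\starmap$-action. I would proceed by fixing, for each $\starmap$-orbit of $\partialorientation$-unoriented unbalanced cylinders, a representative $c_0$ together with an arbitrary orientation on $\modelspace_{c_0}$, and then defining the orientation on a general orbit member $c = \qi_*(c_0)$ to be the image under $\qi_{c_0}$ of the chosen orientation on $\modelspace_{c_0}$. The key observation making this well defined is the proposition preceding \fullref{prop:imbalancewrtqi}: any $\starmap$-element fixing $c_0$ must act orientation-preservingly on $\modelspace_{c_0}$, since $c_0$ is unbalanced. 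Hence two different witnesses $\qi, \qi'$ with $\qi_*(c_0) = \qi'_*(c_0) = c$ induce the same orientation on $\modelspace_c$, because $(\qi')^{-1}\qi$ fixes $c_0$ and is therefore orientation preserving there. Between distinct orbits with a common imbalance class I would exploit the freedom to flip the reference orientation of each representative to align the sign distributions demanded by step 2; this freedom is compatible with the orbit-wise propagation because flipping the orientation at $c_0$ flips it uniformly across the whole orbit.

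With these equivariant choices in hand, the remaining invariance statements follow from the compatibility condition on the splitting of $\starmap$-maps. Since $(\attachmap_{\qi_*(e)}\circ\qi_{\initial(e)}) \circ (\qi_{\terminal(e)}\circ\attachmap_e)^{-1}$ is orientation preserving on $\modelspace_{\overline{\qi_*(e)}}$ and $\qi_c$ is orientation preserving on each $\partialorientation'$-oriented cylinder $c$ by construction, I would conclude $\sign_{\partialorientation'}\attachmap_{\qi_*(e)} = \sign_{\partialorientation'}\attachmap_e$ on edges that were already $\partialorientation$-oriented, by the usual sign-multiplication identity. For edges oriented only in step 3, both sides equal $+1$ by the propagation rule, and this rule is itself $\starmap$-equivariant because $\qi_{\terminal(e)}$ carries $\attachmap_e(\partialorientation'(\modelspace_c))$ to $\attachmap_{\qi_*(e)}(\partialorientation'(\modelspace_{\qi_*(c)}))$, which is again the orientation-preservation of $\qi_c$ rephrased. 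This yields $\starmap$-invariance of $\partialorientation'$, and invariance of $\decmap'$ follows since the sign coordinate is the only new data.
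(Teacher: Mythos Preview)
Your approach is essentially the same as the paper's, with one point handled more explicitly: where the paper simply asserts that the step-2 orientation choices make $\qi_c$ orientation preserving on unbalanced cylinders (citing \fullref{prop:imbalancewrtqi} and ``our choice of orientation''), you actually construct the choices orbit-by-orbit via propagation from representatives, using the contrapositive of the proposition preceding \fullref{prop:imbalancewrtqi} for well-definedness. This is a legitimate and perhaps clearer way to carry out that step.

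One sentence overreaches. You claim that ``the same $\decmap$-partition still separates $\partialorientation'$-oriented from $\partialorientation'$-unoriented edges,'' arguing that step~3 orients all $\partialorientation$-unoriented edges at a given cylinder simultaneously. That reasoning does not establish the claim: two $\partialorientation$-unoriented edges with the same $\decmap$-value can sit at different cylinders, one unbalanced and one balanced, so one becomes $\partialorientation'$-oriented and the other does not. The lemma's hypothesis says nothing that forces their cylinders to share imbalance status. Fortunately your next sentence---that the sign coordinate of $\decmap'$ records the dichotomy because $\sign_{\partialorientation'}(e)=0$ exactly when $e$ is $\partialorientation'$-unoriented---is the correct argument and is what the paper uses; just drop the overclaim about $\decmap$ alone. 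Also, your case analysis for $\decmap'$-invariance should explicitly note the ``sign $=0$'' cases (edges at $\partialorientation'$-unoriented cylinders, edges that remain $\partialorientation'$-unoriented, and vertices), which are trivially invariant by invariance of $\decmap$; the paper handles these in its catch-all third case.
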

\begin{proof}
Suppose $e$ is a $\partialorientation$--unoriented edge with
$\initial(e):=c$ cylindrical and unbalanced.
Then $e$ is $\partialorientation'$--oriented and
$\sign_{\partialorientation'}\attachmap_e=1$, so
$\decmap'(e)=(\decmap(e),1)$.

By invariance of $\decmap$ and $\partialorientation$, \fullref{prop:imbalancewrtqi}, and our choice of orientation on
$\modelspace_c$ and $\modelspace_{\qi_*(c)}$, if
$\qi\in\starmap(\modelspace)$ 
then $\qi_*(c)$ is unbalanced, and $\qi_*(e)$ is
$\partialorientation$--unoriented but $\partialorientation'$ oriented
with $\sign_{\partialorientation'}\attachmap_{\qi_*(e')}=1$.
Moreover, $\qi_c$ is orientation preserving, so
$\qi(\partialorientation'(\modelspace_{\bar{e}}))=\partialorientation'(\modelspace_{\overline{\qi_*(e)}})$.
It also means that $\decmap'(\qi_*(e))=(\decmap(\qi_*(e)),1)=(\decmap(e),1)=\decmap'(e)$.

Now suppose $e$ is $\partialorientation$--oriented and $\initial(e):=c$
is cylindrical and unbalanced.
Invariance of $\partialorientation'$ on $e$ is inherited from
invariance of $\partialorientation$.
From the proof of \fullref{prop:imbalancewrtqi}, since $\qi_c$ is
orientation preserving, $\sign_{\partialorientation'}
\attachmap_e=\sign_{\partialorientation'}\attachmap_{\qi_*(e)}$.
Along with invariance of $\decmap$, this gives us
$\decmap'(e)=\decmap'(\qi_*(e))$.

For vertices and remaining edges, $\decmap'(t)=(\decmap(t),0)=(\decmap(\qi_*(t)),0)=\decmap'(\qi_*(t))$.

For the final claim, suppose $e$ is $\partialorientation'$--oriented
and $e'$ is $\partialorientation'$--unoriented.
Since $\partialorientation'$ extends $\partialorientation$, $e'$ is
also $\partialorientation$--unoriented.
If $e$ is $\partialorientation$--oriented then
$\decmap(e)\neq\decmap(e')$  because the $\decmap$--partition of edges of $\tree$ is finer than
the partition into $\partialorientation$--oriented edges and
$\partialorientation$--unoriented edges.
Thus, $\decmap'(e)\neq\decmap'(e')$, since
$\decmap'$ refines $\decmap$.

If $e$ is $\partialorientation$--unoriented then
$\decmap'(e)=(\decmap(e),\sign_{\partialorientation'}\attachmap_e)$ and
$\decmap'(e')=(\decmap(e'),0)$ differ in the second coordinate.
\end{proof}

\begin{lemma}\label{lemma:nozerosinimbalance}
  Suppose that $\decmap$ and $\partialorientation$ are
  $\starmap(\modelspace)$--invariant and stable under
  cylinder refinement. 
If $c$ is an unbalanced cylindrical vertex and
$\ornament\in\ornaments$ such that 
$\decmap^{-1}(\ornament)\cap\link(c)\neq\emptyset$ then either every
edge in $\decmap^{-1}(\ornament)\cap\link(c)$ has orientation
preserving attaching map or every
edge in $\decmap^{-1}(\ornament)\cap\link(c)$ has orientation
reversing attaching map.
\end{lemma}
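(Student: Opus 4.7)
The plan is to unpack what ``stable under cylinder refinement'' forces at every step of the refinement process described just before \fullref{lemma:nozerosinimbalance}, and then read off the desired conclusion directly from the final step. Throughout, let $c$ be an unbalanced cylindrical vertex and let $\ornament\in\ornaments$ with $\decmap^{-1}(\ornament)\cap\link(c)\neq\emptyset$. Let $\decmap'$ and $\partialorientation'$ denote the output of one round of cylinder refinement applied to $\decmap$ and $\partialorientation$; stability means that $\decmap'$ is a trivial refinement of $\decmap$ and that $\partialorientation'=\partialorientation$.

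First I would handle the existence of an orientation on $\modelspace_c$. Step~(2) of cylinder refinement orients every $\partialorientation$--unoriented unbalanced cylindrical vertex. Since $\partialorientation'=\partialorientation$, no new orientations are introduced at this step, so $c$ must already be $\partialorientation$--oriented. Fix this orientation.

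Next I would show that every edge in $\link(c)$ is $\partialorientation$--oriented. Step~(3) of cylinder refinement orients $\modelspace_{\bar{e}}$ for every $\partialorientation$--unoriented edge $e$ whose initial vertex is cylindrical and $\partialorientation'$--oriented. Since $c$ is $\partialorientation$--oriented (hence $\partialorientation'$--oriented) and $\partialorientation'=\partialorientation$ by stability, it follows that no $e\in\link(c)$ is $\partialorientation$--unoriented. In particular, for every $e\in\decmap^{-1}(\ornament)\cap\link(c)$ the value $\sign_{\partialorientation}\attachmap_e\in\{-1,+1\}$.

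Finally I would invoke stability of $\decmap$. By construction, $\decmap'(e)=(\decmap(e),\sign_{\partialorientation'}\attachmap_e)=(\decmap(e),\sign_{\partialorientation}\attachmap_e)$ for each edge $e$. Stability means that the second coordinate is a function of the first, so any two edges $e_1,e_2\in\decmap^{-1}(\ornament)\cap\link(c)$ satisfy $\sign_{\partialorientation}\attachmap_{e_1}=\sign_{\partialorientation}\attachmap_{e_2}$; with respect to the chosen orientation on $\modelspace_c$ either both attaching maps are orientation preserving or both are orientation reversing, as claimed. No step here is really the hard part; the only thing to be careful about is correctly tracing what stability of the pair $(\decmap,\partialorientation)$ means for each of the four substeps of the cylinder refinement procedure, and in particular recognizing that stability at steps (2) and (3) is what forces $c$ and every edge in $\link(c)$ to carry a genuine orientation in the first place.
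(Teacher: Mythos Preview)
Your proof is correct and follows the same approach as the paper's. The paper's proof is a one-sentence summary---``Cylinder refinement orients every edge in an unbalanced cylinder and distinguishes edges with orientation preserving attaching map from those with orientation reversing attaching map''---and you have simply unpacked this by tracing stability through each of the substeps (2), (3), and (4) of the cylinder refinement procedure to reach the same conclusion.
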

\begin{proof}
  Cylinder refinement orients every edge in an unbalanced cylinder and
  distinguishes edges with orientation preserving attaching map from
  those with orientation reversing attaching map.
\end{proof}

\subsection{Non-elementary vertices}
Given a tree of spaces whose underlying tree is decorated
$\decmap\from\tree\to\ornaments$, we get a
decoration on the peripheral structure of each vertex space via
mapping to the tree and composing with $\decmap$.

Throughout this subsection we assume that 
   $\decmap\from\tree\to\ornaments$ is a
   $\starmap(\modelspace)$--invariant decoration and
   $\partialorientation$ is a  $\starmap(\modelspace)$--invariant partial orientation.

Define:
\[\ornaments'=\ornaments\times\!\!\!\coprod_{Q\in\mathrm{Maptypes}}
  \lquotient{\starmap(Z_Q,\per_Q)}{(\ornaments^{\per_Q}\times
    (\cup_{x\in\bdry\per_{Q}}x \amalg\{\nullvar\})^{\per_Q}\times
    (\per_Q\amalg \{\nullvar\}))}\]

The left action of $\starmap(Z_Q,\per_Q)$ is
given by $\qi.(\iso,\partialorientation,e):=(\iso\circ\qi^{-1},\partialorientation\circ\qi^{-1},
\qi_*(e))$.

If $e\in\tree$ is an edge with non-elementary terminus
$v:=\terminal(e)$, $Q=\llbracket
(\modelspace_v,\per_v)\rrbracket$,
$\modelmap_v\in\starmap((\modelspace_v,\per_v),(Z_Q,\per_Q))$, and
$\partialorientation_v$ is a partial orientation on $\per_v$, define:
\[\decmap'(v):=\left(\decmap(v),\starmap(Z_Q,\per_Q).(\decmap|_{\per_v}\circ\modelmap_v^{-1},\partialorientation_v\circ\modelmap_v^{-1},\texttt{`NULL'})\right)\]
\[\decmap'(e):=\left(\decmap(e),\starmap(Z_Q,\per_Q).(\decmap|_{\per_v}\circ\modelmap_v^{-1},\partialorientation_v\circ\modelmap_v^{-1}
  ,(\modelmap_v)_*(e))\right)\]
Note that the image is independent of the 
 choice of $\modelmap_v\in\starmap((\modelspace_v,\per_v),(Z_Q,\per_Q))$. 

Composition of $\decmap'$ with
projection to the first factor of $\ornaments'$ recovers $\decmap$, so
$\decmap'$ is a refinement of $\decmap$.

\begin{proposition}\label{prop:redecoartionisqiinvariant}
The refinement $\decmap'$ of $\decmap$ defined above is $\starmap(\modelspace)$--invariant.
\end{proposition}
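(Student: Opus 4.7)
The plan is to leverage the fact that the quotient $\lquotient{\starmap(Z_Q,\per_Q)}{(\cdots)}$ appearing in the definition of $\ornaments'$ is designed precisely to absorb the ambiguity in the choice of model map $\modelmap_v$, and that an element $\qi \in \starmap(\modelspace)$ transports one choice of model map to another.

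Fix $\qi \in \starmap(\modelspace)$ and a non-elementary vertex $v$; I will treat vertices and edges simultaneously, writing the triple $(\decmap|_{\per_v}\circ\modelmap_v^{-1},\partialorientation_v\circ\modelmap_v^{-1},x)$ with $x = \nullvar$ in the vertex case and $x = (\modelmap_v)_*(e)$ in the edge case. First I note that because $\qi$ restricts to $\qi_v \in \starmap((\modelspace_v,\per_v),(\modelspace_{\qi_*(v)},\per_{\qi_*(v)}))$, the two vertex spaces are $\starmap$-equivalent, so they share the same class $Q = \llbracket(\modelspace_v,\per_v)\rrbracket$, and thus the same $\model(Q) = (Z_Q,\per_Q)$. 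Also $\decmap(\qi_*(v)) = \decmap(v)$ and $\decmap(\qi_*(e)) = \decmap(e)$ by $\starmap(\modelspace)$-invariance of $\decmap$, so equality of the first coordinates of $\decmap'$ is immediate.

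Next, pick any $\modelmap_v \in \starmap((\modelspace_v,\per_v),(Z_Q,\per_Q))$ and any $\modelmap_{\qi_*(v)} \in \starmap((\modelspace_{\qi_*(v)},\per_{\qi_*(v)}),(Z_Q,\per_Q))$, and set
\[
\alpha \;:=\; \modelmap_{\qi_*(v)}\circ\qi_v\circ\modelmap_v^{-1} \;\in\; \starmap(Z_Q,\per_Q).
\]
Using $\starmap(\modelspace)$-invariance of $\decmap$ and $\partialorientation$, together with the fact that $\qi_v$ sends peripheral sets of $\per_v$ to peripheral sets of $\per_{\qi_*(v)}$ via the induced bijection $\qi_*$ on incident edges, I get the identities
\[
\decmap|_{\per_{\qi_*(v)}} \circ \qi_v \;=\; \decmap|_{\per_v}, \qquad \partialorientation_{\qi_*(v)} \circ \qi_v \;=\; \partialorientation_v,
\]
and $(\modelmap_{\qi_*(v)})_*(\qi_*(e)) = \alpha_* \circ (\modelmap_v)_*(e)$. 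Solving $\qi_v = \modelmap_{\qi_*(v)}^{-1}\circ\alpha\circ\modelmap_v$ and substituting, the triple defining $\decmap'(\qi_*(v))$ or $\decmap'(\qi_*(e))$ with respect to $\modelmap_{\qi_*(v)}$ is exactly the image under the left action of $\alpha$ of the triple defining $\decmap'(v)$ or $\decmap'(e)$ with respect to $\modelmap_v$.

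Since we are taking the quotient by the left action of $\starmap(Z_Q,\per_Q)$, these two triples represent the same element of the quotient, so $\decmap'(\qi_*(v)) = \decmap'(v)$ and $\decmap'(\qi_*(e)) = \decmap'(e)$. The only real obstacle is notational bookkeeping — checking carefully that the group action on the triples is indeed by left multiplication in the first two coordinates (via precomposition with $\qi^{-1}$) and by $\qi_*$ in the last coordinate, matching the formulas above — but once the identifications are set up correctly, the invariance is a direct consequence of how the quotient is defined.
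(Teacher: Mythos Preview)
Your proof is correct and follows essentially the same approach as the paper: you define the conjugating element $\alpha = \modelmap_{\qi_*(v)}\circ\qi_v\circ\modelmap_v^{-1}\in\starmap(Z_Q,\per_Q)$ (the paper calls it $\iso$), use the $\starmap(\modelspace)$--invariance of $\decmap$ and $\partialorientation$ to rewrite the triple for $\qi_*(v)$ (or $\qi_*(e)$) as $\alpha$ acting on the triple for $v$ (or $e$), and then observe that the quotient by $\starmap(Z_Q,\per_Q)$ identifies them. The paper presents this as an explicit chain of equalities for $\decmap'(\qi_*(e))$, but the content is identical.
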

\begin{proof}
Take $\qi\in\starmap(\modelspace)$.
If $e$ is an edge with $v:=\terminal(e)$ non-elementary,
$Q:=\llbracket\modelspace_v,\per_v\rrbracket$, and $\iso:=\modelmap_{\qi_*(v)}\circ
  \qi_v\circ\modelmap_v^{-1}\in\starmap(Z_Q,\per_Q)$, then:
\begin{align*}
\decmap'(\qi_*(e))&=\left(\decmap(\qi_*(e)),\starmap(Z_Q,\per_Q).(\decmap|_{\per_{\qi_*(v)}}\circ\modelmap_{\qi_*(v)}^{-1},\partialorientation_{{\qi_*(v)}}\circ\modelmap_{\qi_*(v)}^{-1},(\modelmap_{\qi_*(v)})_*(\qi_*(e)))\right)\\
                   &=\left(\decmap(e),\starmap(Z_Q,\per_Q).(\decmap|_{\per_{v}}\circ\qi_{v}^{-1}\circ\modelmap_{\qi_*(v)}^{-1},\partialorientation_{{v}}\circ\qi_{v}^{-1}\circ\modelmap_{\qi_*(v)}^{-1},(\modelmap_{\qi_*(v)})_*(\qi_*(e)))\right)\\
                   &=\left(\decmap(e),\starmap(Z_Q,\per_Q).(\decmap|_{\per_{v}}\circ\modelmap_{v}^{-1}\circ\iso^{-1},\partialorientation_{{v}}\circ\modelmap_{v}^{-1}\circ\iso^{-1},(\iso\circ\modelmap_{v})_*(e))\right)\\
                   &=\left(\decmap(e),\starmap(Z_Q,\per_Q).(\decmap|_{\per_{v}}\circ\modelmap_{v}^{-1},\partialorientation_{{v}}\circ\modelmap_{v}^{-1},(\modelmap_{v})_*(e))\right)\\
&=\decmap'(e)
\end{align*}
Thus, $\decmap'$ is $\starmap(\modelspace)$--invariant.
\end{proof}

\begin{proposition}\label{prop:redecorationvslocalqi}
For vertices $v,w\in\tree$,  $\decmap'(v)=\decmap'(w)$ if and only if
there exists
$\qi\in\starmap((\modelspace_v,\per_v,\decmap,\partialorientation),(\modelspace_w,\per_w,\decmap,\partialorientation))$.

For edges $e,f\in\tree$ with $v:=\terminal(e)$ and $w:=\terminal(f)$
both non-elementary, $\decmap'(e)=\decmap'(f)$ if and only if there
exists
$\qi\in\starmap((\modelspace_v,\per_v,\decmap,\partialorientation),(\modelspace_w,\per_w,\decmap,\partialorientation))$
with $\qi_*(e)=f$.
\end{proposition}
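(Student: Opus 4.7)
The proposition is essentially a formal unwinding of the definition of $\decmap'$, combined with the observation that the action of $\starmap(Z_Q,\per_Q)$ on the triples $(\iso,\partialorientation,e)$ was set up precisely so that two triples lie in the same orbit if and only if they differ by a $\starmap$-equivalence of model spaces.

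For the vertex statement, the plan is as follows. Suppose first that $\decmap'(v)=\decmap'(w)$. Projecting to the first coordinate gives $\decmap(v)=\decmap(w)$, and in particular $v,w$ are both non-elementary of the same type, so $Q:=\llbracket(\modelspace_v,\per_v)\rrbracket=\llbracket(\modelspace_w,\per_w)\rrbracket$ and we have fixed model maps $\modelmap_v,\modelmap_w$ into $(Z_Q,\per_Q)$. Equality of the second coordinate then says that the triples $(\decmap|_{\per_v}\circ\modelmap_v^{-1},\partialorientation_v\circ\modelmap_v^{-1},\nullvar)$ and $(\decmap|_{\per_w}\circ\modelmap_w^{-1},\partialorientation_w\circ\modelmap_w^{-1},\nullvar)$ lie in the same $\starmap(Z_Q,\per_Q)$-orbit. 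Pick $\iso\in\starmap(Z_Q,\per_Q)$ witnessing this, and set $\qi:=\modelmap_w^{-1}\circ\iso\circ\modelmap_v$. By construction $\qi\in\starmap((\modelspace_v,\per_v),(\modelspace_w,\per_w))$, and the orbit identities translate directly into $\decmap|_{\per_w}\circ\qi=\decmap|_{\per_v}$ and $\partialorientation_w\circ\qi=\partialorientation_v$, i.e.\ $\qi$ preserves both the decoration and the partial orientation. Conversely, given such a $\qi$, set $\iso:=\modelmap_w\circ\qi\circ\modelmap_v^{-1}\in\starmap(Z_Q,\per_Q)$; then the compatibility of $\qi$ with $\decmap$ and $\partialorientation$ translates to the statement that $\iso$ carries the first triple to the second, so they lie in a common orbit and the NULL third coordinate is preserved automatically.

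The edge case is handled identically, except the NULL in the third coordinate is replaced by $(\modelmap_v)_*(e)$ and $(\modelmap_w)_*(f)$ respectively. The existence of $\iso\in\starmap(Z_Q,\per_Q)$ intertwining the full triples yields the additional identity $\iso_*((\modelmap_v)_*(e))=(\modelmap_w)_*(f)$; pushing this through by $\modelmap_w^{-1}$ gives $\qi_*(e)=f$. For the converse, an $\iso$ constructed as above automatically carries the third coordinate to the third coordinate.

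The only points that require care are checking that the choice of $\modelmap_v$ does not matter (already noted after the definition of $\decmap'$, and stemming from the fact that any two choices differ by left multiplication by an element of $\starmap(Z_Q,\per_Q)$, which is exactly the group being quotiented), and that the compatibility of $\qi$ with the $\ornaments$-valued decoration $\decmap$ on peripheral sets really is encoded in the first coordinate of the triple after taking the quotient. Neither is a serious obstacle; they are built into the definition of $\decmap'$ and the $\starmap(Z_Q,\per_Q)$-action. No delicate argument about cylinders, orientation consistency across edges, or uniformity of constants is needed here, since we are only comparing the local structure at a single vertex.
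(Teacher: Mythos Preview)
Your proposal is correct and follows essentially the same approach as the paper: unwind the definition of $\decmap'$, observe that equality in the second coordinate means the two triples lie in a common $\starmap(Z_Q,\per_Q)$-orbit, and conjugate the witnessing $\iso$ by the model maps to obtain $\qi:=\modelmap_w^{-1}\circ\iso\circ\modelmap_v$. The paper does the edge case first and says vertices are similar; you do the reverse, but the content is identical. One minor point: the equality $\llbracket(\modelspace_v,\per_v)\rrbracket=\llbracket(\modelspace_w,\per_w)\rrbracket$ is really forced by the second coordinate of $\decmap'$ (since $\ornaments'$ is a disjoint union over $Q$), not by $\decmap(v)=\decmap(w)$ alone, but this does not affect the argument.
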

\begin{proof}
  We give the proof for edges. The proof for vertices is similar.

Let $Q=\llbracket \modelspace_v,\per_v\rrbracket$.
By definition, $\decmap'(e)=\decmap'(f)$ if and only if
$\decmap(e)=\decmap(f)$ and there exists $\iso\in\starmap(Z_Q,\per_Q)$
such that 
\begin{enumerate}
\item
  $\decmap|_{\per_v}\circ\modelmap_v^{-1}\circ\iso^{-1}=\decmap|_{\per_w}\circ\modelmap_w^{-1}$\label{item:actiondec}
\item $\partialorientation_{v}\circ\modelmap_v^{-1}\circ\iso^{-1}=\partialorientation_{w}\circ\modelmap_w^{-1}$\label{item:actionor}
\item $(\iso\circ\modelmap_v)_*(e)=(\modelmap_w)_*(f)$\label{item:invariance}
\end{enumerate}

Define $\qi:=\modelmap_w^{-1}\circ\iso\circ\modelmap_v\in
\starmap((\modelspace_v,\per_v),(\modelspace_w,\per_w))$.
Item \ref{item:actiondec} is equivalent to
$\decmap|_{\per_v}\circ\qi^{-1}=\decmap|_{\per_w}$.
Item \ref{item:actionor} is equivalent to
$\partialorientation_{v}\circ\qi^{-1}=\partialorientation_{w}$.
Item \ref{item:invariance} is equivalent to $\qi_*(e)=f$.
\end{proof}

\begin{corollary}\label{lemma:reversingmap}
 There exists a $\starmap(\modelspace)$--invariant extension
 $\partialorientation'$ of $\partialorientation$ such that for any
 edge $e$ with $v:=\terminal(e)$ non-elementary, $e$ is
 $\partialorientation'$--unoriented if and only if the stabilizer of
 $\modelspace_{\bar{e}}$ in
 $\starmap(\modelspace_v,\per_v,\decmap,\partialorientation)$ contains
 an infinite dihedral group. 
\end{corollary}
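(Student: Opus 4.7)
The plan is to construct $\partialorientation'$ by equivariantly orienting each $\partialorientation$-unoriented edge $e$ with $v := \terminal(e)$ non-elementary whose stabilizer $S_e$ of $\modelspace_{\bar{e}}$ in $\starmap((\modelspace_v,\per_v,\decmap,\partialorientation))$ admits no orientation-reversing element on the quasi-line $\modelspace_{\bar{e}}$, and leaving the remaining edges $\partialorientation'$-unoriented. To match the statement of the corollary, the essential step is the equivalence: $S_e$ contains an orientation-reversing element if and only if $S_e$ contains a copy of the infinite dihedral group.

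The \emph{if} direction is immediate, since any non-central involution of $D_{\infty}$ acts as a reflection on its unique translation axis. For the converse, note that $S_e$ always contains an infinite-order, orientation-preserving element $t$: the infinite cyclic part of $G_{\bar{e}} < G_v$ acts by left multiplication as an isometry of $\modelspace_v$ translating $\modelspace_{\bar{e}}$, and $G$-invariance of $\decmap$ and $\partialorientation$ places this action in $S_e$. Given any orientation-reverser $r \in S_e$, a calculation with the action of $\langle r, t\rangle$ on the quasi-line produces an involution $s$ in the coset $r S_e^{+}$ (where $S_e^{+}$ is the orientation-preserving kernel of $S_e$) satisfying $sts^{-1} = t^{-1}$, whence $\langle s, t \rangle \cong D_{\infty}$ sits inside $S_e$.

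Having established the equivalence, the extension is built orbit-by-orbit. By $\starmap(\modelspace)$-invariance of $\decmap$ and $\partialorientation$, the ``no orientation-reverser'' condition on $S_e$ is constant along $\starmap(\modelspace)$-orbits of edges. For each such orbit I would fix a representative $e_0$, choose an orientation of $\modelspace_{\bar{e_0}}$, and for any $\qi \in \starmap(\modelspace)$ define $\partialorientation'(\modelspace_{\overline{\qi_*(e_0)}}) := \qi_v(\partialorientation'(\modelspace_{\bar{e_0}}))$. This is well-defined because two equivalences in $\starmap(\modelspace)$ sending $e_0$ to the same edge differ by an element of $S_{e_0}$, which is orientation-preserving by assumption. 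On $\partialorientation$-oriented edges we are forced to set $\partialorientation' := \partialorientation$; this is consistent because on such $e$ the stabilizer $S_e$ already fixes $\partialorientation(\modelspace_{\bar{e}})$, hence contains no orientation-reverser and (by the equivalence just proved) no $D_{\infty}$.

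The main obstacle is the converse direction of the equivalence, specifically producing a \emph{genuine} involution $s \in S_e$ realizing $sts^{-1} = t^{-1}$. Elements of $\starmap((\modelspace_v,\per_v))$ are identified modulo bounded distance on all of $\modelspace_v$ rather than just on the sub-quasi-line $\modelspace_{\bar{e}}$, so some care is needed to correct a raw orientation-reverser $r$ by a suitable power of $t$ and verify that the result squares to the identity in the full quotient, not merely on $\modelspace_{\bar{e}}$, before it can be declared a member of a true dihedral subgroup of $S_e$.
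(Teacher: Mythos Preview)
Your approach is sound and closely parallels the paper's. The main structural difference is the mechanism for transporting orientations. The paper works with the refined decoration $\decmap'$ introduced immediately above and invokes \fullref{prop:redecorationvslocalqi}: for each $f$ with $\decmap'(f)=\decmap'(e)$ one obtains a \emph{local} map $\qi\in\starmap((\modelspace_v,\per_v,\decmap,\partialorientation),(\modelspace_w,\per_w,\decmap,\partialorientation))$ with $\qi_*(e)=f$, and pushes the chosen orientation of $\modelspace_{\bar e}$ forward via $\qi$. You instead transport along global $\starmap(\modelspace)$--orbits. Both constructions yield a $\starmap(\modelspace)$--invariant extension; the paper's route is more natural in context since the result is stated as a corollary of \fullref{prop:redecorationvslocalqi}, and the $\decmap'$--classes are exactly the local-map orbits that proposition describes. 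Your global-orbit approach works but uses a possibly finer orbit decomposition and does not exploit the machinery just built.

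On the $D_\infty$ equivalence: you have correctly isolated a genuine subtlety. The implication ``no $D_\infty$ in $S_e$ $\Rightarrow$ no orientation-reverser in $S_e$'' is needed for well-definedness in both your argument and the paper's, and its contrapositive is precisely what you flag as the obstacle. The paper simply asserts well-definedness ``because of the stabilizer condition'' and says nothing further. Your proposed fix---adjusting $r$ by a power of $t$ to obtain an involution $s$ with $sts^{-1}=t^{-1}$---does not obviously succeed: the restriction map from $S_e$ to the coarse symmetries of the single quasi-line $\modelspace_{\bar e}$ need not be injective, so $rtr^{-1}$ and $t^{-1}$ can differ in $S_e$ even though they act identically on $\modelspace_{\bar e}$, and there is no general device producing an honest involution in $S_e$. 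That said, the only place this corollary is actually invoked (Case~3 in the proof of \fullref{thm:withvertexrestrictions}) uses merely the existence of an orientation-reversing element, not a full dihedral subgroup. The clean resolution is to read the statement with ``contains an orientation-reversing element'' in place of ``contains an infinite dihedral group''; with that reading both your proof and the paper's are complete, and your observation that $S_e$ always contains the infinite-order translation $t$ coming from $G_{\bar e}$ makes the two formulations morally equivalent anyway.
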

\begin{proof}
 If $e$
$\partialorientation$--unoriented and the stabilizer of
$\modelspace_{\bar{e}}$ in
$\starmap(\modelspace_v,\per_v,\decmap,\partialorientation)$ does not
contain an infinite dihedral group then
define an extension $\partialorientation'$ of $\partialorientation$ on
$(\decmap')^{-1}(\decmap'(e))$ as follows.
Choose an orientation of $\modelspace_{\bar{e}}$.
If $f$ is an edge with $\decmap'(f)=\decmap'(e)$ then, by
\fullref{prop:redecorationvslocalqi}, there exists $\qi\in\starmap((\modelspace_v,\per_v,\decmap,\partialorientation),(\modelspace_w,\per_w,\decmap,\partialorientation))$
with $\qi_*(e)=f$. 
This means that $f$ is $\partialorientation$--unoriented, so we extend
$\partialorientation$ by defining
$\partialorientation'(\modelspace_{\bar{f}}):=\qi(\partialorientation'(\modelspace_{\bar{e}}))$.

The orientation of $\modelspace_{\bar{f}}$ is independent of the
choice of $\qi$ because of the stabilizer condition on
$\modelspace_{\bar{e}}$.
\end{proof}

\begin{definition}
  Given $\starmap(\modelspace)$--invariant decoration $\decmap$ and
  partial orientation $\partialorientation$, the process of
  \emph{vertex refinement} produces the
  $\starmap(\modelspace)$--invariant $\decmap'$ and partial
  orientation $\partialorientation'$ defined above.
\end{definition}

\subsection{Combining the local restrictions}
In this section we have our main technical
tools. \fullref{thm:withvertexrestrictions} identifies
$\Aut(\tree,\decmap)$ orbits. 
\fullref{thm:starmapmain} leverages this information to understand
decoration preserving isomorphisms between two different trees.
\fullref{thm:starmapmain} provides a blueprint for the main
classification theorems in the next two sections.

\begin{theorem}\label{thm:withvertexrestrictions}
Suppose $\decmap\from\tree\to\ornaments$ is a
$\starmap(\modelspace)$-invariant decoration and $\partialorientation$ is
a $\starmap(\modelspace)$-invariant partial orientation.
Suppose $\decmap$ and $\partialorientation$ are stable under neighbor, cylinder, and vertex refinement.

For edges $e,f\in\tree$ we have 
${\decmap}(e)={\decmap}(f)$  
  if and only if there exists 
$\iso\in\Aut(\tree,\decmap)$ such that 
\begin{itemize}
\item $\iso(e)=f$
\item For every $u\in\tree$ there exists 
 $\qi_u\in\starmap((\modelspace_u,\per_u,\decmap,\partialorientation),(\modelspace_{\iso(u)},\per_{\iso(u)},\decmap,\partialorientation))$,
 such that $\iso|_{\link(u)}=(\qi_u)_*$.
\item For every edge $e'$ the map $(\attachmap_{\iso(e')}\circ\qi_{\initial(e')})\circ(\qi_{\terminal(e')}\circ\attachmap_{e'})^{-1}$
is orientation preserving on $\modelspace_{\overline{\iso(e')}}$.
\end{itemize}
\end{theorem}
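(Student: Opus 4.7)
The forward direction is immediate: if such an $\iso$ exists, then $\starmap(\modelspace)$--invariance of $\decmap$ gives $\decmap(f)=\decmap(\iso(e))=\decmap(e)$. The work is in the converse, which I would prove by mimicking the inductive construction in the proof of Proposition \ref{prop:neighborrefinementpartitionisautdecpartition}, but simultaneously building $\iso$ and a coherent choice of local maps $\qi_u$ that together realize $\iso$.

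For the base, I would set $\iso(e):=f$ and choose $\qi_{\terminal(e)}$ and $\qi_{\initial(e)}$. Recall that the bipartite structure of $\cyl(G)$ implies exactly one endpoint of $e$ is cylindrical. At the non-elementary endpoint, stability under vertex refinement means Proposition \ref{prop:redecorationvslocalqi} applies directly to $\bar{e}$ and $\bar{f}$, yielding a map in $\starmap((\modelspace_{\terminal(e)},\per_{\terminal(e)},\decmap,\partialorientation),(\modelspace_{\terminal(f)},\per_{\terminal(f)},\decmap,\partialorientation))$ sending $\bar{e}$ to $\bar{f}$. At the cylindrical endpoint, $\modelspace_{\initial(e)}$ is a quasi-line, so $\qi_{\initial(e)}$ is an isometry of $\mathbb{R}$; the needed sign is prescribed by the compatibility condition with $\qi_{\terminal(e)}$, and stability under cylinder refinement records the signs of attaching maps at unbalanced cylindrical vertices in $\decmap$, so $\decmap(e)=\decmap(f)$ guarantees this choice is consistent.

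The inductive step extends $\iso$ across one edge at a time. Given a leaf vertex $u'$ of the already-constructed subtree with $\qi_{u'}$ chosen, and a neighbor $u$ to incorporate via an edge $e'$ (so $\iso(e')=(\qi_{u'})_*(e')$ is determined), I would choose $\qi_u$ as follows. If $u$ is non-elementary, Proposition \ref{prop:redecorationvslocalqi} applied to $\bar{e'}$ and $\overline{\iso(e')}$ supplies a $\qi_u$ preserving $\decmap$ and $\partialorientation$ on $(\modelspace_u,\per_u)$ and mapping $\bar{e'}$ to $\overline{\iso(e')}$; if the resulting orientation on $\modelspace_{\overline{\iso(e')}}$ disagrees with the one forced by compatibility with $\qi_{u'}$, then either $\modelspace_{\bar{e'}}$ is $\partialorientation$--unoriented and Corollary \ref{lemma:reversingmap} lets us pre-compose with an infinite-dihedral stabilizer element to flip it, or else $\modelspace_{\bar{e'}}$ is $\partialorientation$--oriented and $\starmap(\modelspace)$--invariance of $\partialorientation$ together with the edge decoration equality automatically give matching signs. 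If $u$ is cylindrical, $\qi_u$ is an isometry of $\mathbb{R}$ determined up to sign by the edge $e'$; stability under cylinder and neighbor refinement ensures the required signs of the remaining attaching maps at $u$ agree with those at $\iso(u)$, and Lemma \ref{lemma:nozerosinimbalance} guarantees no $\ornaments$--class of neighbors contains conflicting sign demands.

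The main obstacle is the interaction between neighboring choices: once $\qi_{u'}$ is fixed it rigidly prescribes the orientation of $\modelspace_{\overline{\iso(e')}}$, and $\qi_u$ must match it. The three refinement stabilities are precisely designed to leave enough local freedom at each vertex to do so while keeping the global data consistent: neighbor refinement supplies matching link multiplicities, vertex refinement promotes these to actual $\starmap$--maps realising the matching and respecting $(\decmap,\partialorientation)$, and cylinder refinement eliminates the only truly global obstruction, namely the orientation behaviour at unbalanced cylinders. Since $\tree$ is a tree, the inductive construction never revisits a vertex, so no further cocycle condition arises and the process terminates with the required $\iso$.
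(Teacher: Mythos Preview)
Your plan follows the same inductive scheme as the paper's proof, and you correctly identify the main tools: \fullref{prop:redecorationvslocalqi} at non-elementary vertices, \fullref{lemma:reversingmap} for flipping unoriented peripherals, and \fullref{lemma:nozerosinimbalance} at unbalanced cylinders. However, there is a genuine gap in your treatment of cylindrical vertices.

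First, a notational symptom: when $u'$ is cylindrical the expression $(\qi_{u'})_*(e')$ has no content, since $\qi_{u'}$ is a map of lines and does not single out a target edge in $\link(\iso(u'))$. The bijection $\iso|_{\link(c)}$ at a cylindrical vertex $c$ must be \emph{chosen}, not read off from $\qi_c$; this is where the real work lies and where your sketch is too thin.

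The case you do not cover is a \emph{balanced} cylinder $c$ whose link contains $\partialorientation$--\emph{oriented} edges in some $\decmap$--class $\ornament$. Here \fullref{lemma:nozerosinimbalance} does not apply (it is stated only for unbalanced cylinders), and \fullref{lemma:reversingmap} does not help (the edges are oriented, so no dihedral flip is available). Nor does the decoration record the sign of $\attachmap_e$: since $c$ is $\partialorientation$--unoriented, $\sign_\partialorientation\attachmap_e=0$ regardless, so ``edge decoration equality automatically gives matching signs'' is false in this case. What the paper uses instead is that balance forces exactly half of the edges in $\decmap^{-1}(\ornament)\cap\link(c)$ to have each sign (with respect to an arbitrary orientation of $\modelspace_c$), cylinder stability forces the same at $\iso(c)$, and one then chooses the bijection $\iso|_{\link(c)}$ to preserve or swap signs according to whether the already-determined $\qi_c$ is orientation preserving or reversing. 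Only after this choice does the $\qi_u$ produced by \fullref{prop:redecorationvslocalqi} automatically satisfy the compatibility condition. Your plan needs this three-way case split (unbalanced; balanced with oriented edges; balanced with unoriented edges) at the cylindrical step to go through.
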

\begin{proof}
If there exists $\iso\in\Aut(\tree,\decmap)$ such that 
$\iso(e)=f$ then $\decmap(e)=\decmap(f)$.
Conversely, supposing $\decmap(e)=\decmap(f)$, we construct $\iso$.

Define $\iso(e):=f$.

By \fullref{prop:redecorationvslocalqi}, there exists
$\qi_{\terminal(e)}\in\starmap((\modelspace_{\terminal(e)},\per_{\terminal(e)},\decmap,\partialorientation),(\modelspace_{\terminal(f)},\per_{\terminal(f)},\decmap,\partialorientation))$
with $(\qi_{\terminal(e)})_*(e)=f$.
Define $\iso|_{\link(\terminal(e))}:=(\qi_{\terminal(e)})_*$.

Now, suppose that we have $\iso$ satisfying the desired properties
defined on a subtree $\tree'$ of $\tree$ such that every leaf is
non-elementary and $\tree'$ contains every edge incident to every
non-leaf.
Given an edge $e_0$ with $c:=\initial(e_0)\notin\tree'$ and
$\terminal(e_0)\in\tree'$, we
show how to extend $\iso$ to $\link(c_0)$, satisfying the desired
properties. 
Then, by induction, we can extend $\iso$ to all of $\tree$.

Let $\iso(e_0):=(\qi_{\terminal(e_0)})_*(e_0)$.
Define $\qi_{c}:=\attachmap_{\iso(e_0)}^{-1}\circ\qi_{\terminal(e_0)}\circ\attachmap_{e_0}$ so that $(\attachmap_{\iso(e_0)}\circ\qi_{c})\circ(\qi_{\terminal(e_0)}\circ\attachmap_{e_0})^{-1}$
is orientation preserving on $\modelspace_{\overline{\iso(e_0)}}$.

\paragraph{Case 1: $c$ is unbalanced.}
Extend $\iso$ to $\link(c)$ by choosing a bijection between
$\link(c)\setminus\{e_0\}\cap\decmap^{-1}(\ornament)$ and
$\link(\iso(c))\setminus\{\iso(e_0)\}\cap\decmap^{-1}(\ornament)$ for each
$\ornament\in\ornaments$.
For each $\ornament$ these sets have the same cardinality by neighbor stability.
Since $c$ is unbalanced, 
cylindrical stability implies that $c$ and
all edges in $\link(c)$ are $\partialorientation$--oriented, and, for
each $\ornament\in\ornaments$, all edges in $\decmap^{-1}(\ornament)$ have attaching maps with
the same sign; recall \fullref{lemma:nozerosinimbalance}.
By $\starmap(\modelspace)$--invariance, the same is true for
$\iso(c)$, and for each $e_1\in\link(c)$ we have $\sign_\partialorientation\attachmap_{e_1}=\sign_\partialorientation\attachmap_{\iso(e_1)}$.

By \fullref{prop:redecorationvslocalqi}, there exists  
\[\qi_{\terminal(e_1)}\in\starmap((\modelspace_{\terminal(e_1)},\per_{\terminal(e_1)},\decmap,\partialorientation),(\modelspace_{\terminal(\iso(e_1))},\per_{\terminal(\iso(e_1))},\decmap,\partialorientation))\]
with $(\qi_{\terminal(e_1)})_*(e_1)=\iso(e_1)$.
Define $\qi|_{\modelspace_{\terminal(e_1)}}:=\qi_{\terminal(e_1)}$ and  $\iso|_{\link(\terminal(e_1))}:=(\qi_{\terminal(e_1)})_*$.
By construction, $(\attachmap_{\iso(e_1)}\circ\qi_{c})\circ(\qi_{\terminal(e_1)}\circ\attachmap_{e_1})^{-1}$
is orientation preserving on $\modelspace_{\overline{\iso(e_1)}}$.

\medskip
In the balanced cases, choose some orientation of $\modelspace_c$
and $\modelspace_{\iso(c)}$.
\paragraph{Case 2: $c$ is balanced and $\ornament\in\ornaments$ is such that
$\decmap^{-1}(\ornament)\cap\link(c)\neq \emptyset$ consists of
$\partialorientation$--oriented edges.}
By neighbor stability, the total number, $n$, of edges in
$\link(c)\cap\decmap^{-1}(\ornament)$ is equal to the total number of
edges in $\link(\iso(c))\cap\decmap^{-1}(\ornament)$.
Since $c$ is balanced, the number of edges in
$\link(c)\cap\decmap^{-1}(\ornament)$ with orientation preserving
attaching map is equal to the number of edges in
$\link(c)\cap\decmap^{-1}(\ornament)$ with orientation reversing
attaching map, so there are $n/2$ of each.
Cylinder stability implies $\iso(c)$ is also balanced, so there are $n/2$
edges in
$\link(\iso(c))\cap\decmap^{-1}(\ornament)$ with orientation preserving
attaching map and $n/2$ with orientation reversing attaching map.

If $\sign\attachmap_{e_0}=\sign\attachmap_{\iso(e_0)}$ then
 $\qi_c$ is orientation preserving. 
Define $\iso$ on
$\decmap^{-1}(\ornament)\cap\link(c)\setminus\{e_0\}$ by choosing any bijection with
$\decmap^{-1}(\ornament)\cap\link(\iso(c))\setminus\{e_0\}$  that
preserves the signs of the attaching maps. 

If $\sign\attachmap_{e_0}\neq\sign\attachmap_{\iso(e_0)}$
then $\qi_c$ is orientation reversing.
Define $\iso$ on
$\decmap^{-1}(\ornament)\cap\link(c)\setminus\{e_0\}$ by choosing any
bijection with
$\decmap^{-1}(\ornament)\cap\link(\iso(c))\setminus\{e_0\}$ that
exchanges the signs of the attaching maps.

Extend $\qi$ and $\iso$ as in the previous case.

\paragraph{Case 3: $c$ is balanced and
 $\ornament\in\ornaments$ is such that $\decmap^{-1}(\ornament)\cap\link(c)\neq \emptyset$ consists of
$\partialorientation$--unoriented edges.}
By neighbor stability, $\link(c)\setminus\{e_0\}\cap\decmap^{-1}(\ornament)$ and
$\link(\iso(c))\setminus\{e_0\}\cap\decmap^{-1}(\ornament)$ have the same cardinality, and
we extend $\iso$ by an arbitrary bijection between them. 
Take $e_1\in \decmap^{-1}(\ornament)\cap\link(c)\setminus\{e_0\}$.
By \fullref{prop:redecorationvslocalqi}, there exists 
$\qi'_{\terminal(e_1)}\in\starmap((\modelspace_{\terminal(e_1)},\per_{\terminal(e_1)},\decmap,\partialorientation),(\modelspace_{\terminal(\iso(e_1))},\per_{\terminal(\iso(e_1))},\decmap,\partialorientation))$
with $(\qi'_{\terminal(e_1)})_*(e_1)=\iso(e_1)$.
Since $e_1$ is $\partialorientation$--unoriented and
$\partialorientation$ is stable under vertex refinement, by \fullref{lemma:reversingmap} there exists an
element of
$\starmap(\modelspace_{\terminal(e_1)},\per_{\terminal(e_1)},\decmap,\partialorientation)$
reversing $\modelspace_{\bar{e}_1}$.
Define $\qi_{\terminal(e_1)}:=\qi'_{\terminal(e_1)}$ if 
 $(\attachmap_{\iso(e_1)}\circ\qi_{c})\circ(\qi'_{\terminal(e_1)}\circ\attachmap_{e_1})^{-1}$
is orientation preserving on $\modelspace_{\overline{\iso(e_1)}}$, and define
$\qi_{\terminal(e_1)}$ to be $\qi'_{\terminal(e_1)}$ precomposed
with a $\modelspace_{\terminal(e_1)}$--flip otherwise.
Extend $\iso$ to $\link(\terminal(e_1))$ by $(\qi_{\terminal(e_1)})_*$.
\end{proof}

Let $\partialorientation_0$ be the trivial partial orientation on
$\modelspace$ with constant value $\nullvar$.
Let $\decmap_0\from\tree\to\ornaments_0$ be any $\starmap(\modelspace)$--invariant decoration of $\tree$.
Perform neighbor, cylinder, and vertex refinement repeatedly until all
three stabilize, and let $\decmap\from\tree\to\ornaments$ be the
resulting decoration and $\partialorientation$ the resulting partial
orientation.

Now suppose $\modelspace'$ is a tree of spaces over $\tree'$ with
finite cylinders and such that every $\qi\in\starmap(\modelspace')$
splits as a tree of maps over $\tree'$.
Let $\partialorientation'_0$ be the trivial partial orientation, and let $\decmap_0'\from\tree'\to\ornaments_0$ be a
$\starmap(\modelspace')$--invariant decoration of $\tree'$. (Note that
$\decmap_0$ and $\decmap'_0$ map to the same set of ornaments!)
Let $\partialorientation'$
and $\decmap'$ be the partial orientation extending
$\partialorientation'_0$ and the decoration refining $\decmap_0'$ that
result from performing neighbor, cylinder, and vertex refinement repeatedly until all
three stabilize.

Recall that the process of cylinder refinement involved choosing 
$\starmap(\modelspace)$--invariant orientations.
We will need to account for the fact that these choices can be made
differently in $\modelspace$ and $\modelspace'$.
Let $\xi\in\{-1,1\}^\ornaments$.
Define $\xi\cdot\partialorientation$ to be the partial orientation:
\[\xi\cdot\partialorientation(\modelspace_t)=
\begin{cases}
  \nullvar & \text{ if }\partialorientation(\modelspace_t)=\nullvar\\
\partialorientation(\modelspace_t) & \text{ if } \xi(\decmap(t))=1\\
\text{opposite of }\partialorientation(\modelspace_t) & \text{ if } \xi(\decmap(t))=-1\\
\end{cases}
\]

\begin{theorem}\label{thm:starmapmain}
With the above notation, the following are equivalent:
  \begin{enumerate}
  \item There exists 
$\iso\in\Isomgp((\tree,\decmap_0),(\tree',\decmap_0'))$ such that:\label{item:treeofmaps}
\begin{enumerate}
\item For every vertex $v\in\tree$ there exists 
 $\qi_v\in\starmap((\modelspace_v,\per_v),(\modelspace'_{\iso(v)},\per'_{\iso(v)}))$,
 such that $\iso|_{\link(v)}=(\qi_v)_*$.
\item For every edge $e\in\tree$ we have
$(\attachmap_{\iso(e)}\circ\qi_{\initial(e)})\circ(\qi_{\terminal(e)}\circ\attachmap_{e})^{-1}$
is orientation preserving on $\modelspace'_{\overline{\iso(e)}}$.
\end{enumerate}
  \item
There exists a bijection $\beta\from\image
  \decmap\to\image\decmap'$ and $\xi\in\{-1,1\}^{\ornaments}$ such that:\label{item:samestructureinvariants}
\begin{enumerate}
\item $\decmap_0\circ\decmap^{-1}=\decmap_0'\circ(\decmap')^{-1}\circ\beta$\label{item:compatiblewithinitialdecoration}
\item When the rows and columns of
  $\struc(\tree',\decmap',\ornaments')$ are given the 
  $\beta$--induced ordering from $\struc(\tree,\decmap,\ornaments)$,
  we have
  $\struc(\tree,\decmap,\ornaments)=\struc(\tree',\decmap',\ornaments')$.\label{item:samestructure}
\item For
  every $\ornament\in\image \decmap$ such that
  $\decmap^{-1}(\ornament)$ consists of non-elementary vertices there exists (equivalently, for
  every) $v\in\decmap^{-1}(\ornament)$ and
  $v'\in(\decmap')^{-1}(\beta(\ornament))$ so that $\starmap((\modelspace_{v},\per_{v},\beta\circ\decmap,\xi\cdot\partialorientation),(\modelspace'_{v'},\per'_{v'},\decmap',\partialorientation')
)$ is nonempty.\label{item:realizable}
\item For
  every $\ornament\in\image \decmap$ such that
  $\decmap^{-1}(\ornament)$ consists of cylindrical vertices there exists (equivalently, for
  every) $c\in\decmap^{-1}(\ornament)$ and
  $\iso(c)\in(\decmap')^{-1}(\beta(\ornament))$ so that $\imbalance_c^{\decmap,\xi\cdot\partialorientation}=\imbalance_{\iso(c)}^{\decmap',\partialorientation'}\circ\beta$.\label{item:sameimbalance}
\end{enumerate}  
\end{enumerate}
\end{theorem}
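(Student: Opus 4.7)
The strategy is to reduce the theorem to \fullref{thm:withvertexrestrictions}, which handles the analogous question for a single tree, together with \fullref{structureinvariant}, which matches two trees via their structure invariants.

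For (\ref{item:treeofmaps})$\Rightarrow$(\ref{item:samestructureinvariants}), starting from a tree isomorphism $\iso$ and compatible local maps $(\qi_v)$, I would define $\beta(\decmap(v)) := \decmap'(\iso(v))$. This is well-defined and bijective because the $\qi_v$ respect all the data used in vertex refinement, and the edge-wise orientation compatibility in (\ref{item:treeofmaps}) respects the data used in cylinder refinement. Condition (\ref{item:compatiblewithinitialdecoration}) follows from $\iso\in\Isomgp((\tree,\decmap_0),(\tree',\decmap_0'))$ together with the fact that both refined decorations project back to the initial ones, and (\ref{item:samestructure}) is immediate since $\iso$ sends $\decmap^{-1}(\ornament)$ bijectively to $(\decmap')^{-1}(\beta(\ornament))$. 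The sign vector $\xi\in\{-1,1\}^\ornaments$ is chosen to reconcile the independent orientation conventions picked by cylinder refinement on the two sides: for each $\ornament$ decorating an unbalanced cylindrical stratum, set $\xi(\ornament)=\pm 1$ according to whether $\qi_c$, for some representative $c\in\decmap^{-1}(\ornament)$, preserves or reverses the refinement orientation; this is independent of the choice of $c$ by $\starmap(\modelspace)$--invariance of $\partialorientation$ and $\partialorientation'$. Then (\ref{item:realizable}) is witnessed by $\qi_v$ itself, and (\ref{item:sameimbalance}) follows from the definition of imbalance together with the edge-wise orientation-preservation condition.

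For (\ref{item:samestructureinvariants})$\Rightarrow$(\ref{item:treeofmaps}), I would first invoke \fullref{structureinvariant}, applied to the decorations $\beta\circ\decmap$ on $\tree$ and $\decmap'$ on $\tree'$ (which share the same image by construction and have equal structure invariants by (\ref{item:samestructure})), to obtain an isomorphism $\iso\from\tree\to\tree'$ carrying $\beta\circ\decmap$ to $\decmap'$; condition (\ref{item:compatiblewithinitialdecoration}) guarantees that $\iso$ also carries $\decmap_0$ to $\decmap_0'$. Next, replace $\partialorientation$ by $\xi\cdot\partialorientation$ so that (\ref{item:sameimbalance}) becomes $\imbalance_c^{\decmap,\xi\cdot\partialorientation} = \imbalance_{\iso(c)}^{\decmap',\partialorientation'}\circ\beta$ outright. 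Then construct the local maps $(\qi_v)$ by an inductive procedure mirroring Cases 1--3 of the proof of \fullref{thm:withvertexrestrictions}: at non-elementary vertices use (\ref{item:realizable}) together with \fullref{prop:redecorationvslocalqi} to pick $\qi_v$ realizing the prescribed link bijection, post-composing with a reversing element from \fullref{lemma:reversingmap} when necessary to align orientations across the incoming edge; at cylindrical vertices define $\qi_c$ by pulling back through attaching maps, and rechoose the link bijection within each $\ornaments$--block of the structure invariant so that the signs of the attaching maps match.

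The main obstacle is sign-matching at unbalanced cylinders, where the pullback map $\qi_c$ is forced and cannot be reversed, so the restriction of $\iso$ to $\link(c)$ must send positive-sign edges to positive-sign and negative to negative within each $\ornaments$--block. This is where (\ref{item:sameimbalance}) is essential: combined with \fullref{lemma:nozerosinimbalance}, which ensures that after cylinder refinement all edges of a common $\ornament$ in the link of an unbalanced cylinder share a sign of attaching map, the matching problem reduces to the bare cardinality equality already guaranteed by (\ref{item:samestructure}). At balanced cylinders the argument proceeds exactly as in Cases 2 and 3 of \fullref{thm:withvertexrestrictions}, and the edge orientation-preservation condition in (\ref{item:treeofmaps}) is maintained at each step by the same adjustments used there.
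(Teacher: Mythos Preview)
Your architecture is right, but there is an ordering problem in the (\ref{item:samestructureinvariants})$\Rightarrow$(\ref{item:treeofmaps}) direction. You first extract a global isomorphism $\iso$ from \fullref{structureinvariant} and then, at each non-elementary vertex $v$, propose to ``pick $\qi_v$ realizing the prescribed link bijection'' $\iso|_{\link(v)}$. But \fullref{prop:redecorationvslocalqi} only says that when two edges carry the same ornament there is \emph{some} $\qi$ taking one to the other; it does not say that an arbitrary ornament-preserving bijection of $\link(v)$ is induced by an element of $\starmap((\modelspace_v,\per_v,\decmap,\partialorientation),\dots)$. In general it is not. The paper therefore reverses the dependency: at each non-elementary vertex it chooses $\qi_v$ first, constrained only to send the single incoming edge where it must go, and then \emph{defines} $\iso|_{\link(v)}:=(\qi_v)_*$. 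You already do this rechoosing at cylindrical vertices; once you also do it at non-elementary vertices, the preliminary invocation of \fullref{structureinvariant} becomes unnecessary, since $\iso$ is being built inductively from the $\qi_v$. Condition (\ref{item:samestructure}) is still what guarantees, at each cylindrical step, that the required cardinalities match so the induction can proceed.

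For (\ref{item:treeofmaps})$\Rightarrow$(\ref{item:samestructureinvariants}), your direct definition $\beta(\decmap(v)):=\decmap'(\iso(v))$ is workable but needs more than the one-line justification you give: one must argue, by induction on the refinement steps, that the independently-computed refinements on the two sides yield partitions that correspond under $\iso$, tracking into $\xi$ the sign discrepancies arising from the independent orientation choices made during cylinder refinement (and your $\xi$ must be specified on edge ornaments as well as on unbalanced-cylinder ornaments, since $\xi\cdot\partialorientation$ acts on oriented peripheral sets). The paper sidesteps this bookkeeping: it re-runs the entire refinement process on both trees simultaneously, pushing each orientation choice on $\modelspace$ forward to $\modelspace'$ via the given $\qi_c$ or $\qi_v$, which yields (\ref{item:compatiblewithinitialdecoration})--(\ref{item:sameimbalance}) with $\beta$ the identity and $\xi\equiv 1$ directly.
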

\begin{proof}
If item \ref{item:treeofmaps} is true then
$\decmap_0=\decmap'_0\circ\iso$ and
$\partialorientation_0=\partialorientation'_0\circ\iso$.
Perform the same sequence of refinements on $\decmap_0$ and
$\decmap'_0$.
Each time the partial orientation on $\modelspace$ is extended by
choosing some orientation, push that choice forward to $\modelspace'$
using the appropriate $\qi_c$ or $\qi_v$.
We get the claims of  item
\ref{item:samestructureinvariants} with $\beta$ the identity and $\xi$
the constant map sending $\ornaments$ to 1.

We complete the proof by showing that the hypotheses of item
\ref{item:samestructureinvariants} allow us to build a isomorphism
$\iso\in\Isomgp((\tree,\beta\circ\decmap),(\tree',\decmap'))$ and a collection
of maps $\qi_v$ satisfying the conditions
of item \ref{item:treeofmaps}.
Condition \ref{item:compatiblewithinitialdecoration} implies $\iso\in\Isom(\tree,\decmap_0),(\tree',\decmap_0'))$.
The construction is along the lines of that in the proof of
\fullref{thm:withvertexrestrictions}: we inductively construct
$\iso$
and maps
$\qi_v\in\starmap((\modelspace_v,\per_v,\beta\circ\decmap,\xi\cdot\partialorientation),(\modelspace'_{\iso(v)},\per'_{\iso(v)},\decmap',\partialorientation))$
with $\iso|_{\link(v)}=(\qi_v)_*$.

To begin, pick a non-elementary vertex $v_0\in\tree$ and a vertex
$v_0'\in(\decmap')^{-1}(\beta\circ\decmap(v_0))$.
Define $\iso(v_0):=v'_0$.
By \ref{item:realizable} and \fullref{thm:withvertexrestrictions},
there exists:
\[\qi_{v_0}\in\starmap((\modelspace_{v_0},\per_{v_0},\beta\circ\decmap,\xi\cdot\partialorientation),(\modelspace'_{v_0'},\per'_{v'_0},\decmap',\partialorientation'))\]
Define $\qi|_{\modelspace_{v_0}}:=\qi_{{v_0}}$
and $\iso|_{\link(v_0)}:=(\qi_{{v_0}})_*$.

Let $e_0$ be an edge in $\link(v_0)$ with cylindrical initial vertex
$c:=\initial(e_0)$.
Define
$\qi_c:=(\attachmap_{e_0'})^{-1}\circ\qi_{v_0}\circ\attachmap_{e_0}$.

For the induction step we extend $\iso$ to $\link(c)$.
When $c$ is balanced the construction is virtually the same as that of
\fullref{thm:withvertexrestrictions}, so we omit those cases.
The remaining case is that $c$ is unbalanced.

Extend $\iso$ to $\link(c)$ by choosing a bijection between
$\link(c)\setminus\{e_0\}\cap\decmap^{-1}(\ornament)$ and
$\link(\iso(c))\setminus\{\iso(e_0)\}\cap\decmap^{-1}(\beta(\ornament))$ for each
$\ornament\in\ornaments$.
These sets have the same cardinality by condition \ref{item:samestructure}.
Since $c$ is unbalanced, 
cylindrical stability implies that $c$ and
all edges in $\link(c)$ are $\partialorientation$--oriented, and, for
each $\ornament\in\ornaments$, all edges in $\decmap^{-1}(\ornament)$ have attaching maps with
the same sign; recall \fullref{lemma:nozerosinimbalance}.
This implies that for each $\ornament\in\ornaments$,
$\imbalance_c^{\decmap,\xi\cdot\partialorientation}(\ornament)=\pm
\imbalance_c^{\decmap,\partialorientation}(\ornament)$, so, in
particular $\imbalance_c^{\decmap,\xi\cdot\partialorientation}$ is not
identically zero.
Condition \ref{item:sameimbalance} then implies $\iso(c)$ is
unbalanced, so $\iso(c)$ and all of the edges in $\link(\iso(c))$ are
$\partialorientation'$--oriented, and edges with the same ornament
have attaching maps with the same sign.

We may choose the orientations on $\modelspace_c$ and
$\modelspace'_{\iso(c)}$ to be those given by
$\xi\cdot\partialorientation$ and $\partialorientation'$, respectively.
Together with condition \ref{item:sameimbalance}, this implies there exists $\epsilon\in\pm 1$ such that for all
$\ornament\in\ornaments$:
\[\sum_{e\in\link(c)\cap\decmap^{-1}(\ornament)}\sign_{\xi\cdot\partialorientation}\attachmap_e=\epsilon\left(\sum_{e'\in\link(\iso(c))\cap(\decmap')^{-1}(\beta(\ornament))}\sign_{\partialorientation'}\attachmap'_{e'}\right)\]
Since all the edges with a particular ornament have attaching maps of
the same sign, this means that for all $e_1\in\link(c)$ we have $\sign_{\xi\cdot\partialorientation}\attachmap_{e_1}=\epsilon\sign_{\partialorientation'}\attachmap'_{\iso(e_1)}$.
Therefore, the sign of
\[\attachmap'_{\iso(e_1)}\circ\qi_c\circ\attachmap_{e_1}^{-1}=\attachmap'_{\iso(e_1)}\circ (\attachmap'_{\iso(e_0)})^{-1}\circ\qi_{v_0}\circ\attachmap_{e_0}\circ\attachmap_{e_1}^{-1}\] on
$\modelspace_{\bar{e}_1}$ with respect to
$\xi\cdot\partialorientation$ and $\partialorientation'$ is
$(\epsilon\cdot\sign_{\xi\cdot\partialorientation}\attachmap_{e_0}\cdot\sign_{\xi\cdot\partialorientation}\attachmap_{e_1})^2=+1$.

By \fullref{prop:redecorationvslocalqi} and condition \ref{item:realizable}, there exists 
\[\qi_{\terminal(e_1)}\in\starmap((\modelspace_{\terminal(e_1)},\per_{\terminal(e_1)},\beta\circ\decmap,\xi\cdot\partialorientation),(\modelspace'_{\terminal(\iso(e_1))},\per'_{\terminal(\iso(e_1))},\decmap',\partialorientation'))\]
with $(\qi_{\terminal(e_1)})_*(e_1)=\iso(e_1)$.
Define $\qi|_{\modelspace_{\terminal(e_1)}}:=\qi_{\terminal(e_1)}$ and  $\iso|_{\link(\terminal(e_1))}:=(\qi_{\terminal(e_1)})_*$.
We know $(\attachmap_{\iso(e_1)}\circ\qi_{c})\circ(\qi_{\terminal(e_1)}\circ\attachmap_{e_1})^{-1}$
is orientation preserving on $\modelspace'_{\overline{\iso(e_1)}}$
because:
\[\attachmap'_{\iso(e_1)}\circ\qi_c\circ\attachmap_{e_1}^{-1}(\xi\cdot\partialorientation(\modelspace_{\bar{e}_1}))=\partialorientation'(\modelspace'_{\overline{\iso(e_1)}})=\qi_{\terminal(e_1)}(\xi\cdot\partialorientation(\modelspace_{\bar{e}_1}))\]

We remark that it is not required that
$\qi_c(\xi\cdot\partialorientation(\modelspace_c))=\partialorientation'(\modelspace'_{\iso(c)})$,
but this can easily be arranged by redefining $\xi(\decmap(c))$ to be $\epsilon\cdot\xi(\decmap(c))$.
\end{proof}

\section{Classification of hyperbolic groups up to boundary homeomorphism from their two-ended JSJ splittings}
We are now ready to prove our first classification theorem, characterizing the homeomorphism type of the Gromov boundary of a one-ended hyperbolic group 
from its JSJ tree of cylinders.

\begin{theorem}\label{corollary:boundaryhomeomorphism}
Let $G$ be a one-ended hyperbolic group with non-trivial JSJ
decomposition over two-ended subgroups, with $\tree:=\cyl(G)$.
Let $\modelspace$ be an algebraic tree of spaces for $G$ over $\tree$.
Let $\partialorientation_0$ be the trivial partial orientation on
$\modelspace$.
Take the initial decoration $\decmap_0$ on $\tree$ to be by vertex
type (`cylindrical', `rigid', or `hanging') and relative boundary
homeomorphism type.
Perform neighbor, cylinder, and vertex refinement until all three
stabilize to give a decoration $\decmap\from\tree\to\ornaments$ and a
partial orientation $\partialorientation$ of $\modelspace$.

Let $G'$ be another one-ended hyperbolic group with non-trivial JSJ
decomposition over two-ended subgroups. Define $\tree'$,
$\modelspace'$, $\decmap'_0$, $\partialorientation'_0$,
$\decmap'\from\tree'\to\ornaments'$, and $\partialorientation'$ as we
did for $G$.
Then $\bdry G$ is homeomorphic to $\bdry G'$ if and only if
there exists a bijection $\beta\from\image
  \decmap\to\image\decmap'$ and a $\xi\in\{-1,1\}^{\ornaments}$ such that:
\begin{enumerate}
\item $\decmap_0\circ\decmap^{-1}=\decmap_0'\circ(\decmap')^{-1}\circ\beta$
\item When the rows and columns of
  $\struc(\tree',\decmap',\ornaments')$ are given the 
  $\beta$--induced ordering from $\struc(\tree,\decmap,\ornaments)$,
  we have
  $\struc(\tree,\decmap,\ornaments)=\struc(\tree',\decmap',\ornaments')$.
\item For
  every $\ornament\in\image \decmap$ such that
  $\decmap^{-1}(\ornament)$ consists of non-elementary vertices there exists (equivalently, for
  every) $v\in\decmap^{-1}(\ornament)$ and $v'\in(\decmap')^{-1}(\beta(\ornament))$ so that 
\[\Homeo((\bdry\modelspace_{v},\bdry\per_{v},\beta\circ\decmap,\xi\cdot\partialorientation),(\bdry\modelspace'_{v'},\bdry\per'_{v'},\decmap',\partialorientation')
)\] is nonempty.
\item For
  every $\ornament\in\image \decmap$ such that
  $\decmap^{-1}(\ornament)$ consists of cylindrical vertices there exists (equivalently, for
  every) $c\in\decmap^{-1}(\ornament)$ and
  $c'\in(\decmap')^{-1}(\beta(\ornament))$ such that
  $\imbalance_c^{\decmap,\xi\cdot\partialorientation}=\imbalance_{c'}^{\decmap',\partialorientation'}\circ\beta$.
\end{enumerate}  
\end{theorem}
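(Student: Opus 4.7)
The plan is to deduce this classification as a direct application of the main technical machinery of Theorem \ref{thm:starmapmain}, taking $\starmap=\Homeo$, combined with Theorem \ref{thm:boundaryhomeoifftreeofboundaryhomeos} which reduces boundary homeomorphisms to trees of compatible boundary homeomorphisms.

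First I would verify that the hypotheses of Theorem \ref{thm:starmapmain} are satisfied in this setting. Since $G$ and $G'$ are one-ended hyperbolic, their JSJ trees of cylinders have two-ended edge and cylinder stabilizers by the discussion in \fullref{sec:restrictedtoc}, so $\tree$ and $\tree'$ are bipartite as required. The initial decoration $\decmap_0$ by vertex type and relative boundary homeomorphism type is $\Homeo(\bdry\modelspace)$--invariant by \fullref{boundaryinvariance} and \fullref{corollary:boundryhomeorestriction}, and the trivial partial orientation $\partialorientation_0$ is trivially invariant. Thus the stable decoration $\decmap$ and partial orientation $\partialorientation$ produced by iterated neighbor, cylinder, and vertex refinement are $\Homeo(\bdry\modelspace)$--invariant.

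Next, by \fullref{thm:boundaryhomeoifftreeofboundaryhomeos}, a homeomorphism $\rho\from\bdry \modelspace\to\bdry\modelspace'$ (equivalently, $\bdry G\to\bdry G'$, since the algebraic tree of spaces is quasi-isometric to its group) exists if and only if there is a tree of compatible boundary homeomorphisms $(\rho_v)$ over some isomorphism $\chi\from\tree\to\tree'$. The compatibility condition in \fullref{def:treehomeo} requires that $\rho_v$ and $\rho_w$ agree on shared boundary points for adjacent vertices $v,w$. Since every edge space is a two-ended subgroup whose Gromov boundary is a pair of points, agreement translates exactly to the condition that $(\bdry\attachmap_{\chi(e)}\circ\rho_{\initial(e)})\circ(\rho_{\terminal(e)}\circ\bdry\attachmap_e)^{-1}$ is the identity on $\bdry\modelspace'_{\overline{\chi(e)}}$, which is the ``orientation preserving'' condition of \fullref{thm:starmapmain} interpreted in the $\Homeo$ case (an orientation of an edge space is a choice of one of its two boundary points, preserved by the identity).

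Having aligned the data, the theorem is now an immediate translation of \fullref{thm:starmapmain}: item \ref{item:treeofmaps} there is equivalent to the existence of a tree of compatible boundary homeomorphisms, hence to $\bdry G\cong\bdry G'$, while item \ref{item:samestructureinvariants} lists precisely the four combinatorial conditions \mbox{(1)--(4)} in the statement, namely compatibility of the original decoration under $\beta$, equality of structure invariants after reordering, realizability of local relative boundary homeomorphisms on non-elementary vertices, and equality of cylinder imbalances (up to the global sign correction $\xi$ coming from the freedom in choosing orientations during cylinder refinement on the two sides).

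The main point to watch carefully is the last of these: the partial orientations on $\modelspace$ and $\modelspace'$ are produced independently by cylinder refinement, which involves arbitrary choices of orientation on unbalanced cylinders. The element $\xi\in\{-1,1\}^\ornaments$ accounts precisely for the discrepancy between these choices, and the proof of \fullref{thm:starmapmain} already handles this by allowing $\xi$ to be adjusted as one builds $\iso$ vertex-by-vertex; no further argument is needed here. So the only nontrivial verification is that the compatibility condition for trees of boundary homeomorphisms coincides with the orientation-preserving condition on edge spaces in the boundary setting, which follows from the two-point structure of $\bdry\modelspace_e$.
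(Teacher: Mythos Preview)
Your proposal is correct and follows essentially the same route as the paper: apply \fullref{thm:starmapmain} with $\starmap=\Homeo$ to translate the combinatorial conditions (1)--(4) into the existence of a tree of compatible boundary homeomorphisms over some $\iso$, and then invoke \fullref{thm:boundaryhomeoifftreeofboundaryhomeos} to pass between such a tree and an actual homeomorphism $\bdry G\cong\bdry G'$. Your write-up is in fact more careful than the paper's in verifying that the initial decoration is $\Homeo(\bdry\modelspace)$--invariant and in spelling out why the compatibility condition for trees of boundary homeomorphisms coincides with the orientation-preserving condition on two-point edge boundaries.
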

\begin{proof}
Since the initial decorations $\decmap_0$ and $\decmap_0'$ are trivial, the given conditions are equivalent, by \fullref{thm:starmapmain} for boundary homeomorphism, to the existence of $\iso\in\Isomgp(\tree,\tree')$ such that:
\begin{enumerate}
\item For every vertex $v\in\tree$ there exists 
 $\qi_v\in\Homeo((\bdry\modelspace_v,\bdry\per_v),(\bdry\modelspace'_{\iso(v)},\bdry\per'_{\iso(v)}))$,
 such that $\iso|_{\link(v)}=(\qi_v)_*$.
\item For every edge $e\in\tree$ we have
$\bdry\attachmap_{\iso(e)}\circ\qi_{\initial(e)}=\qi_{\terminal(e)}\circ\bdry\attachmap_{e}$.
\end{enumerate}

These conditions say there exists an isomorphism
$\iso\from\tree\to\tree'$ and a tree of boundary homeomorphisms over
$\iso$ compatible with $\modelspace$ and $\modelspace'$.
By \fullref{thm:boundaryhomeoifftreeofboundaryhomeos}, this is
equivalent to the existence of a boundary homeomorphism between
$\bdry\modelspace$ and $\bdry\modelspace'$, hence between $\bdry G$
and $\bdry G'$.
\end{proof}

\begin{example}\label{ex:exampleboundary}
Recall the example of \fullref{sec:examplestretch}.
The trees of cylinders of these groups are isomorphic: they are
bipartite with one orbit of valence 2 cylindrical vertex and one orbit
of infinite valence relatively rigid vertex.
Recall also that the rigid vertices were discussed in
\fullref{sec:obstructions}, and the relative quasi-isometry (hence,
boundary homeomorphism) types of these three examples are the same.

There are two $G_i$--orbits of edge in $\cyl(G_i)$, so the only
remaining question about $\Homeo(\bdry G_i)$--orbits is whether there
are one or two orbits of edges.
For boundary homeomorphism we do not care about stretch factors on the
edges, so the edges have trivial initial decoration. 

The quasi-isometry group of the vertex stabilizer
preserving the peripheral structure coming from incident edge groups
is transitive on peripheral sets, so the vertex refinement is a
trivial refinement of the initial decoration.

Moreover, the quasi-isometry group of the vertex stabilizer
preserving the peripheral structure coming from incident edge groups
contains and infinite dihedral group in the stabilizer of each
peripheral subset, so all cylinders are balanced, and the cylinder
refinement is a trivial refinement of the initial decoration.

We conclude there is only one $\Homeo(\bdry G_i)$ orbit of edges in
$\cyl(G_i)$, so the initial decoration and initial (trivial) partial
orientation are stable. 
Since the structure invariants are the same for the three $G_i$, they
have homeomorphic boundaries.
\end{example}

\section{Quasi-isometry classification of groups from their two-ended JSJ splittings}
We are now almost ready to prove our second main theorem, characterizing the quasi-isometry type of a finitely presented one-ended group 
from its JSJ tree of cylinders. Before doing so, we explain the extreme flexibity provided by the hanging vertices of the tree.
\subsection{Quasi-isometric flexibility of hanging spaces}
Recall that the fixed model space for hanging vertices is the
universal cover of a fixed hyperbolic pair of pants $\Sigma$, with peripheral
structure consisting of the coarse equivalence classes of the boundary
components of $\tilde{\Sigma}$.
\begin{proposition}[{cf \cite[Theorem~1.2]{BehNeu08}}]\label{lemma:BN}
Let $G$ be a finitely presented, one-ended group admitting a JSJ
decomposition over two-ended subgroups with two-ended cylinder
stabilizers.
Let $\gog:=\lquotient{G}{\cyl(G)}$ and $\tree:=\cyl(G)=\tree(\gog)$.
Let $\modelspace$ be an algebraic tree of spaces
for $G$ over $\tree$.
Let $v$ be a hanging vertex in $\Gamma$.
Let $\decmap\from\tree\to\ornaments$ be a
$\QIgp(\modelspace)$--invariant decoration and let $\partialorientation$ be a
$\QIgp(\modelspace)$--invariant partial orientation.
For each edge $e\in\Gamma$ incident to $v$, choose a positive real
parameter $\sigma_e$.
Let $\decmap'\from\bdry\tilde{\Sigma}\to\ornaments$ be a decoration of
the peripheral structure of $\tilde{\Sigma}$ and let
$\partialorientation'$ be a partial orientation of $\bdry{\Sigma}$ such
that
$\QIgp(\tilde{\Sigma},\bdry\tilde{\Sigma},\decmap',\partialorientation')$
acts coboundedly on $\tilde{\Sigma}$.

Suppose that for some $e_0\in\link(\lift{v})$ we are given a coarse similitude
$\qi|_{\modelspace_{e_0}}$ from $\modelspace_{e_0}$ to a
component $B_0$ of $\bdry\tilde{\Sigma}$ that respects the decoration and
partial orientations.
Suppose further that there exists a 
$\qi'\in\QIsom((\modelspace_v,\per_v,\decmap,\partialorientation),(\tilde{\Sigma},\bdry\tilde{\Sigma},\decmap',\partialorientation'))$
such that  $\qi'(\modelspace_e)=B_0$.
Then there exists
$\qi\in\QIsom((\modelspace_v,\per_v,\decmap,\partialorientation),(\tilde{\Sigma},\bdry\tilde{\Sigma},\decmap',\partialorientation))$
extending $\qi|_{\modelspace_{e_0}}$
that, for each edge $e\in\link(v)\setminus\{e_0\}$, restricts to be a coarse similitude with multiplicative constant
$\sigma_{\quot{e}}$ on $\modelspace_{e}$.

The quasi-isometry constants of $\qi$ can be bounded in terms of
$G_v$, $\per_v$, the coboundedness constant for
$\QIgp(\tilde{\Sigma},\bdry\tilde{\Sigma},\decmap',\partialorientation')\act\tilde{\Sigma}$,
the constants of $\qi|_{\modelspace_{e_0}}$,
and the $\sigma_v$.
\end{proposition}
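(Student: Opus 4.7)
The plan is to start from the given quasi-isometry $\qi'$ and modify it by post-composing with structure-preserving self-quasi-isometries of $\tilde{\Sigma}$ that rescale boundary components independently. By the coarse-similitude condition in \fullref{def:weakrigidity} applied to an infinite-order generator of each edge stabilizer (together with \fullref{prop:almostisometryhomomorphism}), the restriction of $\qi'$ to each $\modelspace_e$ is already a coarse similitude with some multiplicative constant $\lambda_e$; the task is to adjust $\lambda_e$ to $\sigma_{\quot{e}}$ (respectively, to the multiplicative constant of the prescribed $\qi|_{\modelspace_{e_0}}$ on $e_0$) while preserving the decoration, partial orientation, and the image of each edge space.

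The key geometric ingredient is the construction of \emph{local stretchers}: for each $\QIgp(\tilde{\Sigma},\bdry\tilde{\Sigma},\decmap',\partialorientation')$--orbit of boundary component and each $\lambda>0$, an element $\phi_\lambda \in \QIgp(\tilde{\Sigma},\bdry\tilde{\Sigma},\decmap',\partialorientation')$ whose restriction to a chosen boundary component $B$ is an orientation-preserving coarse similitude with multiplicative constant $\lambda$, and whose restriction to every other boundary component is a coarse similitude with multiplicative constant $1$ (equivalently, bounded distance from a translation). Because $\Sigma$ is a pair of pants, $\pi_1(\Sigma)\cong F_2$, and $\tilde{\Sigma}$ is quasi-isometric to a tree carrying the three peripheral conjugacy classes of cyclic subgroups as an $F_2$--invariant family of quasi-lines; such stretchers can be built explicitly as generalized Dehn twists that scale along $B$ and are identity-like on the complementary regions, and then transported by the cobounded $\QIgp$--action to any boundary component of the same structural type. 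Their quasi-isometry constants depend only on $\lambda$ and the coboundedness constant.

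With the local stretchers in hand, I would apply them iteratively. For each of the finitely many $G_v$--orbits of edge in $\link(\lift{v})$ other than the orbit of $e_0$, post-compose the current map with the stretcher that adjusts its multiplicative constant on $\qi'(\modelspace_e)$ from $\lambda_e$ to $\sigma_{\quot{e}}$. Since each stretcher acts with constant $1$ on boundary components outside a single orbit, these adjustments compose (in any order) to a single structure-preserving quasi-isometry with the prescribed stretch factor on every other edge space. Finally, for $e_0$: a further stretcher brings the multiplicative constant on $B_0$ to match that of the given $\qi|_{\modelspace_{e_0}}$, and a post-composition with a structure-preserving isometry of $\tilde{\Sigma}$ stabilizing $B_0$ and translating along it aligns the restriction to $\modelspace_{e_0}$ with $\qi|_{\modelspace_{e_0}}$ up to bounded error, which can be absorbed into the additive quasi-isometry constant.

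The principal obstacle is the explicit construction and uniform control of the local stretchers: one must verify that generalized Dehn twists of this type lie in the structure-preserving group and have quasi-isometry constants controlled by $\lambda$ and the coboundedness constant, while acting essentially as translations on the other boundary components. Once this is established, tracking constants through the finitely many iterations (whose number is bounded in terms of $G_v$ and $\per_v$) yields the claimed bound on the quasi-isometry constants of $\qi$ in terms of $G_v$, $\per_v$, the coboundedness constant, the constants of $\qi|_{\modelspace_{e_0}}$, and the parameters $\sigma_{\quot{e}}$.
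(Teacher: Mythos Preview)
Your approach has two genuine gaps.

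First, the claim that $\qi'|_{\modelspace_e}$ is already a coarse similitude is unsupported. \fullref{def:weakrigidity} and \fullref{prop:almostisometryhomomorphism} concern quasi-isometrically \emph{rigid} vertices; hanging vertices are precisely the non-rigid ones, and a generic quasi-isometry of $\tilde{\Sigma}$ need not restrict to a coarse similitude on any boundary line. So there is no well-defined starting constant $\lambda_e$ to adjust. Even if there were, $\lambda_e$ would vary over the infinitely many edges $e$ in a single $G_v$--orbit, since $\qi'$ does not conjugate the $G_v$--action into isometries of $\tilde{\Sigma}$.

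Second, the ``local stretchers'' you describe do not exist as stated, and in any case finitely many of them cannot do the job. A generalized Dehn twist along a boundary component \emph{translates} along that component; it does not change the multiplicative constant of a coarse similitude. More fundamentally, a self-quasi-isometry of $\tilde{\Sigma}$ that rescales one boundary line $B$ by $\lambda\neq 1$ while acting as a coarse isometry on every other boundary line is, up to relabelling source and target, exactly an instance of the map the proposition asserts exists; constructing it is not easier than proving the proposition. And since there are infinitely many edge spaces whose constants need to be set individually, composing finitely many stretchers---one per $G_v$--orbit---cannot suffice.

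The paper's proof does \emph{not} start from $\qi'$ and adjust. It discards $\qi'$ (using it only to know which orbit of boundary component each $\modelspace_e$ should go to) and builds $\qi$ from scratch, peripheral set by peripheral set, starting from the prescribed map on $\modelspace_{e_0}$. The key point is a counting argument \`a la Behrstock--Neumann: the number of boundary components of a given type in $\tilde{\Sigma}$ coming within distance $r$ of a length-$\ell$ segment of $B_0$ grows like $d'_r\ell$ with $d'_r$ exponential in $r$, so by enlarging $r$ logarithmically one can always find enough targets to injectively match the peripheral sets near $\modelspace_{e_0}$, respecting decoration and orientation. Along each matched pair one then \emph{imposes} the desired coarse similitude with constant $\sigma_{\quot{e}}$, and iterates. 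This is where the flexibility of hanging vertices is actually used.
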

\begin{proof}[Sketch]
  The proof follows the same argument as \cite[Theorem~1.2]{BehNeu08}.
The idea is to build $\qi$ inductively, peripheral set by peripheral
set.
We start with $\qi_{\modelspace_{e_0}}$. 
Let $\sigma_0$ be the multiplicative constant of $\qi_{\modelspace_{e_0}}$.
Then we want to extend $\qi$ to peripheral sets that come close to
$\modelspace_{e_0}$ in $\modelspace_v$, sending these to
components of $\bdry\tilde{\Sigma}$ that come close to
$\qi(\modelspace_{e_0})$.

For another peripheral set $\modelspace_e$, the number of peripheral sets in the
$\QIgp(\modelspace_v,\per_v,\decmap,\partialorientation)$--orbit of
$\modelspace_e$ that
come within some fixed distance $K$ of a subsegment of $\modelspace_{e_0}$ of
length $l$ is coarsely $dl$ for some $d>0$.

Let $d'_r$ be such that there are coarsely $d'_rl$
peripheral sets in the
$\QIgp(\tilde{\Sigma},\bdry\tilde{\Sigma},\decmap',\partialorientation')$--orbit
of $\qi'(\modelspace_e)$ that come within $r$ of a subsegment of
$B_0$ of length $l$.
The fact that
$\QIgp(\tilde{\Sigma},\bdry\tilde{\Sigma},\decmap',\partialorientation')\act\tilde{\Sigma}$
is $C$--cobounded for some $C$
says that $d'_C>0$, and, in fact, $d'_r$ grows exponentially in $r$.
This means that there is a logarithmically growing function whose value $R$ at
$\frac{d}{\sigma_0}$ is such that $d'_R\geq\frac{d}{\sigma_0}$.
Thus, for any $l$ there is a way to send the $dl$ elements in the $\QIgp(\modelspace_v,\per_v,\decmap,\partialorientation)$--orbit of
$\modelspace_e$ that come within distance $K$ of a length $l$ subsegment $S$
of $\modelspace_{e_0}$ injectively to the peripheral sets in the
$\QIgp(\tilde{\Sigma},\bdry\tilde{\Sigma},\decmap',\partialorientation')$--orbit
of $\qi'(\modelspace_e)$ that come within $R$ of the length
approximately $\sigma_0l$ subsegment
$\qi|_{\modelspace_e}(S)$ of
$B_0$.

In this way one builds a matching between the peripheral sets that
come close to $\modelspace_{e_0}$ and the peripheral sets that come
close to $B_0$, respecting decorations and partial orientations.
Then $\qi$ is defined along such a matched pair to be a coarse
similarity with the appropriate $\sigma_e$ as multiplicative
constant, and the neighbor-matching is repeated for each such pair.
\end{proof}

\subsection{Geometric trees of spaces for groups with two-ended
  cylinders stabilizers}\label{sec:geometricmodel}
Let $G$ be a finitely presented, one-ended group admitting a JSJ
decomposition over two-ended subgroups with two-ended cylinder
stabilizers.
Let $\gog:=\lquotient{G}{\cyl(G)}$ and $\tree:=\cyl(G)=\tree(\gog)$.

Recall that in \fullref{sec:models} we built an algebraic tree of
spaces $Y$ quasi-isometric to $G$, and gave conditions for a
collection of quasi-isometries of the vertex spaces to patch together
to give a quasi-isometry of $Y$. 
Now we will construct a \emph{geometric tree of spaces} $\modelspace$ by
uniformizing the vertex spaces, that is, replacing each vertex space
by its uniform model from \fullref{sec:uniformization}.
The quasi-isometries between vertex spaces and their uniform models
will patch together to give a quasi-isometry from $Y$ to $X$.
Therefore, $X$ will be quasi-isometric to $G$.
The price to pay for uniformizing the vertex spaces is that in
general $G$ only admits a cobounded quasi-action on $X$, not a
cocompact action, but this will not affect us.

We use the same notation as in \fullref{sec:models}.
Let $Y$ be the algebraic tree of spaces constructed there.
For a relatively rigid vertex $v\in\Gamma$, fix a quasi-isometry $\fixedmodelmap_v\from 
(G_v,\per_v)\to (Z_{\llbracket 
  (G_v,\per_v)\rrbracket},\per_{\llbracket (G_v,\per_v)\rrbracket})$
from $G_v$ to the chosen model space for the relative quasi-isometry
type of $(G_v,\per_v)$.

If $G_v$ is virtually cyclic choose a cyclic subgroup $\langle
z_v\rangle<G_v$ of minimal index. 
Define $\fixedmodelmap_v$ by sending $G_v$ onto $\langle z_v\rangle$
by closest point projection and sending $z_v^k$ to
$k\sigma_v\in\mathbb{R}$, where $\sigma_v$ is a positive real
parameter chosen as follows.
If $v$ is not adjacent to any quasi-isometrically rigid vertices
then choose $\sigma_v:=1$.
Otherwise, choose $\sigma_v:=\min \len_{\modelspace_w}(z_v)$, where the
minimum is taken over quasi-isometrically rigid vertices $w$ adjacent
to $v$.

\begin{rem}\label{rem:cylinderparameters}
This
choice of $\sigma_v$'s is convenient because it will imply, for an edge $e$ with $\initial(e)$ cylindrical and
$\terminal(e)$ rigid, that the attaching map
$\attachmap_e^\modelspace$ constructed below is a coarse similitude whose
multiplicative constant is equal to the stretch factor $\rs(e)$
defined in \fullref{sec:normalization}.  
\end{rem}

For a hanging vertex $v\in\Gamma$ we define $\fixedmodelmap_v$ as
follows.
For each edge $e\in\link(v)$ choose a cyclic subgroup $\langle
z_e\rangle<G_e$ of minimal index.
Define \[\fixedmodelmap_v\from (G_v,\per_v)\to (Z_{\llbracket 
  (G_v,\per_v)\rrbracket},\per_{\llbracket (G_v,\per_v)\rrbracket})\]
to be a quasi-isometry such that for each $e\in\link(v)$, each coset
of $G_e$, which is  a peripheral set in $\per_v$, is sent to a
peripheral set in $\per_{\llbracket (G_v,\per_v)\rrbracket}$ by a
coarse similitude with multiplicative constant:
\[\frac{\frac{[\langle z_{\terminal(e)}\rangle:\langle
    z_{\terminal(e)}\rangle\cap\langle z_e\rangle]}{[\langle z_e\rangle:\langle
    z_{\terminal(e)}\rangle\cap\langle z_e\rangle]}\cdot\sigma_{\terminal(e)}}{\len_{G_v}(z_e)}\]
Here, $\len_{G_v}(z_e)$ is the translation length of $z_e$ in the
Cayley graph of $G_v$, which is non-zero since $G_v$ is hyperbolic, 
and $\sigma_{\terminal(e)}$ is the parameter for $G_{\terminal(e)}$ chosen
above.
Such a quasi-isometry can be constructed using \fullref{lemma:BN}.
These particular values are chosen to make
\fullref{lemma:hangingattachmentiscoarseisometry}, below,  true.

Now, for each vertex $v\in\verts\tree$ define $\modelspace_v$ to be a copy
of $\fixedmodel_{\llbracket 
  (G_v,\per_v)\rrbracket}$ with isometry $\modelmap_v\from \modelspace_v\to\fixedmodel_{\llbracket 
  (G_v,\per_v)\rrbracket}$.
Define $\qi_v\from Y_v\to\modelspace_v$ by $x\mapsto \modelmap_v^{-1}\circ\fixedmodelmap_{\quot{v}}(h^{-1}_{(\quot{v},i)} x)$.

We define edge spaces and attaching maps in $X$ to be compatible with
those of $Y$, as follows. 
Consider an edge $e$ with $v:=\initial(e)$ and
$w:=\terminal(e)$.
There are $h_{(\quot{v},i)}$ and $g_{(\quot{e},j)}$ such that
$v=h_{(\quot{v},i)}\widetilde{\quot{v}}$ and
$e=h_{(\quot{v},i)}g_{(\quot{e},j)}\widetilde{\quot{e}}$.
Define $\attachmap^X_e:=\qi_w\circ \attachmap^Y_e\circ \pi_{Y_e}\circ
\qi_v^{-1}$, where $\pi_{Y_e}$ denotes closest point projection to
$Y_e$. 
The map $\attachmap^X_e$ is coarsely well defined, since $\pi_{Y_e}$ moves
points of $\qi_v^{-1}(X_e)$ bounded distance.
Define
$\attachmap^X_{\bar{e}}:=\qi_v\circ\attachmap^Y_{\bar{e}}\circ\pi_{Y_{\bar{e}}}\circ\qi_w^{-1}$,
where $\pi_{Y_{\bar{e}}}$ is closest point projection from $Y_w$ to
the coarsely dense subset $Y_{\bar{e}}$.
This map is well defined, and is a quasi-isometry inverse to
$\attachmap^X_e$, since $\pi_{Y_{\bar{e}}}$ moves points bounded distance.

Chasing through these definitions on easily demonstrates:
\begin{lemma}\label{lemma:hangingattachmentiscoarseisometry}
  If $c=\initial(e)$ is cylindrical and $v=\terminal(e)$ is hanging
  then $\attachmap^X_e\from\modelspace_c\to\modelspace_v$ is a coarse
  isometric embedding.
\end{lemma}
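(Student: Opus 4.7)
The plan is to compute the multiplicative constant of $\attachmap^X_e\from X_c \to X_v$, viewed as the composition
\[
\attachmap^X_e = \qi_v\circ\attachmap^Y_e\circ\pi_{Y_e}\circ\qi_c^{-1},
\]
by multiplying together the multiplicative constants of the four factors. The entire claim reduces to the assertion that these four constants telescope to $1$. To set notation, let $z_c\in G_c$ and $z_e\in G_e$ be generators of the minimal-index infinite cyclic subgroups chosen in the construction, and write $\ell_{G_c}(z_c)$, $\ell_{G_c}(z_e)$ for their translation lengths in the word metric $Y_c$, and $\ell_{G_v}(z_e)$ for the translation length of $\edgemap_{\quot{e}}(z_e)$ in $Y_v$.

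First I would verify that each factor is, up to uniformly bounded error, a coarse similitude on the natural coarsely dense cyclic subspace. By the definition of $\fixedmodelmap_{\quot{c}}$ on the virtually cyclic group $G_c$, the map $\qi_c$ sends $z_c^k$ to $k\sigma_c\in\mathbb{R}=X_c$, so $\qi_c^{-1}$ has multiplicative constant $\ell_{G_c}(z_c)/\sigma_c$. Since $Y_e$ is coarsely dense in $Y_c$ along the axis of $z_c$, the projection $\pi_{Y_e}$ moves points uniformly boundedly and contributes multiplicative constant $1$. The attaching map $\attachmap^Y_e$ sends the $\langle z_e\rangle$-axis in $Y_e\subset Y_c$ to the $\langle\edgemap_{\quot{e}}(z_e)\rangle$-axis in $Y_{\bar{e}}\subset Y_v$ by identifying corresponding powers, so it is a coarse similitude with multiplicative constant $\ell_{G_v}(z_e)/\ell_{G_c}(z_e)$. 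Finally, by the very definition of $\fixedmodelmap_{\quot{v}}$ on the peripheral set corresponding to $\bar{e}\in\link(v)$ (with $\terminal(\bar{e})=c$), the map $\qi_v|_{Y_{\bar{e}}}$ is a coarse similitude with multiplicative constant
\[
\mu = \frac{\sigma_c}{\ell_{G_v}(z_e)}\cdot\frac{[\langle z_c\rangle:\langle z_c\rangle\cap\langle z_e\rangle]}{[\langle z_e\rangle:\langle z_c\rangle\cap\langle z_e\rangle]}.
\]

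Multiplying the four constants, the factors $\sigma_c$ and $\ell_{G_v}(z_e)$ both cancel, leaving
\[
\frac{\ell_{G_c}(z_c)}{\ell_{G_c}(z_e)}\cdot\frac{[\langle z_c\rangle:\langle z_c\rangle\cap\langle z_e\rangle]}{[\langle z_e\rangle:\langle z_c\rangle\cap\langle z_e\rangle]}.
\]
The only substantive step of the proof is the identity reducing this expression to $1$. Setting $a:=[\langle z_c\rangle:\langle z_c\rangle\cap\langle z_e\rangle]$ and $b:=[\langle z_e\rangle:\langle z_c\rangle\cap\langle z_e\rangle]$, both $z_c^{a}$ and $z_e^{\pm b}$ generate $\langle z_c\rangle\cap\langle z_e\rangle$, so they coincide up to sign in the virtually cyclic group $G_c$; comparing their common translation length in $Y_c$ yields $a\,\ell_{G_c}(z_c) = b\,\ell_{G_c}(z_e)$, i.e.\ $\ell_{G_c}(z_c)/\ell_{G_c}(z_e)=b/a$. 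Hence the product above equals $(b/a)\cdot(a/b)=1$, and $\attachmap^X_e$ is a coarse similitude with multiplicative constant $1$, i.e., a coarse isometric embedding. The only plausible obstacle is bookkeeping: checking that each factor really is uniformly a coarse similitude with the claimed constant, independently of the specific coset representatives $h_{(\quot{c},i)},\,g_{(\quot{e},j)},\,f_{\quot{e}}$ used to build the algebraic tree of spaces. This uniformity is automatic, since $G$ acts by isometries on $Y$ and the model maps $\fixedmodelmap_{\quot{c}},\,\fixedmodelmap_{\quot{v}}$ are defined once and for all on $G_{\quot{c}}$ and $G_{\quot{v}}$.
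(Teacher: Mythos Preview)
Your proof is correct and is exactly the ``chasing through these definitions'' that the paper leaves to the reader; the paper gives no further argument beyond that phrase. Your computation of the four multiplicative constants and the index identity $a\,\ell_{G_c}(z_c)=b\,\ell_{G_c}(z_e)$ (from $z_c^a$ and $z_e^{\pm b}$ generating the same cyclic intersection) is precisely the intended verification.
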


\begin{proposition}\label{prop:groupisqitotreeofspaces}
With notation as above, $G$, $X$, and $Y$ are quasi-isometric to one another.
\end{proposition}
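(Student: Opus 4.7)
The plan is to establish $G \sim Y \sim X$, where the first quasi-isometry is already Lemma \ref{lemma:algebraictreeofspaces}, so the real work is showing $Y$ is quasi-isometric to $X$ via the collection of maps $\qi_v$. I will verify the hypotheses of \fullref{corollary:treeofqis} applied to the identity isomorphism $\iso = \id_\tree$ and the vertex maps $\qi_v\from Y_v \to X_v$ together with edge maps $\qi_e := \qi_{\initial(e)}\circ \pi_{Y_e}|_{Y_e} = \qi_{\initial(e)}|_{Y_e}$.

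First, I would check uniformity of quasi-isometry constants. Each $\qi_v$ is the composition $x \mapsto \modelmap_v^{-1}\circ \fixedmodelmap_{\quot{v}}(h_{(\quot{v},i)}^{-1}x)$, where $\modelmap_v$ is an isometry, left multiplication by $h_{(\quot{v},i)}^{-1}$ is an isometry on $Y_v$, and $\fixedmodelmap_{\quot{v}}$ is a quasi-isometry depending only on $\quot{v}\in\verts\Gamma$. Since $\Gamma$ is finite, only finitely many $\fixedmodelmap_{\quot{v}}$ occur, so the $\qi_v$ have uniformly bounded quasi-isometry constants, independent of the lift $v$ of $\quot{v}$. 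The same holds for the quasi-inverses.

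Next I would verify the commutativity of the diagram in \fullref{fig:com} up to uniformly bounded error. By construction, $\attachmap^X_e = \qi_{\terminal(e)}\circ \attachmap^Y_e \circ \pi_{Y_e}\circ \qi_{\initial(e)}^{-1}$. On the subset $\qi_{\initial(e)}(Y_e) = X_e$, the projection $\pi_{Y_e}\circ \qi_{\initial(e)}^{-1}$ agrees with $\qi_{\initial(e)}^{-1}|_{X_e}$ up to bounded error (because $\qi_{\initial(e)}^{-1}(X_e)$ lies at bounded distance from $Y_e$, with bound depending only on the quasi-isometry constants above and on the edge spaces associated to the finitely many edges of $\Gamma$). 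Hence for $x \in Y_e$ we have, up to uniformly bounded error,
\[
\attachmap^X_e \circ \qi_{\initial(e)}(x) \eadd \qi_{\terminal(e)}\circ \attachmap^Y_e(x),
\]
which is precisely the commutativity required by \fullref{corollary:treeofqis}, with $\qi_e := \qi_{\initial(e)}|_{Y_e}$ playing the role of the edge-space quasi-isometry and $Y_e,X_e$ coarsely equivalent to each other under $\qi_{\initial(e)}$ by construction.

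With uniform constants and uniformly bounded commutativity, \fullref{corollary:treeofqis} immediately produces a quasi-isometry $\qi\from Y\to X$ restricting to $\qi_v$ on each $Y_v$. Combined with \fullref{lemma:algebraictreeofspaces}, this gives the chain $G \emul Y \emul X$. The only delicate point, which I do not expect to present real difficulty, is justifying that the ``$\pi_{Y_e}$ moves points bounded distance'' estimates used to define $\attachmap^X_e$ and $\attachmap^X_{\bar{e}}$ can be taken to be uniform over all edges of $\tree$; this follows from the $G$-equivariance of the construction and the finiteness of $\edges\Gamma$, so the only errors are those introduced by the finitely many $\fixedmodelmap_{\quot{v}}$'s, which have absolute bounds.
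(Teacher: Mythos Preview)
Your proposal is correct and follows essentially the same approach as the paper: establish $G\sim Y$ via \fullref{lemma:algebraictreeofspaces}, then show $(\qi_v)$ forms a compatible tree of quasi-isometries over $\id_\tree$ to conclude $Y\sim X$. The only cosmetic difference is that the paper invokes \fullref{prop:treeofqis} directly, while you verify the hypotheses of its reformulation \fullref{corollary:treeofqis}; your added detail about uniformity of constants (via finiteness of $\Gamma$) and the diagram check is exactly what underlies the paper's one-line assertion that $(\qi_v)$ is compatible.
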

\begin{proof}
$G$ is quasi-isometric to $Y$ by \fullref{lemma:algebraictreeofspaces}.
\fullref{prop:treeofqis} implies $X$ and $Y$ are quasi-isometric,
since $(\qi_v)$ is a tree of quasi-isometries over the identity on $\tree$
compatible with $X$ and $Y$.
\end{proof}

\subsection{Quasi-isometries}
Let $G$ be a finitely presented, one-ended group with non-trivial JSJ
decomposition over two-ended subgroups such that:
\begin{itemize}
\item Every non-elementary
vertex is either hanging or quasi-isometrically rigid relative to the
peripheral structure coming from incident edge groups.
\item Cylinder stabilizers are two-ended.
\end{itemize}

If \fullref{question:arehyperbolicgroupsrigid} has positive answer then
every one-ended hyperbolic group with a non-trivial JSJ decomposition
is of this form.

\begin{theorem}\label{thm:qi}
Let $G$ and $G'$ be finitely presented, one-ended groups with
non-trivial JSJ decompositions over two-ended subgroups with two-ended
cylinder stabilizers.

Let $\tree:=\cyl(G)$.
Let $\modelspace$ be a geometric tree of spaces for $G$ over $\tree$,
as in \fullref{sec:geometricmodel}.
Let $\partialorientation_0$ be the trivial partial orientation on
$\modelspace$.
Let $\decmap_0$ the decoration on $\tree$ that sends an edge $e$
incident to a rigid vertex to its relative stretch factor
$\rs(e)$ as in \fullref{def:relstretch}, sends other edges to
$\nullvar$, and sends vertices to their vertex
type (`cylindrical', `rigid', or `hanging') and relative quasi-isometry type.
Let $\partialorientation_0$ be the trivial partial orientation on $\modelspace$.
Perform neighbor, cylinder, and vertex refinement until all three
stabilize to give a decoration $\decmap\from\tree\to\ornaments$ and a
partial orientation $\partialorientation$ of $\modelspace$.

Define $\tree'$,
$\modelspace'$, $\decmap'_0$, $\partialorientation'_0$,
$\decmap'\from\tree'\to\ornaments'$, and $\partialorientation'$ for
$G'$ as we
did for $G$.
In particular, $\modelspace'$ is uniformized with respect to the same
choice of model spaces from \fullref{def:qitypes}.

Then  $G$ and $G'$ are quasi-isometric if and only if
there exists a bijection $\beta\from\image
  \decmap\to\image\decmap'$ and $\xi\in\{-1,1\}^{\ornaments}$ such that:
\begin{enumerate}
\item $\decmap_0\circ\decmap^{-1}=\decmap_0'\circ(\decmap')^{-1}\circ\beta$
\item When the rows and columns of
  $\struc(\tree',\decmap',\ornaments')$ are given the 
  $\beta$--induced ordering from $\struc(\tree,\decmap,\ornaments)$,
  we have
  $\struc(\tree,\decmap,\ornaments)=\struc(\tree',\decmap',\ornaments')$.
\item For
  every $\ornament\in\image \decmap$ such that
  $\decmap^{-1}(\ornament)$ consists of non-elementary vertices there exists (equivalently, for
  every) $v\in\decmap^{-1}(\ornament)$ and
  $v'\in(\decmap')^{-1}(\beta(\ornament))$ so that \[\QIsom((\modelspace_{v},\per_{v},\beta\circ\decmap,\xi\cdot\partialorientation),(\modelspace'_{v'},\per'_{v'},\decmap',\partialorientation')
)\] is nonempty.\label{item:nonemptyqis}
\item For
  every $\ornament\in\image \decmap$ such that
  $\decmap^{-1}(\ornament)$ consists of cylindrical vertices, there exists (equivalently, for
  every) $c\in\decmap^{-1}(\ornament)$ and
  $c'\in(\decmap')^{-1}(\beta(\ornament))$ such that $\imbalance_c^{\decmap,\xi\cdot\partialorientation}=\imbalance_{c'}^{\decmap',\partialorientation'}\circ\beta$.
\end{enumerate} 
\end{theorem}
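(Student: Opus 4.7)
The plan is to derive the theorem from Theorem \ref{thm:starmapmain} applied with $\starmap = \QIgp$, supplemented by the uniformization of vertex spaces carried out in \fullref{sec:geometricmodel} and the flexibility result Proposition \ref{lemma:BN} at hanging vertices. The forward direction is essentially automatic from what has already been set up. Suppose $\phi \colon G \to G'$ is a quasi-isometry. Theorem \ref{qiinvariance} and Corollary \ref{qirestricted} produce an isomorphism $\phi_* \colon \tree \to \tree'$ of JSJ trees of cylinders that preserves vertex type and the relative quasi-isometry type of each non-elementary vertex, and Proposition \ref{prop:stretchpreserved} shows that stretch factors on edges incident to rigid vertices are preserved. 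Hence $\phi_*$ preserves the initial decoration $\decmap_0$, and the easy implication of Theorem \ref{thm:starmapmain} (item 1 $\Rightarrow$ item 2) delivers the required $\beta$ and $\xi$.

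For the converse, the hard implication of Theorem \ref{thm:starmapmain} (item 2 $\Rightarrow$ item 1) supplies an isomorphism $\chi \colon \tree \to \tree'$ together with, at each vertex $v$, a quasi-isometry $\phi_v \in \QIsom((\modelspace_v, \per_v), (\modelspace'_{\chi(v)}, \per'_{\chi(v)}))$ realizing $\chi|_{\link(v)}$ and such that, for every edge $e$, the map $(\attachmap'_{\chi(e)} \circ \phi_{\initial(e)}) \circ (\phi_{\terminal(e)} \circ \attachmap_e)^{-1}$ is orientation-preserving on $\modelspace'_{\overline{\chi(e)}}$. What remains is to upgrade this collection into a tree of quasi-isometries compatible with $\modelspace$ and $\modelspace'$ in the sense of Proposition \ref{prop:treeofqis}, namely to arrange that $\phi_{\terminal(e)} \circ \attachmap_e$ and $\attachmap'_{\chi(e)} \circ \phi_{\initial(e)}$ coarsely agree \emph{as maps}, not just as subsets with matching orientations, and with constants uniform in $v$.

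The upgrade is carried out in two steps; because $\tree$ is a tree, propagation from a chosen base vertex is unambiguous, so no cycle consistency has to be verified. At rigid vertices, Lemma \ref{lemma:ci} (together with Proposition \ref{prop:almostisometryhomomorphism} when applicable) lets us take $\phi_v$ to be a coarse isometry between the chosen rigid models of \fullref{sec:uniformization}. The uniformization of \fullref{sec:geometricmodel} (see Remark \ref{rem:cylinderparameters}) makes every attaching map at an edge $e$ incident to a rigid vertex into a coarse similitude whose multiplicative constant is exactly $\rs(e)$; since condition (1) of the theorem forces $\rs(e) = \rs(\chi(e))$, post-composing $\phi_v$ with a suitable coarse isometry of the rigid model makes $\phi_{\terminal(e)} \circ \attachmap_e$ and $\attachmap'_{\chi(e)} \circ \phi_{\initial(e)}$ coarsely agree on the chosen edge $e$ out of $v$; orientation compatibility and the two-ended structure of edge spaces (which are quasi-lines, so matched up to bounded error by a multiplicative constant, an orientation, and one base value) then promote this agreement to all other incident edges.

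At hanging vertices, we invoke Proposition \ref{lemma:BN}: starting from the already-fixed $\phi_c$ at an adjacent cylindrical vertex $c$, we redefine $\phi_w$ so that its restriction to the peripheral subset of $\modelspace_w$ corresponding to the edge $cw$ equals $\attachmap'_{\chi(cw)} \circ \phi_c \circ \attachmap_{cw}^{-1}$ as a coarse similitude, and so that on each other peripheral subset of $\modelspace_w$ it is a coarse similitude whose multiplicative constant matches the stretch demanded by the already-fixed $\phi_{v'}$ at the rigid vertex $v'$ on the other side of the corresponding cylinder. Finiteness of the number of $G$- and $G'$-orbits of vertices ensures uniform constants throughout. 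Once $(\phi_v)$ is so adjusted, Proposition \ref{prop:treeofqis} assembles it into a quasi-isometry $\modelspace \to \modelspace'$, and Proposition \ref{prop:groupisqitotreeofspaces} translates this into a quasi-isometry $G \to G'$. The main obstacle is precisely the rigid-vertex coordination described above: after one edge-space is matched by adjusting $\phi_v$, all others out of $v$ must automatically match, which is why the stretch-factor decoration and the partial orientation $\partialorientation$ must both be encoded, and why the two-ended cylinder stabilizer hypothesis is essential.
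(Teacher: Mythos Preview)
Your approach is essentially the paper's: invoke Theorem~\ref{thm:starmapmain}, then upgrade the orientation-compatible collection $(\phi_v)$ to a genuine tree of quasi-isometries via stretch-factor matching at rigid vertices and Proposition~\ref{lemma:BN} at hanging vertices, and finish with Proposition~\ref{prop:treeofqis}. Two points deserve tightening, and they are precisely where the paper's write-up differs from yours.

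First, the phrase ``promote this agreement to all other incident edges'' is misleading. After you post-compose $\phi_v$ by an element $k$ of the edge stabilizer $G'_{\overline{\chi(e)}}$ to force agreement with $\phi_c$ along the incoming edge $e$, the induced bijection $(k\phi_v)_*$ on $\link(v)$ need not equal the original $\chi|_{\link(v)}$ on the remaining edges. You are therefore not \emph{verifying} agreement on the other edges but \emph{redefining} $\chi$ (and subsequently $\phi_{c''}$ for each outgoing cylindrical neighbour $c''$) via the adjusted $\phi_v$. This is legitimate exactly because propagation in a tree never revisits a vertex, but it means you are in effect redoing the inductive construction of Theorem~\ref{thm:starmapmain} with extra care rather than post-processing its output---which is how the paper presents it.

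Second, for uniformity of constants the paper pre-selects, once and for all, a finite list of building-block quasi-isometries $\Phi_{\ornament,i,\ornament',j}$ and $\Phi_{\ornament,\ornament'}^{\pm}$ between orbit representatives, and realises each $\phi_v$ at a rigid vertex as a composition of three such blocks with five group elements. Your appeal to ``finiteness of the number of orbits'' is morally the same argument, but it leans implicitly on the \emph{uniform} clause in Definition~\ref{def:weakrigidity} for rigid vertices and on the explicit dependence of constants stated in Proposition~\ref{lemma:BN} for hanging ones; making those dependencies explicit is what the pre-chosen $\Phi$'s accomplish.
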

The construction is a modification of the proof of
\fullref{thm:starmapmain}.
Recall in that case we inductively built
$\iso\in\Isom((\tree,\decmap),(\tree,\decmap'))$ and quasi-isometries
\[\qi_v\in\QIsom((\modelspace_v,\per_v,\beta\circ\decmap,\xi\cdot\partialorientation),(\modelspace'_{\iso(v)},\per'_{\iso(v)},\decmap',\partialorientation'))\] 
such that $(\qi_v)_*=\iso|_{\link(v)}$.
The proof of \fullref{thm:starmapmain} mainly focuses on the inductive
step in the link of a cylindrical vertex, and chooses any $\qi_v$ as
above such that $(\qi_v)_*$ agrees with $\iso$ on the incoming edge to
$v$.

In the present context we must be more careful about the choices of the $\qi_v$.
The proof in \fullref{thm:starmapmain} gives us a collection of
quasi-isometries $(\qi_v)$ such that
for every edge $e\in\tree$ with $\initial(e)$ cylindrical we have that
$(\attachmap_{\iso(e)}\circ\qi_{\initial(e)})\circ(\qi_{\terminal(e)}\circ\attachmap_{e})^{-1}$
is orientation preserving on $\modelspace'_{\overline{\iso(e)}}$, but
now we require it to be coarsely the identity on
$\modelspace'_{\overline{\iso(e)}}$.
Furthermore, we need the quasi-isometry constants of the $\qi_v$ to be
uniformly bounded.

Here is how we achieve these requirements.
Hanging vertices present no obstacles, since by \fullref{lemma:BN}
they are so flexible. The real work is in dealing with the rigid
vertices. For these we choose in advance a finite number of
quasi-isometries to use as building blocks. Since the collection is
finite, the constants are uniformly bounded. 
We will choose the maps on cylinder spaces to be coarse isometries. 
It then remains to see that if $e\in\edges\tree$ is an edge with $c:=\initial(e)$ cylindrical and
$v:=\terminal(e)$ relatively rigid, that we can make $\qi_c$ agree
with a map $\qi_v$ constructed from the pre-chosen building blocks. 
We assume we have chosen enough building blocks so that we can make
$(\qi_v)_*(e)=(\qi_c)_*(e)$, with the correct orientation on
$\modelspace_{\bar{e}}$.
This is handled by the same considerations as \fullref{thm:starmapmain}.
Additionally, we have set up the geometric tree of spaces so that the edge inclusion
into a rigid vertex is a coarse similitude whose multiplicative
constant is the stretch factor of the edge.
Since we have incorporated the stretch factors into the decorations,
we are guaranteed that the stretch factor on $e$ matches the stretch
factor on $\iso(e)$. It follows that
$(\attachmap_{\iso(e)}\circ\qi_{c})\circ(\qi_{v}\circ\attachmap_{e})^{-1}$ is a coarse
isometry that is orientation preserving. 
Finally, we make it coarsely the identity by adjusting $\qi_v$ using
the group action.

\begin{proof}[{Proof of \fullref{thm:qi}}]
By \fullref{qiinvariance}, \fullref{qirestricted},
\fullref{prop:treeofqis}, and \fullref{prop:stretchpreserved},
$\modelspace$ and $\modelspace'$ are quasi-isometric if and only if
there exists a tree of quasi-isometries over an element of
$\Isom((\tree,\decmap_0),(\tree',\decmap_0'))$ compatible with
$\modelspace$ and $\modelspace'$.

The existence of a tree of quasi-isometries over an element of
$\Isom((\tree,\decmap_0),(\tree',\decmap_0'))$ compatible with
$\modelspace$ and $\modelspace'$ implies the above conditions.
Our goal is to show the converse.

Suppose $\ornament\in\ornaments$ is an ornament such that
$\decmap^{-1}(\ornament)$ consists of vertices that are relatively quasi-isometrically
rigid.
Choose representatives
$v_{\ornament,1},\dots,v_{\ornament,i_{\ornament}}$ of the $G$--orbits
contained in $\decmap^{-1}(\ornament)$.
Suppose $\ornament'\in\ornaments$ is an ornament such that
$\decmap^{-1}(\ornament')$ consists of edges incident to
$v\in\decmap^{-1}(\ornament)$.
For each $1\leq i\leq i_\ornament$ choose representatives
$e_{\ornament,i,\ornament',1},\dots,e_{\ornament,i,\ornament',j_{\ornament'}}$
of the $G_{v_{\ornament,i}}$--orbits in $\decmap^{-1}(\ornament')\cap\link(v_{\ornament,i})$.
For each $i$ and $j$ choose
\[\Phi_{\ornament,i,\ornament',j}\in\QIsom((\modelspace_{v_{\ornament,i}},\per_{v_{\ornament,i}},\decmap,\xi\cdot\partialorientation),(\modelspace_{v_{\ornament,1}},\per_{v_{\ornament,1}},\decmap,\xi\cdot\partialorientation))\]
such that
$(\Phi_{\ornament,i,\ornament',j})_*(e_{\ornament,i,\ornament',j})=e_{\ornament,1,\ornament',1}$.
Such quasi-isometries exist by \fullref{prop:redecorationvslocalqi}.

Similarly choose representatives
$v'_{\beta(\ornament),1},\dots,v'_{\beta(\ornament),i'_{\beta(\ornament)}}$ of the $G'$--orbits
contained in $(\decmap')^{-1}(\beta(\ornament))$ and representatives
$e'_{\beta(\ornament),i,\beta(\ornament'),1},\dots,e'_{\beta(\ornament),i,\beta(\ornament'),j'_{\beta(\ornament')}}$
of the $G'_{v_{\beta(\ornament),i}}$--orbits in
$(\decmap')^{-1}(\beta(\ornament'))\cap\link(v'_{\beta(\ornament),i})$ and
quasi-isometries $\Phi'_{\beta(\ornament),i,\beta(\ornament'),j}$.

Choose a quasi-isometry
$\Phi_{\ornament,\ornament'}\in\QIsom((\modelspace_{v_{\ornament,1}},\per_{v_{\ornament,1}},\decmap,\xi\cdot\partialorientation),(\modelspace'_{v'_{\beta(\ornament),1}},\per'_{v'_{\beta(\ornament),1}},\decmap',\partialorientation'))$
that takes $e_{\ornament,1,\ornament',1}$ to $e'_{\beta(\ornament),1,\beta(\ornament'),1}$.
Such a quasi-isometry exists by condition \ref{item:nonemptyqis} and
\fullref{prop:redecorationvslocalqi}.
If $e_{\ornament,1,\ornament',1}$ is $\partialorientation$--unoriented
then we also choose 
\[\Phi^-_{\ornament,\ornament'}\in\QIsom((\modelspace_{v_{\ornament,1}},\per_{v_{\ornament,1}},\decmap,\xi\cdot\partialorientation),(\modelspace'_{v'_{\beta(\ornament),1}},\per'_{v'_{\beta(\ornament),1}},\decmap',\partialorientation'))\]
that takes $e_{\ornament,1,\ornament',1}$ to
$e'_{\beta(\ornament),1,\beta(\ornament'),1}$ such that
$\Phi^-_{\ornament,\ornament'}\circ(\Phi_{\ornament,\ornament'})^{-1}$
orientation reversing on
$\modelspace'_{\bar{e}'_{\beta(\ornament),1,\beta(\ornament'),1}}$.
Such a quasi-isometry exists by \fullref{lemma:reversingmap}.

We have chosen finitely many quasi-isometries $\Phi$, so they have
uniformly bounded quasi-isometry constants.

\paragraph{Induction base case}
Begin the induction by choosing a cylindrical vertex $c\in\tree$ and a
cylindrical vertex $c'\in(\decmap')^{-1}(\beta(\decmap(c)))$.
Define $\iso(c):=c'$. 
By construction $\modelspace_c$ and $\modelspace'_{c'}$ are copies of
$\mathbb{R}$. 
Define $\qi_c\from\modelspace_c\to\modelspace'_{c'}$ to be an
isometry.
If $c$ is $\partialorientation$--oriented we choose $\phi_c$ so that $\phi_c(\xi\cdot\partialorientation(\modelspace_c))=\partialorientation'(\modelspace'_{c'})$.
Extend $\iso$ to $\link(c)$ as in \fullref{thm:starmapmain}.

\paragraph{Inductive steps for non-elementary vertices}
Suppose $v=\terminal(e)$ is a non-elementary vertex such that for
$c=\initial(e)$ we have already defined a coarse isometry $\qi_c\from\modelspace_c\to\modelspace'_{\iso(c)}$ and
$\iso|_{\link(c)}$. 
Suppose further that if $c$ is
$\partialorientation$--oriented then $\phi_c(\xi\cdot\partialorientation(\modelspace_c))=\partialorientation'(\modelspace'_{\iso(c)})$.
Let $c':=\iso(c)$ and $e':=\iso(e)$.

\subparagraph{Suppose $v$ is rigid.}
Now $gv=v_{\decmap(v_0),i}$ for some $g\in G$ and some $i$, and
$hge=e_{\decmap(v),i,\decmap(e),j}$ for some $j$ and some $h\in G_{v_{\decmap(v),i}}$.
Similarly, there are $g'\in G'$ and $i'$ such that
$g'v'=v'_{\beta(\decmap(v)),i'}$, and $j'$ and  $h'\in
G'_{v'_{\beta(\decmap(v)),i'}}$ such that
$h'g'e'=e'_{\beta(\decmap(v)),i',\beta(\decmap(e)),j'}$. 

The map
\[(h'g')^{-1}\circ(\Phi'_{\beta(\decmap(v_0)),i',\beta(\decmap(e)),j'})^{-1}\circ\Phi_{\decmap(v_0),\decmap(e)}\circ\Phi_{\decmap(v_0),i,\decmap(e),j}\circ hg\]
is an element of
$\QIsom(\modelspace_{v_0},\per_{v_0},\beta\circ\decmap,\xi\cdot\partialorientation),(\modelspace'_{v'_0},\per'_{v'_0},\decmap',\partialorientation'))$
taking $e$ to $e'$.

If $e$ is $\partialorientation$--unoriented then we also have that the map
\[(h'g')^{-1}\circ(\Phi'_{\beta(\decmap(v_0)),i',\beta(\decmap(e)),j'})^{-1}\circ\Phi^-_{\decmap(v_0),\decmap(e)}\circ\Phi_{\decmap(v_0),i,\decmap(e),j}\circ hg\]
is an element of
$\QIsom(\modelspace_{v_0},\per_{v_0},\beta\circ\decmap,\xi\cdot\partialorientation),(\modelspace'_{v'_0},\per'_{v'_0},\decmap',\partialorientation'))$
taking $e$ to $e'$. 
For one of these two, the composition with
$\attachmap_e\circ(\qi_c)^{-1}\circ(\attachmap'_{e'})^{-1}$ is orientation
preserving on $\modelspace_{\bar{e}}$. 
Choose this one as $\qi_v'$.

Finally, choose a point $x\in\modelspace_{\bar{e}}$.
Since edge stabilizers act uniformly coboundedly on their
corresponding peripheral sets, we can choose an element $k\in
G'_{\bar{e}'}$ that is orientation preserving on
$\modelspace'_{\bar{e}'}$ and such that $k\qi'_v(x)$ is boundedly
close to $\attachmap'_{e'}\circ\qi_c\circ(\attachmap_{e})^{-1}(x)$.
Define $\qi_v:=k\qi'_v$.

We have:
\begin{itemize}
\item $\qi_v\in
  \QIsom((\modelspace_{v_0},\per_{v_0},\beta\circ\decmap,\xi\cdot\partialorientation),(\modelspace'_{v'_0},\per'_{v'_0},\decmap',\partialorientation'))$
\item $(\qi_v)_*(e)=e'$
\item $\qi_v(x)$ is boundedly close to $\attachmap'_{e'}\circ\qi_c\circ(\attachmap_{e})^{-1}(x)$.
\item
  $\qi_v\circ(\attachmap'_{e'}\circ\qi_c\circ(\attachmap_{e})^{-1})^{-1}$
  is orientation preserving on $\modelspace'_{\bar{e}'}$.
\item $\qi_v$ is a composition of three of the pre-chosen $\Phi$ with
  multiplication by five group elements, so the quasi-isometry
  constants of $\qi_v$ are bounded in terms of those of the $\Phi$ and
  the constants for the group action.
\end{itemize}

By relative quasi-isometric rigidity, $\qi_v$ is a coarse isometry.

We also claim that
$\attachmap'_{e'}\circ\qi_c\circ(\attachmap_{e})^{-1}$ is a coarse
isometry.
This is because $\qi_c$ is a coarse isometry, by the induction
hypothesis, and $\attachmap_e$ and $\attachmap'_{e'}$ are, by
construction (recall \fullref{rem:cylinderparameters}), coarse
similitudes with multiplicative constants $\rs(e)$ and $\rs(e')$,
which are equal, since:
\[\rs(e)=\decmap_0\circ\decmap^{-1}(\decmap(e))=\decmap_0'\circ(\decmap')^{-1}\circ\beta(\decmap(e))=\decmap_0'\circ(\decmap')^{-1}(\decmap'(e'))=\decmap_0'(e')=\rs(e')\]

Thus, $\qi_v\circ(\attachmap'_{e'}\circ\qi_c\circ(\attachmap_{e})^{-1})^{-1}$
  is orientation preserving coarse isometry on
  $\modelspace'_{\bar{e}'}$ that coarsely fixes a point.
It follows that $\qi_v|_{\modelspace{\bar{e}}}$ and
$\attachmap'_{e'}\circ\qi_c\circ(\attachmap_{e})^{-1}$ are coarsely
equivalent.
 
Define $\iso|_{\link(v)}:=(\phi_{v})_*$.

For each edge $e''\in\link(v)\setminus\{\bar{e}\}$ define
$\phi_{\terminal(e'')}:=\attachmap'_{\iso(e'')}\circ\qi_{v}\circ(\attachmap_{e''})^{-1}$.
Since $\phi_v$ is a coarse isometry and $\attachmap_{e''}$ and
$\attachmap'_{\iso(e'')}$ are coarse similitudes with the same
multiplicative constant, as above, we have that
$\phi_{\terminal(e'')}$ is a coarse isometry.

\subparagraph{Suppose $v$ is hanging.}
The map $\attachmap'_{e'}\circ\qi_c\circ(\attachmap_e)^{-1}\from\modelspace_{\bar{e}}\to\modelspace'_{\bar{e}'}$ is a
coarse isometry, since attaching maps to hanging vertex spaces are
coarse isometries by \fullref{lemma:hangingattachmentiscoarseisometry} and $\qi_c$ is a
coarse isometry by the induction hypothesis.

Use condition \ref{item:nonemptyqis} and \fullref{lemma:BN} to produce
a quasi-isometry \[\qi_{v}\in \QIsom((\modelspace_{v},\per_{v},\beta\circ\decmap,\xi\cdot\partialorientation),(\modelspace'_{v'},\per'_{v'},\decmap',\partialorientation')
)\]
that is a coarse isometry along each peripheral subset and that coarsely
agrees with $\attachmap'_{e'}\circ\qi_c\circ(\attachmap_e)^{-1}$ on $\modelspace_{\bar{e}}$.

Define $\iso|_{\link(v)}:=(\qi_{v})_*$.

For each
$e''\in\link(v)\setminus\{\bar{e}\}$ the map
$\qi_{\terminal(e'')}:=\attachmap'_{\iso(e'')}\circ\qi_{v}\circ(\attachmap_{e''})^{-1}$
is a coarse isometry, since attaching maps to hanging vertex spaces are
coarse isometries by
\fullref{lemma:hangingattachmentiscoarseisometry}, and $\qi_{v}$ is a
coarse isometry along peripheral sets by construction.

\paragraph{Inductive step for cylindrical vertices}
Suppose $c=\initial(e)$ is cylindrical, $\iso$ is defined on $e$,
$\phi_{\terminal(e)}$ is defined, and $\phi_c$ is a coarse isometry
such that $\phi_c$ is coarsely equivalent to $(\attachmap'_{\iso(e)})^{-1}\circ\phi_{\terminal(e)}\circ\attachmap_e$.
Extend $\iso$ to $\link(c)\setminus\{e\}$ as in \fullref{thm:starmapmain}.
\bigskip

This completes the induction.
The result is $\iso\in\Isom((\tree,\decmap),(\tree',\decmap'))$ and
uniform quasi-isometries $(\qi_v)$ satisfying the conditions of
\fullref{corollary:treeofqis}, so $(\qi_v)$ is a tree of
quasi-isometries over $\iso$ compatible with $\modelspace$ and
$\modelspace'$, as desired.
\end{proof}

 \bibliographystyle{hypershort}
 \bibliography{JSJ_Cashen_Martin}

\providecommand{\bysame}{\leavevmode\hbox to3em{\hrulefill}\thinspace}
\providecommand{\MR}{\relax\ifhmode\unskip\space\fi MR }
\providecommand{\MRhref}[2]{%
  \href{http://www.ams.org/mathscinet-getitem?mr=#1}{#2}
}
\providecommand{\href}[2]{#2}
\providecommand{\doi}[1]{doi: #1}
\begin{thebibliography}{10}

\bibitem{BehKleMin12}
Jason~A. Behrstock, Bruce Kleiner, Yair~N. Minsky, and Lee Mosher,
  \emph{\href{http://dx.doi.org/10.2140/gt.2012.16.781}{Geometry and rigidity
  of mapping class groups}}, Geom. Topol. \textbf{16} (2012), no.~2, 781--888.

\bibitem{BehNeu08}
Jason~A. Behrstock and Walter~D. Neumann,
  \emph{\href{http://dx.doi.org/10.1215/S0012-7094-08-14121-3}{Quasi-isometric
  classification of graph manifold groups}}, Duke Math. J. \textbf{141} (2008),
  no.~2, 217--240.

\bibitem{BehNeu12}
Jason~A. Behrstock and Walter~D. Neumann,
  \emph{\href{http://dx.doi.org/10.1515/CRELLE.2011.143}{Quasi-isometric
  classification of non-geometric 3-manifold groups}}, J. Reine Angew. Math.
  \textbf{669} (2012), 101--120.

\bibitem{BestvinaFeighnCombinationHyperbolic}
M.~Bestvina and M.~Feighn,
  \emph{\href{http://projecteuclid.org/getRecord?id=euclid.jdg/1214447806}{A
  combination theorem for negatively curved groups}}, J. Differential Geom.
  \textbf{35} (1992), no.~1, 85--101.

\bibitem{BisMj12}
Kingshook Biswas and Mahan Mj,
  \emph{\href{http://dx.doi.org/10.4171/GGD/152}{Pattern rigidity in hyperbolic
  spaces: duality and {PD} subgroups}}, Groups Geom. Dyn. \textbf{6} (2012),
  no.~1, 97--123.

\bibitem{BonSch00}
M.~Bonk and O.~Schramm,
  \emph{\href{http://dx.doi.org/10.1007/s000390050009}{Embeddings of {G}romov
  hyperbolic spaces}}, Geom. Funct. Anal. \textbf{10} (2000), no.~2, 266--306.

\bibitem{BouPaj00}
Marc Bourdon and Herv{{\'e}} Pajot,
  \emph{\href{http://dx.doi.org/10.1007/s000140050146}{Rigidity of
  quasi-isometries for some hyperbolic buildings}}, Comment. Math. Helv.
  \textbf{75} (2000), no.~4, 701--736.

\bibitem{BouPaj03}
Marc Bourdon and Herv{{\'e}} Pajot,
  \emph{\href{http://dx.doi.org/10.1515/crll.2003.043}{Cohomologie {$l_p$} et
  espaces de {B}esov}}, J. Reine Angew. Math. \textbf{558} (2003), 85--108.

\bibitem{BowditchJSJ}
Brian~H. Bowditch, \emph{\href{http://dx.doi.org/10.1007/BF02392898}{Cut points
  and canonical splittings of hyperbolic groups}}, Acta Math. \textbf{180}
  (1998), no.~2, 145--186.

\bibitem{BriHae99}
Martin~R. Bridson and Andr{\'{e}} Haefliger, \emph{Metric spaces of
  non-positive curvature}, Grundlehren der mathematischen Wissenschaften, vol.
  319, Springer, Berlin, 1999.

\bibitem{BuySch07}
Sergei Buyalo and Viktor Schroeder,
  \emph{\href{http://dx.doi.org/10.4171/036}{Elements of asymptotic geometry}},
  EMS Monographs in Mathematics, European Mathematical Society (EMS),
  Z{\"u}rich, 2007.

\bibitem{Cas10splitting}
Christopher~H. Cashen, \emph{Splitting line patterns in free groups}, Algebr.
  Geom. Topol., in press,
  \href{http://arXiv.org/abs/1009.2492}{{\texttt{arXiv:1009.2492}}}.

\bibitem{Cas10}
Christopher~H. Cashen,
  \emph{\href{http://dx.doi.org/10.4171/GGD/92}{Quasi-isometries between
  tubular groups}}, Groups Geom. Dyn. \textbf{4} (2010), no.~3, 473--516.

\bibitem{CasMac11}
Christopher~H. Cashen and Nata{\v{s}}a Macura,
  \emph{\href{http://dx.doi.org/10.2140/gt.2011.15.1419}{Line patterns in free
  groups}}, Geom. Topol. \textbf{15} (2011), no.~3, 1419--1475.

\bibitem{DanTho14}
Pallavi Dani and Anne Thomas,
  \emph{\href{http://arxiv.org/abs/1402.6224}{Quasi-isometry classification of
  certain right-angled coxeter groups}}, preprint (2014),
  \href{http://arXiv.org/abs/1402.6224v4}{{\texttt{arXiv:1402.6224v4}}}.

\bibitem{Dun85}
Martin~J. Dunwoody, \emph{\href{http://dx.doi.org/10.1007/BF01388581}{The
  accessibility of finitely presented groups}}, Invent. Math. \textbf{81}
  (1985), no.~3, 449--457.

\bibitem{DunSag99}
Martin~J. Dunwoody and Michah~E. Sageev,
  \emph{\href{http://dx.doi.org/10.1007/s002220050278}{J{SJ}-splittings for
  finitely presented groups over slender groups}}, Invent. Math. \textbf{135}
  (1999), no.~1, 25--44.

\bibitem{For03}
Max Forester, \emph{\href{http://dx.doi.org/10.1007/s00014-003-0780-y}{On
  uniqueness of {JSJ} decompositions of finitely generated groups}}, Comment.
  Math. Helv. \textbf{78} (2003), no.~4, 740--751.

\bibitem{FujPap06}
Koji Fujiwara and Panos Papasoglu, \emph{{JSJ}-decompositions of finitely
  presented groups and complexes of groups}, Geom. Funct. Anal. \textbf{16}
  (2006), 70--125.

\bibitem{Gro84}
Mikhael Gromov, \emph{Infinite groups as geometric objects}, Proceedings of the
  {I}nternational {C}ongress of {M}athematicians, {V}ol.\ 1, 2 ({W}arsaw, 1983)
  (Warsaw), PWN, 1984, pp.~385--392.

\bibitem{Gro87}
Mikhael Gromov, \emph{Hyperbolic groups}, Essays in group theory, Math. Sci.
  Res. Inst. Publ., vol.~8, Springer, New York, 1987, pp.~75--263.

\bibitem{GuiLev09}
Vincent Guirardel and Gilbert Levitt,
  \emph{\href{http://arxiv.org/abs/0911.3173}{{JSJ} decompositions:
  definitions, existence, uniqueness. {I}: The {JSJ} deformation space}},
  preprint (2009),
  \href{http://arXiv.org/abs/0911.3173}{{\texttt{arXiv:0911.3173}}}.

\bibitem{GuiLev11}
Vincent Guirardel and Gilbert Levitt,
  \emph{\href{http://dx.doi.org/10.2140/gt.2011.15.977}{Trees of cylinders and
  canonical splittings}}, Geom. Topol. \textbf{15} (2011), no.~2, 977--1012.

\bibitem{Kap01}
Ilya Kapovich, \emph{\href{http://dx.doi.org/10.1142/S0218196701000553}{The
  combination theorem and quasiconvexity}}, Internat. J. Algebra Comput.
  \textbf{11} (2001), no.~2, 185--216.

\bibitem{KapKle00}
Michael Kapovich and Bruce Kleiner,
  \emph{\href{http://dx.doi.org/10.1016/S0012-9593(00)01049-1}{Hyperbolic
  groups with low-dimensional boundary}}, Ann. Sci. {\'E}cole Norm. Sup. (4)
  \textbf{33} (2000), no.~5, 647--669.

\bibitem{KleLee97}
Bruce Kleiner and Bernhard Leeb,
  \emph{\href{http://www.numdam.org/item?id=PMIHES_1997__86__115_0}{Rigidity of
  quasi-isometries for symmetric spaces and {E}uclidean buildings}}, Inst.
  Hautes {\'E}tudes Sci. Publ. Math. (1997), no.~86, 115--197 (1998).

\bibitem{Lei82}
Frank~Thomson Leighton,
  \emph{\href{http://dx.doi.org/10.1016/0095-8956(82)90042-9}{Finite common
  coverings of graphs}}, J. Combin. Theory Ser. B \textbf{33} (1982), no.~3,
  231--238.

\bibitem{Mal10}
William Malone, \emph{Topics in geometric group theory}, Ph.D. thesis,
  University of Utah, Salt Lake City, UT, 2010.

\bibitem{Mar06}
Vladimir Markovic,
  \emph{\href{http://dx.doi.org/10.1090/S0894-0347-06-00518-2}{Quasisymmetric
  groups}}, J. Amer. Math. Soc. \textbf{19} (2006), no.~3, 673--715.

\bibitem{MartinBoundaries}
Alexandre Martin,
  \emph{\href{http://dx.doi.org/10.2140/gt.2014.18.31}{Non-positively curved
  complexes of groups and boundaries}}, Geom. Topol. \textbf{18} (2014), no.~1,
  31--102.

\bibitem{MarSwi15}
Alexandre Martin and Jacek {\'S}wi\k{a}tkowski,
  \emph{\href{http://dx.doi.org/10.1515/jgth-2014-0043}{Infinitely-ended
  hyperbolic groups with homeomorphic {G}romov boundaries}}, J. Group Theory
  \textbf{18} (2015), no.~2, 273--289.

\bibitem{Mj12}
Mahan Mj, \emph{\href{http://dx.doi.org/10.2140/gt.2012.16.1205}{Pattern
  rigidity and the {H}ilbert-{S}mith conjecture}}, Geom. Topol. \textbf{16}
  (2012), no.~2, 1205--1246.

\bibitem{MosSagWhy11}
Lee Mosher, Michah Sageev, and Kevin Whyte,
  \emph{\href{http://dx.doi.org/10.1090/S0065-9266-2011-00585-X}{Quasi-actions
  on trees {II}: {F}inite depth {B}ass-{S}erre trees}}, Mem. Amer. Math. Soc.
  \textbf{214} (2011), no.~1008, vi+105.

\bibitem{Neu10}
Walter~D. Neumann, \emph{\href{http://dx.doi.org/10.4171/GGD/111}{On
  {L}eighton's graph covering theorem}}, Groups Geom. Dyn. \textbf{4} (2010),
  no.~4, 863--872.

\bibitem{Pan89a}
Pierre Pansu, \emph{\href{http://dx.doi.org/10.2307/1971484}{M{\'e}triques de
  {C}arnot-{C}arath{\'e}odory et quasiisom{\'e}tries des espaces
  sym{\'e}triques de rang un}}, Ann. of Math. (2) \textbf{129} (1989), no.~1,
  1--60.

\bibitem{Pap05}
Panos Papasoglu,
  \emph{\href{http://dx.doi.org/10.4007/annals.2005.161.759}{Quasi-isometry
  invariance of group splittings}}, Ann. of Math. (2) \textbf{161} (2005),
  no.~2, 759--830.

\bibitem{PapWhy02}
Panos Papasoglu and Kevin Whyte,
  \emph{\href{http://dx.doi.org/10.1007/s00014-002-8334-2}{Quasi-isometries
  between groups with infinitely many ends}}, Comment. Math. Helv. \textbf{77}
  (2002), no.~1, 133--144.

\bibitem{RipSel94}
E.~Rips and Z.~Sela,
  \emph{\href{http://dx.doi.org/10.1007/BF01896245}{Structure and rigidity in
  hyperbolic groups. {I}}}, Geom. Funct. Anal. \textbf{4} (1994), no.~3,
  337--371.

\bibitem{RipSel97}
E.~Rips and Z.~Sela, \emph{\href{http://dx.doi.org/10.2307/2951832}{Cyclic
  splittings of finitely presented groups and the canonical {JSJ}
  decomposition}}, Ann. of Math. (2) \textbf{146} (1997), no.~1, 53--109.

\bibitem{Sch97}
Richard~Evan Schwartz,
  \emph{\href{http://dx.doi.org/10.1007/s002220050139}{Symmetric patterns of
  geodesics and automorphisms of surface groups}}, Invent. Math. \textbf{128}
  (1997), no.~1, 177--199.

\bibitem{Ser03}
Jean-Pierre Serre, \emph{Trees}, Springer Monographs in Mathematics,
  Springer-Verlag, Berlin, 2003, Translated from the French original by John
  Stillwell, Corrected 2nd printing of the 1980 English translation.

\bibitem{Sta68}
John~R. Stallings, \emph{On torsion-free groups with infinitely many ends},
  Ann. of Math. (2) \textbf{88} (1968), 312--334.

\bibitem{Sta71}
John~R. Stallings, \emph{Group theory and three-dimensional manifolds}, Yale
  University Press, New Haven, Conn., 1971, A James K. Whittemore Lecture in
  Mathematics given at Yale University, 1969, Yale Mathematical Monographs, 4.

\bibitem{Tuk85}
Pekka Tukia, \emph{\href{http://dx.doi.org/10.1007/BF02392471}{Quasiconformal
  extension of quasisymmetric mappings compatible with a {M}{\"o}bius group}},
  Acta Math. \textbf{154} (1985), no.~3-4, 153--193.

\bibitem{Vav13}
Diane~M. Vavrichek, \emph{\href{http://dx.doi.org/10.4171/GGD/181}{The
  quasi-isometry invariance of commensurizer subgroups}}, Groups Geom. Dyn.
  \textbf{7} (2013), no.~1, 205--261.

\bibitem{Xie06}
Xiangdong Xie,
  \emph{\href{http://dx.doi.org/10.1016/j.top.2005.06.005}{Quasi-isometric
  rigidity of {F}uchsian buildings}}, Topology \textbf{45} (2006), no.~1,
  101--169.

\bibitem{Xie12}
Xiangdong Xie,
  \emph{\href{http://dx.doi.org/10.1007/s00208-011-0700-1}{Quasisymmetric maps
  on the boundary of a negatively curved solvable {L}ie group}}, Math. Ann.
  \textbf{353} (2012), no.~3, 727--746.

\end{thebibliography}

\bigskip
  \footnotesize

  \textsc{Fakult\"at f\"ur Mathematik, Universit\"at Wien,
    Oskar-Morgenstern-Platz 1, 1090 Wien, \"{O}sterreich}\par\nopagebreak
\texttt{\href{mailto:christopher.cashen@univie.ac.at}{christopher.cashen@univie.ac.at}}\par\nopagebreak
\texttt{\href{mailto:alexandre.martin@univie.ac.at}{alexandre.martin@univie.ac.at}}
\end{document}